\DeclareMathOperator{\Hom}{Hom}
\DeclareMathOperator{\End}{End}
\DeclareMathOperator{\im}{Im}
\DeclareMathOperator{\Ker}{Ker}
\DeclareMathOperator{\spn}{span}
\DeclareMathOperator{\tp}{top}
\DeclareMathOperator{\rad}{rad}
\DeclareMathOperator{\soc}{soc}
\DeclareMathOperator{\Mod*}{mod}
\DeclareMathOperator{\MOD}{Mod}
\DeclareMathOperator{\ind}{ind}
\DeclareMathOperator{\uind}{\underline{ind}}
\DeclareMathOperator{\Coker}{Coker}
\DeclareMathOperator{\val}{val}
\DeclareMathOperator{\dimbf}{\textbf{dim}}
\DeclareMathOperator{\id}{id}
\DeclareMathOperator{\proj}{proj}
\DeclareMathOperator{\add}{add}
\DeclareMathOperator{\fin}{fin}
\DeclareMathOperator{\Ob}{Ob}
\theoremstyle{plain}
\newtheorem{thm}{Theorem}[section]
\newtheorem*{thm*}{Theorem}
\newtheorem{lem}[thm]{Lemma}
\newtheorem{prop}[thm]{Proposition}
\newtheorem*{prop*}{Proposition}
\theoremstyle{definition}
\newtheorem{defn}[thm]{Definition}
\newtheorem{exam}[thm]{Example}
\newtheorem*{exam*}{Example}
\theoremstyle{remark}
\newtheorem{rem}[thm]{Remark}
\newtheorem{assumption}[thm]{Assumption}
\numberwithin{equation}{section}
\begin{document}
	\title{Tame and Wild Symmetric Special Multiserial Algebras}
	\author{Drew Duffield}
	\maketitle
	\begin{abstract}
		We provide a complete classification of all tame and wild symmetric special multiserial algebras in terms of the underlying Brauer configuration. Our classification contains the symmetric special multiserial algebras of finite representation type.
	\end{abstract}
	
	\section{Introduction}
	The representation type of a finite dimensional algebra is of fundamental importance to representation theorists. Drozd's famous dichotomy (\cite{Drozd}) shows that an algebra can either be of tame or wild representation type (which becomes a trichotomy if one distinguishes between algebras that are of finite or infinite representation type). For tame algebras, the indecomposable modules in each dimension occur in a finite number of one-parameter families, meaning that there is at least some hope of a classification of the indecomposable modules. On the other hand, the representation theory of any wild algebra is at least as complicated as the representation theory of all finite dimensional algebras. Thus, a classification of the indecomposable modules of a wild algebra is often considered to be hopeless. It is therefore of tremendous importance to know whether an algebra is tame or wild if one aims to develop a detailed understanding of its representation theory.
	
	The class of special biserial algebras have been of great interest and study, and their representation theory is well-understood. For example, special biserial algebras are of tame representation type (\cite{TameBiserial}, \cite{WaldWasch}). The indecomposable modules of special biserial algebras have been classified using the functorial filtration method due to Gel'fand and Ponomarev (\cite{FuncFilt}) and the morphisms between them have been studied in \cite{CBMaps} and \cite{TreeMaps}. The Auslander-Reiten quiver of special biserial algebras is also well-understood (\cite{butlerRingel}), particularly for those that are self-injective (\cite{ErdmannAR}) and symmetric (\cite{Chinburg}, \cite{DuffieldBGA}, \cite{GroupsWOGroups}).
	
	Special biserial algebras have been instrumental to many classification problems regarding the representation type of an algebra. For example, they are used in the derived equivalence classification of tame self-injective algebras that are either periodic, standard, or of polynomial growth (\cite{FiniteSelfInj}, \cite{TameSelfInj}). Special biserial algebras also play a role in the classification of blocks of group algebras of tame (and finite) representation type (see for example \cite{ErdmannString}).
	
	The focus of this paper is on the broader class of special multiserial algebras, which were first introduced in \cite{VonHohne} and later investigated in \cite{AlmostGentle}, \cite{BCA} and \cite{Multiserial}. They both generalise and contain the class of special biserial algebras, and similar to special biserial algebras, the representation theory is generally controlled by the uniserial modules over the algebra. Unlike biserial algebras though, most special multiserial algebras are wild. Symmetric radical cube zero algebras are a subclass of special multiserial algebras (as shown in \cite{Multiserial}), and for these, a classification of finite, tame and wild algebras exists in \cite{BensonRad}. However, such a classification does not currently exist for symmetric special multiserial algebras in general. It is precisely the aim of this paper to provide a classification of all tame and wild symmetric special multiserial algebras.
	
	A helpful tool in understanding the representation theory of symmetric special multiserial algebras is the notion of a Brauer configuration. A Brauer configuration is a decorated hypergraph with orientation -- that is, a collection of vertices and connected polygons, with a cyclic ordering of the polygons at each vertex. Every Brauer configuration gives rise to a symmetric special multiserial algebra (\cite{BCA}). Conversely, every symmetric special multiserial algebra can be associated to a Brauer configuration (\cite{Multiserial}). Brauer configurations are particularly useful, as the representation theory of the algebra is encoded in the combinatorial data of the hypergraph. If every polygon in the Brauer configuration is a 2-gon (or edge), then one obtains a Brauer graph, and hence a symmetric special biserial algebra.
	
	This paper is centred around proving the main result given in Section~\ref{MainResult}, which uses a variety of mathematical techniques. For example, one test for the wildness of an algebra, detailed in \cite{Boevey}, is to show that there exist infinitely many indecomposable modules of dimension $d$ which lie in an Auslander-Reiten component that is not a homogeneous tube. In Sections~\ref{Examples} and \ref{nSerialSection}, we give examples of one-parameter families of indecomposable modules in symmetric special multiserial algebras, some of which are the well known class of band modules. We then show that for certain symmetric special multiserial algebras, these modules do not belong to homogeneous tubes in the Auslander-Reiten quiver, thus showing that these algebras are wild. We use this technique in particular to show that there is precisely one tame symmetric special quadserial algebra, and that every symmetric special $n$-serial algebra with $n>4$ is wild.
	
	The most involved cases in the proof of the main theorem concern the subclass of symmetric special triserial algebras. This forms the majority of the paper. In Section~\ref{QCSection}, we prove the existence of a class of tame symmetric special triserial algebras which are closely related to the (tame) classes of clannish algebras and skewed gentle algebras in \cite{Clans} and \cite{Pena}. The proof involves an indirect classification of the indecomposable modules.
	
	In Section~\ref{TriserialResults}, we show which symmetric special triserial algebras (with an underlying Brauer configuration that is a tree) are derived equivalent to the trivial extension of a hereditary algebra. The techniques we use are similar to those used by Rickard for Brauer tree algebras in \cite{Rickard}. In particular, we determine which symmetric special triserial algebras are derived equivalent to the trivial extension of some wild hereditary algebra, and which are derived equivalent to the trivial extension of some hereditary algebra of type $\mathbb{E}$ or $\widetilde{\mathbb{E}}$.
	
	The aim of Sections~\ref{CycleMultSubsection} and \ref{Multiple3GonSubsection} is to prove that any symmetric special triserial algebra which is not one of the algebras from Sections~\ref{QCSection} and \ref{TriserialResults} is wild. The proof involves the construction of functors analogous to representation embeddings (c.f. \cite{WildBook}) from a wild hereditary algebra to the symmetric special triserial algebra. In the case of Section~\ref{CycleMultSubsection}, this provides an explicit description of a class of two parameter families of indecomposable modules.

	Finally, in Section~\ref{MainProof}, we bring together all of the results in Sections~\ref{Examples}-\ref{Multiple3GonSubsection} to prove the main theorem in Section~\ref{MainResult}.

	\section*{Acknowledgements}
	This paper is an outcome of my PhD, so I would like to thank EPSRC for providing the funding for my research project. I would also like to give a special thank you to my PhD supervisor, Prof Sibylle Schroll, for taking the time to proofread the work for this paper and for the many helpful discussions along the way. Finally, I would like to thank Edward Green and {\O}yvind Solberg for the development of the QPA package in GAP, which proved to be enormously useful in the various calculations behind this paper.
	
	%	=========================================================================================
	%	THE MAIN RESULT
	%	=========================================================================================
	\section{The Classification Theorem} \label{MainResult}
	The aim of this paper is to prove the following result.
	
	\begin{thm} \label{Result1}
		Let $A$ be a symmetric special multiserial algebra corresponding to a Brauer configuration $\chi$. Then $A$ is tame if and only if $\chi$ satisfies one of the following.
		\begin{enumerate}[label=(\roman*)]
			\item $\chi$ is a Brauer graph.
			\item $\chi$ is of the form
			\begin{center}
				\begin{tikzpicture}[scale=1.5, ]%baseline=(o.base)
				\draw [dashed] (0,0) ellipse (0.5 and 0.5);
				\draw (0,0) node {$G$};
				\draw [fill=black] (0,0.5) ellipse (0.03 and 0.03);
				\draw (-0.25,0.6) node {$u_1$};
				\draw [fill=black] (-0.4,-0.3) ellipse (0.03 and 0.03);
				\draw (-0.667,-0.1665) node {$u_2$};
				\draw [fill=black] (0.4,-0.3) ellipse (0.03 and 0.03);
				\draw (0.6665,-0.1665) node {$u_r$};
				
				\draw [fill=black] (0.4,1.1) ellipse (0.03 and 0.03);
				\draw (0.2665,1.25) node {$v_1$};
				\draw [fill=black] (0.8,1.4) ellipse (0.03 and 0.03);
				\draw (0.65,1.55) node {$v'_1$};
				\draw [fill=black] (-0.4,1.1) ellipse (0.03 and 0.03);
				\draw (-0.3165,1.2634) node {$v''_1$};
				\draw [fill=black] (-0.8,1.4) ellipse (0.03 and 0.03);
				\draw (-0.6299,1.55) node (o) {$v'''_1$};
				\draw [fill=black] (-1.2,-0.3) ellipse (0.03 and 0.03);
				\draw (-1.25,-0.45) node {$v_2$};
				\draw [fill=black] (-1.6,-0.1) ellipse (0.03 and 0.03);
				\draw (-1.7,-0.3) node {$v'_2$};
				\draw [fill=black] (-0.8,-0.9) ellipse (0.03 and 0.03);
				\draw (-1,-0.95) node {$v''_2$};
				\draw [fill=black] (-0.7,-1.4) ellipse (0.03 and 0.03);
				\draw (-0.9335,-1.45) node {$v'''_2$};
				\draw [fill=black] (0.8,-0.9) ellipse (0.03 and 0.03);
				\draw (1,-1) node {$v_r$};
				\draw [fill=black] (0.7,-1.4) ellipse (0.03 and 0.03);
				\draw (0.9,-1.4) node {$v'_r$};
				\draw [fill=black] (1.2,-0.3) ellipse (0.03 and 0.03);
				\draw (1.3,-0.5) node {$v''_r$};
				\draw [fill=black] (1.6,-0.1) ellipse (0.03 and 0.03);
				\draw (1.8,-0.3) node {$v'''_r$};
				
				\draw [pattern=north west lines](0,0.5) -- (-0.4,1.1) -- (0.4,1.1) -- (0,0.5);
				\draw (-0.8,1.4) -- (-0.4,1.1);
				\draw (0.8,1.4) -- (0.4,1.1);
				\draw [pattern=north west lines](-1.2,-0.3) -- (-0.4,-0.3) -- (-0.8,-0.9) -- (-1.2,-0.3);
				\draw (-1.6,-0.1) -- (-1.2,-0.3);
				\draw (-0.7,-1.4) -- (-0.8,-0.9);
				\draw [pattern=north west lines](0.4,-0.3) -- (1.2,-0.3) -- (0.8,-0.9) -- (0.4,-0.3);
				\draw (1.6,-0.1) -- (1.2,-0.3);
				\draw (0.7,-1.4) -- (0.8,-0.9);
				\draw (0,-0.8) node {$\cdots$};
				\end{tikzpicture}
			\end{center}
			where $G$ is a Brauer graph connecting the (not necessarily distinct) vertices $u_1,\ldots,u_r$ and $\mathfrak{e}_{v_i}=\mathfrak{e}_{v'_i}=\mathfrak{e}_{v''_i}=\mathfrak{e}_{v'''_i}=1$ for all $i$.
			\item $\chi$ is of the form
			\begin{center}
				\begin{tikzpicture}[baseline = (o.base),scale=0.9]
					\draw[pattern = north west lines] (0.2,-1) -- (1.4,-1) -- (0.8,-0.1) -- (0.2,-1);
					\draw[dashed] (1.8,-1) -- (1.4,-1);
					\draw[dashed] (2.4,-1) -- (2.7,-1);
					\draw (2.1,-1) node{$T_1$};
					\draw[dashed] (0.8,-0.1) -- (0.8,0.2);
					\draw[dashed] (0.8,0.8) -- (0.8,1.2) node (o) {};
					\draw (0.8,0.5) node{$T_2$};
					\draw[dashed] (-1.1,-1) -- (-0.8,-1);
					\draw[dashed] (-0.2,-1) -- (0.2,-1);
					\draw (-0.5,-1) node{$T_3$};
				\end{tikzpicture}
			\end{center}
			where $T_1$, $T_2$ and $T_3$ are distinct multiplicity-free Brauer trees containing $m_1$, $m_2$ and $m_3$ polygons respectively such that the values of the triple $(m_1,m_2,m_3)$ conform to a column of the following table.
			\begin{center}
				\begin{tabular}{c | c c c c c c}
					$m_1$	&	$1$	&	$1$	&	$1$	&	$1$	&	$1$	&	$2$	\\	\hline
					$m_2$	&	$2$	&	$2$	&	$2$	&	$2$	&	$3$	&	$2$	\\	\hline
					$m_3$	&	$2$	&	$3$	&	$4$	&	$5$	&	$3$	&	$2$
				\end{tabular}
			\end{center}
			\item $\chi$ is of the form
			\begin{center}
				\begin{tikzpicture}[scale=0.9]
					\draw[pattern = north west lines] (0.5,-0.5) -- (0.5,0.5) -- (1.5,0.5) -- (1.5,-0.5) -- (0.5,-0.5);
					\draw (0,1) -- (0.5,0.5);
					\draw (0,-1) -- (0.5,-0.5);
					\draw (1.5,0.5) -- (2,1);
					\draw (1.5,-0.5) -- (2,-1);
				\end{tikzpicture}
			\end{center}
			where every vertex has multiplicity one.
		\end{enumerate}
	\end{thm}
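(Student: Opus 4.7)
The strategy is to prove the biconditional by first fixing the maximum polygon size $n$ of $\chi$, and then carrying out a case analysis on the local combinatorial structure of the Brauer configuration. Since symmetric special $n$-serial algebras are known to be wild for $n \geq 5$, and since the only tame symmetric special quadserial algebra is the one described in (iv), the substantive analysis concentrates on $n \in \{2,3\}$. The biserial case ($n=2$) is exactly case (i), so the heart of the theorem lies in the triserial case.

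For the tame direction, I would verify each listed configuration separately. Case (i) is immediate because any Brauer graph algebra is a symmetric special biserial algebra, and special biserial algebras are tame by \cite{TameBiserial, WaldWasch}. Case (ii) is handled by invoking the results of Section~\ref{QCSection}: the 3-gons attached through multiplicity-one vertices give the algebra the structure of a symmetric special triserial algebra that is closely related (via the indirect classification of indecomposables in that section) to a clannish or skewed gentle algebra, both of which are tame by \cite{Clans, Pena}. Cases (iii) and (iv), being finite lists of small configurations, are handled by Section~\ref{TriserialResults}: one shows each listed algebra is derived equivalent to the trivial extension of a hereditary algebra of type $\mathbb{A}$, $\mathbb{D}$, $\mathbb{E}_6$, $\mathbb{E}_7$, $\mathbb{E}_8$ or one of the affine extensions, each of which has a tame trivial extension, and derived equivalence preserves representation type.

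For the wild direction, I would contrapose: assume $\chi$ is not of the form (i)--(iv) and show $A$ is wild. If $\chi$ contains an $n$-gon with $n \geq 5$, apply the Crawley-Boevey test from \cite{Boevey} together with the explicit one-parameter families of non-homogeneous-tube modules constructed in Sections~\ref{Examples} and \ref{nSerialSection}. If the maximum polygon size is $4$ but $\chi$ is not the configuration in (iv), attach the single-4-gon wildness argument to whatever additional edge, vertex multiplicity, or second polygon appears; again this yields indecomposables whose Auslander--Reiten component is not a homogeneous tube. The triserial case is the most delicate: one must show that any Brauer configuration containing at least one 3-gon, yet not falling under (ii) or (iii), contains either a ``cycle with multiplicity'' subconfiguration or a multi-3-gon subconfiguration of the type studied in Sections~\ref{CycleMultSubsection} and \ref{Multiple3GonSubsection}. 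For each such subconfiguration, one constructs a representation embedding from a wild hereditary algebra into $A$ (in the cycle case, this even produces an explicit two-parameter family of indecomposables), forcing wildness.

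The main obstacle is the combinatorial exhaustion step in the triserial case: enumerating the ``forbidden'' local patterns in $\chi$ and proving that any $\chi$ avoiding (ii) and (iii) must contain one of them. One has to carefully bookkeep vertex multiplicities, the placement and number of 3-gons relative to the underlying graph-like skeleton, and the cyclic orderings at each vertex, then check that every remaining possibility houses a copy of a wild subalgebra via the functors of Sections~\ref{CycleMultSubsection}--\ref{Multiple3GonSubsection}. This reduction, together with glueing arguments to extend a local wild embedding to the full algebra $A$, is what makes this step the crux; the final assembly in Section~\ref{MainProof} is then the routine matter of collating the tame configurations with the exhaustive wildness of everything else.
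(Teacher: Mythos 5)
Your outline matches the paper's proof strategy closely: a contrapositive case analysis driven by the maximum polygon size, using one-parameter families and Crawley--Boevey's Theorem~\ref{tameTube} for wildness when $n\geq 4$, representation embeddings from wild hereditary algebras for triserial wildness (Sections~\ref{CycleMultSubsection}--\ref{Multiple3GonSubsection}), the cut-algebra/skewed-gentle argument for case (ii), and derived equivalence to the trivial extension of a hereditary algebra for case (iii). The only slips are minor misattributions: case (iv) is established via Theorem~\ref{TameQuad} and the radical-cube-zero classification of \cite{BensonRad}, not via Section~\ref{TriserialResults} (which assumes triseriality), and case (iii) produces only type $\mathbb{E}_p$ or $\widetilde{\mathbb{E}}_p$ quivers --- the $\mathbb{A}$ and $\mathbb{D}$ type configurations with at most one $m_i>1$ are subsumed by case (ii).
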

	
	%	=========================================================================================
	%	PRELIMINARIES
	%	=========================================================================================
	\section{Preliminaries} \label{preliminaries}
	Throughout, we let $K$ be an algebraically closed field and $Q=(Q_0,Q_1)$ be a finite connected quiver with vertex set $Q_0$ and arrow set $Q_1$. We let $I$ be an admissible ideal of the path algebra $KQ$ such that $KQ/I$ is a basic finite dimensional algebra. We denote by $\Mod* A$ the category of finitely generated right $A$-modules. For any vertex $x \in Q_0$, we denote by $S(x)$ and $P(x)$ the simple module and indecomposable projective module respectively corresponding to $x$.
	
	\subsection{Representation Type}
	The definitions of this section are taken from \cite[XIX]{WildBook}. Let $A$ and $B$ be arbitrary $K$-algebras (that are not necessarily finite-dimensional). Let $\mathcal{A}\subseteq \MOD A$ and $\mathcal{B}\subseteq \MOD B$ be full exact additive subcategories that are closed under direct summands. Let $F:\mathcal{B} \rightarrow \mathcal{A}$ be a $K$-linear functor. We say $F$ \emph{respects isomorphism classes} if for any modules $M,M' \in \mathcal{B}$, we have $FM \cong FM' \Rightarrow M \cong M'$. We say $F$ is a \emph{representation embedding} if it is exact, respects isomorphism classes, and maps indecomposable modules in $\mathcal{B}$ to indecomposable modules in $\mathcal{A}$. A $K$-linear functor $F:\mathcal{B} \rightarrow \mathcal{A}$ is said to be a \emph{strict representation embedding} if is exact and fully faithful. It is known that strict representation embeddings are representation embeddings (\cite[XIX, Lemma 1.2]{WildBook}).
	
	There are two equivalent definitions for a finite-dimensional algebra to be of wild representation type. A finite dimensional $K$-algebra $A$ is said to be of \emph{wild representation type} (or shortly, is said to be \emph{wild}) if for every finite dimensional $K$-algebra $B$, there exists a representation embedding $F:\Mod*B \rightarrow \Mod*A$. Equivalently, there exists a representation embedding $F:\fin K\langle a_1, a_2 \rangle \rightarrow \Mod*A$. Note that if for some algebra $A$ of unknown representation type and for some algebra $B$ of wild representation type, it follows that if there exists a representation embedding $F:\Mod*B \rightarrow \Mod*A$, then $A$ is also wild.
	
	On the other hand, a finite dimensional algebra $A$ is said to be of \emph{tame representation type} (or shortly, is said to be \emph{tame}) if for each strictly positive integer $d$, there exists a finite number of $K[a]$-$A$-bimodules $M_1, \ldots, M_{n_d}$ that are finitely generated and free as left $K[a]$-modules such that all (except perhaps finitely many) indecomposable modules of dimension $d$ are isomorphic to a module of the form $S \otimes_{K[a]} M_i$ for some simple right $K[a]$-module $S$ and some $1 \leq i \leq n_d$.
	
	\subsection{Tilting Complexes}
	Denote by $\proj A$ the full subcategory of $\Mod* A$ consisting of projective $A$-modules. By $K^b(\proj A)$, we mean the bounded homotopy category of chain complexes over $\proj A$. We call an object in $K^b(\proj A)$ that has a non-zero term in at most one degree a \emph{stalk complex}. Given an object $T \in K^b(\proj A)$, denote by $\add(T)$ the full subcategory of $K^b(\proj A)$ consisting of direct summands of direct sums of copies of $T$. We call an object $T \in K^b(\proj A)$ a \emph{tilting complex} if $\Hom(T, T[n])=0$ for all $n \neq 0$ and $\add(T)$ generates $K^b(\proj A)$ as a triangulated category. We use the following result of Rickard in Section~\ref{TriserialResults}, which is particularly useful to us, as derived equivalent self-injective algebras have the same representation type.
	
	\begin{thm}[\cite{Rickard}, Theorem 1.1] \label{RickardTheorem}
		Let $A$ and $B$ be finite dimensional algebras. Then $A$ and $B$ are derived equivalent if and only if $B$ is isomorphic to the endomorphism ring of a tilting complex.
	\end{thm}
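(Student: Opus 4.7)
The plan is to treat the two implications separately. For the ``only if'' direction, suppose $\Phi \colon D^b(A) \to D^b(B)$ is a triangulated equivalence with quasi-inverse $\Psi$. Note that $B$, viewed as a right $B$-module concentrated in degree zero, is trivially a tilting complex in $K^b(\proj B)$ with endomorphism ring $B$. Because $A$ and $B$ are finite dimensional, the compact objects of $D^b(-)$ coincide with the perfect complexes $K^b(\proj -)$, and any triangulated equivalence preserves compactness. Setting $T := \Psi(B)$ therefore produces an object of $K^b(\proj A)$; the tilting conditions $\Hom(T, T[n]) = 0$ for $n \ne 0$ together with the generation of $K^b(\proj A)$ by $\add(T)$ transfer across $\Psi$, and $B \cong \End_{K^b(\proj A)}(T)$ is immediate from full-faithfulness.

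For the converse, given a tilting complex $T \in K^b(\proj A)$ with $B \cong \End_{K^b(\proj A)}(T)$, the goal is to manufacture a triangulated equivalence $D^b(B) \to D^b(A)$ sending $B \mapsto T$. The strategy is to first promote $T$ to a bounded complex $\tilde T$ of $A$-$B$-bimodules representing the same object of $D^b(A)$ and realising, on the nose, the right $B$-action encoded in $\End_{K^b(\proj A)}(T)$. Once such a model is in hand, one defines $F := - \otimes_B^L \tilde T \colon D^b(B) \to D^b(A)$ together with its right adjoint $G := R\Hom_A(\tilde T, -) \colon D^b(A) \to D^b(B)$.

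The tilting hypothesis then yields $F(B) \simeq T$, while $G(T) \simeq R\Hom_A(T,T)$ is concentrated in degree zero (because $\Hom_{D^b(A)}(T, T[n]) = 0$ for $n \ne 0$) and equals $B$. Consequently the unit $\id \to GF$ is an isomorphism on the stalk complex $B$, and the counit $FG \to \id$ is an isomorphism on $T$. The full subcategories on which these natural transformations are isomorphisms are triangulated and closed under direct summands; since $\add(T)$ generates $K^b(\proj A)$ and $\add(B)$ generates $K^b(\proj B)$, the adjunction restricts to a triangulated equivalence between the perfect subcategories. Extension to the full bounded derived categories is then a standard d\'evissage: every bounded complex is an iterated extension of its cohomology, and the equivalence propagates through the distinguished triangles.

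The principal obstacle is the construction of the bimodule model $\tilde T$. A priori $T$ lives in a triangulated category, and while the isomorphism $B \cong \End(T)$ endows $T$ with a right $B$-action up to homotopy, upgrading this to a strict action on an honest complex of bimodules takes genuine work. One modern resolution is to pass to the DG setting: the DG endomorphism algebra of $T$ has cohomology concentrated in degree zero equal to $B$, and $T$ is tautologically a DG bimodule over $A$ and this DG algebra, so DG Morita theory supplies the desired equivalence. Rickard's original argument instead proceeds by induction on the width of $T$, reducing to the elementary case in which $T$ has non-zero terms in only two consecutive degrees and then building up the general case by iterated shifts and gluing.
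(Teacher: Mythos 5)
The paper does not prove this statement at all: it is quoted verbatim as Rickard's Theorem~1.1 and used as a black box. There is therefore no ``paper's proof'' to compare against, and the appropriate benchmark is the literature argument (Rickard's original 1989 paper and Keller's DG reformulation). Against that benchmark your sketch is structurally sound: the ``only if'' direction correctly reduces to transporting the tautological tilting complex $B \in K^b(\proj B)$ across a quasi-inverse and noting that compact objects and the two tilting axioms are preserved; and in the converse direction you have correctly identified the genuine difficulty, namely that $\End_{K^b(\proj A)}(T) \cong B$ only gives $T$ a right $B$-action \emph{up to homotopy}, and that promoting this to an honest complex of $A$-$B$-bimodules is the heart of the matter. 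Mentioning both Rickard's width-induction and Keller's DG Morita theory as ways to resolve this is accurate.

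One step deserves a caveat. The final d\'evissage as you phrase it --- ``every bounded complex is an iterated extension of its cohomology, and the equivalence propagates through the distinguished triangles'' --- does not by itself upgrade an equivalence $K^b(\proj B)\simeq K^b(\proj A)$ to one of $D^b(\Mod B)\simeq D^b(\Mod A)$, because the cohomology modules need not be perfect and so do not live in the category where the equivalence has been constructed. The standard fix is to first extend $-\otimes_B^{L}\widetilde T$ to all of $D(\MOD B)\to D(\MOD A)$, use that $K^b(\proj)$ is exactly the subcategory of compact objects and that it compactly generates $D(\MOD)$, conclude the equivalence at the unbounded level, and then restrict to the intrinsically characterized subcategory $D^b(\Mod)$ of cohomologically bounded complexes with finitely generated cohomology. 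With that correction your outline matches the accepted proof.
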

	
	An example of a tilting complex in a symmetric algebra $A$ is an Okuyama-Rickard complex (c.f. \cite{Okuyama}, \cite{TiltingHandbook}). Let $\varepsilon_1, \ldots, \varepsilon_n$ be complete set of primitive orthogonal idempotents of $A$. Let $E'$ be a subset of $E = \{1,\ldots, n\}$ and let $\varepsilon=\sum_{i \in E'}\varepsilon_i$. Define $T_i$ to be either the stalk complex with degree zero term $\varepsilon_i A$ if $i \in E'$ or the complex
	\begin{equation*}
		\xymatrix@1{0 \ar[r] & P(\varepsilon_i A\varepsilon A) \ar[r]^-f & \varepsilon_i A \ar[r] & 0}
	\end{equation*}
	if $i \not\in E'$, where $P(\varepsilon_i A\varepsilon A)$ is in degree zero and $\xymatrix@1{P(\varepsilon_i A\varepsilon A) \ar[r]^-f & \varepsilon_i A}$ is the minimal projective presentation of $\varepsilon_i A / \varepsilon_i A\varepsilon A$. Then the complex $T=\bigoplus_{i \in E} T_i$ is called the \emph{Okuyama-Rickard} tilting complex with respect to $E'$.
	
	\subsection{Special Multiserial Algebras}
	We follow the definitions in \cite{Multiserial}. Let $A$ be a finite dimensional algebra. We say a left or right $A$-module is \emph{multiserial} (or \emph{$n$-serial}) if $\rad(M)$ can be written as a sum of uniserial modules $U_1, \ldots, U_n$ such that $U_i \cap U_j$ is simple or zero for all $i \neq j$. We say that an algebra is \emph{multiserial} (or \emph{$n$-serial}) if $A$ is multiserial (resp. $n$-serial) as a left or right $A$-module. In particular, if $A$ is an $n$-serial algebra for $n\in\{1,2,3,4\}$, then we say that $A$ is uniserial, biserial, triserial or quadserial respectively.
	
	We say that a finite dimensional algebra $A$ is \emph{special multiserial} if it is Morita equivalent to a quotient $KQ/I$ of a path algebra $KQ$ by an admissible ideal $I$ such that the following property holds. For any arrow $\alpha \in Q_1$, there exists at most one arrow $\beta \in Q_1$ and at most one arrow $\gamma \in Q_1$ such that $\alpha\beta \not\in I$ and $\gamma \alpha \not\in I$. Note that special multiserial algebras are multiserial algebras (\cite[Corollary 2.4]{Multiserial}).
	
	\subsection{Configurations}
	We present an alternative (but equivalent) definition of a hypergraph, which we call a configuration.
	\begin{defn}
		A \emph{configuration} is a tuple $\chi=(\chi_0,\chi_1, \kappa)$ where the following hold.
		\begin{enumerate}[label=(\roman*)]
			\item $\chi_0$ is a finite set whose elements are called \emph{vertices}.
			\item $\chi_1$ is a finite collection of finite sets. We call a set $x \in \chi_1$ a \emph{polygon} of $\chi$ and require that $|x| \geq 2$ for all $x \in \chi_1$. Specifically, we call $x \in \chi_1$ an $n$-gon if $|x|=n$. For each polygon $x \in \chi_1$, we call the elements of $x$ the \emph{germs of the polygon $x$}. By definition, we say that $x \cap y = \emptyset$ for any distinct $x,y\in \chi_1$. We denote the set of all germs of polygons in $\chi$ by $\mathcal{G}_\chi=\bigcup_{x \in \chi_1} x$.
			\item $\kappa\colon\mathcal{G}_\chi \rightarrow \chi_0$ is a function, which may be considered as a map taking each germ of a polygon to an incident vertex.
		\end{enumerate}
	\end{defn}
	
	A polygon $x$ is said to be \emph{incident} (or \emph{connected}) to a vertex $v$ if there exists $g \in x$ such that $\kappa(g)=v$. We define the \emph{valency} of a vertex $v$ in a configuration to be the integer $\val(v)=|\{g \in \mathcal{G}_\chi : \kappa(g)=v\}|$. Informally, one can realise a configuration as a generalisation of a graph, where instead of vertices and connected edges, we have vertices and connected polygons. We typically realise $2$-gons as edges in the configuration. If a configuration $\chi$ consists entirely of 2-gons, then $\chi$ is indeed equivalent to a graph. 
	
	We say a polygon $x$ in $\chi$ is \emph{self-folded} (at the vertex $v$) if there exist at least two distinct germs $g_1, g_2 \in x$ such that $\kappa(g_1)=v=\kappa(g_2)$. Specifically, if there are precisely $m$ distinct germs $g_1,\ldots, g_m \in x$ such that $\kappa(g_i)=v$ for all $i$, we say that $x$ is $m$-self-folded at $v$. Self-folded polygons in a configuration generalise the notion of loops in a graph, and indeed a self-folded 2-gon is considered as a \emph{loop}.
	
	For the purposes of readability, we will now establish a notation for germs of polygons in a configuration $\chi$. If $g$ is a unique germ of a polygon $x\in\chi_1$ such that $\kappa(g)=v\in\chi_0$, then we will write $g$ as $x^v$. It is therefore implicitly assumed that for a germ of a polygon $x^v \in \mathcal{G}_\chi$, we have $x^v \in x$ and $\kappa(x^v)=v$ for some polygon $x$ and vertex $v$ in $\chi$. On the other hand, if a polygon $x$ in $\chi$ is $m$-self-folded at a vertex $v$, then we will write the germs $g_1,\ldots,g_m\in x$ satisfying $\kappa(g_i)=v$ for each $i$ as $x^{v,1},\ldots,x^{v,m}$.
	
	A \emph{path of length} $n$ in a configuration $\chi$ is a sequence 
	\begin{equation*}
		(v_0,x^{v_0}_1, x^{v_1}_1, v_1, x^{v_1}_2, x^{v_2}_2, v_2, \ldots, v_{n-1}, x^{v_{n-1}}_n, x^{v_n}_n, v_n)
	\end{equation*}
	of vertices and germs of polygons such that $x^{v_i}_i \in x_i \setminus \{x^{v_{i-1}}_i\}$ for all $i$. We say a polygon $x$ is in $p$ if there exists a germ of a polygon $g$ in $p$ such that $g \in x$. Where the context is clear and there are no ambiguities arising from self-folded polygons and multiple polygons between vertices, we will often write the above sequence as
	\begin{equation*}
		\xymatrix@1{v_0 \ar@{-}[r]^-{x_1} & v_1 \ar@{-}[r]^-{x_2} & v_2 \ar@{-}[r] & \cdots \ar@{-}[r] & v_{n-1} \ar@{-}[r]^-{x_n} & v_n}.
	\end{equation*}
	We say a path in $\chi$ is \emph{simple} if it is non-crossing at polygons and vertices. That is, each vertex in $p$ occurs precisely once and for any polygon $x$ in $p$, there are precisely two germs of $x$ in $p$. We say a path in $\chi$ is a \emph{cycle} if the starting and ending vertices are the same. A cycle is said to be \emph{simple} if it is non-crossing at polygons and vertices (except at the starting and ending vertices). We say a configuration $\chi$ is \emph{connected} if there exists a path between any two vertices of $\chi$. We say $\chi$ is a tree if there exists a unique simple path between any two vertices of $\chi$. We say $\chi'=(\chi'_0,\chi'_1, \kappa')$ is a \emph{subconfiguration} of $\chi=(\chi_0,\chi_1, \kappa)$ if $\chi'$ is a connected configuration such that $\chi'_0 \subseteq \chi_0$, $\chi'_1 \subseteq \chi_1$ and $\kappa'(g)=\kappa(g)$ for all $g \in \mathcal{G}_{\chi'}$.
	
	\subsection{Brauer Configuration Algebras}
	The definitions presented here are based on the work of \cite{BCA}. A non-empty connected configuration $\chi=(\chi_0,\chi_1, \kappa)$ is called a \emph{Brauer configuration} if the we have the following additional properties.
	\begin{enumerate}[label=(\roman*)]
		\item To each $v \in \chi_0$, we equip a cyclic ordering $\mathfrak{o}_v$ of the germs of polygons incident to $v$.
		\item To each $v \in \chi_0$, we assign a strictly positive integer $\mathfrak{e}_v$ called the \emph{multiplicity} of the vertex.
		\item For any $x \in \chi_1$ such that $|x|>2$, there exists no $g \in x$ such that $\val(\kappa(g))=1$ and $\mathfrak{e}_{\kappa(g)}=1$.
	\end{enumerate}
	A Brauer configuration generalises the notion of a ribbon graph with weighted vertices, and may be realised geometrically as a local embedding of the polygons around each vertex in the oriented plane. We shall use an anticlockwise cyclic ordering throughout. If every polygon of $\chi$ is a 2-gon (and thus, $\chi$ is a graph), then $\chi$ is a \emph{Brauer graph}. We call a Brauer graph $\chi$ a \emph{Brauer tree} if $\chi$ is a tree and at most one vertex $v$ in $\chi$ has multiplicity $\mathfrak{e}_v>1$. If every vertex of a Brauer configuration has multiplicity one, then we say that $\chi$ is \emph{multiplicity-free}.
	
	We say a vertex $v$ of a Brauer configuration $\chi$ is \emph{truncated} if $\val(v)=1$ and $\mathfrak{e}_v=1$. It follows from condition (iii) of the definition of a Brauer configuration that any such vertex is connected to a unique 2-gon. We call such a polygon a \emph{truncated edge} of $\chi$.
	
	Let $x_1^v$ and $x_2^v$ be germs of polygons at the same vertex in a Brauer configuration. We say $x_2^v$ is the \emph{successor} to $x_1^v$ if $x_2^v$ directly follows $x_1^v$ in the cyclic ordering at $v$. We then say that the polygon $x_2$ is the successor to $x_1$ at $v$. From this we obtain a sequence $x_1, x_2, \ldots, x_{\val(v)}$, where each $x_i$ is the successor to $x_{i-1}$. We call this the \emph{successor sequence} of $x_1$ at $v$. Similarly, we say $x_2^v$ is the \emph{predecessor} to $x_1^v$ if $x_1^v$ directly follows $x_2^v$ in the cyclic ordering at $v$, and we say the polygon $x_2$ is the predecessor to $x_1$ at $v$. From this we obtain a (descending) sequence $x_{\val(v)}, \ldots, x_2, x_1$, where each $x_i$ is the predecessor to $x_{i-1}$. We call this the \emph{predecessor sequence} of $x_1$ at $v$.
	
	Given a Brauer configuration $\chi$, we construct an algebra $A$ as follows. If $\chi$ is a Brauer tree consisting of a single edge and two distinct connected vertices of multiplicity one, then we let $A=KQ/I$, where $Q$ is the quiver consisting of a loop $\alpha$ at a single vertex and $I$ is generated by the relation $\alpha^2$. Otherwise, we define $Q$ to be the quiver whose vertices are in bijective correspondence with the distinct polygons of $\chi$. If $x_2^v$ is the successor to $x_1^v$ at some non-truncated vertex $v$ in $\chi$, then there exists an arrow $x_1 \rightarrow x_2$ in $Q$. If $x$ is connected to a vertex $v$ such that $\val(v) = 1$ and $\mathfrak{e}_v>1$, then there exists a loop at $x$ in $Q$. If $\mathfrak{e}_v=1$ then no such loop exists. Each non-truncated vertex of $\chi$ therefore induces a cycle in $Q$, and no two such cycles share a common arrow. We denote by $\mathfrak{C}_v$ the cycle of $Q$ up to cyclic permutation generated by the non-truncated vertex $v$ in $\chi$. By $\mathfrak{C}_{v,\alpha}$, we denote the permutation of the cycle $\mathfrak{C}_{v}$ such that the first arrow is $\alpha$.
	
	We define a set of relations $\rho$ on $Q$ as follows. If $x$ is a truncated edge of $\chi$ and $\mathfrak{C}_{v,\gamma_1}=\gamma_1\ldots\gamma_n$ is the cycle induced by the non-truncated vertex $v$ connected to $x$ with $\gamma_1$ of source $x$, then $(\mathfrak{C}_{v,\gamma_1})^{\mathfrak{e}_v}\gamma_1 \in \rho$. If $u$ and $v$ are (possibly equal) non-truncated vertices connected to the same polygon $x$ and $\mathfrak{C}_{u,\gamma}$ and $\mathfrak{C}_{v,\delta}$ are cycles of source $x$ generated by the respective vertices $u$ and $v$ (for some arrows $\gamma$ and $\delta$ of source $x$), then $(\mathfrak{C}_{u,\gamma})^{\mathfrak{e}_u}-(\mathfrak{C}_{v,\delta})^{\mathfrak{e}_v} \in \rho$. Finally, if $\alpha\beta$ is a path of length two in $Q$ such that $\alpha\beta$ is not a subpath of any cycle $\mathfrak{C}_{v}$ of any non-truncated vertex $v$ of $\chi$, then $\alpha\beta \in \rho$. The algebra $A=KQ/I$, where $I$ is the ideal generated by $\rho$, is called the Brauer configuration algebra associated to $\chi$.
	
	\begin{exam}
		Let $\chi=(\chi_0, \chi_1, \kappa)$ be a configuration defined as follows. We let $\chi_0=\{v_1,v_2,v_3,v_4,v_5\}$ and
		\begin{equation*}
			\chi_1=\{x=\{x^{v_1}, x^{v_2}, x^{v_4, 1},x^{v_4, 2}\}, y=\{y^{v_1,1}, y^{v_1,2}\}, z=\{z^{v_2}, z^{v_3}\}, w=\{w^{v_4}, w^{v_5}\}\}.
		\end{equation*}
		The definition of $\kappa$ is implicit from the notation used for the germs of polygons. We give $\chi$ the structure of a Brauer configuration by setting
		\begin{align*}
			\mathfrak{o}_{v_1}&=[x^{v_1}, y^{v_1,1}, y^{v_1,2}]	&
			\mathfrak{o}_{v_2}&=[x^{v_2}, z^{v_2}]					&
			\mathfrak{o}_{v_3}&=[z^{v_3}]								\\
			\mathfrak{o}_{v_4}&=[x^{v_4, 1}, w^{v_4}, x^{v_4, 2}]	&
			\mathfrak{o}_{v_5}&=[w^{v_5}]
		\end{align*}
		and setting $\mathfrak{e}_{v_3}=2$ and $\mathfrak{e}_{v_i}=1$ for all $i \neq 3$. The Brauer configuration may be presented pictorially, as shown in Figure~\ref{BCAExample}. Here, $\chi$ contains a self-folded 4-gon $x$ and three 2-gons (edges), one of which is self-folded (a loop). The edge $w$ and connected vertex $v_5$ are both truncated, with all other polygons and vertices being non-truncated. From $\chi$, we obtain the quiver $Q$ illustrated in Figure~\ref{BCAExample}. We also obtain the set of relations
		\begin{multline*}
			\rho = \{ \gamma_2\gamma_3\gamma_1\gamma_2, \\
			\alpha_1\alpha_2\alpha_3 - \beta_1\beta_2, \beta_1\beta_2 - \gamma_1\gamma_2\gamma_3, \gamma_3\gamma_1\gamma_2- \gamma_1\gamma_2\gamma_3, \alpha_2\alpha_3\alpha_1 - \alpha_3\alpha_1\alpha_2, \beta_2\beta_1 - \delta^2, \\
			 \alpha_3\beta_1, \alpha_3\gamma_1, \alpha_3\gamma_3, \beta_2\alpha_1, \beta_2\gamma_1, \beta_2\gamma_3, \gamma_2\alpha_1, \gamma_2\beta_1, \gamma_2\gamma_1, \gamma_3\alpha_1, \gamma_3\beta_1,  \gamma_3^2, \alpha_2^2, \alpha_1\alpha_3, \beta_1\delta, \delta\beta_2\}.
		\end{multline*}
		The Brauer configuration algebra associated to $\chi$ is therefore $KQ/\langle \rho\rangle$.
	\end{exam}
	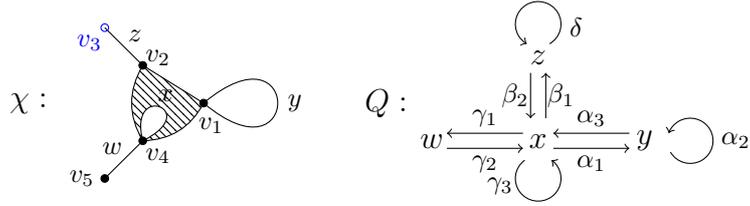
\begin{figure} 
		\begin{tikzpicture}
			\draw (0.3,-3.5) node {$\chi:$};
			\draw[pattern = north west lines] (1.8,-4) to[bend right] (2.6,-3.5) -- (1.8,-3) to[bend right] (1.8,-4);
			\draw (1.3,-2.5) -- (1.8,-3);
			\draw (1.3,-4.5) -- (1.8,-4);
			\draw (2.6,-3.5) .. controls (3.9,-4.6) and (3.9,-2.4) .. (2.6,-3.5);
			\draw [fill=white] (1.8,-4) .. controls (1.6,-3.2) and (2.6,-3.6) .. (1.8,-4);
		
			\draw (2.1,-3.4) node{\footnotesize$x$};
			\draw (3.8,-3.5) node{\footnotesize$y$};
			\draw (1.7,-2.6) node{\footnotesize$z$};
			\draw (1.4,-4.1) node{\footnotesize$w$};
			\draw (2.7,-3.8) node{\footnotesize$v_1$};
			\draw (2,-2.9) node{\footnotesize$v_2$};
			\draw [blue] (1.1,-2.7) node{\footnotesize$v_3$};
			\draw (2,-4.2) node{\footnotesize$v_4$};
			\draw (1,-4.5) node{\footnotesize$v_5$};
			
			\draw [blue] (1.3,-2.5) ellipse (0.05 and 0.05);
			\draw [fill=black] (2.6,-3.5) ellipse (0.05 and 0.05);
			\draw [fill=black] (1.3,-4.5) ellipse (0.05 and 0.05);
			\draw [fill=black] (1.8,-4) ellipse (0.05 and 0.05);
			\draw [fill=black] (1.8,-3) ellipse (0.05 and 0.05);
			
			\draw (5,-3.5) node {$Q:$};
			\draw (7,-4) node {$x$};
			\draw (8.4,-4) node {$y$};
			\draw (7,-2.9) node {$z$};
			\draw (5.6,-4) node {$w$};
			
			\draw [->](7.1,-3.7) -- (7.1,-3.1);
			\draw [->](6.9,-3.1) -- (6.9,-3.7);
			\draw [->](6.8,-3.9) -- (5.8,-3.9);
			\draw [->](5.8,-4.1) -- (6.8,-4.1);
			\draw [->](8.2,-3.9) -- (7.2,-3.9);
			\draw [->](7.2,-4.1) -- (8.2,-4.1);
			\draw [->](8.7402,-4.15) arc (-149.9993:160:0.3);
			
			\draw (7.7,-4.3) node {\footnotesize$\alpha_1$};
			\draw (9.6,-4) node {\footnotesize$\alpha_2$};
			\draw (7.7,-3.7) node {\footnotesize$\alpha_3$};
			\draw (7.3,-3.4) node {\footnotesize$\beta_1$};
			\draw (6.7,-3.4) node {\footnotesize$\beta_2$};
			\draw (6.3,-3.7) node {\footnotesize$\gamma_1$};
			\draw (6.3,-4.3) node {\footnotesize$\gamma_2$};
			\draw (6.5,-4.6) node {\footnotesize$\gamma_3$};
			\draw (7.5,-2.5) node {\footnotesize$\delta$};
			\draw [->](7.15,-2.7134) arc (-59.9993:240:0.3);
		
			\draw [->](6.8237,-4.2573) arc (125.9951:414:0.3);
		\end{tikzpicture}
		\caption{The above is an example of a Brauer configuration and its corresponding quiver $Q$. Here, we have $\mathfrak{e}_{v_3}>1$.} \label{BCAExample}
	\end{figure}

	The class of Brauer configuration algebras and the class of symmetric special multiserial algebras coincide (\cite[Theorem 4.1]{Multiserial}). Henceforth, we assume that $A=KQ/I$ is a Brauer configuration algebra constructed from a Brauer configuration $\chi$. By the stationary path at a vertex $x \in Q_0$, we mean the path of length zero of source (and target) $x$, which we denote by $\varepsilon_x$.
	
	\subsection{Strings in Brauer Configuration Algebras}
	We follow the definitions of \cite{butlerRingel}, but in the context of Brauer configuration algebras. To each arrow $\alpha \in Q_1$, we denote by $s(\alpha)$ the vertex $x \in Q_0$ at the source of $\alpha$, and by $e(\alpha)$ the vertex $y \in Q_0$ at the target of $\alpha$. Recall that the vertices of $Q_0$ are in correspondence with the polygons of $\chi$. Thus for each arrow $\alpha \in Q_1$, we can consider $s(\alpha)$ and $e(\alpha)$ to be polygons in $\chi$. Further recall that, the arrows of $Q_1$ correspond to ordered pairs of germs of polygons $(x^v, y^v)$ such that $y^v$ is the successor to $x^v$. Thus, we can also consider $\alpha$ to be an arrow between two germs of polygons. Denote by $\widehat{s}(\alpha)$ the germ of the polygon at the source of $\alpha$, and by $\widehat{e}(\alpha)$ the germ of the polygon at the target of $\alpha$. Given an arrow $\alpha \in Q_1$, we denote by $\alpha^{-1}$ the formal inverse of $\alpha$. That is, the symbolic arrow such that $s(\alpha^{-1})=e(\alpha)$ and $e(\alpha^{-1})=s(\alpha)$. We similarly define $\widehat{s}(\alpha^{-1}) = \widehat{e}(\alpha)$ and $\widehat{e}(\alpha^{-1}) = \widehat{s}(\alpha)$. By $Q_1^{-1}$, we mean the set of formal inverses of all arrows in $Q_1$.
	
	We call a word $w=\alpha_1\ldots\alpha_n$, where each symbol $\alpha_i \in Q_1 \cup Q^{-1}_1$ a \emph{string of length} $n$ if $w$ avoids the relations in $\rho$ and for each $i$, $\alpha_i \neq \alpha^{-1}_{i+1}$ and $e(\alpha_i)=s(\alpha_{i+1})$. We define $|w|=n$. We allow for strings of length zero (which can be considered as the stationary paths $\varepsilon_x$ for each $x \in Q_0$), which we call \emph{zero strings}. We say a string $w$ is a \emph{direct string} if every symbol of $w$ is in $Q_1$ and we say $w$ is an \emph{inverse string} if every symbol of $w$ is in $Q^{-1}_1$. A zero string is defined to be both direct and inverse. A \emph{band} is a cyclic string $b$ such that $b^m$ is a string, but $b$ is not a proper power of any string $w$. For any non-zero string $w=\alpha_1\ldots\alpha_n$, we define $s(w)=s(\alpha_1)$ and $e(w)=e(\alpha_n)$ and $\widehat{s}(w)=\widehat{s}(\alpha_1)$ and $\widehat{e}(w)=\widehat{e}(\alpha_n)$. If $w=\varepsilon_x$ is a zero string, then we let $s(w)=x=e(w)$, but we do not define $\widehat{s}(w)$ and $\widehat{e}(w)$, since this is not possible for zero strings. One must therefore take care when using the functions $\widehat{s}$ and $\widehat{e}$ with strings.
	
	Let $w=\alpha_1\ldots\alpha_n$ be a string, let $x_0=s(\alpha_1)$ and for each $i$, let $x_i=e(\alpha_i)$. From the string $w$, we obtain an indecomposable module $M(w) \in \Mod*A$ called a \emph{string module}. The underlying vector space of $M(w)$ is given by replacing each $x_i$ with a copy of the field $K$. We then say that the action of an arrow $\alpha \in Q_1$ is induced by the relevant identity maps if $\alpha$ or its formal inverse is in $w$, and is zero otherwise. It follows from the construction of string modules that $M(\varepsilon_x)=S(x)$.
	
	To each band $b=\beta_1\ldots\beta_m$, we obtain an infinite family of indecomposable modules $M(b, n, \phi)$ called \emph{band modules}, where $n \in \mathbb{Z}_{>0}$ and $\phi \in \mathrm{Aut}(K^n)$. We direct the reader to \cite{butlerRingel} for the full details on the construction of $M(b, n, \phi)$, however we will provide a brief summary here. The underlying vector space of $M(b, n, \phi)$ is given by replacing each vertex of $b$ with a copy of $K^n$. The action of an arrow in $\gamma \in Q_1$ on $M(b, n, \phi)$ is given by the relevant identity morphism if $\gamma = \beta_i$ or $\gamma = \beta^{-1}_i$ for some $i \neq m$. If we instead have $\gamma = \beta_m$ or $\gamma = \beta^{-1}_m$, then the action of $\gamma$ on $M(b, n, \phi)$ is $\phi$. Otherwise, $\gamma$ has a zero action on $M(b, n, \phi)$.
	
	%	=========================================================================================
	%	ONE-PARAMETER FAMILIES MODULES IN SYMMETRIC SPECIAL MULTISERIAL ALGEBRAS
	%	=========================================================================================
	\section{Bands in Symmetric Special Multiserial Algebras} \label{Examples}
	In this section, we will be utilising the contrapositive of a theorem of Crawley-Boevey to show an algebra is wild. We state the theorem here for convenience, where by almost all, we mean all but finitely many.
	\begin{thm}[\cite{Boevey},Theorem D] \label{tameTube}
		Let $K$ be an algebraically closed field. If $A$ is a tame $K$-algebra, then for each dimension $d$, $M \cong \tau M$ for almost all indecomposable $A$-modules of dimension $d$.
	\end{thm}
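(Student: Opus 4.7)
The approach is to reduce the statement to analysing the Auslander--Reiten translate $\tau$ on the finite set of \emph{generic} modules of a given endolength, in the sense of Crawley-Boevey.

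First, for a fixed dimension $d$, tameness provides $K[a]$-$A$-bimodules $M_1,\ldots,M_{n_d}$, free as left $K[a]$-modules, such that all but finitely many indecomposable $A$-modules of dimension $d$ are of the form $V_{i,\lambda} := K_\lambda \otimes_{K[a]} M_i$ for some $i$ and some $\lambda \in K$, where $K_\lambda = K[a]/(a-\lambda)$. I would then pass to the generic modules $G_i := K(a) \otimes_{K[a]} M_i$; each $G_i$ is indecomposable of infinite $K$-dimension but of finite length $d$ over $\End_A(G_i)^{\op}$, that is, a generic $A$-module of endolength $d$. A key preliminary is to show that these are, up to isomorphism, the only generic $A$-modules of endolength $d$, so that this set is finite.

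Next, I would show that $\tau$ carries generic modules of endolength $d$ to generic modules of endolength $d$. The translate is defined through a minimal projective presentation followed by transpose and $K$-duality, and these operations are compatible with the base change from $K[a]$ to $K(a)$ away from finitely many $\lambda$. Combined with the previous step, this yields, for each $i$, an index $j(i)$ and a rational self-map $f_i$ of $\mathbb{A}^1$ such that $\tau V_{i,\lambda} \cong V_{j(i),\, f_i(\lambda)}$ for all but finitely many $\lambda$, and in particular $\tau G_i \cong G_{j(i)}$.

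The main obstacle is proving $j(i)=i$ and $f_i = \id$ for each $i$. Finiteness of the set of generic modules forces $j$ to be a permutation of finite order, and iterating $\tau$ composes the $f_i$ to an identity in finitely many steps. The Auslander--Reiten formula at the generic point, together with the observation that dimension vectors of modules in a family are generically constant, provides the rigidity needed to show that each $f_i$ agrees with $\id$ on a cofinite subset of $\mathbb{A}^1$ and hence is the identity, and that $j$ fixes each index. Specialising at cofinitely many $\lambda$ then gives $\tau V_{i,\lambda} \cong V_{i,\lambda}$, yielding the theorem. The delicate content is the compatibility of $\tau$ with base change and the rigidity argument identifying $f_i$ with the identity; everything else is bookkeeping of finite exceptional sets.
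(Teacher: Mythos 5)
The paper does not prove this statement: it is quoted as Theorem D of Crawley-Boevey's \emph{On tame algebras and bocses} and used as a black box, so there is no proof in the text to compare against. Crawley-Boevey's original argument runs through the machinery of free triangular bocses and a reduction algorithm for matrix problems; the fact that $\tau$ acts trivially on the one-parameter families is read off from the combinatorics of the reduced bocs. Your sketch replaces this with the language of generic modules, which is Crawley-Boevey's own later (1991) reformulation of tameness, so the overall strategy is defensible, but it is genuinely a different route from the cited reference.

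The gap is in the step you yourself flag as the ``main obstacle.'' You do not give an argument that $j(i)=i$ and $f_i=\id$; what you write does not produce one. Finiteness of the generic modules of endolength $d$ only makes $j$ a permutation of a finite set, hence of finite order; iterating $\tau$ and composing the $f_i$ around a $j$-cycle shows at best that some power $\tau^{N}$ fixes each family (generically), not that $\tau$ does. An invertible algebraic self-correspondence of $\mathbb{A}^1$ can easily have finite order greater than one, so there is no contradiction to extract at this level. The appeal to ``the Auslander--Reiten formula at the generic point'' and to generic constancy of dimension vectors is too vague to supply the missing rigidity: constancy of $\dimbf V_{i,\lambda}$ is just flatness and says nothing about whether $\tau V_{i,\lambda}$ lies in the $i$-th family or in some other family of the same dimension vector, and the AR formula is a duality between Hom and Ext, not an isomorphism criterion. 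There is also a prior gap you gloss over: $\tau$ has no reason to preserve dimension $d$ (even for symmetric algebras, where $\tau=\Omega^2$, the syzygy functor changes dimension), and likewise no obvious reason to preserve endolength of generic modules. That $\tau$ sends generic modules of endolength $d$ to generic modules of endolength $d$ is itself part of what has to be proved. In short, the proposal correctly identifies the shape of a generic-module proof but leaves the entire nontrivial content -- that $\tau$ fixes each generic module, not merely permutes them -- unproved.
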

	
	We aim to construct one-parameter families of indecomposable modules that have a cyclic presentation. A well known example of such families of modules is the class of band modules. The motivation for constructing these one-parameter families of modules is that the dimension vector of the modules in the family is independent of the parameter. Thus, if the family of $A$-modules $M_\lambda$ is such a one-parameter family, where $\lambda \in K$ then the algebra $A$ is wild if $\dimbf(M_\lambda) \neq \dimbf(\tau M_\lambda)$ for (almost) all $\lambda$, since then $M_\lambda \not\cong \tau M_\lambda$ for (almost) all $\lambda$. Note that since the algebras we are interested in are symmetric, we have $\tau = \Omega^2$, where $\Omega^2 M_\lambda$ is the second syzygy of $M_\lambda$.
	
	\begin{lem} \label{PathToString}
		Let $A=KQ/I$ be a Brauer configuration algebra associated to a Brauer configuration $\chi$. Suppose there exists a (not necessarily simple) path
		\begin{equation*}
			p\colon \xymatrix@1{v_0 \ar@{-}[r]^-{x_1} & v_1 \ar@{-}[r]^-{x_2} & v_2 \ar@{-}[r] & \cdots \ar@{-}[r] & v_{n-1} \ar@{-}[r]^-{x_n} & v_n}.
		\end{equation*}
		in $\chi$ such that for each $i$, $x^{v_i}_i=x^{v_i}_{i+1}$ only if $\mathfrak{e}_{v_i}>1$. Then there exists a string of the form
		\begin{equation*}
			w= w^+_1w^-_2\ldots w^{\sigma(n-1)}_{n-1},
		\end{equation*}
		where each $w^+_i$ is a non-zero direct string such that $\widehat{s}(w^+_i)=x^{v_i}_i$ and $\widehat{e}(w^+_i)=x^{v_i}_{i+1}$, each $w^-_i$ is a non-zero inverse string such that $\widehat{s}(w^-_i)=x^{v_i}_i$ and $\widehat{e}(w^-_i)=x^{v_i}_{i+1}$, and $\sigma\colon\mathbb{N} \rightarrow \{+,-\}$ is defined by
			\begin{equation*}
				\sigma(n)=
				\begin{cases}
					+, &	\text{if } n \text{ is odd,}	\\
					-, &	\text{if } n \text{ is even.}
				\end{cases}
			\end{equation*}
	\end{lem}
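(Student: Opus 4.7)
The plan is to build each substring $w_i^\pm$ locally from the cyclic datum at $v_i$, and then to verify by a short check at each joint that the concatenation is a string in the sense of the preliminaries.

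For the local construction, fix $i\in\{1,\ldots,n-1\}$ and consider the germs $x_i^{v_i}$ and $x_{i+1}^{v_i}$ at $v_i$. If these are distinct, they occupy two different positions in $\mathfrak{o}_{v_i}$, and I take $w_i^+$ to be the (non-empty) sequence of arrows of $\mathfrak{C}_{v_i}$ obtained by following the successor sequence from $x_i^{v_i}$ until reaching $x_{i+1}^{v_i}$, and $w_i^-$ to be the formal inverse of the complementary arc obtained from the predecessor sequence. Each is a proper subword of $\mathfrak{C}_{v_i}$, so neither contains any truncated-edge monomial $(\mathfrak{C}_{v_i,\gamma_1})^{\mathfrak{e}_{v_i}}\gamma_1$ nor any wrong-turn monomial $\alpha\beta\in\rho$, and both are valid non-zero strings. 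If instead $x_i^{v_i}=x_{i+1}^{v_i}$, the hypothesis forces $\mathfrak{e}_{v_i}>1$; I then take $w_i^+=\mathfrak{C}_{v_i,\alpha}$ (the full cycle once, beginning with the arrow $\alpha$ with $\widehat{s}(\alpha)=x_i^{v_i}$) and $w_i^-$ its formal inverse. Because $\mathfrak{e}_{v_i}>1$, this single traversal is a strict initial subword of $(\mathfrak{C}_{v_i,\alpha})^{\mathfrak{e}_{v_i}}$ and still avoids every relation in $\rho$, and its source and target germ both equal $x_i^{v_i}=x_{i+1}^{v_i}$ as required.

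I then verify that the concatenation $w=w_1^+w_2^-\cdots w_{n-1}^{\sigma(n-1)}$ is itself a string. Concatenability in $Q$ is automatic because $e(w_i^{\sigma(i)})$ and $s(w_{i+1}^{\sigma(i+1)})$ are both the vertex of $Q$ corresponding to the polygon $x_{i+1}$. The remaining two string conditions use the alternation of $\sigma$: at each joint the two adjacent symbols lie in $Q_1$ and $Q_1^{-1}$ in opposite order, so they can be formal inverses of each other only if their underlying arrows of $Q$ coincide; but those arrows are incident to the germs $x_{i+1}^{v_i}$ and $x_{i+1}^{v_{i+1}}$, which the path definition forces to be distinct since $x_{i+1}^{v_{i+1}}\in x_{i+1}\setminus\{x_{i+1}^{v_i}\}$, so the two arrows differ. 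The same alternation prevents any monomial wrong-turn relation $\alpha\beta\in\rho$ from arising at a joint, since every such relation is a product of two arrows of $Q_1$.

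The step I expect to require the most care is the treatment of degenerate situations caused by self-folded polygons: specifically when $x_i=x_{i+1}$ (the only possibility in which the second case above can arise) or when $v_i=v_{i+1}$ (so that $x_{i+1}$ is self-folded at $v_i$). In these situations one has to extract genuine distinctness of the two boundary germs from the path hypothesis in order to validate the formal-inverse check at the corresponding joint, and the hypothesis $\mathfrak{e}_{v_i}>1$ is precisely what makes the full-cycle prescription of the second case an admissible string. With these points secured, every remaining verification at each joint is purely local and routine.
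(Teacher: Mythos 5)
Your proposal is correct and takes essentially the same route as the paper: you build each $w_i^\pm$ from arcs (or the full cycle, when $\mathfrak{e}_{v_i}>1$) of $\mathfrak{C}_{v_i}$, and you rule out the formal-inverse clash at each joint via the distinctness $x_{i+1}^{v_{i+1}}\neq x_{i+1}^{v_i}$ forced by the path definition. The degenerate cases you flag in your closing paragraph are already covered by this very argument and require no separate treatment, since the condition $x_{i+1}^{v_{i+1}}\in x_{i+1}\setminus\{x_{i+1}^{v_i}\}$ holds regardless of whether $x_{i+1}$ is self-folded or $v_i=v_{i+1}$.
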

	\begin{proof}
		We first aim to show that the strings $w^\pm_i$ exist for all $i<n$. Since $\val(u)=1$ for any truncated vertex $u$ and we have $x^{v_i}_i=x^{v_i}_{i+1} \Rightarrow \mathfrak{e}_{v_i}>1$, it follows that all vertices $v_i$ such that $i<n$ are non-truncated. Thus for each $i<n$, there exists a unique arrow $\alpha_i \in Q_1$ such that $\widehat{s}(\alpha_i)=x^{v_i}_i$. Let $\mathfrak{C}_{v_i,\beta_1}=\beta_1\ldots \beta_m$, where $\widehat{s}(\beta_1)=x^{v_i}_i$. In the case that $x^{v_i}_i=x^{v_i}_{i+1}$, we have $\mathfrak{e}_{v_i}>1$ and therefore there exist strings $w^+_i=\beta_1\ldots \beta_m$ and $w^-_i=\beta^{-1}_m\ldots \beta^{-1}_1$, as required. If we otherwise have $x^{v_i}_i \neq x^{v_i}_{i+1}$, then there exists an integer $k$ such that $\widehat{e}(\beta_k)=x^{v_i}_{i+1}$. Then we have $w^+_i=\beta_1\ldots \beta_k$ and $w^-_i=\beta^{-1}_n\ldots \beta^{-1}_{k+1}$, as required.
		
		The next step is to show that strings $w^+_i w^-_{i+1}$ exist for all $i<n-1$. Let $\gamma$ be the last symbol of $w^+_i$ and let $\delta$ be the first symbol of $w^-_{i+1}$. Since both $w^+_i$ and $w^-_{i+1}$ are strings, and $\gamma \in Q_1$ and $\delta \in Q^{-1}_1$, the word $w^+_i w^-_{i+1}$ avoids the relations in $I$. It is also clear that $e(\gamma)=x_{i+1}=s(\delta)$ from the constructions of $w^\pm_i$ above. Furthermore, $\widehat{s}(\delta)=\widehat{e}(\delta^{-1}) =x_{i+1}^{v_{i+1}} \neq x_{i+1}^{v_i}=\widehat{e}(\gamma)$. So $\gamma \neq \delta^{-1}$. Thus, $w^+_i w^-_{i+1}$ is a string for all $i<n-1$, and by a similar argument, so is $w^-_i w^+_{i+1}$. By iteratively concatenating strings, the result follows.
	\end{proof}
	
	For the following results, $\phi_\lambda\colon K\rightarrow K$ is a linear map defined by $\phi_\lambda(a)=\lambda a$.
	
	\begin{lem} \label{SelfFoldedBand}
		Let $A=KQ/I$ be a Brauer configuration algebra associated to a Brauer configuration $\chi$. Suppose there exists a polygon $x$ in $\chi$ that is self-folded at a vertex $v$. Then there exists a band $b$ such that $S(x) \in \tp M(b,1,\phi_\lambda)$ for all $\lambda \in K^\ast$.
	\end{lem}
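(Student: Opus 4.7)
The plan is to construct the band $b$ explicitly by splitting the cycle $\mathfrak{C}_v$ at the two self-folded germs of $x$, and then identifying the resulting cyclic ``turning point'' as a peak contributing $S(x)$ to the top of the band module.

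First I would fix two distinct germs $x^{v,1}, x^{v,2} \in x$ with $\kappa(x^{v,1})=\kappa(x^{v,2})=v$, which exist by the self-foldedness hypothesis. Since $\val(v)\geq 2$, the vertex $v$ is non-truncated, so the cycle $\mathfrak{C}_v$ is defined and of length $m\geq 2$. Choose the cyclic representative $\mathfrak{C}_{v,\beta_1} = \beta_1\cdots\beta_m$ starting at $x^{v,1}$, and let $k$ with $2\leq k\leq m$ be the unique index with $\widehat{s}(\beta_k)=x^{v,2}$. Split $\mathfrak{C}_v$ into the two non-empty direct strings $w_1 = \beta_1\cdots\beta_{k-1}$ and $w_2 = \beta_k\cdots\beta_m$, both beginning and ending at the vertex $x$ of $Q$, and set $b = w_1 w_2^{-1}$.

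Next I would verify that $b$ is a band. Cyclicity is immediate from $s(b)=s(w_1)=x$ and $e(b)=e(w_2^{-1})=s(w_2)=x$. The only new junctions in $b$, and in $b^r$ for $r\geq 2$, beyond those already inside $\mathfrak{C}_v$ are $\beta_{k-1}\beta_m^{-1}$ and $\beta_k^{-1}\beta_1$. In both cases the two consecutive symbols are genuinely distinct arrows, because $k-1<m$ and $1<k$, so they are not formal inverses of one another; moreover a direct-inverse concatenation cannot form a subword of any zero relation in $\rho$, since every zero relation in $\rho$ consists entirely of direct arrows. Hence $b^r$ is a valid string for every $r\geq 1$. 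To rule out $b$ being a proper power, observe that its direct/inverse pattern is one block of $k-1$ directs followed by one block of $m-k+1$ inverses, which forces any factorisation $b=w^j$ to have $j=1$.

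Finally I would read off the top of $M(b,1,\phi_\lambda)$. At the cyclic position where $w_2^{-1}$ joins $w_1$, the underlying vertex of $Q$ is $x$, the incoming symbol is $\beta_k^{-1}$ (inverse), and the outgoing symbol is $\beta_1$ (direct); this is exactly the peak condition for band modules, and it is insensitive to the twist $\phi_\lambda$, which is an automorphism for every $\lambda\in K^\ast$. By the standard description of the tops of band modules of Butler--Ringel type, the simple $S(x)$ therefore appears as a direct summand of $\tp M(b,1,\phi_\lambda)$ for every $\lambda\in K^\ast$. The main technical obstacle is the verification that $b^r$ is a string for all $r\geq 1$ together with $b$ being primitive; both reduce to the fact that $w_1$ and $w_2$ are disjoint proper subwords of $\mathfrak{C}_v$ whose only interface symbols are a direct arrow and a formal inverse, so the two junctions can create neither a zero relation nor a nontrivial periodicity.
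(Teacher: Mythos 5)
Your proof is correct and produces (up to inversion) the same band as the paper: it is $\mathfrak{C}_v$ cut at the two self-folded germs of $x$, so the only change of direction lies over the polygon $x$. The paper obtains this band indirectly, by applying Lemma~\ref{PathToString} to the length-$3$ path $(v,x^{v,1},x^{v,2},v,x^{v,1},x^{v,2},v,x^{v,1},x^{v,2},v)$, whereas you build it explicitly and check band-hood (powers are strings, primitivity) by hand; both routes then read off $S(x)$ in the top from the same peak.
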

	\begin{proof}
		Let $x^{v,1}$ and $x^{v,2}$ be two germs of $x$ incident to $v$. The cycle $c=(v, x^{v,1}, x^{v,2}, v)$ induces a band $b$ by Lemma~\ref{PathToString}, which follows by extending $c$ to the path
		\begin{equation*}
			p=(v, x^{v,1}, x^{v,2}, v, x^{v,1}, x^{v,2}, v, x^{v,1}, x^{v,2}, v).
		\end{equation*}
		By the construction in Lemma~\ref{PathToString}, the first symbol $\alpha$ of $b$ is an arrow and the last symbol $\beta$ of $b$ is a formal inverse such that $s(\alpha)=x=e(\beta)$, which implies $S(x) \in \tp M(b,1,\phi_\lambda)$ for all $\lambda \in K^\ast$, as required.
	\end{proof}
	
	\begin{lem} \label{TriStringLoop}
		Let $A=KQ/I$ be a Brauer configuration algebra associated to a Brauer configuration $\chi$. Suppose $\chi$ contains a polygon $x$ such that $x$ is not self-folded and $|x|>2$. Let $u$ be a vertex connected to $x$ and suppose there exists a subconfiguration $\chi'$ of $\chi$ connected to $u$ but not containing $x$. If $\chi'$ contains a simple cycle or a vertex $v$ such that $\mathfrak{e}_v>1$, then there exists a non-zero string $w=\alpha_1\ldots\alpha_n$ such that $\alpha_1,\alpha_n \in Q_1$ and $\widehat{s}(\alpha_1)=x^u=\widehat{e}(\alpha_n)$.
	\end{lem}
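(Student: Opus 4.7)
The plan is to apply Lemma~\ref{PathToString} to a path $P$ in $\chi$ of the form $v_0 \xrightarrow{x} u \xrightarrow{W} u \xrightarrow{x} v_n$, where $v_0, v_n$ are vertices of $x$ distinct from $u$ (such vertices exist since $|x| > 2$ and $x$ is not self-folded) and $W$ is a closed walk in $\chi'$ based at $u$ chosen to have an even number of polygons and to satisfy the consecutive-germ hypothesis of Lemma~\ref{PathToString}. The structural hypothesis on $\chi'$ is precisely what is needed to build such a walk $W$.

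If $\chi'$ has a vertex $v$ with $\mathfrak{e}_v > 1$, I take a simple path $q$ in $\chi'$ from $u$ to $v$ and set $W := q \cdot q^{-1}$. The only consecutive pair of polygons in $W$ sharing a germ occurs at the turnaround vertex $v$, which is permitted by $\mathfrak{e}_v > 1$; all other consecutive polygons in $W$ are distinct and so use distinct germs. The length $|W| = 2|q|$ is even. In the edge case $u = v$, I take $W$ empty, giving $P = v_0 - x - u - x - v_n$, and the consecutive-germ condition at $v_1 = u$ reads $x^u = x^u$, justified by $\mathfrak{e}_u > 1$.

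If $\chi'$ has a simple cycle $C$, I choose a simple path $q$ in $\chi'$ from $u$ to the first vertex $w$ on $C$ that it meets, so that $q$ and $C$ share only the vertex $w$ and no polygons. I then set $W := q \cdot C^j \cdot q^{-1}$, with $j \in \{1,2\}$ chosen so that $|W| = 2|q| + j|C|$ is even. At each junction (between $q$ and $C$, and between successive iterations of $C$), the consecutive polygons are distinct: $q$ and $C$ share no polygon by construction, and since $C$ is simple, its first and last polygons differ. Hence consecutive polygons at junctions use germs of different polygons, so the consecutive-germ condition holds automatically.

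The path $P$ has $n := |W| + 2$ polygons, an even integer. The transitions from $x$ to the first polygon of $W$ (and from the last polygon of $W$ back to $x$) occur at $u$ and involve distinct polygons since $x \notin \chi'_1$, so the consecutive-germ condition is satisfied there. By Lemma~\ref{PathToString}, $P$ yields a string $w = w_1^+ w_2^- \cdots w_{n-1}^{\sigma(n-1)}$; because $n$ is even, $n-1$ is odd and $\sigma(n-1) = +$, so both $w_1^+$ and $w_{n-1}^+$ are direct. The first and last symbols $\alpha_1, \alpha_n$ of $w$ therefore lie in $Q_1$, and $\widehat{s}(\alpha_1) = \widehat{s}(w_1^+) = x_1^{v_1} = x^u$ and $\widehat{e}(\alpha_n) = \widehat{e}(w_{n-1}^+) = x_n^{v_{n-1}} = x^u$, as required. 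The main obstacle is the simple-cycle case: if $q$ and $C$ were allowed to share polygons, the junction between $q$ and $C$ could force the consecutive-germ condition to fail at a low-multiplicity vertex, but selecting $q$ so that it first meets $C$ at $w$ sidesteps this issue cleanly.
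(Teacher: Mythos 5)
Your proof is correct and follows essentially the same strategy as the paper: build a path through the polygon $x$ at both ends, via the vertex $u$, with the interior walk entirely inside $\chi'$; arrange for the total number of polygons to be even (doubling back at a higher-multiplicity vertex, or travelling around a simple cycle once or twice); then apply Lemma~\ref{PathToString} so that parity forces the first and last blocks of the resulting string to be direct. One small over-statement: a simple cycle $C$ of length one (a loop, i.e.\ a self-folded $2$-gon) has its first polygon equal to its last polygon, so your claim that the consecutive-germ condition at the $C\cdot C$ junction holds ``automatically'' fails when $|C|=1$ and $j=2$; this is easily repaired by re-entering the loop at the germ not just used to exit it, which the path definition permits and which satisfies the hypothesis of Lemma~\ref{PathToString} without invoking any multiplicity. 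The paper's construction implicitly makes the same germ choice.
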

	\begin{proof}
		We will first examine the case where $\chi'$ contains a vertex $v$ such that $\mathfrak{e}_v>1$. If $v=u$ then let $\mathfrak{C}_{u,\beta_1}=\beta_1\ldots\beta_r$, where $s(\beta_1)=x$. The required string is then $\beta_1\ldots\beta_r$. Suppose instead that $v \neq u$ and let $u'\neq u$ be some other vertex connected to $x$. Since $\chi'$ is connected to $u$, there exists a simple path
		\begin{equation*}
			p\colon \xymatrix@1{u' \ar@{-}[r]^-{x} & u=u_0 \ar@{-}[r]^-{y_1} & u_1 \ar@{-}[r]^-{y_2} & \cdots \ar@{-}[r]^-{y_m} & u_m=v},
		\end{equation*}
		which we can extend to a (non-simple) path
		\begin{equation*}
			p'\colon \xymatrix@1{u' \ar@{-}[r]^-{x} & u_0 \ar@{-}[r]^-{y_1} & u_1 \ar@{-}[r]^-{y_2} & \cdots \ar@{-}[r]^-{y_m} & u_m \ar@{-}[r]^-{y_m} & \cdots \ar@{-}[r]^-{y_2} & u_1 \ar@{-}[r]^-{y_1} & u_0 \ar@{-}[r]^-{x} & u'},
		\end{equation*}
		of even length. The result then follows by Lemma~\ref{PathToString}.
		
		Now consider the case where $\chi'$ contains a simple cycle. Again let $u'\neq u$ be some other vertex connected to $x$. There exists a simple path
		\begin{equation*}
			p\colon \xymatrix@1{u' \ar@{-}[r]^-{x} & u=u_0 \ar@{-}[r]^-{y_1} & u_1 \ar@{-}[r]^-{y_2} & \cdots \ar@{-}[r]^-{y_m} & u_m},
		\end{equation*}
		where $u_m$ is a vertex connected to a simple cycle in $\chi'$. We assume $p$ is minimal in the sense that no vertex $u_i$ with $i<m$ is connected to a simple cycle in $\chi'$ (otherwise we may simply choose a shorter path). Let
		\begin{equation*}
			c\colon \xymatrix@1{u_m=v_0 \ar@{-}[r]^-{z_1} & v_1 \ar@{-}[r] & \cdots \ar@{-}[r] & v_{r-1} \ar@{-}[r]^-{z_r} & v_0=u_m},
		\end{equation*}
		be the simple cycle connected to $u_m$. If $c$ is of even length, then we may concatenate paths to extend $p$ to an even length path of the form
		\begin{equation*}
			p'\colon \xymatrix@1{u' \ar@{..}[r] & p \ar@{..}[r] & v_0 \ar@{..}[r] & c \ar@{..}[r] & v_0 \ar@{..}[r] & p^{-1} \ar@{..}[r] & u'}.
		\end{equation*}
		If $c$ is otherwise of odd length, then we may extend $p$ to an even length path of the form
		\begin{equation*}
			p''\colon \xymatrix@1{u' \ar@{..}[r] & p \ar@{..}[r] & v_0 \ar@{..}[r] & c \ar@{..}[r] & v_0 \ar@{..}[r] & c \ar@{..}[r] & v_0 \ar@{..}[r] & p^{-1} \ar@{..}[r] & u'}.
		\end{equation*}
		The result then follows by Lemma~\ref{PathToString}.
	\end{proof}
	
	We may now use the lemmata above to prove the following result, which will be a particularly useful tool for reducing the number of cases in proofs for later results. We provide examples of Brauer configuration algebras satisfying the proposition statement immediately following the proof.
	\begin{prop} \label{CrossBand}
		Let $A$ be a Brauer configuration algebra associated to a Brauer configuration $\chi$ and suppose there exists an $n$-gon $x$ in $\chi$ with $n>2$. Suppose either $x$ is self-folded or $x$ is locally of the form
			\begin{center}
				\begin{tikzpicture}
					\draw[pattern=north west lines] (0,0) -- (0.5,-1) -- (1.5,-1) -- (2,0);
					\draw[white, fill=white]  plot[smooth, tension=.7] coordinates {(0,0.01) (1,-0.2) (2,0.01)};
					\draw (1,0) node {$\cdots$};
					\draw [dashed](-0.3,-1.8) -- (0.5,-1);
					\draw [dashed](1.5,-1) -- (2.3,-1.8);
					\draw [white,fill=white] (-0.1,-1.2) rectangle (0.3,-1.7);
					\draw [white,fill=white] (1.7,-1.2) rectangle (2.1,-1.7);
					\draw (0.1,-1.4) node {$\chi'$};
					\draw (1.9,-1.4) node {$\chi''$};
					\draw [black,fill=black] (0.5,-1) ellipse (0.05 and 0.05);
					\draw [black,fill=black] (1.5,-1) ellipse (0.05 and 0.05);
					\draw (0.15,-0.95) node {$u'$};
					\draw (1.85,-0.95) node {$u''$};
				\end{tikzpicture}
			\end{center}
			where $\chi'$ and $\chi''$ are subconfigurations of $\chi$ such that both of $\chi'$ and $\chi''$ contain a simple cycle or a vertex of multiplicity strictly greater than 1. Then $A$ is wild.
	\end{prop}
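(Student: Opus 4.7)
The plan is to apply the contrapositive of Theorem~\ref{tameTube}. Since $A$ is symmetric we have $\tau = \Omega^2$, so wildness will follow once we produce a one-parameter family of indecomposable modules $M_\lambda$ ($\lambda \in K^\ast$), all of the same dimension vector, satisfying $\dimbf M_\lambda \neq \dimbf \Omega^2 M_\lambda$ for almost all $\lambda$. The natural candidates are band modules $M(b,1,\phi_\lambda)$ for a suitably chosen band $b$ that passes through the polygon $x$ using only two of its germs, so that the ``unused'' germs of $x$ will force extra contributions in the second syzygy.

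The first step is to construct the band $b$. In the self-folded case, Lemma~\ref{SelfFoldedBand} already delivers a band $b$ with $S(x)\in\tp M(b,1,\phi_\lambda)$ obtained from the cycle through $x^{v,1}$ and $x^{v,2}$. In the second configuration, let $u'$ and $u''$ be the two displayed vertices of $x$ and apply Lemma~\ref{TriStringLoop} on each side: on the $\chi'$-side we obtain a non-zero string $w'=\alpha_1\ldots\alpha_r$ with $\alpha_1,\alpha_r\in Q_1$ and $\widehat{s}(\alpha_1)=x^{u'}=\widehat{e}(\alpha_r)$, and analogously a string $w''$ based at $x^{u''}$. Using Lemma~\ref{PathToString} on the length-two path $u'\,x\,u''$ I produce a non-zero inverse bridge $v$ from $x^{u'}$ to $x^{u''}$ (and, reading it backwards, one from $x^{u''}$ to $x^{u'}$). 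Concatenating $w'\,v\,w''\,v^{-1}$ yields a cyclic word, and it is a band: it avoids relations because each piece does and the joins $\alpha_r v$, $v(w'')$, $(w'')v^{-1}$, $v^{-1}\alpha_1$ alternate direct and inverse by the construction in Lemma~\ref{PathToString}; primitivity (not a proper power) is immediate from the asymmetry introduced by $w'\neq w''$, and if they happen to coincide combinatorially one replaces $w''$ by a longer power of its defining cycle.

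The second step is the syzygy computation. By construction $S(x)\in\tp M(b,1,\phi_\lambda)$, so the projective cover of $M(b,1,\phi_\lambda)$ has $P(x)$ as a summand. Because $|x|>2$, the radical of $P(x)$ is a sum of (at least three) uniserial modules, one for each germ of $x$; the band $b$ uses only the two uniserial pieces corresponding to its two occurrences of a germ of $x$ in its underlying walk, so the remaining uniserial summands of $\rad P(x)$ survive into $\Omega M(b,1,\phi_\lambda)$. Taking a further projective cover and passing to $\Omega^2$, these surviving pieces contribute strictly positive dimension at vertices of $Q$ that do not appear (or appear with smaller multiplicity) in $M(b,1,\phi_\lambda)$; concretely, the vertex of $Q$ corresponding to the polygon that is the immediate successor at an unused germ of $x$ receives extra dimension. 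Hence $\dimbf \Omega^2 M(b,1,\phi_\lambda) \neq \dimbf M(b,1,\phi_\lambda)$ for every $\lambda \in K^\ast$, and Theorem~\ref{tameTube} then forces $A$ to be wild.

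The main obstacle I expect is the bookkeeping of the syzygy: one must verify that the uniserial summands of $\rad P(x)$ attached to unused germs of $x$ really do contribute a net dimension vector difference rather than being cancelled by overlaps with projective covers of other summands in the top of $M(b,1,\phi_\lambda)$. The verification reduces to checking that the $b$-walk enters and leaves $x$ at germs which are ``peaks'' (the incoming and outgoing symbols are an arrow and the inverse of an arrow, respectively), a property guaranteed by the fact that $w'$ and $w''$ begin and end with arrows of $Q_1$ in Lemma~\ref{TriStringLoop} (and directly by Lemma~\ref{SelfFoldedBand} in the self-folded case). Once this is in place, the dimension vector discrepancy can be read off at a single vertex of $Q$, completing the proof.
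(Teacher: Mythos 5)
Your overall strategy—producing bands $M(b,1,\phi_\lambda)$ with $S(x)$ in the top and then comparing $\dimbf M_\lambda$ with $\dimbf \Omega^2 M_\lambda$—matches the paper's, but the band construction in your second step is broken and the paper's syzygy bookkeeping differs from yours in a way that matters.

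The string $w'=\alpha_1\cdots\alpha_r$ from Lemma~\ref{TriStringLoop} already starts \emph{and ends at the polygon $x$}: $\widehat{s}(\alpha_1)=x^{u'}$ and $\widehat{e}(\alpha_r)=x^{u'}$ both mean $s(\alpha_1)=e(\alpha_r)=x$. So there is nothing to bridge. Worse, the ``bridge'' would begin with an inverse arrow whose germ is $x^{u'}$, i.e.\ with $\widehat{s}(\gamma_1^{-1})=\widehat{e}(\gamma_1)=x^{u'}=\widehat{e}(\alpha_r)$, which forces $\gamma_1=\alpha_r$; then $w'v$ backtracks and is not a string. Moreover, Lemma~\ref{PathToString} applied to the length-one path $(u',x^{u'},x^{u''},u'')$ returns the empty word, since its conclusion concatenates $w_1^+,\dots,w_{n-1}^{\sigma(n-1)}$ and here $n-1=0$. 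The correct construction is simply $b=(w')^{-1}w''$: the joint $\alpha_1^{-1}\beta_1$ (inverse followed by direct, with $\widehat{s}(\alpha_1)=x^{u'}\ne x^{u''}=\widehat{s}(\beta_1)$) and the cyclic joint $\beta_n\alpha_r^{-1}$ both avoid backtracking and relations, and $\alpha_1^{-1}\beta_1$ is exactly what puts $S(x)$ in the top. You also omit the case in which $\chi'$ and $\chi''$ are not disjoint; there $x$ lies on a simple cycle of $\chi$ and the band comes directly from Lemma~\ref{PathToString}, which must be handled separately since Lemma~\ref{TriStringLoop} is stated for a subconfiguration not containing $x$.

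On the syzygy step you are right to flag cancellation as the hazard, but your proposed remedy---reading off extra dimension at the vertex $y_i$ of $Q$ corresponding to a successor polygon at an unused germ of $x$---is exactly the kind of argument that fails to cancellation: $y_i$ may reappear among the $x'_j$ or among other $y_j$, and nothing in your setup rules that out. The paper instead computes everything at $\varepsilon_x$. Writing $t=\dim((\tp M_\lambda)\varepsilon_x)$, $s=\dim((\soc M_\lambda)\varepsilon_x)$, and letting $u,v,u',v',k,w$ count the $x$-composition factors of the various uniserial pieces of the relevant projectives, one gets $\dim(M_\lambda\varepsilon_x)=t+u+v+s$, $\dim(\Omega M_\lambda\varepsilon_x)=t+s+u'+v'+tk+tw$, and then $\dim(\Omega^2 M_\lambda\varepsilon_x)\ge \dim(M_\lambda\varepsilon_x)+sk+sw+t(n-2)$, which is strict once $t>0$ and $n>2$. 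That inequality is insensitive to coincidences among the $y_i$ and the $x'_j$, so the discrepancy in dimension vectors is guaranteed, and Theorem~\ref{tameTube} forces wildness.
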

	\begin{proof}
		If $x$ is self-folded, then Lemma~\ref{SelfFoldedBand} states that there exists a band $b$ such that $S(x) \in \tp M(b,1,\phi_\lambda)$ for all $\lambda \in K^\ast$. We will prove the same is true for the other cases. So suppose $x$ is not self-folded and suppose $\chi'$ and $\chi''$ are not disjoint. Then necessarily, $x$ belongs to some simple cycle in $\chi$. One can then use Lemma~\ref{PathToString} to construct a band $b$ such that $S(x) \in \tp M(b,1,\phi_\lambda)$ for all $\lambda \in K^\ast$. Now suppose $x$ is not self-folded and $\chi'$ and $\chi''$ are disjoint, then by Lemma~\ref{TriStringLoop} there exist strings $w'=\alpha_1\ldots\alpha_m$ and $w''=\beta_1\ldots\beta_n$ such that $\alpha_1,\alpha_m,\beta_1,\beta_n \in Q_1$, $\widehat{s}(\alpha_1)= x^{u'}=\widehat{e}(\alpha_m)$ and $\widehat{s}(\beta_1)= x^{u''}=\widehat{e}(\beta_n)$. Thus, we can construct a band $b=(w')^{-1}w''$. Since $\alpha^{-1}_1\beta_1$ is a substring of $b$, we conclude that $S(x) \in \tp M(b,1,\phi_\lambda)$ for all $\lambda \in K^\ast$. Thus, if $\chi$ is of any of the forms given in the proposition statement, there exists a band $b$ such that $S(x) \in \tp M(b,1,\phi_\lambda)$ for all $\lambda \in K^\ast$.
		
		Let $b$ be a band as above and let $M_\lambda=M(b,1,\phi_\lambda)$. Fix a choice of $\lambda$ and let
		\begin{equation*}
			\tp M_\lambda = \bigoplus_{i=1}^{m} S(x_i) \quad \text{ and } \quad \soc M_\lambda = \bigoplus_{i=1}^{m} S(x'_i).
		\end{equation*}
		Then we note that $M_\lambda$ has the following module structure.
		\begin{center}
			\begin{tikzpicture}
			\draw [red](0,0) node {$S(x'_m)$};
			\draw [->](0.6,0.6) -- (0.3,0.3);
			\draw (0.9,0.9) node {$U_1$};
			\draw [->](1.5,1.5) -- (1.2,1.2);
			\draw (1.8,1.8) node {$S(x_1)$};
			\draw [->](2.1,1.5) -- (2.4,1.2);
			\draw (2.7,0.9) node {$V_1$};
			\draw [->](3,0.6) -- (3.3,0.3);
			\draw (3.6,0) node {$S(x'_1)$};
			\draw [->](4.2,0.6) -- (3.9,0.3);
			\draw (4.5,0.9) node {$U_2$};
			\draw [->](5.1,1.5) -- (4.8,1.2);
			\draw (5.4,1.8) node {$S(x_2)$};
			\draw [->](5.7,1.5) -- (6,1.2);
			\draw (6.54,1.8) node {$\cdots$};
			\draw (6.54,1.1) node {$\cdots$};
			\draw (6.54,0) node {$\cdots$};
			\draw [->](7.3,1.5) -- (7,1.2);
			\draw (7.6,1.8) node {$S(x_m)$};
			\draw [->](7.9,1.5) -- (8.2,1.2);
			\draw (8.5,0.9) node {$V_m$};
			\draw [->](8.8,0.6) -- (9.1,0.3);
			\draw [red](9.4,0) node {$S(x'_m)$};
			
			%\draw (0.2,0.6) node {\footnotesize$1$};
			%\draw (1.1,1.5) node {\footnotesize$1$};
			%\draw (2,1.2) node {\footnotesize$1$};
			%\draw (2.9,0.3) node {\footnotesize$1$};
			\end{tikzpicture}
		\end{center}
		where the two copies of $S(x'_m)$ are identified and $U_i$ and $V_i$ are uniserial modules. Since $M_\lambda$ is a band module, none of the modules $P(x_i)$ or $P(x'_j)$ are uniserial (otherwise, $b$ would not be a cyclic string). Suppose $x_i$ and $x'_j$ are such that $P(x_i)$ and $P(x'_j)$ are biserial respectively. Then the structure of $P(x_i)$ and $P(x'_j)$ is
		\begin{center}
			\begin{tikzpicture}
			\draw (-8.8,-1.8) node {$P(x_i):$};
			\draw (-6.5,0) node {$S(x_i)$};
			\draw [->] (-6.8,-0.3) -- (-7.1,-0.6);
			\draw [->](-6.2,-0.3) -- (-5.9,-0.6);
			\draw (-7.2,-0.9) node {$U_i$};
			\draw (-5.8,-0.9) node {$V_i$};
			\draw [->](-7.2,-1.2) -- (-7.2,-1.5);
			\draw [->](-5.8,-1.2) -- (-5.8,-1.5);
			\draw (-7.2,-1.8) node {$S(x'_{i-1})$};
			\draw (-5.8,-1.8) node {$S(x'_i)$};
			\draw [->](-7.2,-2.1) -- (-7.2,-2.4);
			\draw [->](-5.8,-2.1) -- (-5.8,-2.4);
			\draw (-7.2,-2.7) node {$U'_i$};
			\draw (-5.8,-2.7) node {$V'_i$};
			\draw [->](-7.1,-3) -- (-6.8,-3.3);
			\draw [->](-5.9,-3) -- (-6.2,-3.3);
			\draw (-6.5,-3.6) node {$S(x_i)$};
			
			\draw (-4.1,-1.8) node {and};
			
			\draw (-2.4,-1.8) node {$P(x'_j):$};
			\draw (-0.3,0) node {$S(x'_j)$};
			\draw [->] (-0.6,-0.3) -- (-0.9,-0.6);
			\draw [->](0,-0.3) -- (0.3,-0.6);
			\draw (-1,-0.9) node {$V'_j$};
			\draw (0.4,-0.9) node {$U'_{i+1}$};
			\draw [->](-1,-1.2) -- (-1,-1.5);
			\draw [->](0.4,-1.2) -- (0.4,-1.5);
			\draw (-1,-1.8) node {$S(x_j)$};
			\draw (0.4,-1.8) node {$S(x_{j+1})$};
			\draw [->](-1,-2.1) -- (-1,-2.4);
			\draw [->](0.4,-2.1) -- (0.4,-2.4);
			\draw (-1,-2.7) node {$V_j$};
			\draw (0.4,-2.7) node {$U_{j+1}$};
			\draw [->](-0.9,-3) -- (-0.6,-3.3);
			\draw [->](0.3,-3) -- (0,-3.3);
			\draw (-0.3,-3.6) node {$S(x'_j)$};
			\end{tikzpicture}
		\end{center}
		respectively, where $U'_i$ and $V'_i$ are uniserial modules.
		
		Suppose there exists an integer $r$ such that $S(x_r) \cong S(x)$. Then the structure of $P(x_r)$ is of the form
		\begin{center}
			\begin{tikzpicture}
			\draw (-2.4,-1.8) node {$P(x_r):$};
			\draw (1.3,0.2) node {$S(x_r)$};
			\draw [->] (0.9,-0.1) -- (-0.5,-0.7);
			\draw [->](1.7,-0.1) -- (3.1,-0.7);
			\draw (-0.7,-0.9) node {$U_r$};
			\draw (3.3,-0.9) node {$V_r$};
			\draw [->](-0.7,-1.2) -- (-0.7,-1.5);
			\draw [->](3.3,-1.2) -- (3.3,-1.5);
			\draw (-0.7,-1.8) node {$S(x'_{r-1})$};
			\draw (3.3,-1.8) node {$S(x'_r)$};
			\draw [->](-0.7,-2.1) -- (-0.7,-2.4);
			\draw [->](3.3,-2.1) -- (3.3,-2.4);
			\draw (-0.7,-2.7) node {$U'_r$};
			\draw (3.3,-2.7) node {$V'_r$};
			\draw [->](-0.5,-2.9) -- (0.9,-3.5);
			\draw [->](3.1,-2.9) -- (1.7,-3.5);
			\draw (1.3,-3.8) node {$S(x_r)$};
			
			\draw (0.3,-0.9) node {$S(y_1)$};
			\draw (1.31,-0.9) node {$\cdots$};
			\draw (2.3,-0.9) node {$S(y_{n-2})$};
			
			\draw (0.3,-2.3) node {$W_1$};
			\draw (1.31,-2.3) node {$\cdots$};
			\draw (2.3,-2.3) node {$W_{n-2}$};
			\draw [->](0.3,-1.2) -- (0.3,-2);
			\draw [->](2.3,-1.2) -- (2.3,-2);
			\draw [->](0.4,-2.6) -- (1.1,-3.5);
			\draw [->](2.2,-2.6) -- (1.5,-3.5);
			\draw [->](1.1,-0.1) -- (0.4,-0.6);
			\draw [->](1.5,-0.1) -- (2.2,-0.6);
			\end{tikzpicture}
		\end{center}
		where $U_r$, $U'_r$, $V_r$, $V'_r$, $W_1,\ldots, W_{n-2}$ are uniserial modules and $y_1, \ldots, y_{n-2}$ are the successors to $x$ at the other vertices connected to $x$. It follows that 
		\begin{equation*}
			\tp \Omega(M_\lambda) \cong \bigoplus_{i=1}^{m} S(x'_i) \oplus \bigoplus_{i=1}^{n-2} (S(y_i))^t,
		\end{equation*}
		where $t$ is the number of direct summands in $\tp M_\lambda$ that are isomorphic to $S(x)$.
		
		Define the following non-negative integers.
		\begin{align*}
			u &= \sum_{i=1}^m \dim (U_i \varepsilon_x),			&	u' &= \sum_{i=1}^m \dim (U'_i  \varepsilon_x),		\\
			v &= \sum_{i=1}^m \dim (V_i  \varepsilon_x),			&	v' &= \sum_{i=1}^m \dim (V'_i  \varepsilon_x),		\\
			k &= \sum_{i=1}^{n-2} \dim (S(y_i) \varepsilon_x),	&	w &= \sum_{i=1}^{n-2} \dim (W_i  \varepsilon_x),	\\
			t &= \dim ((\tp M_\lambda)\varepsilon_x)	,			&	s &= \dim ((\soc M_\lambda)\varepsilon_x).
		\end{align*}
		Then we have
		\begin{align*}
			\dim (M_\lambda \varepsilon_x) &= t + u + v + s, \\
			\dim (\Omega(M_\lambda) \varepsilon_x) &= \sum_{i=1}^m \dim (P(x_i) \varepsilon_x) - \dim (M_\lambda \varepsilon_x) \\
			&= 2t + u + v + 2s + u' + v' + tk + tw - \dim (M_\lambda \varepsilon_x) \\
			&= t + s + u' + v' + tk + tw, \\
			\dim (\Omega^2(M_\lambda) \varepsilon_x) &= \sum_{i=1}^m \dim (P(x'_i) \varepsilon_x) + t \sum_{i=1}^{n-2} \dim (P(y_i) \varepsilon_x) - \dim (\Omega(M_\lambda) \varepsilon_x).
		\end{align*}
		Now
		\begin{align*}
			\sum_{i=1}^m \dim(P(x'_i) \varepsilon_x) &= 2s + u' + v' + 2t + u + v + sk + sw \text{ and} \\
			\sum_{i=1}^{n-2} \dim(P(y_i) \varepsilon_x) &\geq k + w + n-2.
		\end{align*}
		So
		\begin{align*}
			\dim (\Omega^2(M_\lambda) \varepsilon_x) &\geq t + u + v + s + sk + sw + t(n-2) \\
			&\geq \dim (M_\lambda \varepsilon_x) + sk + sw + t(n-2)
			> \dim (M_\lambda \varepsilon_x)
		\end{align*}
		if $t > 0$. Thus, if $S(x)$ is a direct summand of $\tp M_\lambda$, then $M_\lambda \not\cong \Omega^2(M_\lambda) = \tau M_\lambda$. Since $M_\lambda$ describes an infinite family of non-isomorphic indecomposable modules, we conclude in this case that the algebra $A$ must be wild by the contrapositive of Theorem~\ref{tameTube}.
	\end{proof}
	
	\begin{exam}
		Let $A_1$, $A_2$, $A_3$ and $A_4$ be Brauer configuration algebras associated to the following respective Brauer configurations.
		\begin{center}
			\begin{tikzpicture}[scale=0.7]
				\draw (-1.5,4) node{$\chi':$};
				\draw[pattern = north west lines] (0.2,3.2) -- (1.8,3.2) -- (1,4.4) -- (0.2,3.2);
				\draw (1,5.1) -- (1,4.4);
				\draw(-0.1,3) node{$v_1$};
				\draw (2.1,3) node{$v_2$};
			
				\draw (4,4) node{$\chi'':$};
				\draw[pattern = north west lines]  plot[smooth, tension=.7] coordinates {(6,4.7) (5.3,3.9) (6,2.6)};
				\draw[pattern = north west lines]  plot[smooth, tension=.7] coordinates {(6,4.7) (6.7,3.9) (6,2.6)};
				\draw[fill=white]  plot[smooth, tension=.7] coordinates {(6,2.6) (5.6,3.6) (6,4) (6.4,3.6) (6,2.6)};
				\draw (6,4.7) -- (6,5.3);
			
				\draw (8.1,4) node{$\chi''':$};
				\draw[pattern = north west lines] (10,3.3) -- (11.6,3.3) -- (10.8,4.6) -- (10,3.3);
				\draw (10.8,5.3) -- (10.8,4.6);
				\draw(9.7,3.1) node{$v_3$};
				\draw (11.6,3.3) .. controls (11.7,2.9) and (12.5,2.4) .. (12.7,2.9) .. controls (12.9,3.5) and (11.9,3.6) .. (11.6,3.3);
			
				\draw (14.3,4) node{$\chi'''':$};
				\draw[pattern = north west lines] (16.5,3.3) -- (18.1,3.3) -- (17.3,4.6) -- (16.5,3.3);
				\draw (17.3,5.3) -- (17.3,4.6);
				\draw (18.1,3.3) .. controls (18.2,2.9) and (19,2.4) .. (19.2,2.9) .. controls (19.4,3.5) and (18.4,3.6) .. (18.1,3.3);
				\draw (16.5,3.3) .. controls (16.4,2.9) and (15.6,2.4) .. (15.4,2.9) .. controls (15.2,3.5) and (16.2,3.6) .. (16.5,3.3);
			\draw [->, red](0.695,3.695) arc (45:351:0.7);
			\draw [->, red](1.1086,3.3095) arc (171.0006:477:0.7);
			\draw [->, red](5.4343,3.4657) arc (135:405:0.8);
			\draw [->, red](6.3632,3.6128) arc (62.9993:117:0.8);
			\draw [->, red](10.495,3.795) arc (45:351:0.7);
			\draw [->, red](11.1245,3.4545) arc (161.9999:297:0.5);
			\draw [->, red](11.9536,2.9464) arc (-45:18:0.5001);
			\draw [->, red](12.0045,3.5939) arc (36.0013:135:0.5);
			\draw [->, red](16.7939,3.7045) arc (53.9987:144:0.5);
			\draw [->, red](16.0245,3.4545) arc (161.9999:225:0.5);
			\draw [->, red](16.273,2.8545) arc (-117.0007:9:0.5);
			\draw [->, red](17.6062,3.3782) arc (171.0012:297:0.5);
			\draw [->, red](18.4536,2.9464) arc (-45:18:0.5001);
			\draw [->, red](18.5045,3.5939) arc (36.0013:135:0.5);
			
			\draw[red] (-0.3,4.2) node{\small$\alpha_1$};
			\draw[red] (2.3,4.2) node{\small$\alpha_2$};
			\draw[red] (4.9,2.8) node{\small$\beta_1$};
			\draw[red] (6,3.4001) node{\small$\beta_2$};
			\draw[red] (9,3.4) node{\small$\gamma_1$};
			\draw[red] (11.3,2.6) node{\small$\gamma_2$};
			\draw[red] (12.4,3.1) node{\small$\gamma_3$};
			\draw[red] (11.8,4.1) node{\small$\gamma_4$};
			\draw[red] (16.3,4.1) node{\small$\delta_1$};
			\draw[red] (15.7428,3.1) node{\small$\delta_2$};
			\draw[red] (17,2.6) node{\small$\delta_3$};
			\draw[red] (17.7,2.6) node{\small$\delta_4$};
			\draw[red] (18.9,3.1) node{\small$\delta_5$};
			\draw[red] (18.3,4.1) node{\small$\delta_6$};
			\draw [fill=black] (0.2,3.2) ellipse (0.05 and 0.05);
			\draw [fill=black] (1.8,3.2) ellipse (0.05 and 0.05);
			\draw [fill=black] (10,3.3) ellipse (0.05 and 0.05);
			\end{tikzpicture}
		\end{center}
		where $\mathfrak{e}_{v_i}>1$ for each $i\in\{1,2,3\}$ and all other vertices have multiplicity 1. Every Brauer configuration $\chi^{(i)}$ satisfies Proposition~\ref{CrossBand}, and thus each respective Brauer configuration algebra $A_i$ is wild. The following are examples of the bands used in the proof of Proposition~\ref{CrossBand}. For $A_1$, we have $\alpha^{-1}_1\alpha_2$. For $A_2$, we have $\beta_1\beta^{-1}_2$. For $A_3$, we have $\gamma_1^{-1}\gamma_2\gamma_3^{-1}\gamma_4$. For $A_4$, we have $\delta_3^{-1}\delta_2\delta_1^{-1}\delta_4\delta^{-1}_5\delta_6$.
	\end{exam}
	
	% ===============================================================
	% SYMMETRIC SPECIAL N-SERIAL ALGEBRAS (N>3)
	% ===============================================================
	\section{Symmetric Special $n$-Serial Algebras ($n>3$)} \label{nSerialSection}
	We will again be using the contrapositive of 	Crawley-Boevey's Theorem~\ref{tameTube} in this section to prove particular classes of algebras are wild. The following are examples of one-parameter families of modules that are not band modules, which we require for the results of this section.
	
	\begin{exam} \label{QuadBand1}
		Let $A$ be a Brauer configuration algebra associated to a Brauer configuration $\chi$. Suppose $\chi$ contains a polygon $x$ with $|x|>3$ and let $\alpha_1, \ldots, \alpha_4 \in Q_1$ be distinct arrows of source $x$. Consider the following diagram.
		\begin{center}
			\begin{tikzpicture}
			\draw (-1.4,-0.44) node {$B:$};
			\draw (0,0) node {$x$};
			\draw (1.6,0) node {$x$};
			\draw [->](-0.2,-0.2) -- (-0.7,-0.7);
			\draw (-0.5,-0.2) node {\footnotesize$\alpha_1$};
			\draw [->](0,-0.2) -- (0,-0.7);
			\draw (-0.22,-0.5) node {\footnotesize$\alpha_2$};
			\draw [->](0.2,-0.2) -- (0.7,-0.7);
			\draw (0.56,-0.24) node {\footnotesize$\alpha_3$};
			\draw [->](1.4,-0.2) -- (0.9,-0.7);
			\draw (1.04,-0.24) node {\footnotesize$\alpha_3$};
			\draw [->](1.6,-0.2) -- (1.6,-0.7);
			\draw (1.4,-0.5) node {\footnotesize$\alpha_4$};
			\draw [->](1.8,-0.2) -- (2.3,-0.7);
			\draw (2.15,-0.3) node {\footnotesize$\alpha_1$};
			\draw [red](-0.8,-0.95) node {$y_1$};
			\draw (0,-0.95) node {$y_2$};
			\draw (0.8,-0.95) node {$y_3$};
			\draw (1.6,-0.95) node {$y_4$};
			\draw [red](2.4,-0.95) node {$y_1$};
			\end{tikzpicture}
		\end{center}
		where each $y_i$ is the target of arrow $\alpha_i$ and where the two copies of $y_1$ are identified. The structure of the module $M_\lambda$ is as follows. Define a family of modules $M_\lambda$ from $B$ as follows. The underlying vector space of $M_\lambda$ is given by replacing each vertex of $B$ with a copy of $K$. The action of an arrow $\beta \in Q_1$ on $M_\lambda$ is the zero action if $\beta \neq \alpha_i$ for any $i$. The action of $\alpha_i$ on $M_\lambda$ is given by the relevant identity morphisms if $i \neq 1$. The action of $\alpha_1$ on $M_\lambda$ is given by an identity morphism for the leftmost copy in $B$ and by $\phi_\lambda$ for the rightmost copy.
		
		We will assume $x$, $y_1$, $y_2$, $y_3$ and $y_4$ are pairwise distinct and calculate  the space $\Hom_A(M_{\lambda_1}, M_{\lambda_2})$. Let $K(x)$ and $K(y_i)$ denote the underlying $K$-vector spaces of $S(x)$ and $S(y_i)$ respectively. Then we have the following commutative squares.
		\begin{center}
			\begin{tikzpicture}
			\draw (-5.7,2.1) node{(i)};
			\draw (-4.5,3) node{$(K(x))^2$};
			\draw (-4.5,1.2) node{$(K(x))^2$};
			\draw (-2.2,3) node{$K(y_1)$};
			\draw (-2.2,1.2) node{$K(y_1)$};
			\draw [->](-3.8,3) -- (-2.8,3);
			\draw [->](-3.8,1.2) -- (-2.8,1.2);
			\draw [->](-4.5,2.7) -- (-4.5,1.5);
			\draw [->](-2.2,2.7) -- (-2.2,1.5);
			\draw (-3.3,3.3) node{\footnotesize$\left(\begin{smallmatrix} 1 & \lambda_1 \end{smallmatrix}\right)$};
			\draw (-3.3,1.5) node{\footnotesize$\left(\begin{smallmatrix} 1 & \lambda_2 \end{smallmatrix}\right)$};
			\draw (-4.2,2.1) node{\footnotesize$\varphi_x$};
			\draw (-1.83,2.1) node{\footnotesize$\varphi_{y_1}$};
			
			\draw (0.3,2.1) node{(ii)};
			\draw (1.5,3) node{$(K(x))^2$};
			\draw (1.5,1.2) node{$(K(x))^2$};
			\draw (3.8,3) node{$K(y_2)$};
			\draw (3.8,1.2) node{$K(y_2)$};
			\draw [->](2.2,3) -- (3.2,3);
			\draw [->](2.2,1.2) -- (3.2,1.2);
			\draw [->](1.5,2.7) -- (1.5,1.5);
			\draw [->](3.8,2.7) -- (3.8,1.5);
			\draw (2.7,3.3) node{\footnotesize$\left(\begin{smallmatrix} 1 & 0 \end{smallmatrix}\right)$};
			\draw (2.7,1.5) node{\footnotesize$\left(\begin{smallmatrix} 1 & 0 \end{smallmatrix}\right)$};
			\draw (1.8,2.1) node{\footnotesize$\varphi_x$};
			\draw (4.17,2.1) node{\footnotesize$\varphi_{y_2}$};
			
			\draw (-5.8,-0.9) node{(iii)};
			\draw (-4.5,0) node{$(K(x))^2$};
			\draw (-4.5,-1.8) node{$(K(x))^2$};
			\draw (-2.2,0) node{$K(y_3)$};
			\draw (-2.2,-1.8) node{$K(y_3)$};
			\draw [->](-3.8,0) -- (-2.8,0);
			\draw [->](-3.8,-1.8) -- (-2.8,-1.8);
			\draw [->](-4.5,-0.3) -- (-4.5,-1.5);
			\draw [->](-2.2,-0.3) -- (-2.2,-1.5);
			\draw (-3.3,0.3) node{\footnotesize$\left(\begin{smallmatrix} 1 & 1 \end{smallmatrix}\right)$};
			\draw (-3.3,-1.5) node{\footnotesize$\left(\begin{smallmatrix} 1 & 1 \end{smallmatrix}\right)$};
			\draw (-4.2,-0.9) node{\footnotesize$\varphi_x$};
			\draw (-1.83,-0.9) node{\footnotesize$\varphi_{y_3}$};
			
			\draw (0.26,-0.9) node{(iv)};
			\draw (1.5,0) node{$(K(x))^2$};
			\draw (1.5,-1.8) node{$(K(x))^2$};
			\draw (3.8,0) node{$K(y_4)$};
			\draw (3.8,-1.8) node{$K(y_4)$};
			\draw [->](2.2,0) -- (3.2,0);
			\draw [->](2.2,-1.8) -- (3.2,-1.8);
			\draw [->](1.5,-0.3) -- (1.5,-1.5);
			\draw [->](3.8,-0.3) -- (3.8,-1.5);
			\draw (2.7,0.3) node{\footnotesize$\left(\begin{smallmatrix} 0 & 1 \end{smallmatrix}\right)$};
			\draw (2.7,-1.5) node{\footnotesize$\left(\begin{smallmatrix} 0 & 1 \end{smallmatrix}\right)$};
			\draw (1.8,-0.9) node{\footnotesize$\varphi_x$};
			\draw (4.17,-0.9) node{\footnotesize$\varphi_{y_4}$};
			\end{tikzpicture}
		\end{center}
		Squares (ii), (iii) and (iv) imply that $\varphi_x$ is a matrix
		\begin{equation*}
			\varphi_x =
			\begin{pmatrix}
				a	&	0	\\
				0	&	a
			\end{pmatrix},
		\end{equation*}
		where $a=\varphi_{y_2}(1)=\varphi_{y_3}(1)=\varphi_{y_4}(1)$. Square (i) implies that $\varphi_{y_1}(1)=a$ and $a\lambda_1 = a\lambda_2$. Thus, $a \in K$ if $\lambda_1 = \lambda_2$ and $a=0$ if $\lambda_1 \neq \lambda_2$. From this, we conclude firstly that $\dim \End_A(M_\lambda)=1$, and so $M_\lambda$ is indecomposable for all $\lambda \in K^{\ast}$; and secondly that $\dim \Hom_A(M_{\lambda_1}, M_{\lambda_2})=0$ for all $\lambda_1 \neq \lambda_2$. Thus, $M_{\lambda_1} \not\cong M_{\lambda_2}$ for all $\lambda_1 \neq \lambda_2$ and so $M_\lambda$ describes a 1-parameter family of indecomposable $A$-modules.
	\end{exam}
	
	\begin{exam} \label{QuadBand2}
		Let $B$ and $M_\lambda$ be as in Example~\ref{QuadBand1}, except we will now assume $x$, $y_2$, $y_3$ and $y_4$ are pairwise distinct and $y_1=x$. To calculate $\Hom_A(M_{\lambda_1}, M_{\lambda_2})$ we will need to consider the following commutative squares.
		\begin{center}
						\begin{tikzpicture}
			\draw (-5.9,1.9) node{(i)};
			\draw (-4.7,-0.1) node{$(K(x))^3$};
			\draw (-4.7,-1.9) node{$(K(x))^3$};
			\draw (-2,-0.1) node{$K(y_3)$};
			\draw (-2,-1.9) node{$K(y_3)$};
			\draw [->](-4,-0.1) -- (-2.6,-0.1);
			\draw [->](-4,-1.9) -- (-2.6,-1.9);
			\draw [->](-4.7,-0.4) -- (-4.7,-1.6);
			\draw [->](-2,-0.4) -- (-2,-1.6);
			\draw (-3.3,0.2) node{\footnotesize$\left(\begin{smallmatrix} 1 & 1 & 0 \end{smallmatrix}\right)$};
			\draw (-3.3,-1.6) node{\footnotesize$\left(\begin{smallmatrix} 1 & 1 & 0 \end{smallmatrix}\right)$};
			\draw (-4.4,-1) node{\footnotesize$\varphi_x$};
			\draw (-1.63,-1) node{\footnotesize$\varphi_{y_3}$};
			
			\draw (0.5,1.9) node{(ii)};
			\draw (1.7,2.8) node{$(K(x))^3$};
			\draw (1.7,1) node{$(K(x))^3$};
			\draw (4.4,2.8) node{$K(y_2)$};
			\draw (4.4,1) node{$K(y_2)$};
			\draw [->](2.4,2.8) -- (3.8,2.8);
			\draw [->](2.4,1) -- (3.8,1);
			\draw [->](1.7,2.5) -- (1.7,1.3);
			\draw [->](4.4,2.5) -- (4.4,1.3);
			\draw (3.1,3.1) node{\footnotesize$\left(\begin{smallmatrix} 1 & 0 & 0 \end{smallmatrix}\right)$};
			\draw (3.1,1.3) node{\footnotesize$\left(\begin{smallmatrix} 1 & 0 & 0 \end{smallmatrix}\right)$};
			\draw (2,1.9) node{\footnotesize$\varphi_x$};
			\draw (4.77,1.9) node{\footnotesize$\varphi_{y_2}$};
			
			\draw (-6,-1) node{(iii)};
			\draw (-4.7,2.8) node{$(K(x))^3$};
			\draw (-4.7,1) node{$(K(x))^3$};
			\draw (-2,2.8) node{$(K(x))^3$};
			\draw (-2,1) node{$(K(x))^3$};
			\draw [->](-4,2.8) -- (-2.7,2.8);
			\draw [->](-4,1) -- (-2.7,1);
			\draw [->](-4.7,2.5) -- (-4.7,1.3);
			\draw [->](-2,2.5) -- (-2,1.3);
			\draw (-3.35,3.2) node{\footnotesize$\left(\begin{smallmatrix} 0 & 0 & 0 \\ 0 & 0 & 0 \\  1 & \lambda_1 & 0 \end{smallmatrix}\right)$};
			\draw (-3.35,1.4) node{\footnotesize$\left(\begin{smallmatrix} 0 & 0 & 0 \\ 0 & 0 & 0 \\  1 & \lambda_2 & 0 \end{smallmatrix}\right)$};
			\draw (-4.4,1.9) node{\footnotesize$\varphi_x$};
			\draw (-1.7,1.9) node{\footnotesize$\varphi_x$};
			
			\draw (0.46,-1) node{(iv)};
			\draw (1.7,-0.1) node{$(K(x))^3$};
			\draw (1.7,-1.9) node{$(K(x))^3$};
			\draw (4.4,-0.1) node{$K(y_4)$};
			\draw (4.4,-1.9) node{$K(y_4)$};
			\draw [->](2.4,-0.1) -- (3.8,-0.1);
			\draw [->](2.4,-1.9) -- (3.8,-1.9);
			\draw [->](1.7,-0.4) -- (1.7,-1.6);
			\draw [->](4.4,-0.4) -- (4.4,-1.6);
			\draw (3.1,0.2) node{\footnotesize$\left(\begin{smallmatrix} 0 & 1 & 0 \end{smallmatrix}\right)$};
			\draw (3.1,-1.6) node{\footnotesize$\left(\begin{smallmatrix} 0 & 1 & 0 \end{smallmatrix}\right)$};
			\draw (2,-1) node{\footnotesize$\varphi_x$};
			\draw (4.77,-1) node{\footnotesize$\varphi_{y_4}$};
			\end{tikzpicture}
		\end{center}
		The above squares imply that $\varphi_x$ is a matrix
		\begin{equation*}
			\varphi_x =
			\begin{pmatrix}
				c	&	0	&	0	\\
				0	&	c	&	0	\\
				a	&	b	&	c
			\end{pmatrix},
		\end{equation*}
		where $c=\varphi_{y_2}(1)=\varphi_{y_3}(1)=\varphi_{y_4}(1)$ and $c\lambda_1 = c\lambda_2$. Thus, $c \in K$ if $\lambda_1 = \lambda_2$ and $c=0$ if $\lambda_1 \neq \lambda_2$. Thus, we conclude that
		\begin{equation*}
			\End_A(M_\lambda) \cong \left\{
			\begin{pmatrix}
				c	&	0	&	0	\\
				0	&	c	&	0	\\
				a	&	b	&	c
			\end{pmatrix}
			\middle| a,b,c \in K \right\}
		\end{equation*}
		and hence, $\End_A(M_\lambda)$ contains no non-trivial idempotents. So $M_\lambda$ is indecomposable for all $\lambda \in K^{\ast}$. In addition, if $\lambda_1 \neq \lambda_2$ then we note that $M_{\lambda_1} \not\cong M_{\lambda_2}$, since then
		\begin{equation*}
			\Hom_A(M_{\lambda_1}, M_{\lambda_2}) \cong \left\{
			\begin{pmatrix}
				0	&	0	&	0	\\
				0	&	0	&	0	\\
				a	&	b	&	0
			\end{pmatrix}
			\middle| a,b \in K \right\}
		\end{equation*}
		and every $X \in \Hom_A(M_{\lambda_1}, M_{\lambda_2})$ is not invertible. Hence, $M_\lambda$ describes a 1-parameter family of indecomposable $A$-modules.
	\end{exam}
	
	\begin{thm} \label{TameQuad}
		Let $A$ be a Brauer configuration algebra. If $A$ is tame then $A$ is at most quadserial. In particular, $A$ is a tame symmetric special quadserial algebra if and only if $A$ is given by the Brauer configuration
		\begin{center}
			\begin{tikzpicture}[scale=0.9]
				\draw[pattern = north west lines] (0.5,-0.5) -- (0.5,0.5) -- (1.5,0.5) -- (1.5,-0.5) -- (0.5,-0.5);
				\draw (0,1) -- (0.5,0.5);
				\draw (0,-1) -- (0.5,-0.5);
			\draw (1.5,0.5) -- (2,1);
			\draw (1.5,-0.5) -- (2,-1);
			\end{tikzpicture}
		\end{center}
		in which every vertex has multiplicity one.
	\end{thm}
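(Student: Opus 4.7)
The plan is to prove the ``only if'' direction by a case analysis on polygons of size greater than three in $\chi$, using Proposition~\ref{CrossBand} to dispose of configurationally rich cases and Examples~\ref{QuadBand1}--\ref{QuadBand2} to handle polygons of size at least four sitting in sparser neighbourhoods. The forward direction, that the displayed configuration yields a tame algebra, is separate; I would prove it by a direct identification of $A$ with a known tame algebra (for instance by exhibiting $A$ as a string algebra or as a trivial extension of a suitable tame algebra).

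For the ``only if'' direction, suppose $\chi$ contains a polygon $x$ with $|x|>2$. By Proposition~\ref{CrossBand}, $A$ is wild if $x$ is self-folded, or if at least two of its incident vertices each carry a subconfiguration containing a simple cycle or a vertex of multiplicity greater than one. So under tameness, $x$ is not self-folded, its $|x|$ incident vertices $v_1,\ldots,v_{|x|}$ are distinct and non-truncated (by axiom (iii) of a Brauer configuration), and at most one of them supports such a non-trivial surrounding subconfiguration. In the regime $|x|\geq 5$, pick any four distinct arrows of source $x$ and apply Example~\ref{QuadBand1} (with a small adjustment if two of the targets $y_i$ coincide) to obtain a one-parameter family $M_\lambda$ of indecomposable modules with $S(x)\in\tp M_\lambda$. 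Mimicking the dimension computation at the end of the proof of Proposition~\ref{CrossBand}, I would verify that $\dim(\Omega^2 M_\lambda\,\varepsilon_x)-\dim(M_\lambda\,\varepsilon_x)$ receives a strictly positive contribution from each of the $|x|-4\geq 1$ extra incident vertices of $x$, forcing $\dimbf(M_\lambda)\neq\dimbf(\tau M_\lambda)$ for almost all $\lambda$; Theorem~\ref{tameTube} then gives wildness.

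For $|x|=4$, the same strategy with Example~\ref{QuadBand1} or Example~\ref{QuadBand2} (the latter used when one target coincides with $x$ via a structure at the unique rich vertex) applies, but now the dimension count must be exact rather than strictly surplus: any extra feature at some $v_i$ beyond a single truncated 2-gon, whether higher valency, non-trivial multiplicity, or a non-truncated neighbouring polygon, introduces an unmatched summand in $\dim(\Omega^2 M_\lambda\,\varepsilon_x)$, once again producing $\dimbf(M_\lambda)\neq\dimbf(\tau M_\lambda)$ and hence wildness. The unique configuration surviving this analysis, combined with the Proposition~\ref{CrossBand} restriction, is precisely the one displayed in the statement. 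The main obstacle is this final dimension bookkeeping: small variations in the configuration propagate through the branches of $P(x)$ and of each $P(y_i)$, so every deviation from the target configuration has to be tracked carefully against the $M_\lambda$ side to ensure that a strict inequality survives.
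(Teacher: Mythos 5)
Your overall strategy coincides with the paper's: assume a polygon of size greater than three, rule out self-folded $x$ and doubly rich neighbourhoods by Proposition~\ref{CrossBand}, feed the families from Examples~\ref{QuadBand1}--\ref{QuadBand2} into the contrapositive of Theorem~\ref{tameTube}, and identify the surviving configuration as a known tame algebra. The differences are mostly in how the subcases are closed, and they matter for whether the argument actually goes through. The paper first splits on whether $x$ meets a vertex of valency one (hence multiplicity at least two): in that case it uses Example~\ref{QuadBand2} and tracks dimensions at $\varepsilon_{y_2}$, not at $\varepsilon_x$, because the extra weight at $x$ arising from the higher multiplicity is exactly what the loop absorbs. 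When no such vertex exists, for $|x|\geq 5$ the paper does \emph{not} attempt a $\dim(\cdot\,\varepsilon_x)$ count; it observes that the projective covers $P(z_i)$ of the extra branches force a copy of $S(z_i)$ into $\soc\Omega^2(M_\lambda)$, which cannot match $\soc M_\lambda$. Your claim of a "strictly positive contribution" at $\varepsilon_x$ from each extra vertex is plausible but not automatic, and you would have to watch for cancellation between the $t$, $s$, and branch terms in the spirit of the end of Proposition~\ref{CrossBand}'s proof. For $|x|=4$, the paper distinguishes $P(y_r)$ non-uniserial (a direct top/socle structure comparison) from $P(y_r)$ uniserial but with $\rad^3 P(y_r)\neq 0$, and handles the second by computing $\tau^{-1}M_\lambda$ and tracking dimensions at a third vertex $\varepsilon_z$, not at $\varepsilon_x$; your uniform bookkeeping glosses over this. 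Finally, for the converse the paper appeals to the classification of symmetric radical-cube-zero algebras in \cite{BensonRad} rather than constructing an explicit trivial extension, though your alternative should also work if carried out. So: same skeleton, but the subcase closures you sketch differ from those in the paper and would need genuine verification before they count as a proof.
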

	\begin{proof}
		We first note that the quadserial Brauer configuration algebra presented in the theorem is a tame radical cube zero algebra from the classification in \cite{BensonRad}. So suppose instead that $\chi$ is a Brauer configuration not of the above form and that there exists an $n$-gon $x$ with $n>3$ in $\chi$. It follows from Proposition~\ref{CrossBand} that $A$ is wild if $x$ is self-folded. We shall therefore assume that $x$ is not a self-folded polygon. There are multiple cases to consider in the proof.
		
		Case 1: Suppose there exists a vertex $v$ connected to $x$ such that $\val(v)=1$. Then it follows from the definition of a Brauer configuration that $\mathfrak{e}_{v}>1$. We will show that $A$ is wild in this case. We may assume that $\mathfrak{e}_{u}=1$ for all vertices $u \neq v$ in $\chi$, and that $\chi$ is a tree, since $A$ would otherwise be wild by Proposition~\ref{CrossBand}. Now choose a 4-tuple $(v, u_2, u_3, u_4)$ of distinct vertices connected to $x$. Let $\alpha_1$ be the arrow (which is a loop in the quiver) generated by the vertex $v$ and let $\alpha_2$, $\alpha_3$ and $\alpha_4$ be the arrows of source $x$ such that $\widehat{s}(\alpha_2)=x^{u_2}$, $\widehat{s}(\alpha_3)=x^{u_3}$ and $\widehat{s}(\alpha_4)=x^{u_4}$. Let $y_2=e(\alpha_2)$, $y_3=e(\alpha_3)$ and $y_4=e(\alpha_4)$. Let $M_\lambda$ be the family of modules defined in Example~\ref{QuadBand2}.
		
		Note that there are two copies of $S(x)$ in $\tp M_\lambda$ and one copy of $S(x)$ in $\soc M_\lambda$. We further note that $P(x)$ has the following structure
		\begin{center}
			\begin{tikzpicture}
			\draw (-2.2,-1.3) node {$P(x):$};
			\draw (2,0.7) node {$S(x)$};
			
			\draw [->](1.7,0.4) -- (-0.6,-0.2);
			\draw [->](1.8,0.4) -- (0.2,-0.6);
			\draw [->](1.9,0.4) -- (1.1,-0.6);
			\draw [->](2,0.4) -- (2,-0.6);
			\draw [->](2.1,0.4) -- (2.9,-0.6);
			\draw [->](2.3,0.4) -- (4.8,-0.6);
			\draw (-1,-0.45) node {$S(x)$};
			\draw (0,-0.9) node {$S(y_2)$};
			\draw (1,-0.9) node {$S(y_3)$};
			\draw (2,-0.9) node {$S(y_4)$};
			\draw (3,-0.9) node {$S(z_1)$};
			\draw (4.04,-0.9) node {$\cdots$};
			\draw (5,-0.9) node {$S(z_m)$};
			
			\draw [->](-1,-0.75) -- (-1,-1.05);
			\draw (-1,-1.25) node {$\vdots$};
			\draw [->](-1,-1.65) -- (-1,-1.95);
			\draw (-1,-2.25) node {$S(x)$};
			
			\draw [->](0,-1.2) -- (0,-1.5);
			\draw [->](1,-1.2) -- (1,-1.5);
			\draw [->](2,-1.2) -- (2,-1.5);
			\draw (0,-1.8) node {$Y_2$};
			\draw (1,-1.8) node {$Y_3$};
			\draw (2,-1.8) node {$Y_4$};
			\draw [->](3,-1.2) -- (3,-1.5);
			\draw [->](5,-1.2) -- (5,-1.5);
			\draw (3,-1.8) node {$Z_1$};
			\draw (4.04,-1.8) node {$\cdots$};
			\draw (5,-1.8) node {$Z_m$};
			
			\draw [->](-0.6,-2.5) -- (1.7,-3.1);
			\draw [->](0.2,-2.1) -- (1.8,-3.1);
			\draw [->](1.1,-2.1) -- (1.9,-3.1);
			\draw [->](2,-2.1) -- (2,-3.1);
			\draw [->](2.9,-2.1) -- (2.1,-3.1);
			\draw [->](4.8,-2.1) -- (2.3,-3.1);
			\draw (2,-3.4) node {$S(x)$};
			\end{tikzpicture}
		\end{center}
		where the $Y_i$ and $Z_i$ are all uniserial modules. It follows from the structure of $P(x)$ and the structure of $M_\lambda$ that
		\begin{equation*}
			S(x) \oplus \bigoplus_{i=2}^4 S(y_i) \subseteq \tp \Omega(M_\lambda).
		\end{equation*}
		
		To show that Case 1 is wild, we will calculate the $y_2$ entry of the dimension vector of $\Omega^2(M_\lambda)$, although the following calculations also hold for any $y_i$. Since we have assumed that $\chi$ is a tree and $\mathfrak{e}_{u}=1$ for any vertex $u \neq v$, we have $\dim(Z_i\varepsilon_{y_2})=\dim(S(z_i)\varepsilon_{y_2})=\dim(Y_i\varepsilon_{y_2})=0$ for all $i$. So
		\begin{align*}
			\dim(\Omega(M_\lambda)\varepsilon_{y_2})	&= 2\dim(P(x)\varepsilon_{y_2}) - \dim(M_\lambda\varepsilon_{y_2}) = 2 - 1=1\\
			\dim(\Omega^2(M_\lambda)\varepsilon_{y_2})	&\geq \dim(P(x)\varepsilon_{y_2}) + \sum_{i=2}^4 \dim(P(y_i)\varepsilon_{y_2}) - \dim(\Omega(M_\lambda)\varepsilon_{y_2}) \\
			&\geq 1 + 2 - 1 = 2 > \dim(M_\lambda\varepsilon_{y_2}).
		\end{align*}
		Thus, $\tau(M_\lambda)=\Omega^2(M_\lambda) \not\cong M_\lambda$. Since $M_\lambda$ describes an infinite family of non-isomorphic indecomposable modules, we conclude in this case that the algebra $A$ must be wild by the contrapositive of Theorem~\ref{tameTube}.
		
		Case 2: Now assume that there is no vertex $v$ connected to $x$ such that $\val(v)=1$. Note that in this case, we cannot make the assumption that $\chi$ is a tree. Choose a 4-tuple $(u_1, u_2, u_3, u_4)$ of distinct vertices connected to $x$. Let $\alpha_1$, $\alpha_2$, $\alpha_3$ and $\alpha_4$ be the arrows of source $x$ such that $\widehat{s}(\alpha_i)=x^{u_i}$ for each $1 \leq i \leq 4$. Let $y_i=e(\alpha_i)$  for each $1 \leq i \leq 4$. Let $M_\lambda$ be the family of circle modules defined in Example~\ref{QuadBand1}. We have various subcases to consider.
		
		Case 2a: Suppose $|x|>4$. Then $P(x)$ is of the form
		\begin{center}
			\begin{tikzpicture}
			\draw (-2.2,-1.3) node {$P(x):$};
			\draw (2,0.3) node {$S(x)$};
			\draw [->](1.7,0) -- (-0.8,-0.6);
			\draw [->](1.8,0) -- (0.2,-0.6);
			\draw [->](1.9,0) -- (1.1,-0.6);
			\draw [->](2,0) -- (2,-0.6);
			\draw [->](2.1,0) -- (2.9,-0.6);
			\draw [->](2.3,0) -- (4.8,-0.6);
			\draw (3,-0.9) node {$S(z_1)$};
			\draw (4.04,-0.9) node {$\cdots$};
			\draw (5,-0.9) node {$S(z_m)$};
			
			\draw [->](3,-1.2) -- (3,-1.5);
			\draw [->](5,-1.2) -- (5,-1.5);
			\draw (3,-1.8) node {$Z_1$};
			\draw (4.04,-1.8) node {$\cdots$};
			\draw (5,-1.8) node {$Z_m$};
			\draw (2,-3) node {$S(x)$};
			
			\draw (-1,-0.9) node {$S(y_1)$};
			\draw (0,-0.9) node {$S(y_2)$};
			\draw (1,-0.9) node {$S(y_3)$};
			\draw (2,-0.9) node {$S(y_4)$};
			
			\draw (-1,-1.8) node {$Y_1$};
			\draw (0,-1.8) node {$Y_2$};
			\draw (1,-1.8) node {$Y_3$};
			\draw (2,-1.8) node {$Y_4$};
			\draw [->](-1,-1.2) -- (-1,-1.5);
			\draw [->](0,-1.2) -- (0,-1.5);
			\draw [->](1,-1.2) -- (1,-1.5);
			\draw [->](2,-1.2) -- (2,-1.5);
			
			\draw [->](-0.8,-2.1) -- (1.7,-2.7);
			\draw [->](0.2,-2.1) -- (1.8,-2.7);
			\draw [->](1.1,-2.1) -- (1.9,-2.7);
			\draw [->](2,-2.1) -- (2,-2.7);
			\draw [->](2.9,-2.1) -- (2.1,-2.7);
			\draw [->](4.8,-2.1) -- (2.2,-2.7);
			\end{tikzpicture}
		\end{center}
		where $m>0$ and the $Y_i$ and $Z_i$ are all uniserial modules. We note in this case that there are two copies of each $S(z_i)$ in $\tp \Omega(M_\lambda)$ and that $\soc \Omega(M_\lambda) = S(x) \oplus S(x)$. Since $\soc P(z_i) = S(z_i)$ for all $i$, there must exist a copy of $S(z_i)$ in $\soc \Omega^2(M_\lambda)$. Thus, $\tau(M_\lambda)\not\cong M_\lambda$ for any $\lambda \in K^{\ast}$, and so $A$ is wild.
		
		Case 2b: Suppose $|x|=4$ and that there exists an integer $r$ such that $y_r$ is not uniserial. Then $P(y_r)$ has the following structure.
		\begin{center}
			\begin{tikzpicture}
			\draw (-2.2,-1.3) node {$P(y_r):$};
			\draw (0.5,0.1) node {$S(y_r)$};
			\draw [->](0.2,-0.2) -- (-0.5,-0.6);
			\draw [->](0.4,-0.2) -- (0.4,-1);
			\draw [->](0.8,-0.2) -- (1.5,-1);
			
			\draw (0.5,-2.8) node {$S(y_r)$};
			\draw (-0.7,-0.9) node {$Y_r$};
			\draw (0.4,-1.35) node {$V_1$};
			\draw (1.04,-1.35) node {$\cdots$};
			\draw (1.7,-1.35) node {$V_n$};
			
			\draw (-0.7,-1.8) node {$S(x)$};
			\draw [->](-0.7,-1.2) -- (-0.7,-1.5);
			
			\draw [->](-0.5,-2.1) -- (0.2,-2.5);
			\draw [->](0.4,-1.7) -- (0.4,-2.5);
			\draw [->](1.5,-1.7) -- (0.8,-2.5);
			\end{tikzpicture}
		\end{center}
		where $Y_r, V_1,\ldots,V_m$ are uniserial. We note that $\rad P(y_r) / \soc P(y_r)$ contains a direct summand which is a uniserial submodule of $P(x)$. This is precisely the module with top isomorphic to $\tp Y_r$ and socle isomorphic to $S(x)$ presented in the structure of $P(y_r)$ above. We further note that no other direct summand of $\rad P(y_r) / \soc P(y_r)$ is a submodule of $P(x)$. Since $\tp M_\lambda = S(x) \oplus S(x)$ and $S(y_r)$ is a direct summand of $\tp \Omega(M_\lambda)$, it follows that for each $i$, $\tp V_i$ is a direct summand of $\tp \Omega^2(M_\lambda)$. Thus, $\tau(M_\lambda)\not\cong M_\lambda$ for any $\lambda \in K^{\ast}$, and so $A$ is wild by the contrapositive of Theorem~\ref{tameTube}.
		
		Case 2c: Suppose that $|x|=4$, that every $y_i$ is such that $P(y_i)$ is uniserial, and that for some $r$, we have $\rad^3 P(y_r) \neq 0$. We will calculate $\tau^{-1} M_\lambda$ in this case, as it is a simpler calculation. Let $v$ be the vertex connected to $x$ and $y_r$. Let $z$ be the predecessor to $x$ at $v$ and let $a_i = \dim(Y_i\varepsilon_z)$. Then $a_r\geq 1$. If $a_i >0$ for some $i \neq r$, then this implies that there exists a polygon connecting two distinct vertices of $x$, which induces a simple cycle in $\chi$. This implies $A$ is wild by Proposition~\ref{CrossBand}, so assume that this is not the case. Similarly, we have $\dim(S(y_i)\varepsilon_z)=0$ for all $i \neq r$. Note that $\soc \Omega^{-1}(M_\lambda)=S(x) \oplus S(x)$. So
		\begin{align*}
			\dim(\Omega^{-1}(M_\lambda)\varepsilon_z) &= \sum_{i=1}^4\dim(P(y_i)\varepsilon_z) - \dim(M_\lambda\varepsilon_z) \\
			&= 2\dim(S(y_r)\varepsilon_z)+a_r - \dim(S(y_r)\varepsilon_z) \\
			&=\dim(S(y_r)\varepsilon_z)+a_r \\
			\dim(\Omega^{-2}(M_\lambda)\varepsilon_z)	&\geq 2\sum_{i=1}^4 \dim(P(x)\varepsilon_z) - \dim(\Omega^{-1}(M_\lambda)\varepsilon_z) \\
			&\geq 2\left(\dim(S(y_r)\varepsilon_z)+a_r\right)- \left(\dim(S(y_r)\varepsilon_z)+a_r\right) \\
			&\geq \dim(S(y_r)\varepsilon_z)+a_r \\
			&\geq \dim(M_\lambda\varepsilon_z)+1 >\dim(M_\lambda\varepsilon_z).
		\end{align*}
		Thus, $\tau^{-1}(M_\lambda)\not\cong M_\lambda$ for any $\lambda \in K^{\ast}$. Hence, $\tau(M_\lambda)\not\cong M_\lambda$ for any $\lambda \in K^{\ast}$, and so $A$ is wild by the contrapositive of Theorem~\ref{tameTube}.
		
		In conclusion, if $A$ is tame, then $\chi$ does not contain a polygon $x$ with $|x|>4$. If $\chi$ contains a polygon $x$ with $|x|=4$, then $x$ is not self-folded and no vertex incident to $x$ has valency one. Each vertex must therefore have an edge $y_i \neq x$ incident to it. Moreover, every edge $y_i$ incident to $x$ must be such that $P(y_i)$ is uniserial and $\rad^3 P(y_i) = 0$. Thus, $\chi$ is precisely the Brauer configuration
		\begin{center}
			\begin{tikzpicture}[scale=0.9]
				\draw[pattern = north west lines] (0.5,-0.5) -- (0.5,0.5) -- (1.5,0.5) -- (1.5,-0.5) -- (0.5,-0.5);
				\draw (0,1) -- (0.5,0.5);
				\draw (0,-1) -- (0.5,-0.5);
			\draw (1.5,0.5) -- (2,1);
			\draw (1.5,-0.5) -- (2,-1);
			\end{tikzpicture}
		\end{center}
		in which every vertex has multiplicity one, which is known to be associated to a tame algebra.
	\end{proof}
	
	% ===============================================================
	% TAME SYMMETRIC SPECIAL TRISERIAL ALGEBRAS
	% ===============================================================
	
	\section{Tame Symmetric Special Triserial Algebras} \label{QCSection}
	We will briefly recall some terminology and results from \cite{Fernandez}, \cite{Fernandez2}, \cite{trivialGentle} and \cite{AlmostGentle}.
	
	\begin{defn}[\cite{Fernandez}, \cite{Fernandez2}, \cite{trivialGentle}, \cite{AlmostGentle}] \label{AdmisCut}
		Let $A=KQ/I$ be a Brauer configuration algebra associated to a multiplicity-free Brauer configuration $\chi$. Suppose $\chi_0=\{v_1, \ldots, v_n\}$ and let $D$ be a set consisting of precisely one arrow from each cycle $\mathfrak{C}_{v_i}$. We call $D$ an \emph{admissible cut} of $Q$ and we call $KQ/\langle I \cup D \rangle$ the \emph{cut algebra associated to $D$}, where $\langle I \cup D \rangle$  is the ideal generated by $I \cup D$.
	\end{defn}
	
	\begin{thm}[\cite{AlmostGentle}, \cite{trivialGentle}] \label{CutAlgebra}
		Let $A=KQ/I$ be a Brauer configuration algebra associated to a multiplicity-free Brauer configuration $\chi$. Let $D$ be an admissible cut of $Q$. Define a quiver $Q'$ by $Q'_0=Q_0$ and $Q'_1=Q_1 \setminus D$. Then the cut algebra $KQ/\langle I \cup D \rangle$ is isomorphic to the (basic) algebra $B=KQ'/(I \cap KQ')$. Furthermore, $B$ is almost gentle and the trivial extension algebra $T(B) \cong A$. If in addition $\chi$ is a Brauer graph, then $B$ is gentle.
	\end{thm}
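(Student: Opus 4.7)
The plan has three stages, matching the three main assertions of the theorem (with the gentle-case addendum folded into stage two).

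First, I would establish the isomorphism $KQ/\langle I \cup D\rangle \cong B = KQ'/(I \cap KQ')$. The inclusion $\iota \colon KQ' \hookrightarrow KQ$ composed with the canonical projection $\pi \colon KQ \to KQ/\langle I \cup D\rangle$ yields a $K$-algebra homomorphism $\varphi \colon KQ' \to KQ/\langle I \cup D\rangle$. Since every arrow of $D$ vanishes in the codomain while the remaining arrows of $Q'_1 = Q_1 \setminus D$ generate it, $\varphi$ is surjective. The inclusion $I \cap KQ' \subseteq \ker \varphi$ is immediate. For the converse, given $x \in KQ' \cap \langle I \cup D\rangle$, write $x = \sum_i p_i r_i q_i + \sum_j s_j \alpha_j t_j$ with $r_i \in I$ and $\alpha_j \in D$, and project onto the span of paths in $Q'$: every term containing an arrow of $D$ projects to zero, yielding a representative of $x$ in $I \cap KQ'$. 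Passing to the quotient gives the desired isomorphism.

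Next, I would show that $B$ is almost gentle, i.e.\ a monomial special multiserial algebra. The special multiserial property descends from $A$: for any $\alpha \in Q'_1$, the at-most-one continuation $\beta$ with $\alpha\beta \notin I$ is either in $Q'_1$ or in $D$, and in the latter case $\alpha$ has no continuation in $B$. For the monomial property, I would analyse the three families of generators of $I$ in relation to $KQ'$. Because each cycle $\mathfrak{C}_v$ contains exactly one arrow of $D$, the type-(a) generators $\mathfrak{C}_v \gamma_1$ (here $\mathfrak{e}_v = 1$) and the type-(b) commutation generators $\mathfrak{C}_{u,\gamma} - \mathfrak{C}_{v,\delta}$ all involve $D$-arrows and contribute nothing to $I \cap KQ'$; only the type-(c) quadratic monomials $\alpha\beta$ with $\alpha,\beta \in Q'_1$ that are not subpaths of any $\mathfrak{C}_v$ survive. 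I would then argue that no longer monomial relation lies in $I \cap KQ'$ undetected by these: a path in $Q'$ avoiding all such length-two obstructions must be a subpath of some $\mathfrak{C}_v$, and since $D$ removes exactly one arrow from that cycle, this subpath has length strictly less than $|\mathfrak{C}_v|$, hence is nonzero in $A$. For the gentle addendum, when $\chi$ is a Brauer graph, every polygon is a $2$-gon, so every vertex of $Q$ has at most two incoming and at most two outgoing arrows; combined with the almost-gentle property this yields the gentle conditions.

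Finally, I would identify the trivial extension $T(B)$ with $A$. Since $B$ is almost gentle by the previous stage, the general machinery of \cite{AlmostGentle} provides that $T(B)$ is a Brauer configuration algebra associated to some Brauer configuration $\chi_{T(B)}$. The combinatorial invariants match $\chi$: the polygons correspond bijectively to indecomposable projective summands of $B$ (equivalently, of $A$), and the cyclic orderings at vertices correspond to the cycles of the quiver of $T(B)$, which are obtained from the $B$-paths ``between socles and tops'' by closing them up through the Nakayama-induced extension arrows. Because the cut merely opened each $\mathfrak{C}_v$ at its unique $D$-arrow, reintroducing this closure reproduces $Q$ exactly, and hence $\chi_{T(B)} = \chi$, giving $T(B) \cong A$.

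The main obstacle is the monomiality step in stage two: one must rule out the possibility that paths in $Q'$ which are zero in $A$ via the interaction of the commutation relations with the type-(a) cycle relations give rise to additional, genuinely non-quadratic generators of $I \cap KQ'$. The multiplicity-freeness of $\chi$ is critical here, as it ensures $(\mathfrak{C}_v)^{\mathfrak{e}_v} = \mathfrak{C}_v$ and therefore every type-(a) and type-(b) generator strictly involves an arrow of $D$, preventing such a cancellation. Verifying the cyclic orderings in stage three also requires a careful match of the Nakayama permutation on the socle of $T(B)$ with the successor relation $\mathfrak{o}_v$ on $\chi$, but this is combinatorial once the correct bijection of polygons has been fixed.
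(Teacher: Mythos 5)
This theorem is stated in the paper as a citation to \cite{AlmostGentle} and \cite{trivialGentle}; the paper supplies no internal proof, so there is nothing of its own to compare against. Judged on its own terms, your outline is a reasonable and essentially correct reconstruction of the strategy used in those references, and it correctly isolates where multiplicity-freeness is used (ensuring every type-(a) cycle relation and every term of every type-(b) commutation relation passes through exactly one arrow of $D$, hence is annihilated by the projection $KQ \to KQ'$). Two points deserve attention before this could be called a proof rather than a sketch. In stage one, the statement ``projecting onto the span of paths in $Q'$ yields a representative of $x$ in $I\cap KQ'$'' only goes through after reducing the $r_i$ to the monomial (or monomial-difference) generators of $I$: for a general $r_i\in I$ the projection $\mathrm{proj}_{KQ'}(p_ir_iq_i)$ need not a priori lie in $I$, and the multiplicative property $\mathrm{proj}_{KQ'}(p\rho q)=p\,\mathrm{proj}_{KQ'}(\rho)\,q$ holds only when $p,q$ are paths of $Q'$ and $\rho$ is expressed in terms of the generators. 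In stage three, invoking ``the general machinery of \cite{AlmostGentle}'' is partly circular, since the fact that $T(\text{almost gentle})$ is a Brauer configuration algebra with a specific Brauer configuration is a substantial part of what that reference proves; a self-contained argument would need to compute the socle and top series of the indecomposable projective $B$-modules directly, verify that the Nakayama functor glues the truncated maximal uniserial submodules back into the full cycles $\mathfrak{C}_v$ of $Q$, and only then conclude $\chi_{T(B)}=\chi$. You flag this as requiring care, which is the right instinct; it is where the bulk of the technical content lives.
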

	
	We first aim to prove the following.
	\begin{prop} \label{MultFreeCase}
		Let $A$ be a Brauer configuration algebra associated to a configuration of the form 
		\begin{center}
			\newlength{\hatchspread}
			\newlength{\hatchthickness}
			\newlength{\hatchshift}
			\newcommand{\hatchcolor}{}
			% declaring the keys in tikz
			\tikzset{hatchspread/.code={\setlength{\hatchspread}{#1}},
			         hatchthickness/.code={\setlength{\hatchthickness}{#1}},
			         hatchshift/.code={\setlength{\hatchshift}{#1}},% must be >= 0
			         hatchcolor/.code={\renewcommand{\hatchcolor}{#1}}}
			% setting the default values
			\tikzset{hatchspread=3pt,
			         hatchthickness=0.4pt,
			         hatchshift=0pt,% must be >= 0
			         hatchcolor=black}
			% declaring the pattern
			\pgfdeclarepatternformonly[\hatchspread,\hatchthickness,\hatchshift,\hatchcolor]% variables
			   {custom north west lines}% name
			   {\pgfqpoint{\dimexpr-2\hatchthickness}{\dimexpr-2\hatchthickness}}% lower left corner
			   {\pgfqpoint{\dimexpr\hatchspread+2\hatchthickness}{\dimexpr\hatchspread+2\hatchthickness}}% upper right corner
			   {\pgfqpoint{\dimexpr\hatchspread}{\dimexpr\hatchspread}}% tile size
			   {% shape description
			    \pgfsetlinewidth{\hatchthickness}
			    \pgfpathmoveto{\pgfqpoint{0pt}{\dimexpr\hatchspread+\hatchshift}}
			    \pgfpathlineto{\pgfqpoint{\dimexpr\hatchspread+0.15pt+\hatchshift}{-0.15pt}}
			    \ifdim \hatchshift > 0pt
			      \pgfpathmoveto{\pgfqpoint{0pt}{\hatchshift}}
			      \pgfpathlineto{\pgfqpoint{\dimexpr0.15pt+\hatchshift}{-0.15pt}}
			    \fi
			    \pgfsetstrokecolor{\hatchcolor}
			%    \pgfsetdash{{1pt}{1pt}}{0pt}% dashing cannot work correctly in all situation this way
			    \pgfusepath{stroke}
			   }
			\begin{tikzpicture}[scale=1.5, ]%baseline=(o.base)
			\draw [dashed] (0,0) ellipse (0.5 and 0.5);
			\draw (0,0) node {$G$};
			\draw [fill=black] (0,0.5) ellipse (0.03 and 0.03);
			\draw (-0.25,0.6) node {$u_1$};
			\draw [fill=black] (-0.4,-0.3) ellipse (0.03 and 0.03);
			\draw (-0.667,-0.1665) node {$u_2$};
			\draw [fill=black] (0.4,-0.3) ellipse (0.03 and 0.03);
			\draw (0.6665,-0.1665) node {$u_r$};
			
			\draw [fill=black] (0.4,1.1) ellipse (0.03 and 0.03);
			\draw (0.2665,1.25) node {$v_1$};
			\draw [fill=black] (-0.4,1.1) ellipse (0.03 and 0.03);
			\draw (-0.3165,1.2634) node {$w_1$};
			\draw [fill=black] (-1.2,-0.3) ellipse (0.03 and 0.03);
			\draw (-1.25,-0.45) node {$v_2$};
			\draw [fill=black] (-0.8,-0.9) ellipse (0.03 and 0.03);
			\draw (-1,-0.95) node {$w_2$};
			\draw [fill=black] (0.8,-0.9) ellipse (0.03 and 0.03);
			\draw (1,-1) node {$v_r$};
			\draw [fill=black] (1.2,-0.3) ellipse (0.03 and 0.03);
			\draw (1.3,-0.5) node {$w_r$};
			\draw (0,0.9) node {$x_1$};
			\draw (-0.8,-0.5) node {$x_2$};
			\draw (0.8,-0.5) node {$x_r$};
			
			\draw [pattern=custom north west lines, hatchcolor=gray](0,0.5) -- (-0.4,1.1) -- (0.4,1.1) -- (0,0.5);
			\draw (-0.8,1.4) -- (-0.4,1.1);
			\draw (0.8,1.4) -- (0.4,1.1);
			\draw [pattern=custom north west lines, hatchcolor=gray](-1.2,-0.3) -- (-0.4,-0.3) -- (-0.8,-0.9) -- (-1.2,-0.3);
			\draw (-1.6,-0.1) -- (-1.2,-0.3);
			\draw (-0.7,-1.4) -- (-0.8,-0.9);
			\draw [pattern=custom north west lines, hatchcolor=gray](0.4,-0.3) -- (1.2,-0.3) -- (0.8,-0.9) -- (0.4,-0.3);
			\draw (1.6,-0.1) -- (1.2,-0.3);
			\draw (0.7,-1.4) -- (0.8,-0.9);
						\draw (0,-0.8) node {$\cdots$};
			\end{tikzpicture}
		\end{center}
		where $G$ is a Brauer graph connected the (not necessarily distinct) vertices $u_1,\ldots, u_r$ and $\mathfrak{e}_v=1$ for all $v \in \chi_0$. Then $A$ is tame.
	\end{prop}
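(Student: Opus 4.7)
The plan is to reduce the tameness of $A$ to that of a known tame class, by identifying a suitable cut algebra of $A$ as a skewed gentle algebra and then lifting the classification of indecomposables back to $A = T(B)$.

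Since $\chi$ is multiplicity-free, the admissible-cut machinery of Definition~\ref{AdmisCut} and Theorem~\ref{CutAlgebra} applies. My first step is to choose an admissible cut $D \subset Q_1$ tailored to the configuration. At each of the local vertices $v_i$ and $w_i$, the induced cycle $\mathfrak{C}_{v_i}$ (respectively $\mathfrak{C}_{w_i}$) has length two -- it is just the ``there-and-back'' between the triangle $x_i$ and its adjacent truncated edge -- and I would cut the arrow going from the truncated edge back to $x_i$. At the remaining non-truncated vertices (the $u_i$ and the other vertices of $G$), I would cut exactly in the way that yields the gentle algebra associated to the underlying Brauer graph $G$. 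By Theorem~\ref{CutAlgebra}, the resulting cut algebra $B = KQ/\langle I \cup D\rangle$ is almost gentle and satisfies $T(B) \cong A$.

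The key structural step is to verify that, with this choice, $B$ is in fact a skewed gentle algebra in the sense of~\cite{Pena}. This is where the specific shape of $\chi$ is essential: each triangle $x_i$ is attached to the ``gentle part'' of $B$ at only the single vertex $x_i$, and because the two remaining vertices of $x_i$ each contribute only a truncated edge (with no further combinatorics), the surviving arrows into and out of $x_i$ together with the single commutativity relation at $x_i$ inherited from $I$ can be packaged precisely as a ``special pair'' in the skewed gentle presentation. After checking this locally at each $x_i$ and checking global compatibility along $G$, $B$ is skewed gentle, hence tame by the classification of~\cite{Pena}.

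The main obstacle is then the transfer of tameness from $B$ to $A = T(B)$, since trivial extensions do not in general preserve representation type. I would address this by an indirect classification of the indecomposable $A$-modules, adapting the functorial filtration method as developed for clannish algebras in~\cite{Clans} and for skewed gentle algebras in~\cite{Pena}. The cut algebra $B$ is a quotient of $A$, so every indecomposable $A$-module restricts to a $B$-module; conversely, the extra data needed to reconstruct the $A$-structure is controlled by the action of the cut arrows $D$, which corresponds to the $B$-$B$-bimodule summand $D(B) \subset A$. The cut $D$ was chosen precisely so that, for each band or asymmetric band of $B$, this extra data contributes at most one continuous parameter (with only discrete choices for the string-type data). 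Combined with the classification of indecomposable $B$-modules, this gives, in each fixed total dimension $d$, only finitely many one-parameter families of indecomposable $A$-modules, proving that $A$ is tame.
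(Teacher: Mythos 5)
Your overall strategy---choose an admissible cut, recognize the cut algebra as skewed gentle, and deduce tameness of the trivial extension---is the same route the paper takes, and the choice of cut you propose (removing the arrow from the truncated edge back into $x_i$ rather than the one out of $x_i$) is a symmetric variant that should work equally well. However, there are two genuine problems with the proposal as stated.

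First, the cut algebra $B$ is \emph{not} literally a skewed gentle algebra, and the claim that the surviving arrows at $x_i$ ``can be packaged precisely as a special pair'' skips the technical heart of the proof. After cutting, each triangle $x_i$ still carries \emph{two} distinct vertices in the cut quiver corresponding to the two truncated edges, while a skewed gentle algebra would have a single special vertex there equipped with an idempotent loop; with both truncated vertices present, $x_i$ has three arrows on one side (the two from the truncated edges plus the one from the Brauer-graph side), violating the gentle condition outright. The paper closes this gap by explicitly constructing an algebra $C$ with one special vertex per triangle and an idempotent loop $\eta_i$ with $\eta_i^2=\eta_i$, and then proving a non-obvious $K$-algebra isomorphism $B\cong C$ (Lemma~\ref{SGIso}); the isomorphism sends $\widehat\varepsilon_{s(\widehat\gamma_i)}\mapsto\varepsilon_{s(\gamma_i)}+\varepsilon_{s(\zeta_i)}$, $\widehat\eta_i\mapsto\varepsilon_{s(\zeta_i)}$, $\widehat\gamma_i\mapsto\gamma_i+\zeta_i$, and so on, and checking it is well-defined and bijective is real work. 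Your proposal does not furnish this step, and without it the identification of $B$ (or a variant of $C$) as skewed gentle is unjustified.

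Second, the transfer from tameness of $B$ to tameness of $A=T(B)$ does not require reclassifying indecomposable $A$-modules by a new functorial filtration argument: the fact that the trivial extension of a skewed gentle algebra is tame is already known and is cited by the paper (\cite[Remark 4.9]{Pena}). Your last paragraph therefore proposes a large and unnecessary detour; worse, it is presented at a level of vagueness (``this extra data contributes at most one continuous parameter'') that would not survive scrutiny if one actually had to carry it out. Once $B\cong C$ with $C$ skewed gentle is established, the conclusion $T(C)\cong T(B)\cong A$ tame follows directly from Theorem~\ref{CutAlgebra} and the cited result, with no further module-theoretic work.
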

	
	For each $i$, let $\mathfrak{C}_{u_i,\alpha_{i,1}}=\alpha_{i,1}\ldots\alpha_{i,n_i}$, $\mathfrak{C}_{v_i,\beta_i}=\beta_i\gamma_i$ and $\mathfrak{C}_{w_i,\delta_i}=\delta_i\zeta_i$, where $s(\alpha_{i,1})=s(\beta_i)=s(\delta_i)=x_i$. We choose an admissible cut $D$ of $Q$ such that $\beta_i, \delta_i \in D$ for all $i$. Let $Q'$ be the quiver such that $Q'_0=Q_0$ and $Q'_1=Q_1 \setminus D$. Then by Theorem~\ref{CutAlgebra}, the algebra $B=KQ' / (I \cap KQ')$ is almost gentle. For each $i$, there are three possible arrangements of the arrows in $Q'$ at $x_i$, which are as follows.
	\begin{center}
		\begin{tikzpicture}
		\draw (-4,1.5) node {Case 1:};
		\draw (-4,0) node {$x_i$};
		\draw [->](-3.2,0.5) -- (-3.8,0.2);
		\draw (-3.65,0.6) node {$\alpha_{i,n_i}$};
		\draw [->](-4.8,0.5) -- (-4.2,0.2);
		\draw (-4.45,0.6) node {$\gamma_i$};
		\draw [->](-4.8,-0.5) -- (-4.2,-0.2);
		\draw (-4.45,-0.6) node {$\zeta_i$};
		\draw (-2.8,0.6) node {$\cdots$};
		\draw [fill=black] (-5,0.6) ellipse (0.02 and 0.02);
		\draw [fill=black] (-5,-0.6) ellipse (0.02 and 0.02);
		
		\draw (0,1.5) node {Case 2:};
		\draw (0,0) node {$x_i$};
		\draw [->](0.2,-0.2) -- (0.8,-0.5);
		\draw (0.4,-0.6) node {$\alpha_{i,1}$};
		\draw [->](-0.8,0.5) -- (-0.2,0.2);
		\draw (-0.45,0.6) node {$\gamma_i$};
		\draw [->](-0.8,-0.5) -- (-0.2,-0.2);
		\draw (-0.45,-0.6) node {$\zeta_i$};
		\draw (1.2,-0.6) node {$\cdots$};
		\draw [fill=black] (-1,0.6) ellipse (0.02 and 0.02);
		\draw [fill=black] (-1,-0.6) ellipse (0.02 and 0.02);
		
		\draw (4,1.5) node {Case 3:};
		\draw (4,0) node {$x_i$};
		\draw [->](4.8,0.5) -- (4.2,0.2);
		\draw (4.35,0.6) node {$\alpha_{i,n_i}$};
		\draw [->](4.2,-0.2) -- (4.8,-0.5);
		\draw (4.4,-0.6) node {$\alpha_{i,1}$};
		\draw [->](3.2,0.5) -- (3.8,0.2);
		\draw (3.55,0.6) node {$\gamma_i$};
		\draw [->](3.2,-0.5) -- (3.8,-0.2);
		\draw (3.55,-0.6) node {$\zeta_i$};
		\draw (5.2,0.6) node {$\cdots$};
		\draw (5.2,-0.6) node {$\cdots$};
		\draw [fill=black] (3,0.6) ellipse (0.02 and 0.02);
		\draw [fill=black] (3,-0.6) ellipse (0.02 and 0.02);
		\end{tikzpicture}
	\end{center}
	In Case 1, there are no relations in $I \cap KQ'$ involving either $\gamma_i$ or $\zeta_i$. In Cases 2 and 3, we have zero relations $\gamma_i \alpha_{i,1}$ and $\zeta_i \alpha_{i,1}$. Also note the following remark.
	
	\begin{rem} \label{GentleCut}
		Let $Q''$ be a quiver defined by $Q''_0 = Q'_0 \setminus \{s(\zeta_i) : 1 \leq i \leq r\}$ and $Q''_1= Q'_1 \setminus \{\zeta_i : 1 \leq i \leq r \}$. Then $KQ'' / (I \cap KQ'')$ is gentle. This follows from Theorem~\ref{CutAlgebra} and the fact that the vertices $u_1,\ldots,u_r \in \chi_0$ are connected by a Brauer graph.
	\end{rem}
	
	\begin{lem} \label{SGIso}
		Let $B$ be the cut algebra defined above and let $Q''$ be the quiver defined in Remark~\ref{GentleCut}. Let $C= KQ'''/I'''$, where $Q'''_0 = Q''_0$, $Q'''_1 = Q''_1 \cup \{\eta_1,\ldots,\eta_r\}$ with $s(\eta_i)=s(\gamma_i)=e(\eta_i)$ for all $1 \leq i \leq r$, and $I''' = \langle (I \cap KQ'') \cup \{ \eta^2_i -\eta : 1 \leq i \leq r\}\rangle$. Then $B \cong C$.
	\end{lem}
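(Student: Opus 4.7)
The plan is to construct an explicit $K$-algebra isomorphism $\psi\colon C\to B$ by exploiting that the relation $\eta_i^2=\eta_i$ forces $\eta_i$ and $\varepsilon_{s(\gamma_i)}-\eta_i$ to be a pair of orthogonal idempotents of $C$ summing to $\varepsilon_{s(\gamma_i)}$. Thus $\psi$ should ``split'' the vertex $s(\gamma_i)$ of $C$ along this idempotent decomposition to recover the two distinct pendant vertices $s(\gamma_i)$ and $s(\zeta_i)$ of $Q'_0$ that are present in $B$ but have been merged in $Q''_0$. To lighten notation, write $y_i:=s(\gamma_i)$ and $z_i:=s(\zeta_i)$.

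First I would define $\psi$ on generators: $\psi(\varepsilon_v)=\varepsilon_v$ for $v\in Q''_0\setminus\{y_i:1\le i\le r\}$, $\psi(\varepsilon_{y_i})=\varepsilon_{y_i}+\varepsilon_{z_i}$, $\psi(\eta_i)=\varepsilon_{y_i}$, $\psi(\gamma_i)=\gamma_i+\zeta_i$, and $\psi(\alpha)=\alpha$ for every remaining arrow of $Q''_1$. To check that $\psi$ descends to $C=KQ'''/I'''$, one verifies the generators of $I'''$: the relation $\eta_i^2-\eta_i$ maps to $\varepsilon_{y_i}^2-\varepsilon_{y_i}=0$; the zero relations of $I\cap KQ''$ that involve $\gamma_i$, namely $\gamma_i\alpha_{i,1}$ in Cases~2 and~3, map to $(\gamma_i+\zeta_i)\alpha_{i,1}=\gamma_i\alpha_{i,1}+\zeta_i\alpha_{i,1}=0$ in $B$; and every other relation of $I\cap KQ''$ is mapped identically to an element of $I\cap KQ'$ and hence vanishes in $B$.

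Surjectivity of $\psi$ is immediate: $\varepsilon_{y_i}=\psi(\eta_i)$, $\varepsilon_{z_i}=\psi(\varepsilon_{y_i}-\eta_i)$, $\gamma_i=\psi(\eta_i\gamma_i)$, $\zeta_i=\psi(\gamma_i-\eta_i\gamma_i)$, while all other generators of $B$ are hit on the nose. Injectivity then follows from a dimension count: at each $y_i$ the four-dimensional local basis $\{\varepsilon_{y_i},\eta_i,\gamma_i,\eta_i\gamma_i\}$ of $C$ maps to $\{\varepsilon_{y_i}+\varepsilon_{z_i},\varepsilon_{y_i},\gamma_i+\zeta_i,\gamma_i\}$, which is visibly linearly independent and spans the same four-dimensional local subspace $\mathrm{span}\{\varepsilon_{y_i},\varepsilon_{z_i},\gamma_i,\zeta_i\}$ of $B$; away from the pendants $\psi$ is the identity on matching bases.

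The main obstacle is ensuring that no relation of $I\cap KQ''$ involving $\gamma_i$ or $\eta_i$ is overlooked, which is what forces the three-case local analysis at $x_i$. This reduces to a routine check because, after the admissible cut chosen so that $\beta_i,\delta_i\in D$, the commutativity relations $\beta_i\gamma_i-\delta_i\zeta_i$ and $(\alpha_{i,1}\cdots\alpha_{i,n_i})-\beta_i\gamma_i$, together with the truncated-edge relations at the free endpoints of the pendant $2$-gons, each involve at least one of the cut arrows $\beta_i$, $\delta_i$, or $\alpha_{i,j_i}$ and so fail to lie in $KQ'$ at all; this leaves only the zero relations $\gamma_i\alpha_{i,1}$ (present exactly in Cases~2 and~3, when $\alpha_{i,1}\in Q'_1$) to check, and these are handled by the calculation above.
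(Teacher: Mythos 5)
Your proof is correct and follows essentially the same strategy as the paper's: define an explicit linear map $\psi$ (the paper calls it $f$) sending $\eta_i$ to one of the two primitive idempotents sitting over the image of $\varepsilon_{s(\gamma_i)}$, check multiplicativity on the generating relations, and then conclude bijectivity by a local dimension count on the spaces $\varepsilon_v C$ and $\varepsilon_v B$. The one cosmetic difference is the labelling: you send $\eta_i\mapsto\varepsilon_{s(\gamma_i)}$ and hence $\eta_i\gamma_i\mapsto\gamma_i$, while the paper sends $\eta_i\mapsto\varepsilon_{s(\zeta_i)}$ and $\eta_i\gamma_i\mapsto\zeta_i$; both choices exchange the roles of the two pendant $2$-gons at $v_i$ and $w_i$ and produce isomorphic images, so either works.
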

	
	\begin{proof}
		For the purposes of clarity, we will write each element $\xi \in C$ as $\widehat{\xi}$. For each $i$, there are three possible arrangements of the arrows in $Q'''$ at $x_i$ and $s(\widehat{\gamma}_i)$, which are as follows.
		\begin{center}
			\begin{tikzpicture}
			\draw (-5,1.5) node {Case 1:};
			\draw (-4.5,0) node {$x_i$};
			\draw [->](-3.7,0.5) -- (-4.3,0.2);
			\draw (-4.2,0.65) node {$\widehat{\alpha}_{i,n_i}$};
			\draw [->](-5.4,0) -- (-4.8,0);
			\draw (-5.1,0.25) node {$\widehat{\gamma}_i$};
			\draw [->](-5.7702,0.1928) arc (39.9963:320:0.3);
			\draw (-6.6,0) node {$\widehat{\eta}_i$};
			\draw (-3.3,0.6) node {$\cdots$};
			\draw [fill=black] (-5.6,0) ellipse (0.02 and 0.02);
			
			\draw (-0.5,1.5) node {Case 2:};
			\draw (0,0) node {$x_i$};
			\draw [->](0.2,-0.2) -- (0.8,-0.5);
			\draw (0.4,-0.6) node {$\widehat{\alpha}_{i,1}$};
			\draw [->](-0.9,0) -- (-0.3,0);
			\draw (-0.6,0.25) node {$\widehat{\gamma}_i$};
			\draw [->](-1.2702,0.1928) arc (39.9963:320:0.3);
			\draw (-2.1,0) node {$\widehat{\eta}_i$};
			\draw (1.2,-0.6) node {$\cdots$};
			\draw [fill=black] (-1.1,0) ellipse (0.02 and 0.02);
			
			\draw (4,1.5) node {Case 3:};
			\draw (4.5,0) node {$x_i$};
			\draw [->](5.3,0.5) -- (4.7,0.2);
			\draw (4.8,0.65) node {$\widehat{\alpha}_{i,n_i}$};
			\draw [->](4.7,-0.2) -- (5.3,-0.5);
			\draw (4.9,-0.6) node {$\widehat{\alpha}_{i,1}$};
			\draw [->](3.6,0) -- (4.2,0);
			\draw (3.9,0.25) node {$\widehat{\gamma}_i$};
			\draw [->](3.2298,0.1928) arc (39.9963:320:0.3);
			\draw (2.4,0) node {$\widehat{\eta}_i$};
			\draw (5.7,0.6) node {$\cdots$};
			\draw (5.7,-0.6) node {$\cdots$};
			\draw [fill=black] (3.4,0) ellipse (0.02 and 0.02);
			\end{tikzpicture}
		\end{center}
		In Cases 2 and 3, there exists a relation $\widehat{\gamma}_i\widehat{\alpha}_{i,1}$. In all three cases, we have a relation $\widehat{\eta}^2_i-\widehat{\eta}_i$. Define a linear map $f\colon C \rightarrow B$ as follows.
		\begin{equation*}
			f(\widehat{\xi})=
			\begin{cases}
				\varepsilon_{s(\gamma_i)} + \varepsilon_{s(\zeta_i)}	&	\text{if } 
				\widehat{\xi} = \widehat{\varepsilon}_{s(\widehat{\gamma}_i)} \text{ for some } 1 \leq i \leq r,	\\
				\varepsilon_{s(\zeta_i)}	&	\text{if } \widehat{\xi}=\widehat{\eta}_i \text{ for some } 1 \leq i \leq r,	\\
				\gamma_i+\zeta_i		&	\text{if } \widehat{\xi}=\widehat{\gamma}_i \text{ for some } 1 \leq i \leq r,	\\
				\zeta_i 					&	\text{if } \widehat{\xi}=\widehat{\eta}_i\widehat{\gamma}_i\text{ for some } 1 \leq i \leq r,	\\
				\xi						&	\text{otherwise}.
			\end{cases}
		\end{equation*}

		Note the following for each $i$.
		\begin{center}
			\begin{tabular}{c c c c c c c}
				$f(\widehat{\gamma}_i \widehat{\varepsilon}_{x_i})$				&$=$&
				$f(\widehat{\gamma}_i)$												&$=$&
				$\gamma_i+\zeta_i$														&$=$&
				$f(\widehat{\gamma}_i)f(\widehat{\varepsilon}_{x_i})$				\\
				$f(\widehat{\varepsilon}_{s(\widehat{\gamma}_i)}\widehat{\gamma}_i)$	&$=$&
				$f(\widehat{\gamma}_i)$												&$=$&
				$\gamma_i+\zeta_i$ 													&$=$&
				$f(\widehat{\varepsilon}_{s(\widehat{\gamma}_i)})f(\widehat{\gamma}_i)$	\\
				$f(\widehat{\eta}_i \widehat{\varepsilon}_{s(\widehat{\gamma}_i)})$		&$=$&
				$f(\widehat{\eta}_i)$												&$=$&
				$\varepsilon_{s(\zeta_i)}$											&$=$&
				$f(\widehat{\eta}_i)f(\widehat{\varepsilon}_{s(\widehat{\gamma}_i)})$		\\
				$f(\widehat{\varepsilon}_{s(\widehat{\gamma}_i)}\widehat{\eta}_i)$			&$=$&
				$f(\widehat{\eta}_i)$												&$=$&
				$\varepsilon_{s(\zeta_i)}$											&$=$&
				$f(\widehat{\varepsilon}_{s(\widehat{\gamma}_i)})f(\widehat{\eta}_i)$		\\
				$f(\widehat{\eta}_i\widehat{\gamma}_i)$							&$=$&
																					&&
				$\zeta_i$ 															&$=$&
				$f(\widehat{\eta}_i)f(\widehat{\gamma}_i)$						\\
				$f(\widehat{\gamma}_i\widehat{\alpha}_{i,1})$					&$=$&
																					&&
				0 																	&$=$&
				$f(\widehat{\gamma}_i)f(\widehat{\alpha}_{i,1})$
			\end{tabular}
		\end{center}
		Trivially, we also have $f(\widehat{\xi}_1\widehat{\xi}_2)=\xi_1 \xi_2=f(\widehat{\xi}_1)f(\widehat{\xi}_2)$ for all
		\begin{equation*}
			\widehat{\xi}_1,\widehat{\xi}_2 \in C \setminus \{\widehat{\varepsilon}_{s(\widehat{\gamma}_i)},\widehat{\eta}_i,\widehat{\gamma}_i, \widehat{\eta}_i\widehat{\gamma}_i  : 1 \leq i \leq r\}.
		\end{equation*}
		Thus, $f$ is well-defined as an algebra homomorphism.
		
		To show $f$ is an isomorphism, note that for each vertex $y \in Q'''_0$, the image of the vector subspace $\widehat{\varepsilon}_{y} C$ of $C$ under $f$ is
		\begin{equation*}
			f(\widehat{\varepsilon}_y C)=
			\begin{cases}
				\varepsilon_y B	&	\text{if } y \in Q'''_0 \setminus \{s(\widehat{\gamma}_i) : 1 \leq i \leq r\},	\\
				V_i					&	\text{if } y \in \{s(\widehat{\gamma}_i) : 1 \leq i \leq r\},
			\end{cases}
		\end{equation*}
		where each $V_i=\langle \varepsilon_{s(\gamma_i)}+\varepsilon_{s(\zeta_i)}, \gamma_i + \zeta_i, \varepsilon_{s(\zeta_i)}, \zeta_i \rangle$ is considered as a vector subspace of $B$. By the definition of $B$ and $C$, we have $\dim_K \widehat{\varepsilon}_y C = \dim_K \varepsilon_y B$ for all $y \in Q'''_0 \setminus \{s(\widehat{\gamma}_i) : 1 \leq i \leq r\}$. Since $B$ and $C$ are finite dimensional, this implies that each subspace $\widehat{\varepsilon}_y C$ of $C$ with $y \in Q'''_0 \setminus \{s(\widehat{\gamma}_i) : 1 \leq i \leq r\}$ is bijectively mapped onto the corresponding subspace $\varepsilon_y B$ of $B$ under $f$. However, also note that $V_i\cong \varepsilon_{s(\gamma_i)}B \oplus \varepsilon_{s(\zeta_i)}B$ for all $i$, and $\dim_K\varepsilon_{s(\gamma_i)}B=\dim_K\varepsilon_{s(\zeta_i)}B=2$. So $\dim_K \widehat{\varepsilon}_{s(\widehat{\gamma}_i)} C=\dim_K V_i$ for all $i$. Hence, each subspace $\widehat{\varepsilon}_{s(\widehat{\gamma}_i)} C$ of $C$ is bijectively mapped onto the subspace $\varepsilon_{s(\gamma_i)}B \oplus \varepsilon_{s(\zeta_i)}B$ of $B$. Now
		\begin{equation*}
			B=\bigoplus_{y \in Q'_0} \varepsilon_y B \qquad \text{and} \qquad C=\sum_{\widehat{y} \in Q'''_0} \widehat{\varepsilon}_y C.
		\end{equation*}
		Since $f$ is bijective on each subspace $\widehat{\varepsilon}_y C$ and $f(\widehat{\varepsilon}_y C)\cap f(\widehat{\varepsilon}_z C)=\emptyset$ for all $y \neq z$, $C$ is actually a direct sum of subspaces $\widehat{\varepsilon}_y C$. Moreover, $f$ is bijective on the whole of $C$, and hence is an isomorphism.
	\end{proof}
	
	We may now prove Proposition~\ref{MultFreeCase}
	\begin{proof}
		By Remark~\ref{GentleCut}, the quiver and relations of the algebra $C$ given in Lemma~\ref{SGIso} satisfy the axioms of a gentle algebra (with the exception of the idempotent relations on the loops $\eta_i$). In fact, $C$ is a skewed gentle algebra in the sense of \cite{Pena}. The trivial extension algebra of a skewed gentle algebra is tame by \cite[Remark 4.9]{Pena}. But since $B$ is a cut algebra obtained from our original algebra $A$, we have $T(C) \cong T(B) \cong A$ by Lemma~\ref{SGIso} and Theorem~\ref{CutAlgebra}. Thus, $A$ is isomorphic to the trivial extension of a skewed gentle algebra and is hence tame.
	\end{proof}
	
	We now wish to generalise to the case where the Brauer configuration is not multiplicity-free.
	\begin{thm} \label{TameQC}
		Let $A=KQ/I$ be a Brauer configuration algebra associated to a Brauer configuration $\chi$ of the form
		\begin{center}
			\begin{tikzpicture}[scale=1.5, ]%baseline=(o.base)
			\draw [dashed] (0,0) ellipse (0.5 and 0.5);
			\draw (0,0) node {$G$};
			\draw [fill=black] (0,0.5) ellipse (0.03 and 0.03);
			\draw (-0.25,0.6) node {$u_1$};
			\draw [fill=black] (-0.4,-0.3) ellipse (0.03 and 0.03);
			\draw (-0.667,-0.1665) node {$u_2$};
			\draw [fill=black] (0.4,-0.3) ellipse (0.03 and 0.03);
			\draw (0.6665,-0.1665) node {$u_r$};
			
			\draw [fill=black] (0.4,1.1) ellipse (0.03 and 0.03);
			\draw (0.2665,1.25) node {$v_1$};
			\draw [fill=black] (0.8,1.4) ellipse (0.03 and 0.03);
			\draw (0.65,1.55) node {$v'_1$};
			\draw [fill=black] (-0.4,1.1) ellipse (0.03 and 0.03);
			\draw (-0.3165,1.2634) node {$v''_1$};
			\draw [fill=black] (-0.8,1.4) ellipse (0.03 and 0.03);
			\draw (-0.6299,1.55) node (o) {$v'''_1$};
			\draw [fill=black] (-1.2,-0.3) ellipse (0.03 and 0.03);
			\draw (-1.25,-0.45) node {$v_2$};
			\draw [fill=black] (-1.6,-0.1) ellipse (0.03 and 0.03);
			\draw (-1.7,-0.3) node {$v'_2$};
			\draw [fill=black] (-0.8,-0.9) ellipse (0.03 and 0.03);
			\draw (-1,-0.95) node {$v''_2$};
			\draw [fill=black] (-0.7,-1.4) ellipse (0.03 and 0.03);
			\draw (-0.9335,-1.45) node {$v'''_2$};
			\draw [fill=black] (0.8,-0.9) ellipse (0.03 and 0.03);
			\draw (1,-1) node {$v_r$};
			\draw [fill=black] (0.7,-1.4) ellipse (0.03 and 0.03);
			\draw (0.9,-1.4) node {$v'_r$};
			\draw [fill=black] (1.2,-0.3) ellipse (0.03 and 0.03);
			\draw (1.3,-0.5) node {$v''_r$};
			\draw [fill=black] (1.6,-0.1) ellipse (0.03 and 0.03);
			\draw (1.8,-0.3) node {$v'''_r$};
			
			\draw [pattern=north west lines](0,0.5) -- (-0.4,1.1) -- (0.4,1.1) -- (0,0.5);
			\draw (-0.8,1.4) -- (-0.4,1.1);
			\draw (0.8,1.4) -- (0.4,1.1);
			\draw [pattern=north west lines](-1.2,-0.3) -- (-0.4,-0.3) -- (-0.8,-0.9) -- (-1.2,-0.3);
			\draw (-1.6,-0.1) -- (-1.2,-0.3);
			\draw (-0.7,-1.4) -- (-0.8,-0.9);
			\draw [pattern=north west lines](0.4,-0.3) -- (1.2,-0.3) -- (0.8,-0.9) -- (0.4,-0.3);
			\draw (1.6,-0.1) -- (1.2,-0.3);
			\draw (0.7,-1.4) -- (0.8,-0.9);
			\draw (0,-0.8) node {$\cdots$};
			\end{tikzpicture}
		\end{center}
		where $G$ is a Brauer graph connecting the (not necessarily distinct) vertices $u_1,\ldots,u_r$ and $\mathfrak{e}_{v_i}=\mathfrak{e}_{v'_i}=\mathfrak{e}_{v''_i}=\mathfrak{e}_{v'''_i}=1$ for all $i$. Then $A$ is tame.
	\end{thm}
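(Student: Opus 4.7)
The plan is to extend the proof of Proposition \ref{MultFreeCase} to the present setting, in which the only new feature is that the vertices of the Brauer graph $G$ are permitted to have multiplicity greater than one, while the outer vertices $v_i, v'_i, v''_i, v'''_i$ attached to the triangles $x_i$ remain multiplicity-free. The high-level strategy is identical: perform an admissible cut to reduce to a cut algebra $B$, exhibit an isomorphism of $B$ with a ``skewed'' version $C$ of a simpler algebra, and conclude by recognising $A$ as the trivial extension of $C$. The main obstruction is that Definition \ref{AdmisCut} and Theorem \ref{CutAlgebra} are stated only for multiplicity-free Brauer configurations, so care is needed to generalise them to handle the multiplicities on $G$.

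First, I would show that the cut construction and its main properties extend to Brauer configurations that are \emph{partially} multiplicity-free, in the following sense: if the cut $D$ consists only of arrows belonging to cycles $\mathfrak{C}_v$ at multiplicity-one vertices $v$, then the analogue of Theorem \ref{CutAlgebra} still produces a basic algebra $B = KQ'/(I \cap KQ')$ with $A \cong T(B)$, where $Q'_1 = Q_1 \setminus D$. Choose the cut $D = \{\beta_i, \delta_i : 1 \leq i \leq r\}$ exactly as in the proof of Proposition \ref{MultFreeCase}; these arrows originate from the triangles $x_i$ at the multiplicity-one outer vertices. Since no arrow of any cycle $\mathfrak{C}_u$ for $u$ a vertex of $G$ is removed, the cyclic relations $(\mathfrak{C}_u)^{\mathfrak{e}_u}$ survive in $B$. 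Consequently, $B$ is no longer almost gentle, but it is a special biserial algebra whose ``core'' $B_G$ (the subalgebra generated by idempotents at vertices of $Q$ corresponding to polygons of $G$) is precisely the Brauer graph algebra of $G$ with its original multiplicities.

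Second, I would prove an analogue of Lemma \ref{SGIso} producing an isomorphism $B \cong C$, where $C$ is obtained from $B$ by identifying each pair of vertices $(s(\gamma_i), s(\zeta_i))$ and adjoining idempotent-type loops $\eta_i$ with $\eta_i^2 = \eta_i$. The isomorphism constructed in the proof of Lemma \ref{SGIso} is entirely local at the attachment points of the triangles, and its verification of multiplicativity does not interact with the relations originating from $G$. The same explicit definition therefore yields a bijection $C \to B$ here, even in the presence of multiplicities. The resulting algebra $C$ extends the Brauer graph algebra of $G$ by clannish loops at the boundary vertices, and so is a ``skewed special biserial'' algebra generalising the skewed gentle algebras of \cite{Pena}.

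The principal obstacle is the final step: establishing that $T(C) \cong A$ is tame. In the multiplicity-free case, one appeals directly to \cite[Remark 4.9]{Pena}; here, the analogous result for skewed special biserial algebras is not available in the literature in the exact form needed. I would argue that $C$ nevertheless belongs to the class of clannish algebras in the sense of \cite{Clans}, whose indecomposable modules admit a functorial-filtration classification into string-type and band-type modules together with finitely many symmetric-string and asymmetric-string modules corresponding to the loops $\eta_i$. This classification yields only finitely many one-parameter families in each dimension, so $C$ is tame. Standard results on trivial extensions of tame symmetric algebras then give that $A \cong T(C)$ is tame, completing the proof.
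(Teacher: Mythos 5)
Your proposal takes a genuinely different route from the paper, but it contains a gap at the very first step that I do not see how to repair.

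The paper does \emph{not} attempt to extend the admissible-cut machinery to Brauer configurations that carry multiplicities. Instead, it builds an auxiliary \emph{multiplicity-free} Brauer configuration $\chi'$ by gluing a small $2$-cycle $p_i,q_i$ onto each vertex $t_i$ with $\mathfrak{e}_{t_i}>1$ and resetting its multiplicity to $1$; the algebra $A'$ of $\chi'$ is tame by Proposition~\ref{MultFreeCase}. It then defines a map $F\colon \Ob(\uMod*A)\to\Ob(\Mod*A')$ that factors each arrow $\varphi_{\alpha_i}$ through its image and routes it through the new $2$-cycle, proves that $F$ preserves indecomposability and isomorphism classes (Lemmas~\ref{IndFunctor}, \ref{InjFunctor}), and shows how parametrizing bimodules for $A'$ pull back to $A$ (Lemma~\ref{FamFunctor}). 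Tameness of $A$ is then deduced by a dimension-bounded comparison with the tame algebra $A'$. The whole argument deliberately avoids the trivial-extension formalism because, as explained below, that formalism breaks down when multiplicities are present.

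Your proposal instead tries to cut only at the multiplicity-one vertices $v_i,w_i$ and keep the cycles $\mathfrak{C}_{u}$ at the vertices $u$ of $G$ intact, claiming that one still gets $T(B)\cong A$. This is false. The identity $T(B)\cong A$ in Theorem~\ref{CutAlgebra} hinges on removing exactly one arrow from \emph{every} cycle $\mathfrak{C}_v$, and it is proved only in the multiplicity-free setting. If no arrow from $\mathfrak{C}_u$ is removed for a vertex $u$ with $\mathfrak{e}_u>1$, then $B$ still contains an oriented cycle at $u$ and the dimension of $T(B)=B\oplus D(B)$ at the polygons around $u$ overshoots that of $A$. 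The simplest counterexample already appears for one edge with a single vertex of multiplicity $2$: there $A=K[\alpha]/(\alpha^3)$, and if $\alpha$ is left uncut then $B=A$ and $T(B)$ has dimension $6\neq 3$, while if $\alpha$ is cut then $B=K$ and $T(B)=K[\alpha]/(\alpha^2)\neq A$. In either case the trivial extension does not recover $A$. This is precisely why the author constructs the covering $\chi'\to\chi$ instead of cutting.

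A second, independent gap is the final assertion that ``standard results on trivial extensions of tame symmetric algebras then give that $A\cong T(C)$ is tame.'' There is no such general transfer: $T(-)$ does not preserve tameness, and \cite[Remark 4.9]{Pena} is the sharp result available only for skewed \emph{gentle} algebras, which your $C$ need not be once the cyclic relations from $G$ remain. Even granting your algebra $C$ were clannish (which would require verification of the clannish axioms, not merely the presence of idempotent loops), tameness of $C$ does not give tameness of $T(C)$ without a dedicated argument. Both gaps would have to be closed before the proposal could replace the paper's proof.
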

	
	From the Brauer configuration $\chi$ in Theorem~\ref{TameQC}, we will construct a new (multiplicity-free) Brauer configuration $\chi'$ in the following way. Let
	\begin{equation*}
		V_{\chi}= \{ t \in \chi_0 : \mathfrak{e}_t>1\}= \{t_1, \ldots, t_n\}
	\end{equation*}
	Let $Y_{\chi}=\{y^{t_1}_1,\ldots,y^{t_n}_n\}$ be a set of germs of polygons in $\chi$ such that each germ is incident to a vertex in $V_\chi$ and no two germs in $Y_\chi$ are incident to the same vertex.  For each $i$, let $z^{t_i}_i$ be the successor to $y^{t_i}_i$. Define a configuration $\chi'=(\chi'_0,\chi'_1,\kappa')$ by
	\begin{align*}
		\chi'_0 &= \chi_0 \cup \{t'_1 \ldots t'_n\} \\
		\chi'_1 &= \chi_1 \cup \{p_1,q_1, \ldots, p_n,q_n\}
	\end{align*}
	where each $p_i = \{p^{t_i}_i, p^{t'_i}_i\}$ and each $q_i=\{q^{t_i}_i, q^{t'_i}_i\}$. The definition of $\kappa'$ is obtained implicitly from the notation. To give $\chi'$ the structure of a Brauer configuration, define the multiplicity function to be $\mathfrak{e}'\equiv 1$. Define the cyclic ordering at each vertex in $\chi'$ to be the same as the vertices in $\chi$, except for those in $V_\chi$.  The cyclic ordering in $\chi'$ at each $t_i \in V_{\chi}$ is defined to be the same as in $\chi$, except for the segment
	\begin{equation*}
		\ldots, y^{t_i}_i, p^{t_i}_i, q^{t_i}_i, z^{t_i}_i, \ldots
	\end{equation*}
	in $\chi'$. Finally, each $q^{t'_i}_i$ is defined to be the successor to $p^{t'_i}_i$ in $\chi'$.
	
	Informally, $\chi'$ is obtained from $\chi$ by adding a simple cycle of length 2 to each vertex in $\chi$ of multiplicity strictly greater than 1, and then by setting the multiplicity of each such vertex to 1. Since $\chi'$ is multiplicity-free, the associated Brauer configuration algebra $A'=KQ'/I'$ is tame by Proposition~\ref{MultFreeCase}. The next step is to construct a function $F:\Ob(\uMod*A) \rightarrow \Ob(\Mod*A')$, where $\uMod*A$ is the projectively stable module category of $A$.
	
	For each $i$, let $\alpha_i$ be the arrow in $Q$ such that $\widehat{s}(\alpha)=y^{t_i}_i$. Note that $Q_0 \subset Q'_0$ and $Q_1 \setminus \{\alpha_1,\ldots, \alpha_n\} \subset Q'_1$. For each $i$, let $\beta_i$, $\gamma_i$, $\delta_i$, $\zeta_i$ and $\eta_i$ be the arrows in $Q'$ such that
	\begin{align*}
		\widehat{s}(\beta_i)&=y^{t_i}_i,		&	\widehat{s}(\zeta_i)&=p^{t'_i}_i, \\
		\widehat{s}(\gamma_i)&=p^{t_i}_i,	&	\widehat{s}(\eta_i)&=q^{t'_i}_i, \\
		\widehat{s}(\delta_i)&=q^{t_i}_i.
	\end{align*}
	Let $M=(M_x, \varphi_\xi)_{x \in Q_0, \xi \in Q_1}$ be a $K$-representation $Q$. For each $i$, write $\varphi_{\alpha_i}=\theta_i\psi_i$, where $\psi_i\colon M_{y_i} \rightarrow \im \varphi_{\alpha_i}$ is a surjective map and $\theta_i\colon \im \varphi_{\alpha_i} \rightarrow M_{z_i}$ is an inclusion map. Define a $K$-representation $FM=((FM)_{x'}, \varphi'_{\xi'})_{x' \in Q'_0, \xi' \in Q'_1}$ as follows.
	\begin{align*}
		(FM)_{x'}&=
		\begin{cases}
			M_{x'} 					&	\text{if } x' \in Q_0	\\
			\im \varphi_{\alpha_i}	&	\text{if } x'\in\{p_i,q_i : 1 \leq i \leq |V_\chi|\}.
		\end{cases} \\
		\varphi'_{\xi'}&=
		\begin{cases}
			\varphi_{\xi'} 	&	\text{if } \xi' \in Q_1 \setminus \{\alpha_i\in Q_1 : 1 \leq i \leq |V_\chi|\}	\\
			\psi_i		&	\text{if } \xi' = \beta_i \text{ for some } i \\
			\theta_i		&	\text{if } \xi' = \delta_i \text{ for some } i \\
			\id			&	\text{if } \xi' = \eta_i \text{ for some } i \\
			0			&	\text{if } \xi' = \{\gamma_i,\zeta_i: 1 \leq i \leq |V_\chi|\}.
		\end{cases}
	\end{align*}
	
	The reason for defining the function $F$ on $\uMod*A$ is that $FP$ does not respect the relations of $A'$ for some projective-injective $P \in \Mod* A$. Specifically, if $x$ is a polygon in $\chi$ such that $x$ is incident to some vertex $t_i \in V_\chi$, then $F(P(x))$ is not a module in $\Mod* A'$. For any non-projective $A$-module $M$, note that $M$ is an $A'$-module, since the action of each arrow $\gamma_i$ on $M$ is zero, and thus $FM$ respects the relations in $A'$. This is made clearer in the following example. 
	
	\begin{exam} \label{IndFuncExam}
		Consider the Brauer configuration algebras $A$ and $A'$ associated to the following respective Brauer configurations.
		\begin{center}
			\begin{tikzpicture}[scale=1.5, baseline=(o.base)]%baseline=(o.base)
			\draw (-6,0.5) node {$\chi:$};
			\draw [fill=black] (-4.9,0.3) ellipse (0.03 and 0.03);
			\draw (-4.9,0.5) node {$t_1$};
			\draw [fill=black] (-3.7,0.3) ellipse (0.03 and 0.03);
			\draw (-3.7,0.5) node {$t_2$};
			
			\draw (-4.9,0.3) -- (-4.3,0.5) --(-3.7,0.3);
			\draw (-4,0.2665) node {$y_2$};
			
			%[pattern=custom north west lines, hatchcolor=gray]
			%[pattern=north west lines]
			\draw [pattern=north west lines](-4.3,0.5) -- (-4.6,1) -- (-4,1) -- (-4.3,0.5);
			\draw (-4.9,1.2) -- (-4.6,1);
			\draw (-3.7,1.2) -- (-4,1);
			\draw [pattern=north west lines](-5.4,0.6) -- (-4.9,0.3) -- (-5.4,0) -- (-5.4,0.6);
			\draw (-5.6,0.9) -- (-5.4,0.6);
			\draw (-5.6,-0.3) -- (-5.4,0);
			\draw (-5.2,0.3) node {$y_1$};
			\draw [pattern=north west lines](-3.7,0.3) -- (-3.2,0.6) -- (-3.2,0) -- (-3.7,0.3);
			\draw (-3,0.9) -- (-3.2,0.6);
			\draw (-3,-0.3) -- (-3.2,0);

			\draw (-2,0.5) node {$\chi':$};
			\draw [fill=black] (-0.9,0.3) ellipse (0.03 and 0.03);
			\draw (-0.9,0.5) node {$t_1$};
			\draw [fill=black] (-0.7,-0.2) ellipse (0.03 and 0.03);
			\draw (-0.6,-0.4) node {$t'_1$};
			\draw [fill=black] (0.3,0.3) ellipse (0.03 and 0.03);
			\draw (0.3,0.5) node {$t_2$};
			\draw [fill=black] (0.1,-0.2) ellipse (0.03 and 0.03);
			\draw (0,-0.4) node {$t'_2$};
			
			\draw (-0.9,0.3) -- (-0.3,0.5) --(0.3,0.3);
			\draw (-0.1,0.3) node {$y_2$};
			
			\draw [pattern=north west lines](-0.3,0.5) -- (-0.6,1) -- (0,1) -- (-0.3,0.5);
			\draw (-0.9,1.2) -- (-0.6,1);
			\draw (0.3,1.2) -- (0,1);
			\draw [pattern=north west lines](-1.4,0.6) -- (-0.9,0.3) -- (-1.4,0) -- (-1.4,0.6);
			\draw (-1.6,0.9) -- (-1.4,0.6);
			\draw (-1.6,-0.3) -- (-1.4,0);
			\draw (-1.2,0.3) node {$y_1$};
			\draw [pattern=north west lines](0.3,0.3) -- (0.8,0.6) -- (0.8,0) -- (0.3,0.3);
			\draw (1,0.9) -- (0.8,0.6);
			\draw (1,-0.3) -- (0.8,0);
			
			\draw  plot[smooth, tension=.7] coordinates {(-0.9,0.3) (-0.9,0) (-0.7,-0.2)};
			\draw (-1,-0.1) node {$p_1$};
			\draw  plot[smooth, tension=.7] coordinates {(-0.9,0.3) (-0.7,0.1) (-0.7,-0.2)};
			\draw (-0.5,0.1) node {$q_1$};
			
			\draw  plot[smooth, tension=.7] coordinates {(0.3,0.3) (0.1,0.1) (0.1,-0.2)};
			\draw (-0.1,0) node {$p_2$};
			\draw  plot[smooth, tension=.7] coordinates {(0.3,0.3) (0.3,0) (0.1,-0.2)};
			\draw (0.4,-0.2) node {$q_2$};

			\end{tikzpicture}
		\end{center}
		where $\mathfrak{e}_{t_1},\mathfrak{e}_{t_2}>1$ in $\chi$ and $\mathfrak{e}'\equiv 1$ in $\chi'$. Note that $\chi'$ is obtained from $\chi$ using the process outlined above (after Theorem~\ref{TameQC}) with the sets $V_\chi=\{t_1,t_2\}$ and $Y_\chi=\{y^{t_1}_1, y^{t_2}_2\}$. Consider the following family of $A$-modules given by $K$-representations
		\begin{center}
			\begin{tikzpicture}[scale=1.5, ]%baseline=(o.base)
			\draw (-1.1,0.2) node {$M_\lambda:$};
			\draw [->](-0.1,0.2) -- (-0.3,0.4);
			\draw [->](-0.4,0.3) -- (-0.2,0.1);
			\draw (-0.5,0.5) node {$0$};
			\draw [<-](-0.1,-0.2) -- (-0.3,-0.4);
			\draw [<-](-0.4,-0.3) -- (-0.2,-0.1);
			\draw (-0.5,-0.5) node {$0$};
			\draw (0,0) node {$K$};
			\draw [->](0.2,-0.05) -- (0.8,-0.05);
			\draw (0.5,-0.25) node {$\left(\begin{smallmatrix} 0 \\ \lambda \end{smallmatrix}\right)$};
			\draw [->](0.8,0.05) -- (0.2,0.05);
			\draw (0.5,0.25) node {$\left(\begin{smallmatrix} 1 & 0 \end{smallmatrix}\right)$};
			\draw (1,0) node {$K^2$};
			\draw [->](1.2,0) -- (1.8,0);
			\draw (1.5,-0.25) node {$\left(\begin{smallmatrix} 1 & 0 \\ 0 & 0 \end{smallmatrix}\right)$};
			\draw (2,0) node {$K^2$};
			\draw [->](1.85,0.15) -- (1.65,0.45);
			\draw (2.5,-0.25) node {$\left(\begin{smallmatrix} 0 & 1 \end{smallmatrix}\right)$};
			\draw (1.5,0.6) node {$K$};
			\draw [->](1.35,0.45) -- (1.15,0.15);
			\draw (1,0.4) node {$\left(\begin{smallmatrix} 0 \\ 1 \end{smallmatrix}\right)$};
			\draw [->](1.4,0.8) -- (1.2,1);
			\draw [->](1.1,0.9) -- (1.3,0.7);
			\draw (1,1.1) node {$0$};
			\draw [->](1.8,1) -- (1.6,0.8);
			\draw [->](1.7,0.7) -- (1.9,0.9);
			\draw (2,1.1) node {$0$};
			\draw [->](2.2,-0.05) -- (2.8,-0.05);
			\draw (2.1,0.4) node {$\left(\begin{smallmatrix} 0 & 1 \end{smallmatrix}\right)$};
			\draw [->](2.8,0.05) -- (2.2,0.05);
			\draw (2.6,0.25) node {$\left(\begin{smallmatrix} 1 \\ 0 \end{smallmatrix}\right)$};
			\draw (3,0) node {$K$};
			\draw [->](3.2,0.1) -- (3.4,0.3);
			\draw [->](3.3,0.4) -- (3.1,0.2);
			\draw (3.5,0.5) node {$0$};
			\draw [->](3.1,-0.2) -- (3.3,-0.4);
			\draw [->](3.4,-0.3) -- (3.2,-0.1);
			\draw (3.5,-0.5) node {$0$};
			\end{tikzpicture}
		\end{center}
		of $Q$, with $\lambda \in K$. (This is actually a family of band modules.) Note that the composition of the linear map $\left(\begin{smallmatrix} 1 & 0 \end{smallmatrix}\right)$ with $\varphi_{\alpha_1}=\left(\begin{smallmatrix} 0 \\ \lambda \end{smallmatrix}\right)$ is non-zero. This is permitted, since $\mathfrak{e}_{t_1}>1$. Let $F:\uMod*A \rightarrow \Mod*A'$ be the function defined above. Then we obtain a family of $A'$-modules given by the following $K$-representations
		\begin{center}
			\begin{tikzpicture}[scale=1.5, ]%baseline=(o.base)
			\draw (-1.1,0) node {$FM_\lambda:$};
			\draw [->](-0.1,0.2) -- (-0.3,0.4);
			\draw [->](-0.4,0.3) -- (-0.2,0.1);
			\draw (-0.5,0.5) node {$0$};
			\draw [<-](-0.1,-0.2) -- (-0.3,-0.4);
			\draw [<-](-0.4,-0.3) -- (-0.2,-0.1);
			\draw (-0.5,-0.5) node {$0$};
			\draw (0,0) node {$K$};
			\draw [->](0.8,0) -- (0.2,0);
			\draw (0.5,0.25) node {$\left(\begin{smallmatrix} 1 & 0 \end{smallmatrix}\right)$};
			\draw [->](0.1,-0.2) -- (0.2,-0.7);
			\draw (0.05,-0.5) node {$\begin{smallmatrix} \lambda \end{smallmatrix}$};
			\draw (0.2,-0.9) node {$K$};
			\draw [->] plot[smooth, tension=.7] coordinates {(0.4,-0.8) (0.6,-0.7) (0.8,-0.8)};
			\draw (0.6,-0.6) node {$\begin{smallmatrix} 0 \end{smallmatrix}$};
			\draw [->](0.8,-0.9) -- (0.4,-0.9);
			\draw (0.6,-0.82) node {$\begin{smallmatrix} 1 \end{smallmatrix}$};
			\draw [->] plot[smooth, tension=.7] coordinates {(0.4,-1) (0.6,-1.1) (0.8,-1)};
			\draw (0.6,-1) node {$\begin{smallmatrix} 0 \end{smallmatrix}$};
			\draw (1,-0.9) node {$K$};
			\draw [->](1,-0.7) -- (1,-0.2);
			\draw (0.8,-0.4) node {$\left(\begin{smallmatrix} 0 \\ 1 \end{smallmatrix}\right)$};
			\draw (1,0) node {$K^2$};
			\draw [->](1.2,0) -- (1.8,0);
			\draw (1.5,-0.25) node {$\left(\begin{smallmatrix} 1 & 0 \\ 0 & 0 \end{smallmatrix}\right)$};
			\draw (2,0) node {$K^2$};
			\draw [->](1.85,0.15) -- (1.65,0.45);
			\draw (1.7,-0.6) node {$\left(\begin{smallmatrix} 0 & 1 \end{smallmatrix}\right)$};
			\draw (1.5,0.6) node {$K$};
			\draw [->](1.35,0.45) -- (1.15,0.15);
			\draw (1,0.4) node {$\left(\begin{smallmatrix} 0 \\ 1 \end{smallmatrix}\right)$};
			\draw [->](1.4,0.8) -- (1.2,1);
			\draw [->](1.1,0.9) -- (1.3,0.7);
			\draw (1,1.1) node {$0$};
			\draw [->](1.8,1) -- (1.6,0.8);
			\draw [->](1.7,0.7) -- (1.9,0.9);
			\draw (2,1.1) node {$0$};
			\draw [->](2.8,0.05) -- (2.2,0.05);
			\draw (2.6,0.25) node {$\left(\begin{smallmatrix} 1 \\ 0 \end{smallmatrix}\right)$};
			\draw [->](2,-0.2) -- (2,-0.7);
			\draw (2.1,0.4) node {$\left(\begin{smallmatrix} 0 & 1 \end{smallmatrix}\right)$};
			\draw (2,-0.9) node {$K$};
			\draw [->] plot[smooth, tension=.7] coordinates {(2.2,-0.8) (2.4,-0.7) (2.6,-0.8)};
			\draw (2.4,-0.6) node {$\begin{smallmatrix} 0 \end{smallmatrix}$};
			\draw [->](2.6,-0.9) -- (2.2,-0.9);
			\draw (2.4,-0.82) node {$\begin{smallmatrix} 1 \end{smallmatrix}$};
			\draw [->] plot[smooth, tension=.7] coordinates {(2.2,-1) (2.4,-1.1) (2.6,-1)};
			\draw (2.4,-1) node {$\begin{smallmatrix} 0 \end{smallmatrix}$};
			\draw (2.8,-0.9) node {$K$};
			\draw [->](2.8,-0.7) -- (2.9,-0.2);
			\draw (2.95,-0.5) node {$\begin{smallmatrix} 1 \end{smallmatrix}$};
			\draw (3,0) node {$K$};
			\draw [->](3.2,0.1) -- (3.4,0.3);
			\draw [->](3.3,0.4) -- (3.1,0.2);
			\draw (3.5,0.5) node {$0$};
			\draw [->](3.1,-0.2) -- (3.3,-0.4);
			\draw [->](3.4,-0.3) -- (3.2,-0.1);
			\draw (3.5,-0.5) node {$0$};
			
			\end{tikzpicture}
		\end{center}
		In this setting, we have $\varphi_{\beta_1}=(\lambda)$, $\varphi_{\gamma_1}=0$ and $\varphi_{\delta_1}=\left(\begin{smallmatrix} 0 \\ 1 \end{smallmatrix}\right)$. Since $\mathfrak{e}'_{t_1}=1$, we require that the composition of $\left(\begin{smallmatrix} 1 & 0 \end{smallmatrix}\right)$ with $\varphi_{\delta_1}\varphi_{\gamma_1}\varphi_{\beta_1}$ be zero, which is indeed the case. A similar observation can be made with the vertex $t_2$.
	\end{exam}

	\begin{lem} \label{IndFunctor}
		For any indecomposable $M \in \uMod*A$, the module $FM \in \Mod*A'$ is indecomposable.
	\end{lem}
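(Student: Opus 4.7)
The plan is to show that every idempotent in $\End_{A'}(FM)$ is either $0$ or $\id_{FM}$, which suffices for indecomposability. Let $f=(f_{x'})_{x'\in Q'_0}$ be such an idempotent. I will first restrict $f$ to a candidate $A$-endomorphism $\bar f$ of $M$, invoke the indecomposability of $M$ to pin down $\bar f$, and then propagate the result back to $FM$.

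For Step 1, I set $\bar f=(f_x)_{x\in Q_0}$ and verify that $\bar f$ commutes with every arrow of $Q$. For each $\xi\in Q_1$ different from the $\alpha_i$, the arrow $\xi$ also lies in $Q'_1$ with $\varphi_\xi=\varphi'_\xi$, so this is automatic. For $\xi=\alpha_i$, the $\eta_i$-compatibility $f_{p_i}\varphi'_{\eta_i}=\varphi'_{\eta_i}f_{q_i}$ together with $\varphi'_{\eta_i}=\id$ yields $f_{p_i}=f_{q_i}$. Combining this with the $\beta_i$- and $\delta_i$-compatibilities, I compute
\[
f_{z_i}\varphi_{\alpha_i}=f_{z_i}\theta_i\psi_i=\theta_i f_{q_i}\psi_i=\theta_i f_{p_i}\psi_i=\theta_i\psi_i f_{y_i}=\varphi_{\alpha_i} f_{y_i},
\]
so $\bar f\in\End_A(M)$, and it is idempotent because $f$ is.

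For Step 2, since $M$ is indecomposable in $\uMod*A$, I may assume without loss of generality that $M$ has no projective direct summand, so $M$ is indecomposable in $\Mod*A$ and $\End_A(M)$ is local. Hence $\bar f=0$ or $\bar f=\id_M$. For Step 3, the remaining components $f_{p_i}$ and $f_{q_i}$ both act on the common space $\im\varphi_{\alpha_i}$. The relation $f_{p_i}\psi_i=\psi_i f_{y_i}$ combined with the surjectivity of $\psi_i$ onto $\im\varphi_{\alpha_i}=(FM)_{p_i}$ forces $f_{p_i}$ to equal zero or the identity in step with $f_{y_i}$, and the identity $f_{q_i}=f_{p_i}$ settles $q_i$. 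Therefore $f=0$ or $f=\id_{FM}$, as required.

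The principal obstacle is Step 1: the arrow $\alpha_i$ does not appear in $Q'_1$, so a priori the restriction $\bar f$ might fail to respect it. The crux is the deliberate choice $\varphi'_{\eta_i}=\id$ in the definition of $F$, which identifies $(FM)_{p_i}$ with $(FM)_{q_i}$ in a manner compatible with any $A'$-endomorphism; this is precisely what allows the factorisation $\varphi_{\alpha_i}=\theta_i\psi_i$ to survive the transfer back to $M$ and recover the commutativity relation for $\varphi_{\alpha_i}$. The vanishing of $\varphi'_{\gamma_i}$ and $\varphi'_{\zeta_i}$ plays no role beyond producing trivial compatibility conditions, so no further constraints on $f$ arise from those arrows.
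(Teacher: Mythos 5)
Your proof is correct and follows essentially the same route as the paper's: the key step in both is to restrict an idempotent $f\in\End_{A'}(FM)$ to $\bar f\in\End_A(M)$ via the $\eta_i$-compatibility $f_{p_i}=f_{q_i}$ and the factorisation $\varphi_{\alpha_i}=\theta_i\psi_i$, and then to pass between triviality of $f$ and triviality of $\bar f$. The paper phrases this as a contrapositive (a non-trivial idempotent on $FM$ yields a non-trivial one on $M$) and uses injectivity of $\theta_i$ in the final step, whereas you argue directly and use surjectivity of $\psi_i$; these are equivalent.
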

	\begin{proof}
		Let $M=(M_x, \varphi_\xi)_{x \in Q_0, \xi \in Q_1}$ be a $K$-representation of $Q$. We will prove the contrapositive statement. That is, we will show that if $FM$ is decomposable then $M$ is decomposable. So suppose $FM=((FM)_{x'}, \varphi'_{\xi'})_{x' \in Q'_0, \xi' \in Q'_1}$ is decomposable. Then there exists a non-trivial idempotent $f'=(f'_{x'})_{x' \in Q'_0} \in \End_{A'}(FM)$.
		
		For each $i$, there exist commutative squares
		\begin{equation} \tag{$\ast$} \label{ImageSquares}
			\xymatrix{
				M_{y_i} \ar[d]_{f'_{y_i}} \ar[r]^-{\psi_i}
				&	\im \varphi_{\alpha_i} \ar[d]_{f'_{p_i}}
				&	\ar[l]_-{\id} \im \varphi_{\alpha_i} \ar[r]^-{\theta_i} \ar[d]_{f'_{q_i}}
				&	M_{z_i} \ar[d]^{f'_{z_i}} \\
				M_{y_i} \ar[r]^-{\psi_i}
				&	\im \varphi_{\alpha_i}
				&	\ar[l]_-{\id} \im \varphi_{\alpha_i} \ar[r]^-{\theta_i}
				&	M_{z_i}
			}
		\end{equation}
		arising from the arrows $\beta_i$, $\eta_i$ and $\delta_i$ defined in the construction $F$. The middle square implies $f'_{p_i}=f'_{q_i}$. The leftmost and rightmost squares imply that $\psi_i f'_{y_i}= f'_{p_i} \psi_i$ and $\theta_i f'_{q_i}= f'_{z_i} \theta_i$. But then 
		\begin{equation*}
			\theta_i\psi_i f'_{y_i}= \theta_i f'_{p_i} \psi_i=\theta_i f'_{q_i} \psi_i = f'_{z_i} \theta_i \psi_i.
		\end{equation*}
		Recall that for each $i$, we have $\varphi_{\alpha_i}=\theta_i\psi_i$. So for each $i$, there exists a commutativity relation $\varphi_{\alpha_i} f'_{y_i}= f'_{z_i} \varphi_{\alpha_i}$. Since $\varphi'_\xi=\varphi_\xi$ for all arrows $\xi \in Q_1 \setminus \{\alpha_1,\ldots,\alpha_n\}$ by the definition of $FM$, we also have commutativity relations $\varphi_\xi f'_{s(\xi)} = f'_{e(\xi)} \varphi_\xi$ for any arrow $\xi \in Q_1 \setminus \{\alpha_1,\ldots,\alpha_n\}$. This implies the existence of a map $f=(f_x)_{x \in Q_0} \in \End_A (M)$ defined by $f_x=f'_x$ for each $x \in Q_0$. But $f'$ is idempotent, so $f$ must also be a idempotent.
		
		It remains to show that $f$ is non-trivial. Suppose for a contradiction that $f$ is trivial. Then either $f_x=0$ for all $x \in Q_0$ or $f_x$ is an identity map for all $x \in Q_0$. Suppose $f_{z_i}=0$. Then by the commutativity relations arising from the rightmost square in (\ref{ImageSquares}), we have $\theta_i f'_{q_i}= 0$. Since $\theta_i$ is injective, this implies $f'_{q_i}= 0$. But $f'_{q_i}=f'_{p_i}$, which implies $f'_{x'}=0$ for all $x' \in Q'_0$. This contradicts our original assumption that $f'$ is non-trivial. The same contradiction argument essentially works for the case where we assume that $f_x$ is an identity map for all $x \in Q_0$. Thus, $f$ must be a non-trivial idempotent in $\End_A (M)$ and hence, $M$ is decomposable.
	\end{proof}
	
	\begin{lem} \label{InjFunctor}
		For any $M,N \in \uMod*A$, we have $FM \cong FN$ if and only if $M \cong N$.
	\end{lem}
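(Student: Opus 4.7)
The plan is to establish, by essentially the same diagram-chase technique as in the proof of Lemma~\ref{IndFunctor}, a correspondence between isomorphisms $M \cong N$ in $\uMod*A$ and isomorphisms $FM \cong FN$ in $\Mod*A'$. Since projective modules fall outside the natural domain of $F$, throughout the proof I will take $M$ and $N$ to have no projective direct summands, so that isomorphism in $\uMod*A$ coincides with isomorphism in $\Mod*A$.

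For the ($\Rightarrow$) direction, given an isomorphism $f' = (f'_{x'})_{x' \in Q'_0}\colon FM \to FN$, the commutative squares arising from the arrows $\beta_i$, $\eta_i$, and $\delta_i$ combine to yield a commutativity relation $\varphi^N_{\alpha_i} f'_{y_i} = f'_{z_i} \varphi^M_{\alpha_i}$ at each arrow $\alpha_i \in Q_1$, via exactly the computation performed in the proof of Lemma~\ref{IndFunctor}. For all other arrows $\xi \in Q_1 \setminus \{\alpha_1,\ldots,\alpha_n\}$, the commutativity is inherited directly, since $F$ leaves both $\xi$ and its action untouched. Hence $f = (f'_x)_{x \in Q_0}$ defines a morphism $M \to N$ of $A$-modules, and since each component $f'_x$ is a $K$-linear isomorphism, $f$ is itself an isomorphism.

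For the ($\Leftarrow$) direction, let $f\colon M \to N$ be an isomorphism in $\Mod*A$. Write $\varphi^M_{\alpha_i} = \theta^M_i \psi^M_i$ and $\varphi^N_{\alpha_i} = \theta^N_i \psi^N_i$ for the respective image factorisations used in the definitions of $FM$ and $FN$. I define $f'\colon FM \to FN$ by $f'_x = f_x$ for each $x \in Q_0$, and for each pair of new vertices $p_i$, $q_i$ I set $f'_{p_i} = f'_{q_i}\colon \im \varphi^M_{\alpha_i} \to \im \varphi^N_{\alpha_i}$ to be the map induced by $f_{y_i}$ on images, namely
\[
    f'_{p_i}(\psi^M_i(y)) = \psi^N_i f_{y_i}(y), \qquad y \in M_{y_i}.
\]
Well-definedness is immediate: if $\psi^M_i(y) = 0$, then $y \in \ker \varphi^M_{\alpha_i}$ (as $\theta^M_i$ is injective), so $\varphi^N_{\alpha_i} f_{y_i}(y) = f_{z_i} \varphi^M_{\alpha_i}(y) = 0$ and hence $\psi^N_i f_{y_i}(y) = 0$ by injectivity of $\theta^N_i$. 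A short diagram chase using that $f_{y_i}$ and $f_{z_i}$ are isomorphisms then shows $f'_{p_i}$ is itself a $K$-linear isomorphism.

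It remains to verify that the squares arising from the arrows $\beta_i$ (immediate from the defining formula), $\delta_i$ (follows from post-composing the defining formula by $\theta^N_i$ and invoking $\varphi^N_{\alpha_i} f_{y_i} = f_{z_i} \varphi^M_{\alpha_i}$), $\eta_i$ (trivial since $f'_{p_i} = f'_{q_i}$), and $\gamma_i$, $\zeta_i$ (trivial since both horizontal maps vanish) all commute, yielding the desired isomorphism $f'\colon FM \to FN$ in $\Mod*A'$. The principal technical point is verifying the well-definedness (and bijectivity) of the induced map $f'_{p_i}$; once established, all remaining verifications are routine.
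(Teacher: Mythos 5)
Your proof is correct and takes essentially the same approach as the paper in both directions. The only variation is in the $(\Leftarrow)$ direction: you define the induced map $f'_{p_i}$ on images directly via the surjection $\psi^M_i$ and check well-definedness, whereas the paper obtains the same map (together with its inverse) by invoking the universal property of images through the monomorphisms $\theta_i$ and $\theta'_i$ — equivalent constructions of the same natural map.
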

	\begin{proof}
		Let
		\begin{align*}
			M&=(M_x, \varphi_\xi)_{x \in Q_0, \xi \in Q_1}, &
			N&=(N_x, \phi_\xi)_{x \in Q_0, \xi \in Q_1}, \\
			FM&=((FM)_{x'}, \varphi'_{\xi'})_{x' \in Q'_0, \xi' \in Q'_1}, &
			FN&=((FN)_{x'}, \phi'_{\xi'})_{x' \in Q'_0, \xi' \in Q'_1}.
		\end{align*}
		For each $i$, write $\phi_{\alpha_i}=\theta'_i\psi'_i$, where $\psi'_i\colon N_{y_i} \rightarrow \im \phi_{\alpha_i}$ is a surjective map and $\theta'_i\colon \im \phi_{\alpha_i} \rightarrow N_{z_i}$ is an inclusion map. The maps $\psi_i$ and $\theta_i$ are as defined in the construction of $F$.
		
		$(\Rightarrow:)$ Let $f'=(f'_{x'})_{x' \in Q'_0} \in \Hom_{A'}(FM,FN)$ be an isomorphism. Then for each $i$, we have commutative squares
		\begin{equation*}
			\xymatrix{
				M_{y_i} \ar[d]_{f'_{y_i}} \ar[r]^-{\psi_i}
				&	\im \varphi_{\alpha_i} \ar[d]_{f'_{p_i}}
				&	\ar[l]_-{\id} \im \varphi_{\alpha_i} \ar[r]^-{\theta_i} \ar[d]_{f'_{q_i}}
				&	M_{z_i} \ar[d]^{f'_{z_i}} \\
				N_{y_i} \ar[r]^-{\psi'_i}
				&	\im \phi_{\alpha_i}
				&	\ar[l]_-{\id} \im \phi_{\alpha_i} \ar[r]^-{\theta'_i}
				&	N_{z_i}
			}
		\end{equation*}
		arising from the arrows $\beta_i$, $\eta_i$ and $\delta_i$ defined in the construction $F$, where each map $f'_{x'}$ is a bijection. By composing maps, we obtain a commutative square 
		\begin{equation*}
			\xymatrix{
				M_{y_i} \ar[d]_{f'_{y_i}} \ar[r]^-{\varphi_{\alpha_i}}
				&	M_{z_i} \ar[d]^{f'_{z_i}} \\
				N_{y_i} \ar[r]^-{\phi_{\alpha_i}}	
				&	N_{z_i}.
			}
		\end{equation*}
		with $f'_{y_i}$ and $f'_{z_i}$ bijections. Since $(FM)_x=M_x$ for any $x \in Q_0$ and $\varphi'_\xi=\varphi_\xi$ for any $\xi \in Q_1 \setminus \{\alpha_i\in Q_1 : 1 \leq i \leq n\}$, we conclude that there exists an isomorphism $f=(f_x)_{x \in Q_0} \in \Hom_{A}(M,N)$ defined by $f_x=f'_x$ for all $x \in Q_0$.
		
		$(\Leftarrow:)$ Let $f=(f_x)_{x \in Q_0} \in \Hom_{A}(M,N)$ be an isomorphism. Consider the following commutative diagram.
		\begin{equation*}
			\xymatrix@R=1ex{
				M_{y_i}
					\ar@<-0.5ex>[ddd]_-{f_{y_i}}
					\ar[rr]^-{\varphi_{\alpha_i}}
					\ar[dr]_-{\psi_i}	
				&	&	M_{z_i}
					\ar@<-0.5ex>[ddd]_-{f_{z_i}}
				\\
				&	\im \varphi_{\alpha_i}	\ar[ur]_-{\theta_i}	&	\\
				&	\im \phi_{\alpha_i}	\ar[dr]^-{\theta'_i}	&	\\
				N_{y_i}
					\ar@<-0.5ex>[uuu]_-{f^{-1}_{y_i}}
					\ar[rr]^-{\phi_{\alpha_i}}
					\ar[ur]^-{\psi'_i}	
				&	&	N_{z_i}
					\ar@<-0.5ex>[uuu]_-{f^{-1}_{z_i}}
				\\	
			}
		\end{equation*}
		Note that $\varphi_{\alpha_i}=f^{-1}_{z_i}\theta'_i\psi'_i f_{y_i}$ and that the map $f^{-1}_{z_i}\theta'_i$ is a monomorphism. By the universal property of images, there exists a unique morphism $g_i:\im \varphi_{\alpha_i}\rightarrow \im\phi_{\alpha_i}$ such that $\theta_i=f^{-1}_{z_i}\theta'_i g_i$. A similar argument shows that there exists a unique morphism $h_i:\im\phi_{\alpha_i} \rightarrow \im \varphi_{\alpha_i}$ such that $\theta'_i=f_{z_i}\theta_i h_i$. From this, we obtain
		\begin{align*}
			\theta_i&=f^{-1}_{z_i}(f_{z_i}\theta_i h_i) g_i=\theta_i h_i g_i \text{, and} \\
			\theta'_i&=f_{z_i}(f^{-1}_{z_i}\theta'_i g_i) h_i = \theta'_i g_i h_i
		\end{align*}
		Since $\theta_i$ and $\theta'_i$ are monomorphisms, we obtain $h_i g_i = \id$ and $g_i h_i= \id$. Thus, $g_i$ is an isomorphism. Moreover, $f_{z_i} \theta_i=\theta'_i g_i$, and thus we also have
		\begin{equation*}
		 	\theta'_i g_i \psi_i = f_{z_i} \theta_i \psi_i=f_{z_i} \varphi_{\alpha_i} = \phi_{\alpha_i} f_{y_i}=\theta'_i \psi'_i f_{y_i}.
		 \end{equation*}
		Since $\theta'_i$ is a monomorphism, $g_i \psi_i = \psi'_i f_{y_i}$. Hence for each $i$, the sequence of squares 
		\begin{equation*}
			\xymatrix{
				M_{y_i} \ar[d]_{f_{y_i}} \ar[r]^-{\psi_i}
				&	\im \varphi_{\alpha_i} \ar[d]_{g_i}
				&	\ar[l]_-{\id} \im \varphi_{\alpha_i} \ar[r]^-{\theta_i} \ar[d]_{g_i}
				&	M_{z_i} \ar[d]^{f_{z_i}} \\
				N_{y_i} \ar[r]^-{\psi'_i}
				&	\im \phi_{\alpha_i}
				&	\ar[l]_-{\id} \im \phi_{\alpha_i} \ar[r]^-{\theta'_i}
				&	N_{z_i}
			}
		\end{equation*}
		is commutative with each map $f_x$ and each map $g_i$ a bijection. This determines an isomorphism in $\Hom_{A'}(FM,FN)$ in the natural way.
	\end{proof}
	
	\begin{rem} \label{KASimples}
		Let $\lambda \in K$. Recall that $S_\lambda$ is a simple (right) $K[a]$-module if and only if $\dim_K S_\lambda=1$ and the action of $a$ on $S_\lambda$ is defined by $s a = \lambda s$ for all $s \in S_\lambda$.
	\end{rem}
	
	\begin{lem} \label{FamFunctor}
		Let $M\in \uMod*A$. Suppose there exists a $K[a]$-$A'$-bimodule $N'$ that is finitely generated and free as a left $K[a]$-module such that $FM \cong S \otimes_{K[a]} N'$ for some simple right $K[a]$-module $S$. Then there exists a $K[a]$-$A$-bimodule $N$ that is finitely generated and free as a left $K[a]$-module such that $M \cong S \otimes_{K[a]} N$. Moreover, if there exists some other module $M'\in \uMod*A$ such that $FM' \cong S' \otimes_{K[a]} N'$ where $S'\not\cong S$ is a simple right $K[a]$-module, then $M' \cong S' \otimes_{K[a]} N$.
	\end{lem}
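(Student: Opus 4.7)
The plan is to construct $N$ by restricting $N'$ along the idempotent of $A'$ cut out by the polygons of $\chi$, and then defining a compatible right $A$-module structure by ``collapsing'' the three arrows $\beta_i, \eta_i, \delta_i$ back into $\alpha_i$. Concretely, set $e := \sum_{x \in Q_0}\varepsilon_x \in A'$ (a sum over the polygons of $\chi$, regarded as a sub-collection of the polygons of $\chi'$) and define $N := N'e$. Because $N'$ is finitely generated and free as a left $K[a]$-module and $N$ is a direct summand of $N'$ via the orthogonal decomposition $1 = e + (1-e)$ in $A'$, the module $N$ is finitely generated and projective, hence free over the PID $K[a]$.

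Next I would equip $N$ with a right $A$-action. For any $\xi \in Q_1 \setminus \{\alpha_1,\ldots,\alpha_n\}$, the arrow $\xi$ also lies in $Q'_1$ with source and target in $Q_0$, so the right $\xi$-action on $N'$ restricts to a $K[a]$-linear map $N\varepsilon_{s(\xi)} \to N\varepsilon_{e(\xi)}$; declare this to be the $\xi$-action on $N$. For the arrows $\alpha_i$, write $\Psi_i, H_i, D_i$ for the $K[a]$-linear maps $N'\varepsilon_{y_i} \to N'\varepsilon_{p_i}$, $N'\varepsilon_{q_i} \to N'\varepsilon_{p_i}$, $N'\varepsilon_{q_i} \to N'\varepsilon_{z_i}$ induced by $\beta_i, \eta_i, \delta_i$ respectively. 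The construction of $F$ forces the $\eta_i$-action on $FM$ to be the identity under the identifications $(FM)\varepsilon_{p_i} = (FM)\varepsilon_{q_i} = \im\varphi_{\alpha_i}$, so $\id_S \otimes H_i$ is an isomorphism. From this (and a careful analysis of the relations of $A'$ at the vertex $t'_i$, where the cycle $\zeta_i\eta_i$ has multiplicity one and couples via the polygons $p_i, q_i$ to the long cycle at $t_i$), I would argue that one may modify $N'$, without altering its tensor product with any simple $K[a]$-module in the image of $F$, so that $H_i$ itself is a $K[a]$-isomorphism. With $H_i$ invertible, define the $\alpha_i$-action by $\Phi_i := D_i \circ H_i^{-1} \circ \Psi_i$.

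A direct computation then shows $N$ is a right $A$-module: the relations of $A$ involving $\alpha_i$ pull back, via the factorisation $\alpha_i \leftrightarrow \beta_i\eta_i^{-1}\delta_i$, to identities satisfied in $A'$ on $N'$. To see $S \otimes_{K[a]} N \cong M$ as $A$-modules, observe first that as $K$-vector spaces
\begin{equation*}
S \otimes_{K[a]} N \;=\; S \otimes_{K[a]} N'e \;=\; (S \otimes_{K[a]} N')e \;=\; FM \cdot e \;=\; M,
\end{equation*}
where the last equality uses that $(FM)\varepsilon_x = M_x$ for all $x \in Q_0$. The $\xi$-action for $\xi \neq \alpha_i$ matches the $A$-action on $M$ by definition, and for the $\alpha_i$-action one has $\id_S \otimes \Phi_i = \theta_i \circ \id \circ \psi_i = \varphi^M_{\alpha_i}$, so the two $A$-module structures coincide.

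For the ``moreover'' assertion, note that the bimodule $N$ was constructed purely from $N'$ and involved no data from $S$ or $M$. Hence for any other simple right $K[a]$-module $S'$ and module $M' \in \uMod*A$ with $FM' \cong S' \otimes_{K[a]} N'$, repeating the identical calculation yields $M' \cong S' \otimes_{K[a]} N$. The chief obstacle in the argument is the invertibility (or permissible rectification) of the $\eta_i$-action on $N'$; everything else is bookkeeping with the relations in $A$ and $A'$.
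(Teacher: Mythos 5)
Your plan reproduces the paper's construction almost verbatim: set $N := N'e$ with $e=\sum_{x\in Q_0}\varepsilon_x$ (the paper writes $N = \bigoplus_{x\in Q_0}N'_x$); restrict the arrow actions for $\xi\neq\alpha_i$; and define the $\alpha_i$-action as $D_i\circ H_i^{-1}\circ\Psi_i$, which is exactly the paper's $\pi_N\widehat{\delta}_i|_{N_{q_i}}\sigma\pi_{N_{p_i}}\widehat{\beta}_i|_N$. Your treatment of freeness (direct summand of a free module, hence projective, hence free over the PID $K[a]$) and of the ``moreover'' clause (note $N$ depends only on $N'$) are also the paper's.

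The one place you depart from the paper is the ``chief obstacle'' you flag, and there your caution is appropriate but your repair is not carried out. The paper does not modify $N'$. From the commutative square relating the identity map $(FM)_{q_i}\to(FM)_{p_i}$ to $\phi'_{\eta_i}:(S\otimes_{K[a]}N')_{q_i}\to(S\otimes_{K[a]}N')_{p_i}$ through the bijections $f'_{q_i}$, $f'_{p_i}$, the paper concludes that $H_i = \pi_{N'_{p_i}}\widehat{\eta}_i|_{N'_{q_i}}$ is a bijection and sets $\sigma:=H_i^{-1}$. But that square literally only shows that $\phi'_{\eta_i}=\id_S\otimes H_i$ is a bijection; with $S = K[a]/(a-\lambda)$, this says $\det H_i$ does not vanish at the single point $\lambda$, which does not by itself make $\det H_i$ a unit of $K[a]$ (consider $H_i = (a-\mu)$, $\mu\neq\lambda$). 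Your proposed repair --- modify $N'$ so that $H_i$ becomes a $K[a]$-isomorphism without changing the relevant fibres $S'\otimes_{K[a]} N'$ --- is therefore a genuinely different move from what the paper does, and as written it is only a gesture: you would need to exhibit the modification (a Smith normal form of $H_i$ is the natural starting point) and verify that a single modified bimodule serves the ``moreover'' clause for every admissible $S'$ simultaneously. So either carry that rectification through, or satisfy yourself that the paper's one-line deduction is sound; as it stands, your proof is incomplete at precisely the step the paper treats most tersely.
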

	\begin{proof}
		For the purposes of readability, we will denote each subspace $N'\varepsilon_{x'} \subset N'$ by $N'_{x'}$. For each arrow $\xi' \in Q'_1$, define a linear map $\widehat{\xi}'\colon N'\rightarrow N'$ by $\widehat{\xi}'(c)=c\xi'$. For some vector subspace $V \subset N'$, we also denote the canonical projection map from $N'$ onto $V$ by $\pi_V$.
		
		Let $f'=(f_{x'})_{x' \in Q'_0}:FM \rightarrow S \otimes_{K[a]} N'$ be a bijection. Note that since $S \otimes_{K[a]} N' \cong FM$ for some $M \in \uMod*A$, the map $\pi_{N'_{p_i}}\widehat{\eta}_i|_{N'_{q_i}}$ is a bijection. This follows (for each $i$) from the commutativity of the square
		\begin{equation*}
			\xymatrix{
				(FM)_{p_i} \ar@{<-}[r]^-{\id} \ar[d]_{f'_{p_i}} & (FM)_{q_i} \ar[d]^{f'_{q_i}} \\
				(S \otimes_{K[a]} N')_{p_i} \ar@{<-}[r]^-{\phi'_{\eta_i}} & (S\otimes_{K[a]} N')_{q_i},
			}
		\end{equation*}
		where $\phi'_{\eta_i}: (S \otimes_{K[a]} N')_{q_i} \rightarrow (S \otimes_{K[a]} N')_{p_i}$ is the linear map defined by $\phi'_{\eta_i}(b \otimes c)= b \otimes \pi_{N'_{p_i}}\widehat{\eta}_i|_{N'_{q_i}}(c)$ and $f'_{p_i}$ and $f'_{q_i}$ are bijections. Define $\sigma: N'_{p_i} \rightarrow N'_{q_i}$ to be the inverse linear map to $\pi_{N'_{p_i}}\widehat{\eta}_i|_{N'_{q_i}}$.
		
		We now define a $K[a]$-$A$ bimodule $N$ associated to $N'$. The underlying vector space of $N$ is then given by
		\begin{equation*}
			N= \bigoplus_{x \in Q_0} N'_x,
		\end{equation*}
		which is a vector subspace of $N'$. We give $N$ the structure of a right $A$-module in the following way. We define the action of an arrow $\xi \in Q_1 \setminus \{\alpha_1, \ldots, \alpha_{|V_\chi|}\}$ on $N$ to be given by $c\xi=\widehat{\xi}|_N(c)$ for each $c \in N$. The action of each arrow $\alpha_i$ on $N$ to is given by $c\alpha_i=\pi_N\widehat{\delta}_i|_{N_{q_i}}\sigma\pi_{N_{p_i}}\widehat{\beta}_i|_N(c)$. That $N$ is also a finitely generated and free left $K[A]$-module follows from the fact that $N'$ is finitely generated and free.
		
		To show that $M \cong S \otimes_{K[a]} N$, we note that for each arrow $\xi' \in Q'_1$, there exists a commutative square 
		\begin{equation} \tag{$\dagger$} \label{TensorComSquare}
			\xymatrix{
				(FM)_{s(\xi')} \ar[r]^-{\varphi'_{\xi'}} \ar[d]_{f'_{s(\xi')}} & (FM)_{e(\xi')} \ar[d]^{f'_{e(\xi')}} \\
				(S \otimes_{K[a]} N')_{s(\xi')} \ar[r]^-{\phi'_{\xi'}} & (S\otimes_{K[a]} N')_{e(\xi')},
			}
		\end{equation}
		where the maps $f'_{s(\xi')}$ and $f'_{e(\xi')}$ are bijective and $\phi'_{\xi'}(b \otimes c)= b \otimes \pi_{N'_{e(\xi')}}\widehat{\xi}'|_{N'_{s(\xi')}}(c)$. If $\xi \in Q_1 \setminus \{\alpha_1,\ldots,\alpha_{|V_\chi|}\}$ then the square (\ref{TensorComSquare}) is the same as the commutative square
		\begin{equation*}
			\xymatrix{
				M_{s(\xi)} \ar[r]^-{\varphi_{\xi}} \ar[d]_{f'_{s(\xi)}} & M_{e(\xi)} \ar[d]^{f'_{e(\xi)}} \\
				(S \otimes_{K[a]} N)_{s(\xi)} \ar[r]^-{\phi_{\xi}} & (S\otimes_{K[a]} N)_{e(\xi)}.
			}
		\end{equation*}
		where $\phi_{\xi}(b \otimes c)= b \otimes \pi_{N_{e(\xi)}}\widehat{\xi}|_{N_{s(\xi)}}(c)$. It is easy to see that for each $i$, the square
		\begin{equation*}
			\xymatrix{
				M_{y_i} \ar[r]^-{\varphi_{\alpha_i}} \ar[d]_{f'_{y_i}} & M_{z_i} \ar[d]^{f'_{z_i}} \\
				(S \otimes_{K[a]} N)_{y_i} \ar[r]^-{\phi_{\alpha_i}} & (S\otimes_{K[a]} N)_{z_i}.
			}
		\end{equation*}
		is commutative, since we have each $\alpha_i=\theta_i\psi_i$ and a sequence of commutative squares
		\begin{equation*}
			\xymatrix{
				M_{y_i} \ar[d]_{f'_{y_i}} \ar[r]^-{\psi_i}
				&	\im \varphi_{\alpha_i} \ar[d]_{f'_{p_i}}
				&	\ar[l]_-{\id} \im \varphi_{\alpha_i} \ar[r]^-{\theta_i} \ar[d]_{f'_{q_i}}
				&	M_{z_i} \ar[d]^{f'_{z_i}} \\
				(S \otimes_{K[a]} N')_{y_i} \ar[r]^-{\phi'_{\beta_i}}
				&	(S \otimes_{K[a]} N')_{p_i}
				&	\ar[l]_-{\phi'_{\eta_i}} (S \otimes_{K[a]} N')_{q_i} \ar[r]^-{\phi'_{\delta_i}}
				&	(S \otimes_{K[a]} N')_{z_i}
			}
		\end{equation*}
		which implies a commutativity relation 
		\begin{equation*}
			\phi'_{\delta_i}\phi'^{-1}_{\eta_i}\phi'_{\beta_i} f'_{y_i}= \phi_{\alpha_i}f'_{y_i}= f'_{z_i} \varphi_{\alpha_i}= f'_{z_i} \theta_i \psi_i.
		\end{equation*}
		Thus, the map $(f'_x)_{x \in Q_0}:M \rightarrow S \otimes_{K[a]} N$ is an isomorphism.
		
		The statement that $M \cong S \otimes_{K[a]} N$ and $M' \cong S' \otimes_{K[a]} N$ whenever $FM \cong S \otimes_{K[a]} N'$ and $FM' \cong S' \otimes_{K[a]} N'$ for any simple $S' \not\cong S$ follows from the fact that the construction of $N$ depends only on the bimodule $N'$ and not on the choice of simple in the tensor product.
	\end{proof}
	
	\begin{exam}
		Consider the Brauer configuration algebra and function $F$ from Example~\ref{IndFuncExam}. In particular, consider the family of $A'$-modules $FM_\lambda$. It is easy to see that each $FM_\lambda \cong S_\lambda \otimes_{K[a]} N'$, where $S_\lambda$ is as in Remark~\ref{KASimples} and $N'$ is the $K[a]$-$A'$ bimodule defined by the following $K$-representation.
		\begin{center}
			\begin{tikzpicture}[scale=1.5, ]%baseline=(o.base)
			\draw (-2.1,0) node {$N':$};
			\draw [->](-1.1,0.2) -- (-1.3,0.4);
			\draw [->](-1.4,0.3) -- (-1.2,0.1);
			\draw (-1.5,0.5) node {$0$};
			\draw [<-](-1.1,-0.2) -- (-1.3,-0.4);
			\draw [<-](-1.4,-0.3) -- (-1.2,-0.1);
			\draw (-1.5,-0.5) node {$0$};
			\draw (-0.8,0) node {$K[a]$};
			\draw [->](0.2,0) -- (-0.4,0);
			\draw (-0.1,0.25) node {$\left(\begin{smallmatrix} 1 & 0 \end{smallmatrix}\right)$};
			\draw [->](-0.6,-0.2) -- (-0.4,-0.7);
			\draw (-0.6,-0.5) node {$\begin{smallmatrix} a \end{smallmatrix}$};
			\draw (-0.3,-0.9) node {$K[a]$};
			\draw [->] plot[smooth, tension=.7] coordinates {(0,-0.8) (0.2,-0.7) (0.4,-0.8)};
			\draw (0.2,-0.6) node {$\begin{smallmatrix} 0 \end{smallmatrix}$};
			\draw [->](0.4,-0.9) -- (0,-0.9);
			\draw (0.2,-0.82) node {$\begin{smallmatrix} 1 \end{smallmatrix}$};
			\draw [->] plot[smooth, tension=.7] coordinates {(0,-1) (0.2,-1.1) (0.4,-1)};
			\draw (0.2,-1) node {$\begin{smallmatrix} 0 \end{smallmatrix}$};
			\draw (0.7,-0.9) node {$K[a]$};
			\draw [->](0.7,-0.7) -- (0.7,-0.2);
			\draw (0.5,-0.4) node {$\left(\begin{smallmatrix} 0 \\ 1 \end{smallmatrix}\right)$};
			\draw (0.7,0) node {$(K[a])^2$};
			\draw [->](1.2,0) -- (1.8,0);
			\draw (1.5,-0.25) node {$\left(\begin{smallmatrix} 1 & 0 \\ 0 & 0 \end{smallmatrix}\right)$};
			\draw (2.3,0) node {$(K[a])^2$};
			\draw [->](1.85,0.15) -- (1.65,0.45);
			\draw (2,-0.6) node {$\left(\begin{smallmatrix} 0 & 1 \end{smallmatrix}\right)$};
			\draw (1.5,0.65) node {$K[a]$};
			\draw [->](1.35,0.45) -- (1.15,0.15);
			\draw (1,0.4) node {$\left(\begin{smallmatrix} 0 \\ 1 \end{smallmatrix}\right)$};
			\draw [->](1.4,0.95) -- (1.2,1.15);
			\draw [->](1.1,1.05) -- (1.3,0.85);
			\draw (1,1.25) node {$0$};
			\draw [->](1.8,1.15) -- (1.6,0.95);
			\draw [->](1.7,0.85) -- (1.9,1.05);
			\draw (2,1.25) node {$0$};
			\draw [->](3.35,0) -- (2.75,0);
			\draw (3.05,0.25) node {$\left(\begin{smallmatrix} 1 \\ 0 \end{smallmatrix}\right)$};
			\draw [->](2.3,-0.2) -- (2.3,-0.7);
			\draw (2.1,0.4) node {$\left(\begin{smallmatrix} 0 & 1 \end{smallmatrix}\right)$};
			\draw (2.3,-0.9) node {$K[a]$};
			\draw [->] plot[smooth, tension=.7] coordinates {(2.6,-0.8) (2.8,-0.7) (3,-0.8)};
			\draw (2.8,-0.6) node {$\begin{smallmatrix} 0 \end{smallmatrix}$};
			\draw [->](3,-0.9) -- (2.6,-0.9);
			\draw (2.8,-0.82) node {$\begin{smallmatrix} 1 \end{smallmatrix}$};
			\draw [->] plot[smooth, tension=.7] coordinates {(2.6,-1) (2.8,-1.1) (3,-1)};
			\draw (2.8,-1) node {$\begin{smallmatrix} 0 \end{smallmatrix}$};
			\draw (3.3,-0.9) node {$K[a]$};
			\draw [->](3.3,-0.7) -- (3.55,-0.2);
			\draw (3.55,-0.5) node {$\begin{smallmatrix} 1 \end{smallmatrix}$};
			\draw (3.65,0) node {$K[a]$};
			\draw [->](3.95,0.2) -- (4.15,0.4);
			\draw [->](4.05,0.5) -- (3.85,0.3);
			\draw (4.25,0.6) node {$0$};
			\draw [->](3.85,-0.3) -- (4.05,-0.5);
			\draw [->](4.15,-0.4) -- (3.95,-0.2);
			\draw (4.25,-0.6) node {$0$};
			
			\end{tikzpicture}
		\end{center}
		Note that we have used a slight abuse of notation to define $N'$, since the vector spaces at each vertex in the $K$-representation above are in fact left $K[a]$-modules. The $K[a]$-$A$-bimodule $N$ corresponding to $N'$ from the construction in the proof of Lemma~\ref{FamFunctor} is then the following.
		\begin{center}
			\begin{tikzpicture}[scale=1.5, ]%baseline=(o.base)
			\draw (-2.1,0) node {$N:$};
			\draw [->](-1.1,0.2) -- (-1.3,0.4);
			\draw [->](-1.4,0.3) -- (-1.2,0.1);
			\draw (-1.5,0.5) node {$0$};
			\draw [<-](-1.1,-0.2) -- (-1.3,-0.4);
			\draw [<-](-1.4,-0.3) -- (-1.2,-0.1);
			\draw (-1.5,-0.5) node {$0$};
			\draw (-0.8,0) node {$K[a]$};
			\draw [->](0.2,0.05) -- (-0.4,0.05);
			\draw (-0.1,0.25) node {$\left(\begin{smallmatrix} 1 & 0 \end{smallmatrix}\right)$};
			\draw [->](-0.4,-0.05) -- (0.2,-0.05);
			\draw (-0.1,-0.25) node {$\left(\begin{smallmatrix} 0 \\ a \end{smallmatrix}\right)$};
			\draw (0.7,0) node {$(K[a])^2$};
			\draw [->](1.2,0) -- (1.8,0);
			\draw (1.5,-0.25) node {$\left(\begin{smallmatrix} 1 & 0 \\ 0 & 0 \end{smallmatrix}\right)$};
			\draw (2.3,0) node {$(K[a])^2$};
			\draw [->](1.85,0.15) -- (1.65,0.45);
			\draw (1.5,0.65) node {$K[a]$};
			\draw [->](1.35,0.45) -- (1.15,0.15);
			\draw (1,0.4) node {$\left(\begin{smallmatrix} 0 \\ 1 \end{smallmatrix}\right)$};
			\draw [->](1.4,0.95) -- (1.2,1.15);
			\draw [->](1.1,1.05) -- (1.3,0.85);
			\draw (1,1.25) node {$0$};
			\draw [->](1.8,1.15) -- (1.6,0.95);
			\draw [->](1.7,0.85) -- (1.9,1.05);
			\draw (2,1.25) node {$0$};
			\draw [->](3.35,0.05) -- (2.75,0.05);
			\draw (3.05,0.25) node {$\left(\begin{smallmatrix} 1 \\ 0 \end{smallmatrix}\right)$};
			\draw (2.1,0.4) node {$\left(\begin{smallmatrix} 0 & 1 \end{smallmatrix}\right)$};
			
			\draw [->](2.75,-0.05) -- (3.35,-0.05);
			\draw (3.05,-0.25) node {$\left(\begin{smallmatrix} 0 & 1 \end{smallmatrix}\right)$};
			\draw (3.65,0) node {$K[a]$};
			\draw [->](3.95,0.2) -- (4.15,0.4);
			\draw [->](4.05,0.5) -- (3.85,0.3);
			\draw (4.25,0.6) node {$0$};
			\draw [->](3.85,-0.3) -- (4.05,-0.5);
			\draw [->](4.15,-0.4) -- (3.95,-0.2);
			\draw (4.25,-0.6) node {$0$};
			
			\end{tikzpicture}
		\end{center}
		It then follows that each $M_\lambda \cong S_\lambda \otimes_{K[a]} N$.
	\end{exam}
	
	We now prove Theorem~\ref{TameQC}. Throughout the proof, we denote by $\uind_d A$ the full subcategory of $\uMod*A$ whose objects are indecomposable modules $M$ with $\dim_K M=d$. We denote by $\ind_{\leq d'} A'$ the full subcategory of $\Mod*A'$ whose objects are indecomposable modules $M'$ with $\dim_K M' \leq d$.
	\begin{proof}
		Let $V_\chi$ and $Y_\chi$ be the sets, $A'$ be the Brauer configuration algebra associated to the Brauer configuration $\chi'$, and let $F$ be the function outlined in the construction above.
		
		Let $M$ be a non-projective $A$-module of dimension $d$. Note that for each $1\leq i \leq |V_\chi|$,
		\begin{equation*}
			\dim_K ((FM)\varepsilon_{p_i})=\dim_K ((FM)\varepsilon_{q_i}) \leq \mathfrak{e}_{t_i},
		\end{equation*}
		where $t_i \in V_\chi$. Since $\dim_K ((FM)\varepsilon_{x}) = \dim_K (M\varepsilon_{x})$ for all $x \in Q_0$, we conclude that $\dim_K FM \leq \Delta(d)$, where $\Delta:\mathbb{Z}_{> 0} \rightarrow \mathbb{Z}_{> 0}$ is a function defined by
		\begin{equation*}
			\Delta(n)= n + 2 \sum_{i=1}^{|V_\chi|} \mathfrak{e}_{t_i}
		\end{equation*}

		Recall that since $A'$ is tame, there exists a finite number of $K[a]$-$A'$-bimodules $N'_1, \ldots, N'_{r_d}$ that are finitely generated and free as a left $K[a]$-modules such that almost all modules $M' \in \ind_{\leq \Delta(d)} A'$ are isomorphic to a module of the form $S \otimes_{K[a]} N'_k$, where $S$ is some simple right $K[a]$-module and $k \in \{1,\ldots, r_d\}$. But $FM$ is indecomposable for any indecomposable $M \in \uMod*A$ (Lemma~\ref{IndFunctor}). Thus, any indecomposable $A$-module $M$ of dimension $d$ belongs to precisely one of the following sets.
		\begin{equation*}
			B_{d}^{(i)}=\{M \in \uind_d A : FM \cong S \otimes_{K[a]} N'_i \text{ for some simple } S \in \fin K[a]\}
		\end{equation*}
		for some $i \in \{1,\ldots, r_d\}$, or to the set
		\begin{equation*}
			O_d=\{M \in \uind_d A : M \not\in B_d^{(i)} \text{ for any } i\},
		\end{equation*}
		or to the set
		\begin{equation*}
			P_d=\{M \in \Mod* A : \dim_K M = d \text{ and } M \text{ indecomposable projective}\}.
		\end{equation*}
		
		First note that the sets $O_d$ and $P_d$ are finite. The claim that $O_d$ is finite follows from the fact that since $A'$ is tame, there are only finitely many modules $M' \in \ind_{\leq \Delta(d)} A'$ such that $M' \not\cong S \otimes_{K[a]} N'_i$ for any simple $S \in \fin K[a]$ and any $i$. But since $F$ is injective on isomorphism classes (by Lemma~\ref{InjFunctor}), this implies there exist only finitely many $M\in \uind_d A$ such that $FM \not\cong S \otimes_{K[a]} N'_i$ for any simple $S \in \fin K[a]$ and any $i$. The set $P_d$ is finite (or empty) because $A$ is finite dimensional.
		
		Lemma~\ref{FamFunctor} implies that any $M_\lambda \in B_{d}^{(k)}$ is isomorphic to a module of the form $S_\lambda \otimes_{K[a]} N_k$ for some simple $K[a]$-module $S_\lambda$, where $N_k$ is a $K[a]$-$A$-bimodule (corresponding to $N'_k$) which is finitely generated and free as a left $K[a]$-module.
		
		Define a set
		\begin{equation*}
			J_d =\{j : B_{d}^{(j)} \neq \emptyset\}.
		\end{equation*}
		Then it follows that there exists a finite set $\{N_j : j \in J_d\}$ consisting of $K[a]$-$A$-bimodules that are finitely generated and free as a left $K[a]$-modules such that almost all indecomposable modules $M\in \Mod*A$ of dimension $d$ are isomorphic to a module of the form $S \otimes_{K[a]} N_j$, where $S$ is some simple right $K[a]$-module and $j \in J_d$. Thus, $A$ is tame.
	\end{proof}
	
	% ===============================================================
	% WILD SYMMETRIC SPECIAL TRISERIAL ALGEBRAS
	% ===============================================================
	\section{Symmetric Special Multiserial Algebras of Type $\mathbb{E}$} \label{TriserialResults}
	Throughout this section, we will assume that $A$ is a symmetric special triserial algebra. That is, $A$ is a Brauer configuration algebra associated to a configuration $\chi$ such that for any polygon $x$ in $\chi$, we have $|x| \leq 3$. We aim to prove the following Theorem.
	\begin{thm} \label{ExceptionalAlgebras}
		Let $A=KQ/I$ be a Brauer configuration algebra associated to a Brauer configuration $\chi$. Suppose $\chi$ is of the form
		\begin{center}
			\begin{tikzpicture}[baseline = (o.base),scale=0.9]
				\draw[pattern = north west lines] (0.2,-1) -- (1.4,-1) -- (0.8,-0.1) -- (0.2,-1);
				\draw[dashed] (1.8,-1) -- (1.4,-1);
				\draw[dashed] (2.4,-1) -- (2.7,-1);
				\draw (2.1,-1) node{$T_1$};
				\draw[dashed] (0.8,-0.1) -- (0.8,0.2);
				\draw[dashed] (0.8,0.8) -- (0.8,1.2) node (o) {};
				\draw (0.8,0.5) node{$T_2$};
				\draw[dashed] (-1.1,-1) -- (-0.8,-1);
				\draw[dashed] (-0.2,-1) -- (0.2,-1);
				\draw (-0.5,-1) node{$T_3$};
			\end{tikzpicture}
		\end{center}
		where $T_1$, $T_2$ and $T_3$ are distinct multiplicity-free Brauer trees containing $m_1$, $m_2$ and $m_3$ edges respectively. Suppose further that at least two of $m_1$, $m_2$ and $m_3$ are strictly greater than 1. Then $A$ is tame if and only if the values of the (unordered) triple $(m_1,m_2,m_3)$ conform to a column of the following table.
		\begin{center}
			\begin{tabular}{c | c c c c c c}
				$m_1$	&	$1$	&	$1$	&	$1$	&	$1$	&	$1$	&	$2$	\\	\hline
				$m_2$	&	$2$	&	$2$	&	$2$	&	$2$	&	$3$	&	$2$	\\	\hline
				$m_3$	&	$2$	&	$3$	&	$4$	&	$5$	&	$3$	&	$2$
			\end{tabular}
		\end{center}
	\end{thm}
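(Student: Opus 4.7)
My plan is to exploit Theorem~\ref{RickardTheorem} to reduce the question to the representation type of the trivial extension of a suitable hereditary algebra. The triples in the table are exactly those $(m_1,m_2,m_3)$ for which the star-shaped graph $T_{m_1+1,m_2+1,m_3+1}$ (or an appropriate reindexing) is Dynkin of type $\mathbb{E}$ or extended Dynkin of type $\widetilde{\mathbb{E}}$, i.e.\ satisfies $\frac{1}{p}+\frac{1}{q}+\frac{1}{r}\geq 1$. This suggests that $A$ should be derived equivalent to the trivial extension $T(H)$ of a hereditary algebra $H$ of type $T_{p,q,r}$, and one then reads off the representation type of $A$ from that of $H$, using the facts that derived equivalent self-injective algebras share representation type and that $T(H)$ is tame iff $H$ is (representation-finite or tame hereditary, i.e.\ Dynkin or extended Dynkin).

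The first step is to put $\chi$ into a canonical ``star'' form. Using iterated Okuyama--Rickard tilting complexes, analogous to Rickard's treatment of Brauer tree algebras in \cite{Rickard}, I would show that $A$ is derived equivalent to the Brauer configuration algebra $A^\ast$ of a configuration $\chi^\ast$ consisting of a single central $3$-gon with three straight ``legs'' of lengths $m_1,m_2,m_3$ attached to its three vertices, still multiplicity-free. Each flip is a local Okuyama--Rickard modification at a non-central vertex of a tree $T_i$; the main point is that since the only non-tree-like feature is the central $3$-gon, and since each $T_i$ is a multiplicity-free Brauer tree, Rickard's flip procedure goes through locally on each arm independently and reduces each $T_i$ to a linear chain.

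The second step is to identify $A^\ast$ with $T(H)$ for a specific hereditary $H$. Using Theorem~\ref{CutAlgebra}, I choose an admissible cut $D$ of the quiver of $A^\ast$ consisting of one arrow from each interior cycle $\mathfrak{C}_v$; for the star configuration $\chi^\ast$, a careful choice of $D$ at the three vertices of the central $3$-gon yields a cut algebra $B=KQ'/(I\cap KQ')$ whose underlying quiver is a tree $T_{p,q,r}$ (with branch lengths determined by the $m_i$), and whose relations vanish because every path of length two along the tree crosses the cut. Hence $B\cong KT_{p,q,r}$ is hereditary of type $T_{p,q,r}$, and by Theorem~\ref{CutAlgebra}, $A^\ast \cong T(B)$. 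Combining with Step~1, $A$ is derived equivalent to $T(KT_{p,q,r})$. Finally, the classical trichotomy for $T_{p,q,r}$ (Dynkin $\mathbb{E}$ when $\tfrac{1}{p}+\tfrac{1}{q}+\tfrac{1}{r}>1$, extended Dynkin $\widetilde{\mathbb{E}}$ when $=1$, wild otherwise), combined with the fact that $T(H)$ is tame iff $H$ is representation-finite or tame, matches the enumerated table precisely.

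The main obstacle will be Step~1: constructing the Okuyama--Rickard complexes carefully enough, and tracking the effect on the Brauer configuration combinatorially, to guarantee that the flips converge to the star configuration $\chi^\ast$ without disturbing the central $3$-gon. In Rickard's original setting there is a clean combinatorial model (flipping an edge of a Brauer tree across a vertex) which needs to be extended here to handle the presence of a $3$-gon; the verification that the tilting complex endomorphism ring really is the Brauer configuration algebra of the flipped configuration is the technical heart of the argument, and follows the same pattern as Rickard's but with additional bookkeeping at the three vertices of the central $3$-gon. Once this is done, the identification of the star algebra with $T(KT_{p,q,r})$ via the cut algebra theorem is essentially formal, and the conclusion follows from the standard representation theory of hereditary algebras and trivial extensions thereof.
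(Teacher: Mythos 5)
Your proposal is correct and takes essentially the same route as the paper: Sections 7.1--7.5 of the paper carry out precisely this Okuyama--Rickard ``flip'' reduction (Propositions~\ref{DerivedReduction} and \ref{DerivedEquivalence}), modeled on Rickard's Brauer star theorem, to bring $\chi$ to the star form with a central $3$-gon and straight arms, and then Theorem~\ref{CutAlgebra} is invoked to identify the star algebra with the trivial extension of a hereditary algebra of type $T_{p,q,r}$, from which the representation type is read off via \cite{FiniteTrivialExtension}, \cite{DomesticTrivialExtension}, \cite{SkowDomestic}. You also correctly locate the technical crux in verifying that the endomorphism ring of the tilting complex is the flipped Brauer configuration algebra, which is exactly what the paper labors over in Lemmas~\ref{FactorMap}--\ref{NonStandardRelations}.
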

	
	The columns of the above table correspond to the Dynkin and Euclidean diagrams of type $\mathbb{E}_p$ and $\widetilde{\mathbb{E}}_p$ ($p \in\{6,7,8\}$). The first step of the proof is to show that any algebra of the above form is derived equivalent to a Brauer configuration algebra $\widetilde{A}$ associated to a Brauer configuration of the form 
	\begin{center}
		\begin{tikzpicture}[scale=0.9]
			\draw[pattern = north west lines] (0,-1) -- (1.6,-1) -- (0.8,0.2) -- (0,-1);
			
			\draw (2.3,-1.5778) -- (1.6,-1) -- (2.3,-0.4222);
			\draw (1.4,0.9) -- (0.8,0.2) -- (0.2,0.9);
			\draw (-0.7,-0.4222) -- (0,-1) -- (-0.7,-1.5778);
			
			\draw (2.1111,-0.8889) node {$\vdots$};
			\draw (0.8333,0.7) node {$\cdots$};
			\draw (-0.5111,-0.8889) node {$\vdots$};
			
			\draw (3.8,-1) node {\footnotesize$m_1$ edges};
			\draw (0.8,1.7) node {\footnotesize$m_2$ edges};
			\draw (-2.2,-1) node {\footnotesize$m_3$ edges};
			
			\draw (2.6,-1) node {$\left.\begin{smallmatrix} \\ \\ \\ \\ \\ \end{smallmatrix} \right\}$};
			\draw (0.8,1) node {$\overbrace{\begin{matrix} & & & \end{matrix}}$};
			\draw (-1,-1) node {$\left\{\begin{smallmatrix} \\ \\ \\ \\ \\ \end{smallmatrix} \right.$};
		\end{tikzpicture}
	\end{center}
	This is essentially Rickard's Brauer Star Theorem \cite[Theorem 4.2]{Rickard} applied to Brauer configuration algebras. By the results of \cite{AlmostGentle}, we then know that $\widetilde{A}$ is the trivial extension of a hereditary algebra $KQ'$, where $Q'$ is a quiver of the form
	\begin{center}
		\begin{tikzpicture}
		\draw(-2.6,0.6) node {$Q':$};
		
		\draw [->](0.1,0) -- (0.7,0);
		\draw (1,0) node {$\cdots$};
		\draw [->](1.3,0) -- (1.9,0);
		
		\draw [->](0,0.1) -- (0,0.6);
		\draw (0,1) node {$\vdots$};
		\draw [->](0,1.2) -- (0,1.7);
		
		\draw [<-](-1.9,0) -- (-1.3,0);
		\draw (-1,0) node {$\cdots$};
		\draw [<-](-0.7,0) -- (-0.1,0);
		
		\draw (1,-0.1) node {$\underbrace{\begin{matrix} & & & & & \end{matrix}}$};
		\draw (-0.2,0.9) node {$\left\{ \begin{smallmatrix} \\ \\ \\ \\ \\ \\ \\ \end{smallmatrix} \right.$};
		\draw (-1,-0.1) node {$\underbrace{\begin{matrix} & & & & & \end{matrix}}$};
		
		\draw (-1.3,0.8) node {\footnotesize$m_2$ arrows};
		\draw (1,-0.7) node {\footnotesize$m_1$ arrows};
		\draw (-1,-0.7) node {\footnotesize$m_3$ arrows};
		\end{tikzpicture}
	\end{center}
	Under the assumption that at least two of $m_1$, $m_2$ and $m_3$ are strictly greater than 1, $\widetilde{A}=T(KQ')$ is tame if and only if the triple $(m_1,m_2,m_3)$ conforms to a column in the table of Theorem~\ref{ExceptionalAlgebras}, in which case, $Q'$ is an orientation of $\mathbb{E}_p$ or $\widetilde{\mathbb{E}}_p$ ($p \in\{6,7,8\}$). In fact, the algebra $\widetilde{A}$ is either of finite representation type (\cite[Theorem 1.4]{FiniteTrivialExtension}) or is representation-infinite domestic (\cite{DomesticTrivialExtension}, \cite{SkowDomestic}).
	
	Many details of the proof for \cite[Theorem 4.2]{Rickard} carry over to the multiserial case. However, for the benefit of the reader, we will run through the full details of the proof here.
	
	\subsection{Initial assumptions}
	We will make the following assumption in the construction of the tilting complex that follows.
	\begin{assumption} \label{ComplexAssum}
		Let $A=KQ/I$ be any Brauer configuration algebra associated to a Brauer configuration $\chi$. Assume $\chi$ is a multiplicity-free tree that contains precisely one $3$-gon and no $n$-gons with $n>3$.
	\end{assumption}
	 We will use this assumption to prove the following.
	
	\begin{prop} \label{DerivedReduction}
		Let $A$ be a Brauer configuration algebra associated to a configuration $\chi$ satisfying Assumption~\ref{ComplexAssum}. Let $x$ be the unique 3-gon of $\chi$ and suppose $\chi$ contains a subtree $\chi'$ of the form
		\begin{center}
			\begin{tikzpicture}
			\draw [anchor=east](-1.3,0) node {$\chi':$};
			\draw (0.7,0.25) node {$y_1$};
			\draw [anchor=west](-0.4,0.5) node {$y_2$};
			\draw (-0.6,0.1) node {$\vdots$};
			\draw [anchor=west](-0.5,-0.6) node {$y_{\mathrm{val}(u)}$};
			\draw (0,0) node {$u$};
			\draw (1.43,0) node {$u'$};
			\draw (0.2,0) -- (1.2,0);
			\draw (-0.8,0.7) -- (-0.2,0.1);
			\draw (-0.8,-0.7) -- (-0.2,-0.1);
			\end{tikzpicture}
		\end{center}
		such that $y_2, \ldots, y_{\val(u)}$ are truncated and $y_1 \neq x$. Let $y_1,y'_2,\ldots, y'_{\val(u')}$ be the successor sequence of $y_1$ at $u'$. Then $A$ is derived equivalent to a Brauer configuration algebra associated to a Brauer configuration $\widetilde{\chi}$ such that
		\begin{equation*}
			\widetilde{\chi} = (\chi \setminus \{y_2, \ldots, y_{\val(u)}\}) \cup \{\widetilde{y}_2, \ldots, \widetilde{y}_{\val(u)}\},
		\end{equation*}
		where $\widetilde{y}_2, \ldots, \widetilde{y}_{\val(u)}$ are truncated edges connected to $u'$ in $\widetilde{\chi}$, and the successor sequence of $y_1$ at $u'$ in $\widetilde{\chi}$ is
		\begin{equation*}
			y_1,\widetilde{y}_2, \ldots, \widetilde{y}_{\val(u)}, y'_2, \ldots, y'_{\val(u')}.
		\end{equation*}
	\end{prop}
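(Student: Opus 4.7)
The plan is to apply Rickard's Theorem~\ref{RickardTheorem} by exhibiting an explicit Okuyama--Rickard tilting complex $T\in K^b(\proj A)$ whose endomorphism ring is isomorphic to the Brauer configuration algebra $\widetilde{A}$ associated to $\widetilde{\chi}$. The truncated edges $y_2,\dots,y_{\val(u)}$ are precisely the polygons we want to ``slide'' across the $2$-gon $y_1$ onto the vertex $u'$; correspondingly, the idempotents $\varepsilon_{y_2},\dots,\varepsilon_{y_{\val(u)}}$ are the ones we will resolve by their minimal projective presentations, while the remaining indecomposable projectives are kept as stalk complexes. Concretely, take $E'=E\setminus\{y_2,\dots,y_{\val(u)}\}$, set $\varepsilon=\sum_{i\in E'}\varepsilon_i$, and let $T=\bigoplus_{i\in E}T_i$ be the Okuyama--Rickard complex with respect to $E'$.

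The first step is to describe $T_{y_i}$ explicitly for $i\ge 2$. Because $y_i$ is a truncated edge attached to the non-truncated vertex $u$, the indecomposable projective $P(y_i)=\varepsilon_{y_i}A$ is uniserial (one sees this from the Brauer configuration construction since $y_i$ has only one non-truncated endpoint), and its unique composition factor $S(y_1)$ outside the radical appears through the arrow generated by the successor relation at~$u$. Consequently, $\varepsilon_{y_i}A/\varepsilon_{y_i}A\varepsilon A$ has a minimal projective presentation of the form $P(y_1)\to\varepsilon_{y_i}A$, where the map sends $\varepsilon_{y_1}$ to the unique path from $y_i$ to $y_1$ in $\mathfrak{C}_{u,\cdot}$. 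Thus $T_{y_i}$ is the two-term complex $0\to P(y_1)\to P(y_i)\to 0$ in degrees $-1$ and $0$.

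Next I would verify the tilting axioms. That $\add(T)$ generates $K^b(\proj A)$ is immediate: every $P(y_i)$ with $i\ge 2$ appears, via the mapping cone, as an extension of $T_{y_i}$ by a shift of the stalk $P(y_1)\in\add(T)$. The vanishing $\Hom_{K^b(\proj A)}(T,T[n])=0$ for $n\ne 0$ reduces, by standard arguments, to checking $\Hom(T_i,T_j[n])=0$ for the finitely many pairs involving the resolved summands; this is a combinatorial check using the Brauer configuration, since a non-trivial chain map in degree $\pm 1$ would force a path in $Q$ that either factors through $\varepsilon A$ or violates the admissibility of $I$, and in each case one produces the required null-homotopy explicitly.

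The main and most delicate step is the identification $\End_{K^b(\proj A)}(T)\cong \widetilde{A}$. I would argue by computing the quiver and relations of the endomorphism ring: the indecomposable summands of $T$ are in bijection with the polygons of $\widetilde{\chi}$, and each arrow of the new quiver should come from a chain map induced by multiplication by an arrow in $Q$ or, for arrows emerging at the vertex $u'$ of $\widetilde{\chi}$, from chain maps whose components live in degree $-1$. The successor sequence $y_1,\widetilde y_2,\dots,\widetilde y_{\val(u)},y_2',\dots,y_{\val(u')}'$ at $u'$ in $\widetilde{\chi}$ is encoded by the composition of these chain maps around the new cycle $\mathfrak{C}_{u'}$ of $\widetilde{A}$, while the cycle $\mathfrak{C}_u$ of $\widetilde{A}$ (which is generated by the $3$-gon $x$ at $u$, since the truncated edges have been removed) appears from the surviving stalks. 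The socle relations and commutativity relations defining $\widetilde{A}$ then follow from the symmetry of $A$ and from the fact that the pair of chain maps one obtains by going around both cycles at a common polygon of $\widetilde{\chi}$ yields, in each case, multiplication by the socle element of~$A$. The hard part of the proof is this last verification: bookkeeping the chain maps between the resolved summands $T_{y_i}$ and the stalk summands $T_{y_1}, T_{x}, T_{y_j'}$ in order to match precisely the successor combinatorics of $\widetilde{\chi}$. Once this identification is made, Theorem~\ref{RickardTheorem} delivers the stated derived equivalence.
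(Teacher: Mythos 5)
Your proposal is correct and follows essentially the same path as the paper: construct the Okuyama--Rickard tilting complex with $E'=Q_0\setminus\{y_2,\dots,y_{\val(u)}\}$, identify $\End_{K^b(\proj A)}(T)$ with the Brauer configuration algebra of $\widetilde\chi$ by matching the maps between summands of $T$ to the successor combinatorics around $u$ and $u'$, and conclude via Rickard's theorem. The only minor difference is that you re-derive the tilting axioms by hand, whereas the paper simply invokes the known fact that Okuyama--Rickard complexes are tilting complexes; and a degree-shift convention in the two-term complexes, which is immaterial here since $A$ is symmetric (the paper obtains the opposite algebra and notes this).
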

	
	\subsection{The maps between indecomposable projective modules}
	We will begin by investigating the morphisms between the indecomposable projective modules in $A$. We have the following remark from Rickard, which is a trivial consequence of the multiserial nature of the indecomposable projective modules.
	\begin{rem}[\cite{Rickard},Remark 4.1] \label{ProjHom}
		Let $x$ and $y$ be distinct polygons in a Brauer configuration $\chi$ satisfying Assumption~\ref{ComplexAssum}.
		\begin{enumerate}[label=(\alph*)]
			\item If $x$ and $y$ have no common vertex, then $\dim_K \Hom_A(P(x),P(y))=0$. Otherwise, $\dim_K \Hom_A(P(x),P(y))=1$.
			\item $\dim_K\End_A(P(x))=2$ for any polygon $x$ in $\chi$.
		\end{enumerate}
	\end{rem}
	
	It will later be convenient to know the maps between indecomposable projective modules associated to consecutive polygons in the cyclic ordering at any vertex in $\chi$ in detail.
	
	\begin{rem} \label{CanonSurject}
		If $y$ is the direct predecessor to $x$ at some vertex in $\chi$, then $S(x) \subseteq \tp (\rad P(y))$. Thus, the canonical surjection of $P(x)$ into the maximal uniserial submodule $V \subset P(y)$ with $\tp V = S(x)$ is a basis element of $\Hom_A(P(x),P(y))$.
	\end{rem}
	
	\begin{lem} \label{FactorMap}
		Let $A$ be a Brauer configuration algebra associated to a Brauer configuration $\chi$ satisfying Assumption~\ref{ComplexAssum}. Let $x_1$ be a polygon connected to a non-truncated vertex $v$ in $\chi$. Let $x_1, \ldots, x_{\val(v)}$ be the successor sequence of $x_1$ at $v$. Denote by $f_j$ the basis element of $\Hom(P(x_{j+1}), P(x_j))$ given in Remark~\ref{CanonSurject}.
		\begin{enumerate}[label=(\alph*)]
			\item $\{f_1\ldots f_{r-1}\}$ is a basis for $\Hom_A(P(x_r), P(x_1))$ when $r \neq 1$.
			\item $\{\id_{P(x_1)}, f_1\ldots f_{\val(v)}\}$ is a basis for $\End_A(P(x_1))$.
		\end{enumerate}
	\end{lem}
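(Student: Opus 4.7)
The plan is to identify each $f_j$, via the standard isomorphism $\Hom_A(\varepsilon_{x_{j+1}}A,\varepsilon_{x_j}A)\cong \varepsilon_{x_j} A\varepsilon_{x_{j+1}}$, with the arrow $\alpha_j\in Q_1$ that begins the cycle $\mathfrak{C}_{v,\alpha_1}=\alpha_1\cdots\alpha_{\val(v)}$ at $x_j$. The maximal uniserial submodule $V_j\subseteq P(x_j)$ with $\tp V_j = S(x_{j+1})$ is generated by $\alpha_j$, so the canonical surjection of Remark~\ref{CanonSurject} sends $\varepsilon_{x_{j+1}}\mapsto\alpha_j$ and is precisely left-multiplication by $\alpha_j$. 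Chaining these together, $f_1\circ\cdots\circ f_{r-1}$ corresponds to the path $\alpha_1\cdots\alpha_{r-1}$, a subpath of $\mathfrak{C}_v$.

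For part (a), the task reduces to showing that $\alpha_1\cdots\alpha_{r-1}$ is nonzero in $A$ whenever $2\leq r\leq\val(v)$. Under Assumption~\ref{ComplexAssum}, $\chi$ is a multiplicity-free tree; in particular no polygon is self-folded, the polygons $x_1,\ldots,x_{\val(v)}$ are pairwise distinct, and the ideal $I$ is generated by (i) zero relations $\alpha\beta$ not occurring as a subpath of any $\mathfrak{C}_w$, (ii) commutativity relations $\mathfrak{C}_{u,\gamma}-\mathfrak{C}_{w,\delta}$ at shared polygons, and (iii) power relations $\mathfrak{C}_{w,\gamma_1}\gamma_1$ of length $\val(w)+1$ at truncated edges. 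Since $\alpha_1\cdots\alpha_{r-1}$ is a proper subpath of $\mathfrak{C}_v$ of length $r-1<\val(v)$, none of these relations fit inside it, so it is nonzero in $A$. Because $x_1$ and $x_r$ share the vertex $v$, Remark~\ref{ProjHom}(a) gives $\dim_K\Hom_A(P(x_r),P(x_1))=1$, and so $\{f_1\cdots f_{r-1}\}$ is a basis.

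For part (b), the same analysis shows $f_1\cdots f_{\val(v)}$ corresponds to the full cycle $\mathfrak{C}_v$, which is nonzero in $A$: no power relation of length $\val(v)+1$ fits inside a path of length $\val(v)$, and commutativity only equates $\mathfrak{C}_v$ with the cycles at the other vertices of $x_1$. Since this path has positive length, $f_1\cdots f_{\val(v)}\in\rad\End_A(P(x_1))$ and is therefore linearly independent from $\id_{P(x_1)}$. Combined with Remark~\ref{ProjHom}(b), which states $\dim_K\End_A(P(x_1))=2$, the pair $\{\id_{P(x_1)},f_1\cdots f_{\val(v)}\}$ is a basis.

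The only real subtlety is the verification that partial and full cycles at $v$ survive modulo $I$; this is a direct check once the three families of relations are catalogued, and it uses multiplicity-freeness crucially, since otherwise the power relation $\mathfrak{C}_{w,\gamma_1}^{\mathfrak{e}_w}\gamma_1$ at a truncated edge could in principle be too short to be harmless.
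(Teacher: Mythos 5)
Your proof is correct and reaches the same conclusion, but it travels by a noticeably different route than the paper's. You transport the whole problem into the algebra itself: via the Yoneda isomorphism $\Hom_A(\varepsilon_{x_{j+1}}A,\varepsilon_{x_j}A)\cong\varepsilon_{x_j}A\varepsilon_{x_{j+1}}$ you identify each $f_j$ with left-multiplication by an arrow $\alpha_j$ of the cycle $\mathfrak{C}_v$, so that the composites become paths, and the question "is $f_1\cdots f_{r-1}$ nonzero?" becomes "does the subpath $\alpha_1\cdots\alpha_{r-1}$ survive modulo $I$?" The paper instead stays module-theoretic: it tracks the image of the composites down the radical filtration, showing that $f_{j-n}\cdots f_j$ is the canonical surjection onto a uniserial submodule of $\rad^{n+1}P(x_{j-n})$ with top $S(x_{j+1})$. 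Both arguments rest on the same underlying fact, namely the known shape of $P(x_1)$ as a multiserial module over a Brauer configuration algebra, and both combine with Remark~\ref{ProjHom} to get the dimension count for free. Your element-based version is slightly more computational and makes the role of the three families of relations explicit; the paper's version avoids a case analysis of the relation types but is less self-contained about why the compositions are nonzero. One small caveat: your phrase "none of these relations fit inside it, so it is nonzero" is a little quicker than it should be, since membership in the two-sided ideal $\langle\rho\rangle$ could in principle arise from a combination of commutativity relations with zero relations, not only from a relation literally embedded as a subpath; but in the present situation $\alpha_1\cdots\alpha_{r-1}$ is a proper subpath of a single cycle $\mathfrak{C}_v$, so no full cycle occurs and the commutativity relations cannot be brought to bear, which closes the gap. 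This is at essentially the same level of rigour as the paper's appeal to ``the structure of $P(x_1)$''.
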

	\begin{proof}
		(a) First note that $f_j$ is the canonical surjection of $P(x_{j+1})$ into the indecomposable uniserial submodule $V_j \subseteq \rad P(x_j)$ such that $\tp V_j=S(x_{j+1})$. It follows that $f_{j-1} f_j$ is equivalent to the canonical surjection of $P(x_{j+1})$ into the indecomposable uniserial submodule $V_{j-1} \subseteq \rad^2 P(x_{j-1})$ such that $\tp V_{j-1}=S(x_{j+1})$. This follows since $S(x_j) \subseteq \rad P(x_{j-1}) / \rad^2 P(x_{j-1})$, $S(x_{j+1}) \subseteq \rad^2 P(x_{j-1}) / \rad^3 P(x_{j-1})$, and $\tp P(x_j) \subseteq \Coker f_j$. Using this argument iteratively, we can see that the map $f_{j-n}\ldots f_{j}$ is equivalent to the canonical surjection of $P(x_{j+1})$ into the indecomposable uniserial submodule $V_{j-n} \subseteq \rad^{n+1} P(x_{j-n})$ such that $\tp V_{j-n} = S(x_{j+1})$. By setting $j=r-1$ and $n=r-2$, the result follows.
		
		(b) With Assumption~\ref{ComplexAssum}, $\End_A(P(x_1))$ has a basis $\{\id_{P(x_1)}, t\}$, where $t$ is the canonical surjection of $P(x_1)$ into $\soc P(x_1)$. It then follows from the structure of $P(x_1)$ and by the arguments of (a) that $t=f_1\ldots f_{\val(v)}$.
	\end{proof}
	
	\subsection{Construction of the tilting complex}
	Let $x$ be the unique 3-gon of $\chi$ under Assumption~\ref{ComplexAssum}. Suppose $\chi$ contains a subtree $\chi'$ of the form
	\begin{center}
		\begin{tikzpicture}
			\draw [anchor=east](-1.3,0) node {$\chi':$};
			\draw (0.7,0.25) node {$y_1$};
			\draw [anchor=west](-0.4,0.5) node {$y_2$};
			\draw (-0.6,0.1) node {$\vdots$};
			\draw [anchor=west](-0.5,-0.6) node {$y_{\mathrm{val}(u)}$};
			\draw (0,0) node {$u$};
			\draw (1.43,0) node {$u'$};
			\draw (0.2,0) -- (1.2,0);
			\draw (-0.8,0.7) -- (-0.2,0.1);
			\draw (-0.8,-0.7) -- (-0.2,-0.1);
			\end{tikzpicture}
	\end{center}
	such that $y_2, \ldots, y_{\val(u)}$ are truncated and $y_1 \neq x$.
	
	For the polygon $y_1$ in $\chi'$, define a stalk complex
	\begin{equation*}
		T(y_1): \xymatrix@1{0 \ar[r] & P(y_1) \ar[r] & 0},
	\end{equation*}
	where $P(y_1)$ is in degree zero. For every other polygon $y_i$ in $\chi'$, define a complex
	\begin{equation*}
		T(y_i): \xymatrix@1{0 \ar[r] & P(y_1) \ar[r]^{f_i} & P(y_i) \ar[r] & 0},
	\end{equation*}
	where the $P(y_1)$ term is in degree zero. Note that by Remark~\ref{ProjHom}, such a complex is unique up to isomorphism in $K^b(\proj A)$. For every other polygon $z$ in $\chi$ (that is, for any polygon $z$ not in $\chi'$), we define a stalk complex
	\begin{equation*}
		T(z): \xymatrix@1{0 \ar[r] & P(z) \ar[r] & 0},
	\end{equation*}
	where $P(z)$ is in degree zero. Then define $T=\bigoplus_{x \in Q_0} T(x)$. Note that for any $i \neq 1$, we have $\varepsilon_{y_i} A \varepsilon_z A =0$ whenever $z \neq y_j$ for any $j$. Moreover, $P(y_1)=P(\varepsilon_{y_i} A \varepsilon_{y_1} A)$ for all $i \neq 1$, since each $P(y_i)$ is uniserial. Thus, $T$ is an Okuyama-Rickard tilting complex.
	
	\subsection{The maps between the direct summands of $T$} \label{ComplexMaps}
	For the purposes of readability, let $\mathcal{C}=K^b(\proj A)$. We aim to calculate $\End_\mathcal{C}(T)$. For maps between stalk complexes, this can simply be viewed as a map between indecomposable projective modules. We will investigate the morphisms in $\Hom_\mathcal{C}(T(y_i), T(y_j))$.
	
	Suppose $j < i$. Then by Lemma~\ref{FactorMap}(a), any map $f_j \in \Hom_A(P(y_1),P(y_j))$ can be written as a map $f_j=h f_i$, where $f_i \in \Hom_A(P(y_1),P(y_i))$. Thus, given a morphism
	\begin{equation*}
		\xymatrix{
			0 \ar[r]	&	P(y_1) \ar[r]^{f_i} \ar[d]^{g_0}	&	P(y_i) \ar[r] \ar[d]^{g_1}	&	0	\\
			0 \ar[r]	&	P(y_1)	\ar[r]^{f_j}					&	P(y_j) \ar[r]					&	0
		},
	\end{equation*}
	we can see that $\dim_K\Hom_\mathcal{C}(T(y_i), T(y_j)) \leq 2$. Namely, 
	\begin{equation*}
		\Hom_\mathcal{C}(T(y_i), T(y_j))=\spn \{(\id_{P(y_1)}, h), (t_1, 0)\},
	\end{equation*}
	where $t_1\in \End_A(P(y_1))$ maps an element from $\tp P(y_1)$ to $\soc P(y_1)$. But $(t_1, 0) \simeq 0$, since any map in $\End_A(P(y_1))$ that factors through $f_i$ is a scalar multiple of $t_1$ and any map in $\Hom_{A}(P(y_i), P(y_j))$ that factors through $f_j$ is zero (by the assumption that $\mathfrak{e}_u=1$). So $\dim_K\Hom_\mathcal{C}(T(y_i), T(y_j)) = 1$.
	
	Now suppose $j > i$. Then for any map $g_1\in \Hom_A(P(y_i),P(y_j))$, the composition $g_1 f_i$ factors through the map $t_1$. Such a factorisation is non-trivial, so $g_1 f_i=0$. So
	\begin{equation*}
		\Hom_\mathcal{C}(T(y_i), T(y_j))=\spn \{(t_1,0), (0, g_1)\},
	\end{equation*}
	In addition, there exists a map $h \in \Hom_A(P(y_i),P(y_1))$ such that $h f_i = t_1$ and $g_1=f_j h$. So $(t_1, 0) \simeq (0, -g_1)$. Hence, $\dim_K\Hom_\mathcal{C}(T(y_i), T(y_j))=1$.
	
	Now suppose $i=j$. Then
	\begin{equation*}
		\End_\mathcal{C}(T(y_i))=\spn \{(\id_{P(y_1)}, \id_{P(y_i)}), (t_1, 0), (0, t_i)\},
	\end{equation*}
	where $t_i\in \End_A(P(y_i))$ maps an element from $\tp P(y_i)$ to $\soc P(y_i)$. In fact, $(t_1, 0) \simeq (0,-t_i)$, since there exists a map $h \in \Hom_A(P(y_i), P(y_1))$ such that $t_1 = h f_i$ and $t_i= f_i h$. Moreover, $(t_1, 0) \not\simeq (\id_{P(y_1)}, \id_{P(y_i)})$, since for any morphism $h:P(y_i) \rightarrow P(y_1)$, we have $f_i h\neq \lambda\id_{P(y_i)}$ for any $\lambda \neq 0$. Thus, $\dim_K\End_\mathcal{C}(T(y_i)) =2$.
	
	\begin{lem} \label{DerivedCycle}
		Let $y_1, y'_2, \ldots, y'_{\val(u')}$ be the successor sequence of $y_1$ at $u'$. For all $1 \leq i < \val(u)$, let $\alpha_i: T(y_{i+1})\rightarrow T(y_i)$ denote the morphism such that the degree zero map is the identity. Let $\alpha_{\val(u)}:T(y'_2) \rightarrow T(y_{\val(u)})$ denote the morphism such that the degree zero map is the basis element of $\Hom_A(P(y'_2), P(y_1))$ given in Remark~\ref{CanonSurject}. Finally, for all $2 \leq i \leq \val(u')$, let $\alpha_{\val(u)+i-1}:T(y'_{i+1}) \rightarrow T(y'_i)$ denote the morphism such that the degree zero map is the basis element of $\Hom_A(P(y'_{i+1}), P(y'_i))$ given in Remark~\ref{CanonSurject}, where $y'_{\val(u')+1}:=y_1$. For any $1 \leq i,j < \val(u) + \val(u')$, consider the vector space $\Hom_\mathcal{C}(T(z_i), T(z_j))$, where $z_k = y_k$ if $k \leq \val(u)$, $z_k=y'_{k - \val(u)+1}$ if $\val(u)<k<\val(u) + \val(u')$ and $z_{\val(u)+\val(u')}=z_1=y_1$.
		\begin{enumerate} [label=(\alph*)]
			\item For all $j<i$, $\{\alpha_{j}\alpha_{j+1}\ldots \alpha_{i-1}\}$ is a basis for $\Hom_\mathcal{C}(T(z_i), T(z_j))$.
			\item For all $j>i$,
			\begin{equation*}
				\{\alpha_{j}\alpha_{j+1}\ldots \alpha_{\val(u)+\val(u')-1}\alpha_1\ldots\alpha_{i-1}\}
			\end{equation*}
			is a basis for $\Hom_\mathcal{C}(T(z_i), T(z_j))$.
			\item A basis for $\End_\mathcal{C}(T(z_j))$ is
			\begin{equation*}
				\{\id_{T(z_j)}, \alpha_{j}\alpha_{j+1}\ldots \alpha_{\val(u)+\val(u')-1}\alpha_1\ldots\alpha_{j-1}\}.
			\end{equation*}
		\end{enumerate}
	\end{lem}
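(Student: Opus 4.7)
The strategy is to establish (a)--(c) in two steps: first compute $\dim_K \Hom_\mathcal{C}(T(z_i), T(z_j))$ for all pairs, then exhibit the given $\alpha$-compositions as explicit nonzero chain maps. Since each Hom space will be at most two-dimensional (one-dimensional for $i \neq j$, two for $i=j$), producing a single nonzero element (resp.\ two linearly independent ones) suffices to pin down a basis.

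For the dimension count I would split into cases by whether $T(z_i)$ and $T(z_j)$ are stalk or two-term complexes. When both are stalk complexes --- i.e.\ when $z_i, z_j \in \{y_1, y'_2, \ldots, y'_{\val(u')}\}$ --- we have $\Hom_\mathcal{C}(T(z_i), T(z_j)) = \Hom_A(P(z_i), P(z_j))$, and Remark~\ref{ProjHom} gives dimension $1$ for $i \neq j$ (all these polygons share the vertex $u'$) and dimension $2$ for $i=j$. When at least one complex is two-term, the argument is a verbatim repetition of Subsection~\ref{ComplexMaps}: a chain map is a pair $(g_0,g_1)$ subject to a commutativity square, any summand factoring through the socle map $t_1 \in \End_A(P(y_1))$ is either homotopic to another (via the differential $f_i$) or null-homotopic, and the identification $(t_1,0) \simeq (0,-t_i)$ cuts the dimension to $1$ for $i \neq j$ and $2$ for $i=j$.

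For (a), when $j<i$ both lie in $\{1,\ldots,\val(u)-1\}$, every $\alpha_k$ in the composition has degree-zero component $\id_{P(y_1)}$, so the degree-one component of the composite is the iterated canonical surjection $P(y_i) \to P(y_{i-1}) \to \cdots \to P(y_j)$, which by Lemma~\ref{FactorMap}(a) applied at $u$ is a basis element of $\Hom_A(P(y_i),P(y_j))$. The analogous argument at $u'$ handles the case where $i,j$ both sit past the bridge $\alpha_{\val(u)}$. The mixed case invokes the degree-zero component $h\colon P(y'_2) \to P(y_1)$ of $\alpha_{\val(u)}$, which lands in the uniserial submodule of $P(y_1)$ with top $S(y'_2)$; a direct check shows $f_j \circ h = 0$ (because $h$ lands in the $u'$-arm of the biserial $P(y_1)$, while $f_j$ factors through the $u$-arm), so the composition is a bona fide chain map whose degree-zero component is a nonzero iterated canonical surjection, again by Lemma~\ref{FactorMap}(a). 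Case (b) reduces to (a) via the cyclic identification $y'_{\val(u')+1}=y_1=z_1$.

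The main obstacle is (c): showing the full cycle $\alpha_j\alpha_{j+1}\cdots\alpha_{\val(u)+\val(u')-1}\alpha_1\cdots\alpha_{j-1}$ is nonzero and linearly independent of $\id_{T(z_j)}$. Consider first $z_j=y_1$: threading through the definitions, the degree-zero component of the full cycle telescopes to the composition around $u'$ of canonical surjections $P(y_1) \to P(y'_{\val(u')}) \to \cdots \to P(y'_2) \to P(y_1)$, which by Lemma~\ref{FactorMap}(b) applied at $u'$ is precisely the non-identity basis element $t_1$ of $\End_A(P(y_1))$. Since $T(y_1)$ is a stalk complex no homotopies are available, so $t_1 \neq 0$ witnesses the independence. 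For $z_j \neq y_1$ the same cycle-map calculation yields $(t_1,0) \in \End_\mathcal{C}(T(y_i))$ (when $z_j = y_i$) or the socle endomorphism of $T(y'_k)$; linear independence from $\id$ then follows from the dimension analysis of Subsection~\ref{ComplexMaps}, the key point being that the multiplicity-freeness built into Assumption~\ref{ComplexAssum} prevents the differential $f_i$ from producing a null-homotopy that would collapse this socle component.
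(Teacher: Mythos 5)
Your proof is correct and follows essentially the same route as the paper's: compute the Hom space dimensions in $K^b(\proj A)$ (one off the diagonal, two on it) via Remark~\ref{ProjHom} and the homotopy analysis of Subsection~\ref{ComplexMaps}, then exhibit the $\alpha$-compositions as non-zero spanning elements by invoking Lemma~\ref{FactorMap}. Your explicit treatment of the mixed stalk-to-two-term case ($1 \le j \le \val(u) < i$) is even slightly more careful than the paper's, which leaves that sub-case implicit.
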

	\begin{proof}
		(a) For $\val(u)<j<i\leq\val(u)+\val(u')$, this follows from Lemma~\ref{FactorMap}(a), since $T(z_i)$ and $T(z_j)$ are stalk complexes. A similar argument for holds for $\val(u)<i<\val(u)+\val(u')$ and $j=\val(u)$ when considering the maps between degree zero terms.
		
		For $1 < j<i \leq \val(u)$, it follows from the reasoning at the start of this subsection that the degree zero map is the identity and the degree $-1$ map is in the space $\Hom_A(P(i), P(j))$. Thus, the result again follows from Lemma~\ref{FactorMap}(a) when considering the degree $-1$ maps. For $1 <i \leq \val(u)$ and $j=1$. The degree zero map is either the identity map or the map $t_1:\tp P(y_1) \rightarrow \soc P(y_1)$. But by Lemma~\ref{FactorMap}(b), $t_1$ factors through a map in $\Hom_A(P(y_1),P(y_i))$. So any morphism in $\Hom_{\mathcal{C}}(T(y_i), T(y_1))$ with degree zero map $t_1$ is homotopic to zero. If the degree zero map is instead the identity then any morphism in $\Hom_{\mathcal{C}}(T(y_i), T(y_1))$ is equal to the composition of some morphism in $\Hom_{\mathcal{C}}(T(y_i), T(y_2))$ with the morphism $\alpha_1$. By considering Lemma~\ref{FactorMap}(a) on the degree $-1$ terms, the result follows.
		
		(b) The arguments used in the proof for (a) form a cycle of maps. The proof for (b) is hence similar.
		
		(c) If $f \in \End_{\mathcal{C}}(T(z_j))$ is a non-identity map, then the degree zero map must be a map from the top to the socle of the projective module in degree zero. By Lemma~\ref{FactorMap}(b), this is equivalent to a cycle of maps between the indecomposable projective modules corresponding to the polygons around the vertex $u'$. The result then follows from the proof of (a) and (b).
	\end{proof}

	\begin{lem} \label{DerivedArrows}
		Define the following set of germs of polygons in $\chi$.
		\begin{equation*}
			Z= \{ z^v | v \text{ non-truncated}, v \neq u' \text{ and } z \neq y_i \text{ for all } i\}
		\end{equation*}
		Let $z^v \in Z$ and let $z'$ be the successor to $z$ at $v$. Denote by $\beta_{v, z}$ the morphism in $\Hom_{\mathcal{C}}(T(z'), T(z))$ whose degree zero map is the basis element of $\Hom_A(P(z'),P(z))$ given in Remark~\ref{CanonSurject}. Then
		\begin{equation*}
			\left< \alpha_1, \ldots, \alpha_{\val(u)+\val(u')-1}, \beta_{v, z} \right>_{ z^v \in Z} = \End_{K^b(\proj A)}(T),
		\end{equation*}
		where $\alpha_1, \ldots, \alpha_{\val(u)+\val(u')-1}$ are as in Lemma~\ref{DerivedCycle}.
	\end{lem}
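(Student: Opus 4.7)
The strategy is to describe a basis for each $\Hom_\mathcal{C}(T(a), T(b))$ as $T(a), T(b)$ range over the indecomposable summands of $T$, and then exhibit each basis element as a composition of the proposed generators. The summands of $T$ fall into three types: the stalk $T(y_1)$; the two-term complexes $T(y_i)$ for $2 \leq i \leq \val(u)$; and stalks $T(z)$ for every other polygon $z$ (including the polygons $y'_j$ at $u'$ and all polygons outside $\chi'$).

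First, for all pairs where both summands are indexed by polygons in the list $\{y_1,\ldots,y_{\val(u)},y'_2,\ldots,y'_{\val(u')}\}$, Lemma~\ref{DerivedCycle} already provides explicit bases as compositions of the $\alpha_i$'s, so these Hom spaces sit inside the subalgebra generated by the $\alpha$'s. Next, for pairs of stalks $T(z_1), T(z_2)$ where neither $z_j$ lies in the above list, the Hom space in $\mathcal{C}$ equals $\Hom_A(P(z_1), P(z_2))$, which by Remark~\ref{ProjHom} and Lemma~\ref{FactorMap}(a) is either zero or one-dimensional, spanned by the composition $f_1\cdots f_{k-1}$ of canonical surjections between consecutive polygons $z_1,\ldots,z_k$ in the successor sequence at the shared vertex $v$. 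The tree structure of $\chi$ and the fact that $y_1$ is the unique polygon between $u$ and $u'$ forces $v \neq u, u'$, and consequently none of the intermediate polygons can lie in $\{y_\ell\}$; each $f_j$ therefore equals $\beta_{v, z_j}$ by construction, so the basis element is a composition of generators. The Hom spaces between $T(y'_j)$ and stalks $T(z)$ outside the list are handled identically: the shared vertex is necessarily not $u'$ (and, by tree-ness, not $u$), so the composition of canonical surjections is a product of $\beta$'s, with $\beta_{v, y'_j}$ being a legitimate generator since $y'_j \neq y_\ell$ for all $\ell$.

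The remaining cross-type Hom spaces are those involving some $T(y_i)$ with $i \geq 2$ and a stalk $T(z)$ where $z$ is not in the new cycle. A direct homotopy-category computation, using that $h^0: P(y_i) \to 0$ is forced to vanish, shows that $\Hom_\mathcal{C}(T(y_i), T(z))$ embeds into $\Hom_A(P(y_i), P(z))$ (as the subspace of maps $g$ with $g f_i = 0$), while $\Hom_\mathcal{C}(T(z), T(y_i))$ is a quotient of $\Hom_A(P(z), P(y_i))$. Since $y_i$ for $i \geq 2$ is a truncated edge with sole non-truncated incident vertex $u$, and every polygon at $u$ already lies in $\{y_\ell\}$, the polygon $z$ shares no vertex with $y_i$, so both ambient Hom spaces vanish by Remark~\ref{ProjHom}, and therefore so do the corresponding $\Hom_\mathcal{C}$'s. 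The analogous Hom spaces with $T(y_1)$ in place of $T(y_i)$ similarly vanish, since $y_1$ is only incident to $u$ and $u'$ and $z$ is incident to neither.

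The main obstacle is the bookkeeping: in each case one must invoke the tree structure of $\chi$ to verify that the shared vertex avoids $u$ and $u'$ and that the intermediate polygons in the relevant successor sequence are never in the forbidden list $\{y_\ell\}$, so that the canonical surjections appearing in the Lemma~\ref{FactorMap} basis element legitimately equal generators $\beta_{v,z}$. The homotopy computations for the two-term complexes $T(y_i)$ are routine but must be carried out explicitly because of the non-trivial differential $f_i$; once the resulting vanishing is established, the generation claim follows by assembling the cases.
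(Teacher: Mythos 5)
Your strategy---enumerate the Hom spaces between the indecomposable summands of $T$ and express each basis element furnished by Lemma~\ref{FactorMap} and Lemma~\ref{DerivedCycle} as a product of the proposed generators, using Remark~\ref{ProjHom} to show the remaining Hom spaces vanish---is exactly what the paper compresses into its one-line proof, and your case decomposition is complete.

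One detail in the ``cross'' case is inverted. The paper places $P(y_1)$, not $P(y_i)$, in degree zero of $T(y_i)$. A chain map $T(y_i) \to T(z)$ therefore has degree-zero component valued in $\Hom_A(P(y_1), P(z))$, and $\Hom_{\mathcal{C}}(T(y_i), T(z))$ is a \emph{quotient} of $\Hom_A(P(y_1), P(z))$ by maps factoring through $f_i$, not a subspace of $\Hom_A(P(y_i), P(z))$; dually, $\Hom_{\mathcal{C}}(T(z), T(y_i))$ is a subspace of $\Hom_A(P(z), P(y_1))$, not a quotient of $\Hom_A(P(z), P(y_i))$. Your conclusion survives, but for the reason you state two sentences later when treating $T(y_1)$: a polygon $z$ outside $\chi'$ and outside the successor list at $u'$ is incident to neither $u$ nor $u'$, hence shares no vertex with $y_1$, and so $\Hom_A(P(y_1), P(z))$ and $\Hom_A(P(z), P(y_1))$ already vanish by Remark~\ref{ProjHom}(a). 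It is this vanishing, not that of $\Hom_A(P(y_i), P(z))$, that kills the mixed Hom spaces.
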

	\begin{proof}
		This is a trivial consequence of both Lemma~\ref{FactorMap}(b) and Lemma~\ref{DerivedCycle}.
	\end{proof}

	\subsection{The relations of the endomorphism algebra of $T$}
	We will now explicitly calculate the algebra $\End_{K^b(\proj A)}(T)=K\widetilde{Q}/\widetilde{I}$. By Lemma~\ref{DerivedArrows}, the arrows of $\widetilde{Q}$ are given by the maps $\alpha_1, \ldots, \alpha_{\val(u)+\val(u')-1}$ and $(\beta_{v, z})_{ z^v \in Z}$. It remains to calculate the relations that generate $\widetilde{I}$.
	
	\begin{lem} \label{StandardRelations}
		Suppose a polygon $z$ of $\chi$ is connected to non-truncated vertices $v,v' \not\in \{u,u'\}$. Let $z=z_1,\ldots, z_{\val(v)}$ and $z=z'_1,\ldots, z'_{\val(v')}$ be the successor sequences of $z$ at $v$ and $v'$ respectively. Then
		\begin{enumerate} [label=(\alph*)]
			\item $\beta_{v, z_1}\ldots\beta_{v, z_{\val(v)}} = \beta_{v', z'_1}\ldots\beta_{v', z'_{\val(v')}} \neq 0$.
			\item $\beta_{v, z_{\val(v)}}\beta_{v', z'_1} = 0$ and $\beta_{v', z'_{\val(v')}}\beta_{v, z_1} = 0$.
		\end{enumerate}
		If $z$ is instead connected to only one non-truncated vertex $v$, then
		\begin{enumerate} [label=(\alph*)]
			\setcounter{enumi}{2}
			\item $\beta_{v, z_1}\ldots\beta_{v, z_{\val(v)}}\beta_{v, z_1}=0$.
		\end{enumerate}
	\end{lem}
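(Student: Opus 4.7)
The plan is to reduce each of the three relations to a standard relation in the original Brauer configuration algebra $A$ itself. First observe that, under the hypothesis of (a)--(b) that $v, v' \not\in \{u, u'\}$ (and analogously in (c)), none of the polygons $z, z_1, \ldots, z_{\val(v)}, z'_1, \ldots, z'_{\val(v')}$ lies in the subtree $\chi'$ used to build $T$. Consequently every complex $T(z_i)$ and $T(z'_j)$ is a stalk complex concentrated in degree zero, so
\[
\Hom_{K^b(\proj A)}(T(a), T(b)) = \Hom_A(P(a), P(b))
\]
for any two such polygons $a, b$, since no non-trivial homotopies exist between stalk complexes supported in a single degree. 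Under this identification each $\beta_{w, x}$ is precisely the canonical surjection of Remark~\ref{CanonSurject}, and thus corresponds to right-multiplication by the unique arrow of $Q$ generated by $w$ at $x$.

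For (a), I would then apply Lemma~\ref{FactorMap}(b): the composition $\beta_{v, z_1} \cdots \beta_{v, z_{\val(v)}}$ is the unique (up to scalar) non-identity basis element of $\End_A(P(z))$, corresponding to the cycle $\mathfrak{C}_{v, \gamma}$ in $A$ where $\gamma$ is the arrow at $v$ with source $z$. Because $z$ is connected to two non-truncated vertices, it is not a truncated edge, so this endomorphism, namely the top-to-socle map of $P(z)$, is non-zero. The analogous identification holds for $v'$, and the defining Brauer configuration relation $(\mathfrak{C}_{v, \gamma})^{\mathfrak{e}_v} - (\mathfrak{C}_{v', \delta})^{\mathfrak{e}_{v'}} \in \rho$, reduced via multiplicity-freeness to $\mathfrak{C}_{v, \gamma} = \mathfrak{C}_{v', \delta}$ in $A$, gives both the equality and the non-vanishing asserted in (a). For (b), the composition $\beta_{v, z_{\val(v)}}\beta_{v', z'_1}$ corresponds to a length-two path in $Q$ meeting at $z$, one of whose arrows belongs to $\mathfrak{C}_v$ and the other to $\mathfrak{C}_{v'}$. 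Since each arrow of $Q$ belongs to a unique such cycle, this length-two path is not a subpath of any $\mathfrak{C}_w$, and hence lies in $\rho$ by the third family of defining relations of a Brauer configuration algebra. The second relation of (b) follows by the symmetric argument.

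Part (c) is handled in the same way, but with the truncation relations playing the role of the commutativity relations in (a): since $z$ is connected to only one non-truncated vertex $v$, the other vertex of $z$ must be truncated, so $z$ is a truncated edge and the defining relation $(\mathfrak{C}_{v, \gamma})^{\mathfrak{e}_v}\gamma \in \rho$ applies. Multiplicity-freeness reduces this to $\mathfrak{C}_{v, \gamma} \gamma \in \rho$, and translating back through the identification of the first paragraph yields exactly $\beta_{v, z_1}\cdots \beta_{v, z_{\val(v)}}\beta_{v, z_1} = 0$ in $\End_{K^b(\proj A)}(T)$.

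The only mildly delicate point, which I do not expect to be a real obstacle, is verifying that the order of composition of the $\beta$-maps is exactly the order in which arrows appear in $\mathfrak{C}_{v, \gamma}$; this follows by iterating Remark~\ref{CanonSurject} and applying Lemma~\ref{FactorMap}(a) to each partial segment to express $\beta_{v, z_i}\cdots \beta_{v, z_j}$ as the canonical surjection onto the appropriate uniserial submodule in $\rad^{j-i} P(z_i)$.
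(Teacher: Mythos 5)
Your argument is correct and follows essentially the same route as the paper: reduce to morphisms between stalk complexes (so everything happens in $\Hom_A$ between projectives) and then read off the relations from the structure of $P(z)$. The only cosmetic difference is in (b) and (c): the paper argues (b) directly from Remark~\ref{ProjHom}(a) (domain and codomain projectives indexed by polygons sharing no vertex) and (c) by observing that $\soc P(z_2)$ lies in the kernel of the degree-zero map of $\beta_{v,z_1}$, whereas you translate back to the defining relations $\alpha\beta\in\rho$ (for paths straddling two cycles) and $\mathfrak{C}_{v,\gamma}\gamma\in\rho$ (for a truncated edge); these are two phrasings of the same underlying fact.
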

	\begin{proof}
		(a) Since all morphisms are maps between stalk complexes, this is a trivial consequence of Lemma~\ref{FactorMap}(a). Namely, $\beta_{v, z_1}\ldots\beta_{v, z_{\val(v)}}$ is the morphism such that the degree zero map corresponds to the basis element of $\End_A(P(z_1))$ that maps $\tp P(z_1)$ to $\soc P(z_1)$. The same is true for $(\beta_{v', z'_1}\ldots\beta_{v', z'_{\val(v')}})^{\mathfrak{e}_{v'}}$.
		
		(b) This follows from Remark~\ref{ProjHom}(a), since this corresponds a map between the indecomposable projective modules of two polygons that have no common vertex.
		
		(c) Similar to (a), $(\beta_{v, z_2}\ldots\beta_{v, z_{\val(v)}}\beta_{v, z_1})^{\mathfrak{e}_v}$ maps $\tp P(z_2)$ to $\soc P(z_2)$. But $\soc P(z_2)$ is in the kernel of the degree zero map of $\beta_{v, z_1}$. Thus, the result follows.
	\end{proof}
	
	\begin{lem} \label{NonStandardRelations}
		Suppose a polygon $z_1$ of $\chi$ is connected to a non-truncated vertex $v' \in \{u, u'\}$. Suppose $z_1$ is connected to another non-truncated vertex $v \not\in \{u,u',v'\}$ and let $z_1,\ldots, z_{\val(v)}$ be the successor sequence of $z_1$ at $v$. Suppose $T(z_1)$ is the domain of a map $\alpha_r$ from Lemma~\ref{DerivedCycle} and let
		\begin{equation*}
			C_r= \alpha_{r}\alpha_{r+1}\ldots \alpha_{\val(u)+\val(u')-1}\alpha_1\ldots\alpha_{r-1}.
		\end{equation*}
		Then
		\begin{enumerate} [label=(\alph*)]
			\item $(\beta_{v, z_1}\ldots\beta_{v, z_{\val(v)}})^{\mathfrak{e}_{v}} = C_r \neq 0$.
			\item $\alpha_{r-1}\beta_{v, z_1} = 0$ and $\beta_{v, z_{\val(v)}}\alpha_r = 0$.
		\end{enumerate}
		Suppose instead that there is no non-truncated vertex $v \not\in \{u,u',v'\}$ connected to $z_1$. Then
		\begin{enumerate} [label=(\alph*)]
			\setcounter{enumi}{2}
			\item $C_r\alpha_r=0$.
		\end{enumerate}
	\end{lem}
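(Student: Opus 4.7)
The plan is to verify each identity at the level of chain maps in $K^b(\proj A)$ (modulo chain homotopy in one subcase of~(c)), using the basis descriptions from Subsection~\ref{ComplexMaps} together with Lemmas~\ref{FactorMap} and~\ref{DerivedCycle}. I first note that in parts~(a) and~(b) the hypothesis that $z_1$ carries an additional non-truncated vertex $v \notin \{u,u',v'\}$ forces $z_1 = y'_i$ for some $i \geq 2$: $z_1$ cannot be one of the truncated edges $y_j$ ($j \geq 2$), whose only non-truncated vertex is $u$, nor can it be $y_1$, which has only the two vertices $u, u'$ since $y_1 \neq x$. Consequently $T(z_1)$ is the stalk complex on $P(z_1)$ in these parts, and by Remark~\ref{ProjHom}(b) we have $\End_{K^b(\proj A)}(T(z_1)) = \End_A(P(z_1))$ with basis $\{\id_{P(z_1)}, t_{z_1}\}$, where $t_{z_1}$ is the top-to-socle endomorphism.

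For~(a), the left-hand side is a composition of canonical surjections around the vertex $v$, so by Lemma~\ref{FactorMap}(b) it equals $t_{z_1}$, while the right-hand side $C_r$ is the non-identity basis vector of $\End_{K^b(\proj A)}(T(z_1))$ by Lemma~\ref{DerivedCycle}(c); both span the same one-dimensional complement to $\id$, and rescaling one generator absorbs the unit. For~(b), I compute each composition at degree zero: $\beta_{v,z_1}$ factors $P(z_2)$ through the uniserial ``$v$-branch'' $V \subseteq \rad P(z_1)$ by Remark~\ref{CanonSurject}, while the degree-zero map of $\alpha_{r-1}$ is the canonical surjection of $P(z_1)$ onto the uniserial submodule of $P(z_{r-1})$ whose top is $S(z_1)$, a surjection whose kernel annihilates every branch of $\rad P(z_1)$ save the $v'$-branch and in particular annihilates $V$. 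Since $T(z_2)$ is a stalk, the vanishing on degree zero forces the whole chain map to vanish; the symmetric argument with ``outgoing at $v$'' and ``incoming at $v'$'' exchanged handles $\beta_{v,z_{\val(v)}}\alpha_r$.

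For~(c), I split cases by whether $T(z_1)$ is a stalk or two-term. When it is a stalk the argument parallels~(b): the degree-zero map of $\alpha_r$ has image inside the $v'$-branch of $\rad P(z_1)$, while $C_r$ is represented by $t_{z_1}$ whose kernel is exactly $\rad P(z_1)$, so the composition vanishes on the nose. The two-term subcase, $z_1 = y_k$ for some $2 \leq k \leq \val(u)$, is the genuine obstacle: here $T(z_1)$ is the projective presentation $P(y_1) \to P(y_k)$, the chain-level composition $C_r\alpha_r$ equals the pair $(t_1, 0)$ of Subsection~\ref{ComplexMaps} in $\Hom_{K^b(\proj A)}(T(y_{k+1}),T(y_k))$, and this pair is nonzero as a chain map; the relation $C_r\alpha_r = 0$ therefore holds only up to homotopy. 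The null-homotopy is provided by a factorisation of $t_1$ through the structure map $f_{k+1}$ of $T(y_{k+1})$, which is exactly the cyclic factorisation guaranteed by Lemma~\ref{FactorMap}(b) (the same one used in Subsection~\ref{ComplexMaps} to show $(t_1,0) \simeq 0$), and this is the main delicate point of the whole lemma.
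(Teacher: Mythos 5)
Your overall approach mirrors the paper's: reduce everything to the degree-zero components using Remark~\ref{CanonSurject}, Lemma~\ref{FactorMap} and Lemma~\ref{DerivedCycle}, identify $(\beta_{v,z_1}\ldots)^{\mathfrak{e}_v}$ and $C_r$ with the canonical top-to-socle endomorphism, and invoke the null-homotopy exhibited in Subsection~\ref{ComplexMaps} when the chain-level composition is nonzero. Parts~(a) and~(b) are handled correctly, and your observation that the extra vertex $v$ forces $T(z_1)$ to be a stalk complex there is right.

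In part~(c), however, the case split by whether $T(z_1)$ is a stalk or a two-term complex is not the correct dichotomy, and this produces a genuine gap. Throughout the proposal you consistently read $z_1$ as the \emph{codomain} of $\alpha_r$ (this is what makes parts~(a) and~(b) type-check and is what you use in the two-term case, since you write $C_r\alpha_r \in \Hom(T(y_{k+1}),T(y_k))$). Under that reading, the case $z_1 = y_1$ falls under your ``stalk'' branch, but there $\alpha_r = \alpha_1\colon T(y_2) \to T(y_1)$ and by Lemma~\ref{DerivedCycle} its degree-zero component is $\id_{P(y_1)}$, \emph{not} a map into $\rad P(z_1)$. Consequently the degree-zero component of $C_1\alpha_1$ is $t_1 \neq 0$, and the composition does not vanish ``on the nose''; exactly the same null-homotopy through the differential $f_2$ of $T(y_2)$ is required, as in your two-term subcase. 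The distinction you need is not whether $T(z_1)$ has one or two terms, but whether the degree-zero component of $\alpha_r$ is the identity or a canonical surjection, which happens respectively for $r < \val(u)$ and $r \geq \val(u)$. (There is a symmetric, harmless over-reach at the other end: for $z_1 = y_{\val(u)}$ the source $T(y'_2)$ of $\alpha_{\val(u)}$ is a stalk and its degree-zero map is already a canonical surjection into the radical, so $C_{\val(u)}\alpha_{\val(u)}$ in fact vanishes at the chain level and the homotopy you invoke is unnecessary, though not incorrect.) Fixing part~(c) requires only re-drawing the case split along the boundary $r = \val(u)$ and then applying your own null-homotopy argument, via Lemma~\ref{FactorMap}(b), in the additional case $z_1 = y_1$.
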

	\begin{proof}
		The degree zero map of $C_r$ corresponds to a non-identity map in $\End_A(P(z_1))$. Thus, the degree zero map of $C_r$ maps $\tp P(z_1)$ to $\soc P(z_1)$. It follows from the calculations in Subsection~\ref{ComplexMaps} that if $T(z_1)$ is not a stalk complex, then $C_r$ is equivalent in $K^b(\proj A)$ to a morphism in which the map in degree $-1$ is zero. The proofs to (a), (b) and (c) are then similar to the corresponding proofs in Lemma~\ref{StandardRelations}.
	\end{proof}
	
	Every arrow of $\widetilde{Q}$ belongs to either a cycle of the form $\beta_{v, z_1}\ldots\beta_{v, z_{\val(v)}}$ for some non-truncated vertex $v$ in $\chi$ or belongs to the cycle $C_r$. The only possible paths in $\widetilde{Q}$ that are non-zero are subpaths of the paths in Lemma~\ref{StandardRelations}(a) and Lemma~\ref{NonStandardRelations}(a). One can see that there are no further relations in $\widetilde{A}$, since otherwise these maps would be zero. Furthermore, these are precisely the relations of the (opposite) Brauer configuration algebra associated the Brauer configuration $\widetilde{\chi}$ in Proposition~\ref{DerivedReduction}, thus proving the proposition by Theorem~\ref{RickardTheorem} (since the algebra is symmetric).
	
	\subsection{The proof of Theorem~\ref{ExceptionalAlgebras}}
	\begin{prop} \label{DerivedEquivalence}
		Let $A$ be a Brauer configuration algebra associated to a Brauer configuration $\chi$. Suppose $\chi$ is a multiplicity-free tree with precisely one 3-gon, which is locally of the form
		\begin{center}
			\begin{tikzpicture}
				\draw[pattern = north west lines] (0.3,-0.6) -- (1.3,-0.6) -- (0.8,0.2) -- (0.3,-0.6);
				\draw[dashed] (1.7,-0.6) -- (1.3,-0.6);
				\draw[dashed] (2.3,-0.6) -- (2.6,-0.6);
				\draw (2,-0.6) node{$\chi'$};
				\draw[dashed] (0.8,0.2) -- (0.8,0.5);
				\draw[dashed] (0.8,1.1) -- (0.8,1.5);
				\draw (0.8,0.8) node{$\chi''$};
				\draw[dashed] (-1,-0.6) -- (-0.7,-0.6);
				\draw[dashed] (-0.1,-0.6) -- (0.3,-0.6);
				\draw (-0.4,-0.6) node{$\chi'''$};
				\draw [fill=black] (1.3,-0.6) ellipse (0.05 and 0.05);
				\draw [fill=black] (0.8,0.2) ellipse (0.05 and 0.05);
				\draw [fill=black] (0.3,-0.6) ellipse (0.05 and 0.05);
				\draw (1.4,-0.35) node{$v_1$};
				\draw (0.45,0.2) node{$v_2$};
				\draw (0.15,-0.35) node{$v_3$};
			\end{tikzpicture}
		\end{center}
		where $\chi'$, $\chi''$ and $\chi'''$ are subconfigurations of $\chi$. Suppose further that $\chi'$, $\chi''$ and $\chi'''$ contain $m_1$, $m_2$ and $m_3$ edges respectively. Then $A$ is derived equivalent to a Brauer configuration algebra associated to the following Brauer configuration.
		\begin{center}
			\begin{tikzpicture}[scale=0.9]
				\draw[pattern = north west lines] (0,-1) -- (1.6,-1) -- (0.8,0.2) -- (0,-1);
				
				\draw (2.3,-1.5778) -- (1.6,-1) -- (2.3,-0.4222);
				\draw (1.4,0.9) -- (0.8,0.2) -- (0.2,0.9);
				\draw (-0.7,-0.4222) -- (0,-1) -- (-0.7,-1.5778);
				
				\draw (2.1111,-0.8889) node {$\vdots$};
				\draw (0.8333,0.7) node {$\cdots$};
				\draw (-0.5111,-0.8889) node {$\vdots$};
				
				\draw (3.8,-1) node {\footnotesize$m_1$ edges};
				\draw (0.8,1.7) node {\footnotesize$m_2$ edges};
				\draw (-2.2,-1) node {\footnotesize$m_3$ edges};
				
				\draw (2.6,-1) node {$\left.\begin{smallmatrix} \\ \\ \\ \\ \\ \end{smallmatrix} \right\}$};
				\draw (0.8,1) node {$\overbrace{\begin{matrix} & & & \end{matrix}}$};
				\draw (-1,-1) node {$\left\{\begin{smallmatrix} \\ \\ \\ \\ \\ \end{smallmatrix} \right.$};
			\end{tikzpicture}
		\end{center}
	\end{prop}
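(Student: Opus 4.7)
The plan is to apply Proposition~\ref{DerivedReduction} iteratively, peeling off the ``outer'' truncated edges one non-central vertex at a time, so that each of the three subtrees $\chi'$, $\chi''$, $\chi'''$ is collapsed toward the central $3$-gon $x$ until only truncated edges attached to $v_1, v_2, v_3$ remain. Because derived equivalence is transitive, chaining these reductions together yields the stated derived equivalence between $A$ and the star-shaped Brauer configuration algebra.

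We would induct on the quantity
\begin{equation*}
N(\chi) = |\{\, v \in \chi_0 \setminus \{v_1,v_2,v_3\} : v \text{ is non-truncated}\,\}|.
\end{equation*}
If $N(\chi)=0$, then every non-central vertex of $\chi$ is truncated, so every polygon distinct from $x$ is a truncated edge incident to some $v_i$; since the subtree $\chi^{(i)}$ contains $m_i$ edges, one reads off that $\chi$ is already the claimed star configuration and the proposition is trivial. Assume $N(\chi)>0$.

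For the inductive step, choose a non-truncated vertex $u\in\chi_0\setminus\{v_1,v_2,v_3\}$ of \emph{maximal} graph-distance from $x$ in the tree $\chi$, and let $u'$ be the unique neighbour of $u$ on the path from $u$ toward $x$. By maximality of the distance, every other neighbour of $u$ must be truncated. Since $\chi$ is multiplicity-free and its only polygon of size greater than $2$ is the $3$-gon $x$ (whose vertex set $\{v_1,v_2,v_3\}$ excludes $u$), every polygon incident to $u$ is a $2$-gon; labelling by $y_1$ the polygon connecting $u$ to $u'$ and by $y_2,\ldots,y_{\val(u)}$ the remaining polygons at $u$, the edges $y_2,\ldots,y_{\val(u)}$ are all truncated and $y_1\neq x$. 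Hence the hypotheses of Proposition~\ref{DerivedReduction} are met, and $A$ is derived equivalent to the Brauer configuration algebra of the configuration $\widetilde{\chi}$ described in that proposition. In $\widetilde{\chi}$ the vertex $u$ becomes truncated, so $N(\widetilde{\chi})=N(\chi)-1$, and the induction hypothesis applied to $\widetilde{\chi}$ completes the proof.

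\textbf{Main obstacle.} The principal delicacy is to verify that upon termination the cyclic orderings produced at $v_1,v_2,v_3$ by repeated applications of Proposition~\ref{DerivedReduction} coincide with those of the depicted star configuration, rather than differing in the order in which truncated edges appear. For a Brauer configuration consisting of a single $3$-gon surrounded only by truncated edges, with every vertex of multiplicity one, the associated Brauer configuration algebra is independent (up to isomorphism) of the chosen cyclic orderings at $v_1,v_2,v_3$, so this subtlety is ultimately harmless; but it is the one place in the formal write-up requiring explicit bookkeeping of successor sequences as the reductions propagate inward.
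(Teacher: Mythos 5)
Your proposal is correct and follows the same route as the paper's one-line proof, which simply states that the result follows by iterating Proposition~\ref{DerivedReduction} on truncated edges attached to non-truncated 2-gons; you have supplied the induction on the number of non-truncated vertices outside $\{v_1,v_2,v_3\}$ and verified the hypotheses (choosing $u$ of maximal distance forces its outward neighbours to be truncated, and $u\notin\{v_1,v_2,v_3\}$ forces $y_1\neq x$). Your remark that the residual ambiguity in the cyclic orderings at $v_1,v_2,v_3$ is immaterial for the star configuration (since permuting truncated edges gives isomorphic algebras) is a correct and useful clarification that the paper leaves implicit.
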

	\begin{proof}
		Using Proposition~\ref{DerivedReduction} iteratively on the truncated edges attached to non-truncated 2-gons, the result follows.
	\end{proof}
	
	By the reasoning at the start of this section, the above proposition proves Theorem~\ref{ExceptionalAlgebras}.
	
	%	-----------------------------------------------------------------------------------
	% 	BRAUER CONFIGURATION ALGEBRAS WITH CYCLES OR MULTIPLICITIES
	%	-----------------------------------------------------------------------------------
	\section{Brauer Configurations with 3-gons, Cycles and Multiplicities} \label{CycleMultSubsection}
	The aim of this section is to prove the following proposition.
	\begin{prop} \label{WildCycleMult}
		Let $A$ be a Brauer configuration algebra associated to a Brauer configuration $\chi$. Suppose $\chi$ contains a 3-gon $x$. Suppose further that $\chi$ contains a cycle or a vertex $v$ such that $\mathfrak{e}_v>1$. If $\chi$ is not of the form
		\begin{center}
			\begin{tikzpicture}[scale=0.7]
				\draw[pattern = north west lines] (0.2,-0.8) -- (1.5,0) -- (0.2,0.8) -- (0.2,-0.8);
				\draw (-0.4,1.4) -- (0.2,0.8);
				\draw (-0.4,-1.4) -- (0.2,-0.8);
				\draw[dashed] (1.5,0) -- (2,0);
				\draw[dashed] (2.8,0) -- (3.3,0);
				\draw (2.4,0) node{$\chi'$};
				\draw (0.7,0) node{$x$};
			\end{tikzpicture}
		\end{center}
		where $\chi'$ is a subconfiguration of $\chi$ and all vertices except perhaps those in $\chi'$ have multiplicity one, then $A$ is wild.
	\end{prop}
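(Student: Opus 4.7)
The proof splits into three cases, with the first two handled directly by Proposition~\ref{CrossBand} and the third requiring an explicit construction. If the 3-gon $x$ is self-folded, Proposition~\ref{CrossBand} immediately gives wildness. Otherwise, the vertices $v_1,v_2,v_3$ of $x$ are distinct; write $\chi_i$ for the subconfiguration attached at $v_i$ that does not contain $x$ (with $v_i\in\chi_i$), and call $\chi_i$ \emph{heavy} if it contains a simple cycle or a vertex of multiplicity strictly greater than one (possibly $v_i$ itself). When at least two of the $\chi_i$ are heavy, Proposition~\ref{CrossBand} applies again, and $A$ is wild.

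This reduces the argument to the case that at most one $\chi_i$ is heavy; relabel so that $\chi_1=:\chi'$ is this potentially heavy side, while $\chi_2,\chi_3$ are light, i.e.\ trees with all vertices of multiplicity one. Since $\chi$ contains a cycle or a multiplicity vertex by hypothesis, $\chi'$ must in fact be heavy. Since $\chi$ is not of the allowed form, at least one of $\chi_2,\chi_3$ strictly extends beyond a single truncated edge at $v_i$; relabel so this is $\chi_3$. Thus $v_3$ either has valency at least three in $\chi$, or its unique other polygon is not a truncated 2-gon, and in either case $\chi_3$ supplies a nonzero string emanating from $x^{v_3}$ that is strictly longer than what a truncated edge alone would yield.

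To establish wildness in this residual asymmetric case, I would construct a representation embedding $F\colon \Mod* B \to \Mod* A$ from a wild hereditary $K$-algebra $B$, yielding the two-parameter families of indecomposable $A$-modules advertised in the introduction of Section~\ref{CycleMultSubsection}. From the heaviness of $\chi'$, Lemma~\ref{TriStringLoop} yields a nonzero string $w_1$ starting and ending at $x^{v_1}$; combined with the extended strings out of $\chi_3$, this provides cyclic-like paths through $x$ along which parameter maps can be inserted. The functor $F$ would send each $B$-representation to an $A$-module whose underlying vector space is built by glueing copies of the $B$-representation's vector spaces along these strings, with the $B$-action supplying the scalar parameters. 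Exactness of $F$, preservation of indecomposability, and injectivity on isomorphism classes would then follow via the commutative-square bookkeeping illustrated in Examples~\ref{QuadBand1}--\ref{QuadBand2} and in the construction of Section~\ref{QCSection}.

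The main obstacle will be the final step. Making $F$ explicit and verifying it is a representation embedding requires splitting the residual case into subcases based on whether $\chi'$ contains a simple cycle, a multiplicity vertex, or both, and on the local shape of $\chi_3$ (the valency of $v_3$, and whether the other polygons at $v_3$ are 2-gons or 3-gons and truncated or not). Each subcase calls for a slightly different choice of source algebra $B$ and of the strings used to build the target $A$-module, and verifying that the resulting $F$ is fully faithful and preserves indecomposable objects is the technical heart of the argument.
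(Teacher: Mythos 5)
Your reduction is exactly the paper's: dispose of the self-folded case and the ``two heavy sides'' case by Proposition~\ref{CrossBand}, then observe that in the residual case there is a unique heavy side $\chi'$, the other two are multiplicity-free trees, and the hypothesis that $\chi$ is not of the excluded form forces one of them (say $\chi'''$) to contain at least two polygons. This matches Assumption~\ref{WildAssum}. You correctly anticipate that the remaining work is to build a representation embedding from a wild hereditary algebra $B$ into $\Mod* A$, and that the string supplied by Lemma~\ref{TriStringLoop} from the heavy side is the raw material.

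Where your sketch and the paper diverge is in how much case analysis the final construction actually requires. You anticipate splitting into many subcases (cycle vs.\ multiplicity in $\chi'$, 2-gons vs.\ 3-gons at $v_3$, truncated vs.\ not, \dots) and possibly choosing a different source algebra $B$ in each. The paper instead builds a single wild hereditary source $KQ'$, where $Q'$ is an orientation of a fixed tree $\mathbb{X}_n$ whose ``spine'' is determined by the string $w_\chi$; all the internal diversity of $\chi'$ is already absorbed into that string by Lemma~\ref{TriStringLoop}, so there is no case split on the nature of the heavy side. The only bifurcation is $\val(v_3)>2$ versus $\val(v_3)=2$, which just changes the orientation of two arrows of $\mathbb{X}_n$ and two of the linear maps in the strict embedding $F\colon \fin K\langle a_1,a_2\rangle\to\Mod* KQ'$. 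The target embedding is then $H=GF$ where $G$ is defined via a quiver morphism $\pi\colon Q'\to Q$; $G$ alone is not a representation embedding (it collapses the two endpoints of $w_\chi$), so the paper has to check exactness, preservation of indecomposables, and iso-reflection for the composite $H$ rather than proving $G$ fully faithful. That is the technical heart you correctly flag as missing; the commutative-square bookkeeping of Examples~\ref{QuadBand1}--\ref{QuadBand2} is the right flavour but does not by itself get you there, because the matching of block entries across the identification $\pi_0(0)=\pi_0(n)$ (and potentially $\pi_0(-3)=\pi_0(n+3)$) requires the specific $4\times 4$ block structure engineered into $FM$ at the vertices $0$ and $n$.

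So: the strategy is correct and essentially the same as the paper's, but as written the proposal does not constitute a proof. The decisive construction --- the choice of $\mathbb{X}_n$, the explicit block matrices in $FM$, the quiver morphism $\pi$, and the verification in Lemmas~\ref{FunctorFStrict}, \ref{ExactEmbed}, \ref{IndecEmbed}, \ref{IsoEmbed} --- is precisely what you have deferred.
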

	
	To prove this, we will explicitly construct a representation embedding $H: \fin K \langle a_1,a_2 \rangle \rightarrow \Mod* A$.  We also make the following assumption.
	\begin{assumption}	\label{WildAssum}
	Let $A=KQ/I$ be a Brauer configuration algebra associated to a Brauer configuration algebra $\chi$. Suppose $\chi$ contains a 3-gon $x$ that is not self-folded and is locally of the form
	\begin{center}
		\begin{tikzpicture}
			\draw[pattern = north west lines] (0.3,-0.6) -- (1.3,-0.6) -- (0.8,0.2) -- (0.3,-0.6);
			\draw[dashed] (1.7,-0.6) -- (1.3,-0.6);
			\draw[dashed] (2.3,-0.6) -- (2.6,-0.6);
			\draw (2,-0.6) node{$\chi'$};
			\draw[dashed] (0.8,0.2) -- (0.8,0.5);
			\draw[dashed] (0.8,1.1) -- (0.8,1.5);
			\draw (0.8,0.8) node{$\chi''$};
			\draw[dashed] (-1,-0.6) -- (-0.7,-0.6);
			\draw[dashed] (-0.1,-0.6) -- (0.3,-0.6);
			\draw (-0.4,-0.6) node{$\chi'''$};
			\draw [fill=black] (1.3,-0.6) ellipse (0.05 and 0.05);
			\draw [fill=black] (0.8,0.2) ellipse (0.05 and 0.05);
			\draw [fill=black] (0.3,-0.6) ellipse (0.05 and 0.05);
			\draw (1.4,-0.35) node{$v_1$};
			\draw (0.45,0.2) node{$v_2$};
			\draw (0.15,-0.35) node{$v_3$};
		\end{tikzpicture}
	\end{center}
	where $\chi'$, $\chi''$ and $\chi'''$ are disjoint subconfigurations of $\chi$. Suppose that $\chi'$ contains a cycle or a vertex $v$ such that $\mathfrak{e}_v>1$. Assume that both $\chi''$ and $\chi'''$ are multiplicity-free trees and that $\chi'''$ contains at least two distinct polygons. In addition, let $w_\chi=\alpha_1\ldots\alpha_n$ be a non-zero string such that $\widehat{s}(\alpha_1)=x^{v_1}=\widehat{e}(\alpha_n)$ and $\alpha_1,\alpha_n \in Q_1$ (which exists by Lemma~\ref{TriStringLoop}).
	\end{assumption}
	
	\begin{rem}	\label{StringNotInTree}
		Let $u$ be any vertex in $\chi \setminus \chi'$ of Assumption~\ref{WildAssum}. Then no symbol $\alpha_i$ of the string $w_\chi$ from Assumption~\ref{WildAssum} is such that $\alpha_i$ or $\alpha^{-1}_i$ is an arrow in $\mathfrak{C}_u$.
	\end{rem}
	
	We will now begin the construction of the representation embedding. Let $w_\chi=\alpha_1\ldots\alpha_n$ be the string from Assumption~\ref{WildAssum} and consider the following wild acyclic graph.
	\begin{equation*}
		\xymatrix@R=1.2em@C=1.8em{
							&									&	-3 \ar@{-}[dr]^-{\beta'_1}		&					 &							&									&	n+3 \\	
			\mathbb{X}_n:	&									&										&	0	\ar@{-}[r]^-{\alpha'_1} &	\cdots \ar@{-}[r]^-{\alpha'_n}	&	n \ar@{-}[ur]^-{\beta'_2} \ar@{-}[dr]^-{\gamma'_2} &&& (n>0) \\
							&	-2 \ar@{-}[r]^-{\delta'_1}	&	-1 \ar@{-}[ur]^-{\gamma'_1}	&					 &							&									&	n+1 \ar@{-}[r]^-{\delta'_2}	&	n+2
		}
	\end{equation*}
	From the graph $\mathbb{X}_n$, we define a quiver $Q'$. Let $Q'$ be a quiver with vertex set identical to the vertex set of $\mathbb{X}_n$. We define the arrow set of $Q'$ as follows. We say that there exists an arrow $\alpha'_i: i \rightarrow i+1$ in $Q'$ whenever $\alpha_i \in Q_1$ and an arrow $\alpha'_i: i \leftarrow i+1$ in $Q'$ whenever $\alpha_i \in Q^{-1}_1$. In addition, we have arrows $\beta'_1:0 \rightarrow -3$, $\gamma'_1:0 \rightarrow -1$, $\beta'_2:n \leftarrow n+3$ and $\gamma'_2:n \leftarrow n+1$ in $Q'$. If $\val(v_3)>2$ then we say that there exist arrows $\delta'_1:-1 \rightarrow -2$ and $\delta'_2: n+1 \leftarrow n+2$ in $Q'$. Otherwise if $\val(v_3)=2$, then we say that there exist arrows $\delta'_1:-1 \leftarrow -2$ and $\delta'_2: n+1 \rightarrow n+2$ in $Q'$.
	
	It is easy to see that $KQ'$ is a finite-dimensional wild hereditary algebra (in fact, strictly wild). We will explicitly describe a fully faithful representation embedding $F:\fin K\langle a_1,a_2 \rangle \rightarrow \Mod* KQ'$. Recall that $K\langle a_1,a_2 \rangle$ is an infinite-dimensional path algebra associated to a quiver with a single vertex and two loops. Let $M=(M_0, \lambda, \mu)$ be a $K$-representation of some finite-dimensional $K\langle a_1,a_2 \rangle$-module $M$. That is, $M_0$ is a finite-dimensional vector space and $\lambda,\mu \in \End_K(M_0)$ are $K$-linear maps. Define a $K$-representation $FM=((FM)_i, \varphi_{\zeta'})_{i \in Q'_0, \zeta' \in Q'_1}$ of $Q'$, which is either of the form
	\begin{equation*}
		\xymatrix@R=2em@C=2.35em{
			&	M_0^2
				\ar@{<-}[dr]^-{\left( \begin{smallmatrix} 1 & \lambda & 0 & 0 \\ 0 & 0 & \mu & 1 \end{smallmatrix} \right)}
			&	 & &	&	&	&	M_0^2 \\
			&	&	M_0^4
				\ar[r]^-{\left( \begin{smallmatrix} 1 & 0 & 1 & 0 \\ 0 & 1 & 0 & 0 \\ 0 & 0 & 0 & 1 \end{smallmatrix} \right)}
			& M_0^3
				\ar@{-}[r]^-{\left( \begin{smallmatrix} 1 & 0 & 0 \\ 0 & 1 & 0 \\ 0 & 0 & 1 \end{smallmatrix} \right)}
			& \cdots
				\ar@{-}[r]^-{\left( \begin{smallmatrix} 1 & 0 & 0 \\ 0 & 1 & 0 \\ 0 & 0 & 1 \end{smallmatrix} \right)}
			& M_0^3
				\ar[r]^-{\left( \begin{smallmatrix} 1 & 0 & 0 \\ 0 & 1 & 0 \\ 1 & 0 & 0 \\ 0 & 0 & 1 \end{smallmatrix} \right)}
			&	M_0^4
				\ar@{<-}[ur]^-{\left( \begin{smallmatrix} 1 & 0 \\ 1 & 0 \\ 0 & 1 \\ 0 & 1 \end{smallmatrix} \right)}
				\ar@{<-}[dr]^-{\left( \begin{smallmatrix} 1 & 0 \\ 0 & 0 \\ 0 & 1 \\ 0 & 0 \end{smallmatrix} \right)} \\
			M_0
				\ar@{<-}[r]^-{\left( \begin{smallmatrix} 0 & 1 \end{smallmatrix} \right)}
			&	M_0^2
				\ar@{<-}[ur]^-{\left( \begin{smallmatrix} 1 & 0 & 0 & 0 \\ 0 & 0 & 1 & 0 \end{smallmatrix} \right)}
			&	 &	&	& & &	M_0^2
				\ar@{<-}[r]^-{\left( \begin{smallmatrix} 0 \\ 1 \end{smallmatrix} \right)}	&
			M_0
		}
	\end{equation*}
	if $\val(v_3)>2$, or of the form
	\begin{equation*}
		\xymatrix@R=2em@C=2.35em{
			&	M_0^2
				\ar@{<-}[dr]^-{\left( \begin{smallmatrix} \lambda & 1 & 0 & 0 \\ 0 & 0 & 1 & \mu \end{smallmatrix} \right)}
			&	 & &	&	&	&	M_0^2 \\
			&	&	M_0^4
				\ar[r]^-{\left( \begin{smallmatrix} 1 & 0 & 1 & 0 \\ 0 & 1 & 0 & 0 \\ 0 & 0 & 0 & 1 \end{smallmatrix} \right)}
			& M_0^3
				\ar@{-}[r]^-{\left( \begin{smallmatrix} 1 & 0 & 0 \\ 0 & 1 & 0 \\ 0 & 0 & 1 \end{smallmatrix} \right)}
			& \cdots
				\ar@{-}[r]^-{\left( \begin{smallmatrix} 1 & 0 & 0 \\ 0 & 1 & 0 \\ 0 & 0 & 1 \end{smallmatrix} \right)}
			& M_0^3
				\ar[r]^-{\left( \begin{smallmatrix} 1 & 0 & 0 \\ 0 & 1 & 0 \\ 1 & 0 & 0 \\ 0 & 0 & 1 \end{smallmatrix} \right)}
			&	M_0^4
				\ar@{<-}[ur]^-{\left( \begin{smallmatrix} 1 & 0 \\ 1 & 0 \\ 0 & 1 \\ 0 & 1 \end{smallmatrix} \right)}
				\ar@{<-}[dr]^-{\left( \begin{smallmatrix} 1 & 0 \\ 0 & 0 \\ 0 & 1 \\ 0 & 0 \end{smallmatrix} \right)} \\
			M_0
				\ar[r]^-{\left( \begin{smallmatrix} 0 \\ 1 \end{smallmatrix} \right)}
			&	M_0^2
				\ar@{<-}[ur]^-{\left( \begin{smallmatrix} 1 & 0 & 0 & 0 \\ 0 & 0 & 1 & 0 \end{smallmatrix} \right)}
			&	 &	&	& & &	M_0^2
				\ar[r]^-{\left( \begin{smallmatrix} 0 & 1 \end{smallmatrix} \right)}	&
			M_0
		}
	\end{equation*}
	if $\val(v_3)=2$. If $n=1$ then we simply have a linear map
	\begin{equation*}
		\varphi_{\alpha'_1}=
		\begin{pmatrix}
			1 & 0 & 0 \\ 0 & 1 & 0 \\ 1 & 0 & 0 \\ 0 & 0 & 1
		\end{pmatrix}
		\begin{pmatrix}
			1 & 0 & 1 & 0 \\ 0 & 1 & 0 & 0 \\ 0 & 0 & 0 & 1
		\end{pmatrix} =
		\begin{pmatrix}
			1 & 0 & 1 & 0 \\ 0 & 1 & 0 & 0 \\ 1 & 0 & 1 & 0 \\ 0 & 0 & 0 & 1
		\end{pmatrix}.
	\end{equation*}
	where $\varphi_{\alpha'_1}$ is the linear map corresponding to the arrow $\alpha'_1 \in Q'_1$.
	
	Let $N$ be a $K \langle a_1,a_2 \rangle$-module with corresponding $K$-representation $(N_0, \lambda', \mu')$. Then $f \in \Hom_{K \langle a_1,a_2 \rangle}(M,N)$ can be viewed as a $K$-linear map $f_0:M_0 \rightarrow N_0$ such that $\lambda' f_0 = f_0 \lambda$ and $\mu' f_0 = f_0 \mu$. Define $Ff \in \Hom_{KQ'}(FM,FN)$ to be the morphism $Ff=(Ff)_{i \in Q'_0}$, where each $(Ff)_i$ is a block diagonal matrix with diagonal entires $f_0$. It is easy to verify that this definition satisfies the necessary commutativity relations for $Ff$ to be a genuine morphism of $KQ'$-modules.
	
	\begin{lem} \label{FunctorFStrict}
		The functor $F:\fin K\langle a_1,a_2 \rangle \rightarrow \Mod* KQ'$ is $K$-linear, exact and fully faithful. Hence, $F$ is a strict representation embedding.
	\end{lem}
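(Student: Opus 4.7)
The plan is to verify the three properties in order: $K$-linearity, exactness, and fully faithfulness, and then invoke the definition of strict representation embedding directly from the preliminaries. $K$-linearity is essentially immediate: by construction, each component $(Ff)_i$ of $Ff$ is a block-diagonal matrix whose diagonal blocks are all $f_0$, and the assignment $f_0 \mapsto \mathrm{diag}(f_0,\ldots,f_0)$ is clearly $K$-linear, so $F(cf + c'g) = cFf + c'Fg$ term-by-term.

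For exactness, I would observe that at each vertex $i \in Q'_0$ one has $(FM)_i = M_0^{k_i}$ for a fixed integer $k_i$ depending only on the position of the vertex in the graph $\mathbb{X}_n$ (never on $M$), and the component $(Ff)_i$ is simply $f_0$ acting diagonally. Thus, given a short exact sequence $0 \to L \to M \to N \to 0$ in $\fin K\langle a_1,a_2\rangle$, applying $F$ yields at each vertex the sequence $0 \to L_0^{k_i} \to M_0^{k_i} \to N_0^{k_i} \to 0$, which is exact since direct sums of exact sequences of $K$-vector spaces are exact. Faithfulness is equally direct: if $Ff = 0$ then some diagonal block of some $(Ff)_i$ is $f_0 = 0$, forcing $f = 0$.

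The main work lies in showing fullness, and this is where I expect the bulk of the verification to sit. Given a morphism $g = (g_i)_{i\in Q'_0} \colon FM \to FN$ in $\Mod* KQ'$, I would run through the commutativity squares attached to each arrow of $Q'$ in order to force each $g_i$ into block-diagonal form with a single common block $f_0\in \End_K(M_0,N_0)$. Starting at the interior vertices $1,\ldots,n-1$, the transition matrices are identity matrices of size $3\times 3$, so the commutativity squares immediately propagate any block structure from vertex to vertex; in particular, if $g_1$ is block diagonal with blocks equal to some $f_0$, then so is $g_i$ for all interior $i$. At the two boundary vertices $0$ and $n$ the transition matrices are the $4\times 3$ and $3\times 4$ matrices with entries $0,1$ that glue in the "extra" copy of $M_0$; the commutativity squares there force $g_0$ and $g_n$ to be block-diagonal of size four with the same block $f_0$. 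Then the squares at the legs (the arrows $\gamma'_1, \delta'_1, \beta'_1$ on the left and $\gamma'_2, \delta'_2, \beta'_2$ on the right) cut these down to the right block structure at vertices $\pm 1, \pm 2, -3, n+1, n+2, n+3$. The key constraint comes from the branch maps at $\beta'_1$ and $\beta'_2$: these are the only arrows whose matrix involves $\lambda$ and $\mu$. Writing out the two corresponding commutativity squares and using the already-established block form of the neighbouring components forces the identity $f_0\lambda = \lambda' f_0$ and $f_0\mu = \mu' f_0$, which is exactly the condition that $f_0$ be a morphism in $\fin K\langle a_1,a_2\rangle$. Setting $f = f_0$, one concludes $g = Ff$.

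Finally, having established that $F$ is $K$-linear, exact, and fully faithful, $F$ is a strict representation embedding by the definition recalled in Section~\ref{preliminaries}, and hence a representation embedding by \cite[XIX, Lemma 1.2]{WildBook}. The principal obstacle is therefore the bookkeeping in the fullness argument — in particular, chasing the block-diagonal structure across the boundary vertices $0$ and $n$ and verifying that the $\lambda, \mu$ entries in the branch maps produce precisely the commutation relations needed — rather than any subtle categorical content.
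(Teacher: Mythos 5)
Your proposal is correct and follows essentially the same line of argument as the paper: faithfulness and exactness follow immediately from the block-diagonal structure of $Ff$ (your vertex-wise exactness check is a clean alternative to the paper's explicit $\Ker$/$\im$ computation), and fullness is established by the same chase through the commutativity squares of $Q'$. One small slip in the description: the entries $\lambda$ and $\mu$ appear only in the matrix attached to $\beta'_1$, not $\beta'_2$, so the relations $f_0\lambda=\lambda' f_0$ and $f_0\mu=\mu' f_0$ (together with the vanishing of the remaining off-diagonal entries of the $\Phi_i$) are all forced by that single square at $\beta'_1$, with the $\beta'_2$ square contributing only to the preliminary block-structure bookkeeping on the right-hand branch.
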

	\begin{proof}
		That $F$ is $K$-linear follows trivially from the definition. It is also easy to see from the definition that given any morphisms $f,g \in \Hom_{K \langle a_1,a_2 \rangle}(M,N)$, $Ff = Fg$ if and only if $f=g$. So $F$ is faithful. To show that $F$ is exact, it is sufficient to show that $\Ker Ff = F(\Ker f)$ and $\im Ff = F(\im f)$ for any $f \in \Hom_{K \langle a_1,a_2 \rangle}(M,N)$, since given any pair of morphisms $f,g \in \Hom_{K \langle a_1,a_2 \rangle}(M,N)$ such that $\Ker f = \im g$, we have $\Ker Ff = \im Fg$.
		
		We will first calculate $\Ker Ff$. Let $\theta:\Ker f \rightarrow M$ be an inclusion morphism such that $f\theta=0$. Note that $\theta$ is unique up to the universal property. Since each $(Ff)_i$ is a block diagonal matrix, we have $(\Ker Ff)_i = \Ker ((Ff)_i)=(F(\Ker f))_i$ for all $i$. Moreover, $(F\theta)_i$ is an inclusion morphism such that $(Ff)_i(F\theta)_i=0$ for all $i$. Thus, for any arrow $\zeta':i \rightarrow j$ in $Q'$, we have commutative squares of the following form.
		\begin{center}
			\begin{tikzpicture}
				\draw (0,2) node {$\mathrm{Ker} f_0$};
				\draw (0,0) node {$M_0$};
				\draw (0,-2) node {$N_0$};
			\draw [<-](0.52,2.2) arc (140.0037:-140:0.3);
			\draw [<-](-0.52,1.8) arc (-39.9963:-320:0.3);
			\draw [<-](0.4,0.2) arc (140.0037:-140:0.3);
			\draw [<-](-0.4,-0.2) arc (-39.9963:-320:0.3);
			\draw [<-](0.4,-1.8) arc (140.0037:-140:0.3);
			\draw [<-](-0.4,-2.2) arc (-39.9963:-320:0.3);
			\draw [->](0,1.7) -- (0,0.4);
			\draw [->](0,-0.3) -- (0,-1.6);
			\draw (1.3,2) node {$\widetilde{\lambda}$};
			\draw (-1.3,2) node {$\widetilde{\mu}$};
			\draw (1.2,0) node {$\lambda$};
			\draw (-1.2,0) node {$\mu$};
			\draw (1.2,-2) node {$\lambda'$};
			\draw (-1.2,-2) node {$\mu'$};
			\draw (-0.4,1.1) node {$\theta_0$};
			\draw (-0.4,-0.9) node {$f_0$};
			
			\draw (7,2) node {$(\mathrm{Ker} f_0)^{n_1}$};
			\draw (9.5,2) node {$(\mathrm{Ker} f_0)^{n_2}$};
			\draw (7,0) node {$M_0^{n_1}$};
			\draw (9.5,0) node {$M_0^{n_2}$};
			\draw (7,-2) node {$N_0^{n_1}$};
			\draw (9.5,-2) node {$N_0^{n_2}$};
			\draw [->](7,1.7) -- (7,0.4);
			\draw [->](9.5,1.7) -- (9.5,0.4);
			\draw [->](7,-0.3) -- (7,-1.6);
			\draw [->](9.5,-0.3) -- (9.5,-1.6);
			
			\draw [->](5.8,2) -- (6.1,2);
			\draw [->](7.8,2) -- (8.6,2);
			\draw [->](10.3,2) -- (10.6,2);
			\draw [->](5.8,0) -- (6.5,0);
			\draw [->](7.4,0) -- (9,0);
			\draw [->](9.9,0) -- (10.6,0);
			\draw [->](5.8,-2) -- (6.5,-2);
			\draw [->](7.4,-2) -- (9,-2);
			\draw [->](9.9,-2) -- (10.6,-2);
			
			\draw (5.4,2) node {$\cdots$};
			\draw (5.4,0) node {$\cdots$};
			\draw (5.4,-2) node {$\cdots$};
			\draw (11.1,2) node {$\cdots$};
			\draw (11.1,0) node {$\cdots$};
			\draw (11.1,-2) node {$\cdots$};
			\draw (6,1) node {$\left(\begin{smallmatrix} \theta_0 & & 0 \\ & \ddots & \\ 0& & \theta_0 \end{smallmatrix}\right)$};
			\draw (10.5,1) node {$\left(\begin{smallmatrix} \theta_0 & & 0 \\ & \ddots & \\ 0& & \theta_0 \end{smallmatrix}\right)$};
			\draw (6,-1) node {$\left(\begin{smallmatrix} f_0 & & 0 \\ & \ddots & \\ 0& & f_0 \end{smallmatrix}\right)$};
			\draw (10.5,-1) node {$\left(\begin{smallmatrix} f_0 & & 0 \\ & \ddots & \\ 0& & f_0 \end{smallmatrix}\right)$};
			\draw (8.2,2.3) node {$\widetilde{\varphi}_{\zeta'}$};
			\draw (8.2,0.2) node {$\varphi_{\zeta'}$};
			\draw (8.2,-1.7) node {$\varphi'_{\zeta'}$};
			
			\draw [dashed,->](2.5,0) -- (4,0);
			\draw (3.2,0.3) node {$F$};
			\end{tikzpicture}
		\end{center}
		If the block matrix entry $(\varphi_{\zeta'})_{kl}$ of $\varphi_{\zeta'}$ is an identity map, then it follows that $(\widetilde{\varphi}_{\zeta'})_{kl}$ is also an identity map. Similarly, if $(\varphi_{\zeta'})_{kl}=0$ then $(\widetilde{\varphi}_{\zeta'})_{kl}=0$, since $\theta_0$ is an inclusion morphism. If $(\varphi_{\zeta'})_{kl}$ is either the map $\lambda$ or $\mu$, then we simply note that $\lambda \theta_0 = \theta_0 \widetilde{\lambda}$ and $\mu \theta_0 = \theta_0 \widetilde{\mu}$. So $(\widetilde{\varphi}_{\zeta'})_{kl}$ is the map $\widetilde{\lambda}$ or $\widetilde{\mu}$ respectively. This is precisely the $K$-representation $F(\Ker f)$, as required. The proof for showing $\im Ff = F(\im f)$ is similar -- we simply look at the canonical surjection $\xi$ of $M$ into the image of $f$ and show that $F\xi$ is a surjection into the $\im Ff$. Thus, the functor $F$ is exact.
		
		It remains to show that $F$ is full. Let $M=(M_0, \lambda, \mu)$ and $N=(N_0, \lambda', \mu')$ and let $F_{M,N}:\Hom_{K\langle a_1, a_2 \rangle}(M,N) \rightarrow \Hom_{KQ'}(FM,FN)$ be the function defined by $F_{M,N}(f)=Ff$. We will calculate $\Hom_{KQ'}(FM, FN)$ and show that $\im F_{M,N} = \Hom_{KQ'}(FM, FN)$. We will only give the proof for the case where $\val(v_3)>2$, as the proof for the other case is similar. Let $\Phi=(\Phi_i)_{i \in Q'_0} \in \Hom_{KQ'}(FM, FN)$. We have the following commutative squares.
		\begin{equation*}
			\begin{matrix}
				\\ \\ \text{(i):}
			\end{matrix} \;
			\xymatrix@R=2em@C=2.5em{
				M_0^2
					\ar[r]^-{\left( \begin{smallmatrix} 0 & 1 \end{smallmatrix} \right)}
					\ar[d]_{\Phi_{-1}}
				& M_0
					\ar[d]^{\Phi_{-2}}\\ 
				N_0^2
					\ar[r]^-{\left( \begin{smallmatrix} 0 & 1 \end{smallmatrix} \right)}
				& N_0
			} \qquad
			\begin{matrix}
				\\ \\ \text{(ii):}
			\end{matrix} \;
			\xymatrix@R=2em@C=2.5em{
				M_0^4
					\ar[r]^-{\left( \begin{smallmatrix} 1 & 0 & 0 & 0 \\ 0 & 0 & 1 & 0 \end{smallmatrix} \right)}
					\ar[d]_{\Phi_0}
				& M_0^2 
					\ar[d]^{\Phi_{-1}}\\ 
				N_0^4
					\ar[r]^-{\left( \begin{smallmatrix} 1 & 0 & 0 & 0 \\ 0 & 0 & 1 & 0 \end{smallmatrix} \right)}
				& N_0^2
			} \qquad
			\begin{matrix}
				\\ \\ \text{(iii):}
			\end{matrix} \;
			\xymatrix@R=2em@C=2.5em{
				M_0^4
					\ar[r]^-{\left( \begin{smallmatrix} 1 & 0 & 1 & 0 \\ 0 & 1 & 0 & 0 \\ 1 & 0 & 1 & 0 \\ 0 & 0 & 0 & 1 \end{smallmatrix} \right)}
					\ar[d]_{\Phi_0}
				& M_0^4 
					\ar[d]^{\Phi_n}\\ 
				N_0^4
					\ar[r]^-{\left( \begin{smallmatrix} 1 & 0 & 1 & 0 \\ 0 & 1 & 0 & 0 \\ 1 & 0 & 1 & 0 \\ 0 & 0 & 0 & 1 \end{smallmatrix} \right)}
				& N_0^4
			}
		\end{equation*}
		\begin{equation*}
		\begin{matrix}
				\\ \\ \text{(iv):}
			\end{matrix} \;
			\xymatrix@R=2em@C=2.5em{
				M_0^2
					\ar[r]^-{\left( \begin{smallmatrix} 1 & 0 \\ 1 & 0 \\ 0 & 1 \\ 0 & 1 \end{smallmatrix} \right)}
					\ar[d]_{\Phi_{n+3}}
				& M_0^4 
					\ar[d]^{\Phi_{n}}\\ 
				N_0^2
					\ar[r]^-{\left( \begin{smallmatrix} 1 & 0 \\ 1 & 0 \\ 0 & 1 \\ 0 & 1 \end{smallmatrix} \right)}
				& N_0^4
			} \qquad
			\begin{matrix}
				\\ \\ \text{(v):}
			\end{matrix} \;
			\xymatrix@R=2em@C=2.5em{
				M_0^2
					\ar[r]^-{\left( \begin{smallmatrix} 1 & 0 \\ 0 & 0 \\ 0 & 1 \\ 0 & 0 \end{smallmatrix} \right)}
					\ar[d]_{\Phi_{n+1}}
				& M_0^4 
					\ar[d]^{\Phi_{n}}\\ 
				N_0^2
					\ar[r]^-{\left( \begin{smallmatrix} 1 & 0 \\ 0 & 0 \\ 0 & 1 \\ 0 & 0 \end{smallmatrix} \right)}
				& N_0^4
			} \qquad
			\begin{matrix}
				\\ \\ \text{(vi):}
			\end{matrix} \;
			\xymatrix@R=2em@C=2.5em{
				M_0
					\ar[r]^-{\left( \begin{smallmatrix} 0  \\ 1 \end{smallmatrix} \right)}
					\ar[d]_{\Phi_{n+2}}
				& M_0^2
					\ar[d]^{\Phi_{n+1}}\\ 
				N_0
					\ar[r]^-{\left( \begin{smallmatrix} 0\\ 1 \end{smallmatrix} \right)}
				& N_0^2
			}
		\end{equation*}
		Note that if the length $n$ of the string $w_\chi$ used to construct $Q'$ is such that $n>1$, then square (iii) is obtained by composing the linear maps between the vertices 0 and $n$ of $Q'$. Square (i) implies that
		\begin{equation*}
			\Phi_{-1}=
			\begin{pmatrix}
				f & g \\
				0 & h
			\end{pmatrix},
		\end{equation*}
		where $f,g,h \in \Hom_K(M_0,N_0)$ and $h=\Phi_{-2}$. Square (ii) then implies that
		\begin{align*}
			(\Phi_0)_{11}&=f,& (\Phi_0)_{12}&=0,& (\Phi_0)_{13}&=g,& (\Phi_0)_{14}&=0, \\
			(\Phi_0)_{31}&=0,& (\Phi_0)_{32}&=0,& (\Phi_0)_{33}&=h,& (\Phi_0)_{34}&=0.
		\end{align*}
		Similarly, square (vi) implies that
		\begin{equation*}
			\Phi_{n+1}=
			\begin{pmatrix}
				f' & 0 \\
				g' & h'
			\end{pmatrix},
		\end{equation*}
		where $f',g',h' \in \Hom_K(M_0,N_0)$ and $h'=\Phi_{n+2}$. Square (v) then implies that
		\begin{align*}
			(\Phi_n)_{11}&=f',& (\Phi_n)_{13}&=0, \\
			(\Phi_n)_{21}&=0,& (\Phi_n)_{23}&=0, \\
			(\Phi_n)_{31}&=g',& (\Phi_n)_{33}&=h', \\
			(\Phi_n)_{41}&=0,& (\Phi_n)_{43}&=0.
		\end{align*}
		By square (iii), we have
		\begin{equation*}
			\begin{pmatrix}
				f					&	0					&	g +h				&	0					\\
				(\Phi_0)_{21}		&	(\Phi_0)_{22}		&	(\Phi_0)_{23}		&	(\Phi_0)_{24}		\\
				f					&	0					&	g+h				&	0					\\
				(\Phi_0)_{41}		&	(\Phi_0)_{42}		&	(\Phi_0)_{43}		&	(\Phi_0)_{44}
			\end{pmatrix}
			=
			\begin{pmatrix}
				f'					&	(\Phi_n)_{12}		&	f'					&	(\Phi_n)_{14}		\\
				0					&	(\Phi_n)_{22}		&	0					&	(\Phi_n)_{24}		\\
				g' + h'				&	(\Phi_n)_{32}		&	g'+h'				&	(\Phi_n)_{34}		\\
				0					&	(\Phi_n)_{42}		&	0					&	(\Phi_n)_{44}
			\end{pmatrix}.
		\end{equation*}
		So $f=f'$. In addition, square (iv) implies
		\begin{align*}
			(\Phi_n)_{22}&=f,	&	(\Phi_n)_{24}&=0, \\
			(\Phi_n)_{42}&=g',	&	(\Phi_n)_{44}&=h'.
		\end{align*}
		and 
		\begin{equation*}
			\Phi_{n+3}=
			\begin{pmatrix}
				f & 0 \\
				g' & h'
			\end{pmatrix}.
		\end{equation*}
		So
		\begin{equation*}
			\Phi_0 =
			\begin{pmatrix}
				f	&	0	&	g	&	0	\\
				0	&	f	&	0	&	0	\\
				0	&	0	&	h	&	0	\\
				0	&	g'	&	0	&	h'
			\end{pmatrix}
			\quad \text{and} \quad
			\Phi_n=
			\begin{pmatrix}
				f	&	0	&	0	&	0	\\
				0	&	f	&	0	&	0	\\
				g'	&	0	&	h'	&	0	\\
				0	&	g'	&	0	&	h'
			\end{pmatrix}
		\end{equation*}
		with the relation $f=g+h=g'+h'$. Finally, we consider the commutative square
		\begin{equation*}
			\xymatrix@R=2em@C=2.9em{
				M_0^4
					\ar[r]^-{\left( \begin{smallmatrix} 1 & \lambda & 0 & 0 \\ 0 & 0 & \mu & 1 \end{smallmatrix} \right)}
					\ar[d]_{\Phi_0}
				& M_0^2 
					\ar[d]^{\Phi_{-3}}\\ 
				N_0^4
					\ar[r]^-{\left( \begin{smallmatrix} 1 & \lambda' & 0 & 0 \\ 0 & 0 & \mu' & 1 \end{smallmatrix} \right)}
				& N_0^2
			}
		\end{equation*}
		which gives us $g=g'=0$ and therefore $f=h=h'$. The above square also gives us
		\begin{equation*}
			\Phi_{-3} = \begin{pmatrix}
				f & 0 \\
				0 & f
			\end{pmatrix}
		\end{equation*}
		and commutativity relations $\lambda' f=f\lambda$ and $\mu' f=f\mu$. Thus $f$ can also be considered as a morphism in $\Hom_{K\langle a_1, a_2 \rangle}(M, N)$. It is easy to see that the commutative squares involving $\Phi_i$ for $0<i<n$ give diagonal matrices with diagonal entries $f$. Thus, we have shown $\Phi_i$ is a block diagonal matrix with diagonal entries $f\in \Hom_{K\langle a_1, a_2 \rangle}(M, N)$ for all $i$. So
		\begin{equation*}
			\im F_{M,N} = \Hom_{KQ'}(FM,FN)
		\end{equation*}
		and hence, $F$ is a full functor. So $F$ is a strict representation embedding.
	\end{proof}
	
	Now that we have a representation embedding $F:\fin K\langle a_1,a_2 \rangle \rightarrow \Mod* KQ'$, we aim to construct a functor $G:\Mod* KQ' \rightarrow \Mod* A$ such that the composite functor $H=GF$ is a representation embedding. We give advance notice to the reader that $H$ will not be strict. Thus, we must prove that $H$ is exact, respects isomorphism classes and maps indecomposable $K\langle a_1,a_2 \rangle$-modules to indecomposable $A$-modules
	
	Recall that $Q'$ is defined using the Brauer configuration algebra $A$ associated to a Brauer configuration $\chi$ that contains a 3-gon $x$, and a string $w_\chi=\alpha_1\ldots\alpha_n$, which both satisfy Assumption~\ref{WildAssum}. Define a morphism of quivers $\pi=(\pi_0,\pi_1):Q'\rightarrow Q$. That is, we will define maps $\pi_0:Q'_0 \rightarrow Q_0$ and $\pi_1:Q'_1 \rightarrow Q_1$ such that any arrow $\alpha':i \rightarrow j$ in $Q'$ is mapped to an arrow $\pi_1(\alpha'):\pi_0(i) \rightarrow \pi_0(j)$ in $Q$. Define $\pi_0(0)=x=\pi_0(n)$. Then for all $1 \leq i \leq n$, define $\pi_1(\alpha'_i) =\alpha_i$ if $\alpha_i \in Q_1$ and $\pi_1(\alpha'_i) =\alpha^{-1}_i$ if $\alpha_i \in Q^{-1}_1$. Let $\beta_1$, $\beta_2$, $\gamma_1$ and $\gamma_2$ be the distinct arrows of $Q$ such that $\widehat{s}(\beta_1)=x^{v_2}$, $\widehat{e}(\beta_2)=x^{v_2}$, $\widehat{s}(\gamma_1)=x^{v_3}$ and $\widehat{e}(\gamma_2)=x^{v_3}$. Then define $\pi_1(\beta'_1)=\beta_1$, $\pi_1(\beta'_2)=\beta_2$, $\pi_1(\gamma'_1)=\gamma_1$ and $\pi_1(\gamma'_2)=\gamma_2$.
	
	If $\val(v_3)>2$, then $\gamma'_1\delta'_1$ and $\delta'_2\gamma'_2$ form directed paths in $Q'$. In this case, we define $\pi_1(\delta'_1)$ to be the unique arrow of $Q$ such that $\pi_1(\gamma'_1)\pi_1(\delta'_1) \not\in I$ and define $\pi_1(\delta'_2)$ to be the unique arrow of $Q$ such that $\pi_1(\delta'_2)\pi_1(\gamma'_2) \not\in I$. See Figure~\ref{PiFigure}(b) for a visual illustration.
	
	Otherwise if $\val(v_3)=2$, then there exists a polygon $y=\pi_0(e(\gamma'_1))=\pi_0(s(\gamma'_2))$. Let $u$ be any vertex distinct from $v_3$ connected to $y$. Such a choice of vertex $u$ is not necessarily unique (for example, if $\pi_0(e(\gamma'_1))$ is not a 2-gon). However, the proof is not dependent on the choice of the vertex $u$, and so any choice of $u$ will do. Then let $\delta_1$ and $\delta_2$ be the arrows of $Q$ such that $\widehat{e}(\delta_1)=y^u=\widehat{s}(\delta_2)$ and define $\pi_1(\delta'_1)=\delta_1$ and $\pi_1(\delta'_2)=\delta_2$. See Figure~\ref{PiFigure}(a) for a visual illustration.
	
	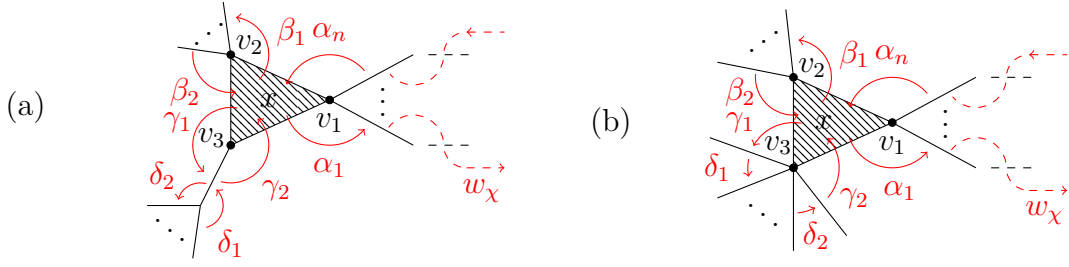
\begin{figure}
		\centering
		\begin{tikzpicture}
		\draw (-3.8,0.4) node {(a)};
		\draw [pattern=north west lines](-1.1,1.1) -- (-1.1,-0.1) -- (0.2,0.5) -- (-1.1,1.1);
		\draw [fill=black] (0.2,0.5) ellipse (0.05 and 0.05);
		\draw [fill=black] (-1.1,1.1) ellipse (0.05 and 0.05);
		\draw [fill=black] (-1.1,-0.1) ellipse (0.05 and 0.05);
		\draw (-0.6,0.5) node {$x$};
		\draw (0.2,0.2) node {$v_1$};
		\draw (-0.85,1.23) node {$v_2$};
		\draw (-1.35,0) node {$v_3$};
		
		\draw (1.3,-0.1) -- (0.2,0.5) -- (1.3,1.1);
		\draw (0.9,0.6) node {$\vdots$};
		\draw [dashed](1.5,1.1) -- (2.1,1.1);
		\draw [dashed](1.5,-0.1) -- (2.1,-0.1);
		
		\draw (-1.2,1.8) -- (-1.1,1.1);
		\draw (-1.4,1.5) node {$\iddots$};
		\draw (-1.8,1.2) -- (-1.1,1.1);
		
		\draw (-1.5,-0.9) -- (-1.1,-0.1);
		\draw (-1.9,-1.1) node {$\ddots$};
		\draw (-2.2,-0.9) -- (-1.5,-0.9);
		\draw (-1.5,-0.9) -- (-1.6,-1.6);
		
		\draw [red,->](-0.3638,0.2948) arc (-160.0006:-40:0.6);
		\draw [red,->](0.6596,0.8857) arc (40.0036:160:0.6);
		\draw [red,->](-0.7235,0.771) arc (-41.1482:82.2857:0.5);
		\draw [red,->](-1.598,1.1448) arc (174.8595:277.7143:0.5);
		\draw [red, dashed](1.6936,-0.0286) arc (10.2818:144:0.4);
		\draw [red, dashed](1.7108,-0.1798) arc (-164.574:-90:0.3);
		\draw [red, dashed](1.6939,1.0305) arc (-10.0063:-140:0.4);
		\draw [red, dashed](1.7046,1.1521) arc (169.9976:90:0.3);
		\draw [red, dashed, <-](2,1.4) -- (2.5,1.4);
		\draw [red, dashed, ->](2,-0.4) -- (2.5,-0.4);
		
		\draw [red](2.2,-0.7) node {$w_\chi$};
		\draw [red](0.2,-0.4) node {$\alpha_1$};
		\draw [red](0.2,1.4) node {$\alpha_n$};
		\draw [red](-0.3,1.4) node {$\beta_1$};
		\draw [red](-1.7,0.6) node {$\beta_2$};
		\draw [red](-1.8,0.2) node {$\gamma_1$};
		\draw [red](-0.5,-0.7) node {$\gamma_2$};
		\draw [red](-2,-0.5) node {$\delta_2$};
		\draw [red](-1.1,-1.4) node {$\delta_1$};
		
		\draw (3.9,0.2) node {(b)};
		\draw [pattern=north west lines](6.3,0.8) -- (6.3,-0.4) -- (7.6,0.2) -- (6.3,0.8);
		\draw [fill=black] (7.6,0.2) ellipse (0.05 and 0.05);
		\draw [fill=black] (6.3,0.8) ellipse (0.05 and 0.05);
		\draw [fill=black] (6.3,-0.4) ellipse (0.05 and 0.05);
		\draw (6.7,0.2) node {$x$};
		\draw (7.6,-0.1) node {$v_1$};
		\draw (6.55,0.93) node {$v_2$};
		\draw (6.1,-0.1) node {$v_3$};
		
		\draw (8.7,-0.4) -- (7.6,0.2) -- (8.7,0.8);
		\draw (8.3,0.3) node {$\vdots$};
		\draw [dashed](8.9,0.8) -- (9.5,0.8);
		\draw [dashed](8.9,-0.4) -- (9.5,-0.4);
		
		\draw (6.2,1.7) -- (6.3,0.8);
		\draw (5.9,-0.9) node {$\ddots$};
		\draw (5.3,1) -- (6.3,0.8);
		
		\draw (6.3,-0.4) -- (7,-1.3);
		\draw (5.2,0) -- (6.3,-0.4);
		\draw (5.9,1.4) node {$\iddots$};
		\draw (5.3,-0.8) -- (6.3,-0.4);
		\draw (6.3,-0.4) -- (6.3,-1.5);
		
		\draw [red,->](7.0362,-0.0052) arc (-160.0006:-40:0.6);
		\draw [red,->](8.0596,0.5857) arc (40.0036:160:0.6);
		\draw [red,->](6.4042,0.1909) arc (79.9992:150.9677:0.6);
		\draw [red,->](5.7115,-0.2829) arc (168.7462:191.25:0.6);
		\draw [red,->](6.3554,-0.9974) arc (-84.7018:-61.7143:0.6);
		\draw [red,->](6.7518,-0.7948) arc (-41.1482:41.1429:0.6);
		\draw [red,->](6.6765,0.471) arc (-41.1482:82.2857:0.5);
		\draw [red,->](5.802,0.8448) arc (174.8595:277.7143:0.5);
		\draw [red, dashed](9.0936,-0.3286) arc (10.2818:144:0.4);
		\draw [red, dashed](9.1108,-0.4798) arc (-164.574:-90:0.3);
		\draw [red, dashed](9.0939,0.7305) arc (-10.0063:-140:0.4);
		\draw [red, dashed](9.1046,0.8521) arc (169.9976:90:0.3);
		\draw [red, dashed, <-](9.4,1.1) -- (9.9,1.1);
		\draw [red, dashed, ->](9.4,-0.7) -- (9.9,-0.7);
		
		\draw [red](9.6,-1) node {$w_\chi$};
		\draw [red](7.7,-0.7) node {$\alpha_1$};
		\draw [red](7.6,1.1) node {$\alpha_n$};
		\draw [red](7.1,1.1) node {$\beta_1$};
		\draw [red](5.6,0.6) node {$\beta_2$};
		\draw [red](5.6,0.2) node {$\gamma_1$};
		\draw [red](7.1,-0.8) node {$\gamma_2$};
		\draw [red](5.3,-0.4) node {$\delta_1$};
		\draw [red](6.6,-1.3) node {$\delta_2$};
		\draw [red, ->](-1.0132,0.3924) arc (80.0026:225:0.5);
		\draw [red, ->](-1.2294,-0.583) arc (-104.9979:45:0.5);
		\draw [red, ->](-1.4224,-1.1898) arc (-75.0095:45:0.3);
		\draw [red, ->](-1.4224,-0.6102) arc (75.0095:165:0.3);
		\end{tikzpicture}
		\caption{The image of the map $\pi$ in the cases where (a) $\val(v_3)=2$ and (b) $\val(v_3)>2$. Primed arrows in $Q'$ are mapped to the corresponding unprimed arrow in the figure. For example, $\pi_1(\alpha'_1)=\alpha_1$.} \label{PiFigure}
	\end{figure}
	
	\begin{rem}	\label{PiPreImages}
		We have the following notes on the image of $\pi$.
		\begin{enumerate}[label=(\alph*)]
			\item The image of any directed path in $Q'$ under $\pi$ avoids the relations in $I$.
			\item For any $i,j \in Q'_0$, define an equivalence relation $i \sim j \Leftrightarrow \pi_0(i)=\pi_0(j)$ and denote the equivalence class of $i \in Q'_0$ by $[i]$. Then for example, we have the following.
				\begin{align*}
					[-3] &=
					\begin{cases}
						\{-3, n+3\}	& \text{if } \val(v_2)=2, \\
						\{-3 \}			& \text{otherwise,}
					\end{cases} \\
					[0] &= [n] \\
					[n+3] &=
					\begin{cases}
						\{-3, n+3\}	& \text{if } \val(v_2)=2, \\
						\{n+3 \}			& \text{otherwise.}
					\end{cases}
				\end{align*}
			\item For any $\zeta',\eta' \in Q'_1$, define an equivalence relation $\zeta' \sim \eta' \Leftrightarrow \pi_1(\zeta')=\pi_1(\eta')$ and denote the equivalence class of $\zeta' \in Q'_1$ by $[\zeta']$. Then by Remark~\ref{StringNotInTree},
				\begin{equation*}
					\beta'_1,\beta'_2,\gamma'_1,\gamma'_2, \delta'_1, \delta'_2 \not \in [\alpha'_i] \text{ for any } 1\leq i \leq n.
				\end{equation*}
				In addition, since it follows from Assumption~\ref{WildAssum} that $\val(v_2), \val(v_3) \geq 2$, we have
				\begin{align*}
					[\beta'_1] 	&= \{\beta'_1\},		& [\beta'_2] 		&= \{\beta'_2\}, \\
					[\gamma'_1]	&= \{\gamma'_1\},	& [\gamma'_2]	&= \{\gamma'_2\}, \\
					[\delta'_1] 	&=
					\begin{cases}
						\{\delta'_1, \delta'_2\}	&	\text{if } \val(v_3)=3, \\
						\{\delta'_1\}	&	\text{otherwise,}
					\end{cases}
					& [\delta'_2] &=
					\begin{cases}
						\{\delta'_1, \delta'_2\}	&	\text{if } \val(v_3)=3, \\
						\{\delta'_2\}	&	\text{otherwise.}
					\end{cases}.
				\end{align*}
		\end{enumerate}
	\end{rem}
	
	 Let $M'=(M'_i, \varphi_{\zeta'})_{i \in Q'_0, \zeta' \in Q'_1}$ be a representation of the quiver $Q'$ over the field $K$. Define a representation $GM'=((GM')_y, \phi_\zeta)_{y \in Q_0, \zeta \in Q_1}$ of the quiver $Q$ over $K$ in the following way. For each $i\in Q'_0$, we say that the vector space $M'_i$ is a direct summand of the vector space $(GM')_{\pi_0(i)}$. If $y \not\in \im\pi_0$, then we define $(GM')_y=0$. Consider an arrow $\zeta:y \rightarrow z$ in $Q$ such that $(GM')_y$ and $(GM')_z$ are non-zero. Suppose
	\begin{equation*}
		(GM')_y=\bigoplus_{\pi_0(i)=y} M'_i \qquad \text{and} \qquad (GM')_z=\bigoplus_{\pi_0(k)=z} M'_k
	\end{equation*}
	Then the linear map $\phi_\zeta$ is given by a block matrix $((\phi_\zeta)_{k,i})_{k \in R, i \in C}$ with row and column index sets
	\begin{equation*}
		R= \{k : \pi(k)=z\} \qquad \text{ and } \qquad C= \{i : \pi(i)=y\}
	\end{equation*}
	respectively, where $(\phi_\zeta)_{k, i}:M'_i \rightarrow M'_k$ is a linear map defined by $\varphi_{\zeta'}$ if $\zeta':i \rightarrow k$ is an arrow in $Q'$ such that $\pi_1(\zeta')=\zeta$, and is zero otherwise. Since the image of any directed path in $Q'$ under $\pi$ avoids the relations in $I$, $\phi_\zeta \phi_\eta=0$ for any path $\zeta\eta \in I$. Thus the representation $GM'$ respects the relations in $I$ and hence corresponds to an $A$-module.
	
	\begin{rem}	\label{phiMaps}
		We have the following notes on the $K$-linear maps $\phi_\zeta$ in the $K$-representation $GM'$.
		\begin{enumerate}[label=(\alph*)]
			\item Let $\zeta \in \{\pi_1(\beta'_j), \pi_1(\gamma'_j) : j=1,2\}$. Then by Remark~\ref{PiPreImages}(c), the map $\phi_\zeta$ contains at most one non-zero entry when considered as a block matrix.
			\item For any arrow $\alpha'_i \in Q'_1$ ($1 \leq i \leq n$), each row and column of the block matrix $\phi_{\pi_1(\alpha'_i)}$ contains at most one non-zero entry. This follows from the fact that the arrows $\pi_1(\alpha'_i)$ follow a string of length $n$.
			\item It follows from Remark~\ref{PiPreImages}(c) that if $\val(v_3) = 3$, then $\phi_{\pi_1(\delta'_1)}=\phi_{\pi_1(\delta'_2)}$ is a diagonal $2 \times 2$ block matrix with possibly non-zero diagonal entries. Otherwise, $\phi_{\pi_1(\delta'_1)}\neq \phi_{\pi_1(\delta'_2)}$ and both $\phi_{\pi_1(\delta'_1)}$ and $\phi_{\pi_1(\delta'_2)}$ contain at most one non-zero entry.
		\end{enumerate}
	\end{rem}
		
	Let $f'=(f'_i)_{i \in Q'_0}:M' \rightarrow N'$ be a morphism of representations of $Q'$ over $K$. We define a morphism $Gf'=((Gf')_y)_{y \in Q_0}:GM' \rightarrow GN'$ of representations of $Q$ over $K$ as follows. If $y \not\in \im \pi_0$ (and hence, $(GM')_y=0$) then $(Gf')_y=0$. Otherwise, suppose 
	\begin{equation*}
		(GM')_y=\bigoplus_{\pi_0(i)=y} M'_i  \qquad \text{and} \qquad (GN')_y=\bigoplus_{\pi_0(i)=y} N'_i
	\end{equation*}
	Then we define $(Gf')_y$ to be a block diagonal matrix $((Gf')_{i,j})_{i,j \in C}$ in which each $(i,i)$-th diagonal entry is precisely the linear map $f'_i:M'_i \rightarrow N'_i$. One can verify that $Gf'$ is a genuine morphism of $A$-modules by considering commutative squares
	\begin{equation*}
		\xymatrix{
			(GM')_y \ar[d]_{(Gf')_y} \ar[r]^{\phi_{\zeta}}		&	(GM')_z \ar[d]^{(Gf')_z} \\
			(GN')_y \ar[r]^{\phi'_{\zeta}}							&	(GN')_z
		}
	\end{equation*}
	which induce commutativity relations $f'_{k} (\phi_\zeta)_{k, i} = (\phi'_\zeta)_{k, i} f'_{i}$ for each arrow $\zeta:y \rightarrow z$ in $Q$. By definition, either $(\phi_\zeta)_{k, i}=(\phi'_\zeta)_{k, i}=0$, or $(\phi_\zeta)_{k, i}=\varphi_{\zeta'}$ and $(\phi'_\zeta)_{k, i}=\varphi'_{\zeta'}$ for some arrow $\zeta': i \rightarrow k$ in $Q'$. Since $f'$ is a morphism of $KQ'$-modules (and hence the commutativity relation $f'_{k} \varphi_{\zeta'} = \varphi'_{\zeta'} f'_{i}$ is satisfied), we deduce that $Gf'$ is a morphism of $A$-modules. Thus, we have defined a functor $G:\Mod*KQ'\rightarrow \Mod* A$ which maps a $KQ'$-module $M'$ to an $A$-module $GM'$ and a morphism $f':M' \rightarrow N'$ to a morphism $Gf':GM' \rightarrow GN'$.
	
	\begin{rem}
		Given $K\langle a_1, a_2 \rangle$-modules $M$ and $N$ and a morphism $f=(f_0):M \rightarrow N$, the linear map $(GFf)_y$ is a block diagonal matrix with diagonal entries $f_0$ for all $y \in Q_0$.
	\end{rem}
	
	\begin{exam}
		Consider the Brauer configuration algebra $A$ associated to the following Brauer configuration.
		\begin{center}
			\begin{tikzpicture}
				\draw[pattern = north west lines] (1.6,-4.2) -- (2.8,-3.5) -- (1.6,-2.8) -- (1.6,-4.2);
				\draw (1.1,-2.3) -- (1.6,-2.8);
				\draw (1.1,-4.7) -- (1.6,-4.2);
				\draw (0.3,-4.7) -- (1.1,-4.7);
				\draw (2.8,-3.5) -- (2.8,-4.3);
				\draw (2.8,-3.5) .. controls (4.5,-5) and (4.5,-2) .. (2.8,-3.5);
				
				\draw [red, ->](2.3096,-3.5975) arc (-168.7552:-101.25:0.5);
				\draw [red, ->](2.8975,-3.9904) arc (-78.7552:-45:0.5);
				\draw [red, ->](3.2619,-3.6913) arc (-22.4973:22.5:0.4999);
				\draw [red, ->](3.1536,-3.1464) arc (45:157.5:0.5001);
			
				\draw [red] (2.4,-4.1) node{\footnotesize$\delta_1$};
				\draw [red] (3.2,-4.2) node{\footnotesize$\delta_2$};
				\draw [red] (3.5,-3.5) node{\footnotesize$\delta_3$};
				\draw [red] (2.6,-2.8) node{\footnotesize$\delta_4$};
			\end{tikzpicture}
		\end{center}
		Let $w_\chi=\delta_1 \delta_2 \delta^{-1}_3\delta_4$. The quiver $Q'$ associated to $(A, w_\chi)$ is the following orientation of $\mathbb{X}_4$.
		\begin{equation*}
			\xymatrix@R=1.2em@C=1.8em{
										&	-3 \ar@{<-}[dr]	&	 			&				&		&					&										&	7	\\
										&						&	0	\ar[r]	&	1 \ar[r]	&	2 	&	\ar[l] 3 \ar[r]	&	4 \ar@{<-}[ur] \ar@{<-}[dr] 	&		\\
				-2 \ar[r]				&	-1 \ar@{<-}[ur]					&	 			&		&					&						&	&	5 \ar[r]	&	6
			}
		\end{equation*}
		Consider the following $K$-representation of $Q'$.
		\begin{equation*}
			\xymatrix@R=1.2em@C=1.8em{
				&	M'_{-3}
					\ar@{<-}[dr]^-{\varphi_{\beta'_1}}
				&	 & &	&	&	&	M'_{7} \\
				&	&	M'_{0}
					\ar[r]^-{\varphi_{\alpha'_1}}
				& M'_{1}
					\ar[r]^-{\varphi_{\alpha'_2}}
				& M'_{2}
					\ar@{<-}[r]^-{\varphi_{\alpha'_3}}
				& M'_{3}
					\ar[r]^-{\varphi_{\alpha'_4}}
				&	M'_{4}
					\ar@{<-}[ur]^-{\varphi_{\beta'_2}}
					\ar@{<-}[dr]^-{\varphi_{\gamma'_2}} \\
				M'_{-2}
					\ar[r]^-{\varphi_{\delta'_1}}
				&	M'_{-1}
					\ar@{<-}[ur]^-{\varphi_{\gamma'_1}}
				&	 &	&	& & &	M'_{5}
					\ar[r]^-{\varphi_{\delta'_2}}	&
				M'_{6}
			}
		\end{equation*}
		Then the $A$-module $GM'$ is given by the following $K$-representation.
		\begin{center}
			\begin{tikzpicture}
				\draw (-6.4,0) node {$GM':$};
				\draw (0.1,0) node {$M'_0 \oplus M'_4$};
				\draw (-1.9,-1.5) node {$M'_{-1} \oplus M'_5$};
				\draw [->](-0.8,-0.2) -- (-1.6,-1);
				\draw [->](-1.4,-1.2) -- (-0.6,-0.4);
				\draw (-5.1,-1.5) node {$M'_{-2} \oplus M'_6$};
				\draw [->](-2.9,-1.4) -- (-4,-1.4);
				\draw [->](-4,-1.6) -- (-2.9,-1.6);
				\draw (-1.9,1.5) node {$M'_{-3} \oplus M'_7$};
				\draw [->](-0.6,0.4) -- (-1.4,1.2);
				\draw [->](-1.6,1) -- (-0.8,0.2);
				\draw (1.5,-1.5) node {$M'_1$};
				\draw [->](0.3,-0.3) -- (1.2,-1.2);
				\draw [->](1.8,-1.2) -- (2.6,-0.3);
				\draw [->](1.9,0) -- (1,0);
				\draw (2.8,0) node {$M'_2 \oplus M'_3$};
				\draw [->](3.68,-0.25) arc (-140:140:0.4);
				
				\draw (-1.92,-0.38) node {\footnotesize$\left(\begin{smallmatrix} \varphi_{\gamma'_1} & 0 \\ 0 & 0 \end{smallmatrix}\right)$};
				\draw (-3.4,-2.07) node {\footnotesize$\left(\begin{smallmatrix} \varphi_{\delta'_1} & 0 \\ 0 & 0 \end{smallmatrix}\right)$};
				\draw (-0.28,1.02) node {\footnotesize$\left(\begin{smallmatrix} \varphi_{\beta'_1} & 0 \\ 0 & 0 \end{smallmatrix}\right)$};
				\draw (-0.28,-1.02) node {\footnotesize$\left(\begin{smallmatrix} 0 & 0 \\ 0 & \varphi_{\gamma'_2} \end{smallmatrix}\right)$};
				\draw (-3.4,-0.9) node {\footnotesize$\left(\begin{smallmatrix} 0 & 0 \\ 0 & \varphi_{\delta'_2} \end{smallmatrix}\right)$};
				\draw (-1.92,0.38) node {\footnotesize$\left(\begin{smallmatrix} 0 & 0 \\ 0 & \varphi_{\beta'_2} \end{smallmatrix}\right)$};
				\draw (1.32,-0.58) node {\footnotesize$\left(\begin{smallmatrix} \varphi_{\alpha'_1} & 0 \end{smallmatrix}\right)$};
				\draw (2.82,-0.92) node {\footnotesize$\left(\begin{smallmatrix} \varphi_{\alpha'_2} \\ 0 \end{smallmatrix}\right)$};
				\draw (5.08,0) node {\footnotesize$\left(\begin{smallmatrix} 0 & 0 \\ \varphi_{\alpha'_3} & 0 \end{smallmatrix}\right)$};
				\draw (1.5,0.5) node {\footnotesize$\left(\begin{smallmatrix} 0 & 0 \\ 0 & \varphi_{\alpha'_4} \end{smallmatrix}\right)$};
			\end{tikzpicture}
		\end{center}
		Given an $A$-module $N'$ and a morphism $f'=(f'_{-3}, \ldots, f'_7):M' \rightarrow N'$, we have a morphism $Gf':GM' \rightarrow GN'$ of the form 
		\begin{equation*}
			Gf'=\left(
			\begin{pmatrix}
				f'_{-3}	&	0		\\
				0		&	f'_7	\\
			\end{pmatrix},
			\begin{pmatrix}
				f'_{-2}	&	0		\\
				0		&	f'_6	\\
			\end{pmatrix},
			\begin{pmatrix}
				f'_{-1}	&	0		\\
				0		&	f'_5	\\
			\end{pmatrix},
			\begin{pmatrix}
				f'_0	&	0		\\
				0		&	f'_4	\\
			\end{pmatrix},
			f'_1,
			\begin{pmatrix}
				f'_2	&	0		\\
				0		&	f'_3	\\
			\end{pmatrix}
			\right)
		\end{equation*}
		Given a $K \langle a_1, a_2 \rangle$-module $M$ associated to the $K$-representation $(M_0, \lambda, \mu)$, the composite functor $H=GF$ gives us an $A$-module $HM$ associated to the following $K$-representation.
		\begin{center}
			\begin{tikzpicture}[scale=1.4]
				\draw (-4.5,0) node {$HM:$};
				\draw (-0.2,0) node {$M_0^8$};
				\draw (-2,-1.6) node {$M_0^4$};
				\draw [->](-0.6645,-0.1355) -- (-1.7645,-1.2355);
				\draw [->](-1.6355,-1.3645) -- (-0.5355,-0.2645);
				\draw (-3.8,-1.6) node {$M_0^2$};
				\draw [->](-2.4,-1.5) -- (-3.4,-1.5);
				\draw [->](-3.4,-1.7) -- (-2.4,-1.7);
				\draw (-2,1.6) node {$M_0^4$};
				\draw [->](-0.5355,0.2645) -- (-1.6355,1.3645);
				\draw [->](-1.7574,1.2355) -- (-0.6574,0.1355);
				\draw (1.5,-1.5) node {$M_0^3$};
				\draw [->](0.1,-0.2) -- (1.2,-1.3);
				\draw [->](1.7,-1.3) -- (2.6,-0.2);
				\draw [->](2.4,0) -- (0.2,0);
				\draw (2.8,0) node {$M_0^6$};
				\draw [->](3.2,-0.2645) arc (-140:140:0.4);
				
				\draw (-2.2645,-0.6355) node {\footnotesize$\left(\begin{smallmatrix} 1 & 0 & 0 & 0 & 0 & 0 & 0 & 0 \\ 0 & 0 & 1 & 0 & 0 & 0 & 0 & 0 \\ 0 & 0 & 0 & 0 & 0 & 0 & 0 & 0 \\ 0 & 0 & 0 & 0 & 0 & 0 & 0 & 0 \end{smallmatrix}\right)$};
				\draw (-2.9,-2.1) node {\footnotesize$\left(\begin{smallmatrix} 0 & 0 \\ 0 & 0 \\ 0 & 0 \\ 0 & 1 \\  \end{smallmatrix}\right)$};
				\draw (-0.3,1.1) node {\footnotesize$\left(\begin{smallmatrix} \lambda & 1 & 0 & 0 & 0 & 0 & 0 & 0 \\ 0 & 0 & 1 & \mu & 0 & 0 & 0 & 0 \\ 0 & 0 & 0 & 0 & 0 & 0 & 0 & 0 \\ 0 & 0 & 0 & 0 & 0 & 0 & 0 & 0 \end{smallmatrix}\right)$};
				\draw (-0.3355,-1.1645) node {\footnotesize$\left(\begin{smallmatrix} 0 & 0 & 0 & 0 \\ 0 & 0 & 0 & 0 \\ 0 & 0 & 0 & 0 \\ 0 & 0 & 0 & 0 \\ 0 & 0 & 1 & 0 \\ 0 & 0 & 0 & 0 \\ 0 & 0 & 0 & 1 \\ 0 & 0 & 0 & 0 \end{smallmatrix}\right)$};
				\draw (-2.9,-1.2) node {\footnotesize$\left(\begin{smallmatrix} 0 & 1 & 0 & 0 \\ 0 & 0 & 0 & 0 \end{smallmatrix}\right)$};
				\draw (-2.0574,0.5355) node {\footnotesize$\left(\begin{smallmatrix} 0 & 0 & 0 & 0 \\ 0 & 0 & 0 & 0 \\ 0 & 0 & 0 & 0 \\ 0 & 0 & 0 & 0 \\ 0 & 0 & 1 & 0 \\ 0 & 0 & 1 & 0 \\ 0 & 0 & 0 & 1 \\ 0 & 0 & 0 & 1 \end{smallmatrix}\right)$};
				\draw (1.3645,-0.4935) node {\footnotesize$\left(\begin{smallmatrix} 1 & 0 & 1 & 0 & 0 & 0 & 0 & 0 \\ 0 & 1 & 0 & 0 & 0 & 0 & 0 & 0 \\ 0 & 0 & 0 & 1 & 0 & 0 & 0 & 0  \end{smallmatrix}\right)$};
				\draw (2.7,-1) node {\footnotesize$\left(\begin{smallmatrix} 1 & 0 & 0 \\ 0 & 1 & 0 \\ 0 & 0 & 1 \\ 0 & 0 & 0 \\ 0 & 0 & 0 \\ 0 & 0 & 0 \end{smallmatrix}\right)$};
				\draw (4.6355,0) node {\footnotesize$\left(\begin{smallmatrix} 0 & 0 & 0 & 0 & 0 & 0 \\ 0 & 0 & 0 & 0 & 0 & 0 \\ 0 & 0 & 0 & 0 & 0 & 0 \\ 1 & 0 & 0 & 0 & 0 & 0 \\ 0 & 1 & 0 & 0 & 0 & 0 \\ 0 & 0 & 1 & 0 & 0 & 0 \end{smallmatrix}\right)$};
				\draw (1.4,0.7) node {\footnotesize$\left(\begin{smallmatrix} 0 & 0 & 0 & 0 & 0 & 0 \\ 0 & 0 & 0 & 0 & 0 & 0 \\ 0 & 0 & 0 & 0 & 0 & 0 \\ 0 & 0 & 0 & 0 & 0 & 0 \\ 0 & 0 & 0 & 1 & 0 & 0 \\ 0 & 0 & 0 & 0 & 1 & 0 \\ 0 & 0 & 0 & 1 & 0 & 0 \\ 0 & 0 & 0 & 0 & 0 & 1 \\  \end{smallmatrix}\right)$};
			\end{tikzpicture}
		\end{center}
		Let $N=(N_0, \lambda', \mu')$. Given a morphism $f=(f_0):M \rightarrow N$, we have a morphism $Hf=((Hf)_y)_{y \in Q_0}:HM \rightarrow HN$ such that each $(Hf)_y$ is a block diagonal matrix with diagonal entries $f_0$.
	\end{exam}
	
	It is clear that the functor $G$ is not a representation embedding, since it does not respect isomorphism classes. A counterexample is formed with the simple modules $S'_0$ and $S'_n$ associated to the vertices $0$ and $n$ of $Q'$, respectively. Clearly, it follows from Remark~\ref{PiPreImages}(b) that we have $GS'_0 \cong GS'_n$ but $S'_0 \not\cong S'_n$. However, the functor $G$ should respect isomorphism classes for all modules $M' \in \Mod* KQ'$ that do not contain a string module as a direct summand. Indeed, we will show with the following lemmata that $G$ satisfies the necessary properties of a representation embedding under the image of the functor $F$. That is, we will show that the composite functor $H=GF$ is a representation embedding.
	
	\begin{lem} \label{ExactEmbed}
		The functor $H=GF:\fin K \langle a_1, a_2 \rangle \rightarrow \Mod* A$ is exact.
	\end{lem}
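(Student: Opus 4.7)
The plan is to deduce exactness of $H = GF$ from the exactness of its two factors. Since $F$ has already been shown to be exact in Lemma~\ref{FunctorFStrict}, the main task is to establish that $G\colon \Mod*KQ' \rightarrow \Mod*A$ is exact as a functor; then the composite $H$ will automatically be exact.

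To show $G$ is exact, I would verify that for any morphism $f'=(f'_i)_{i \in Q'_0}\colon M' \rightarrow N'$ of $KQ'$-modules, we have $\Ker(Gf') \cong G(\Ker f')$ and $\im(Gf') \cong G(\im f')$ as $A$-modules. As remarked in the construction, the linear map $(Gf')_y$ is a block diagonal matrix whose diagonal entries are exactly the maps $f'_i$ for the indices $i \in Q'_0$ with $\pi_0(i)=y$. Since the kernel (respectively, image) of a block diagonal matrix decomposes as the direct sum of the kernels (respectively, images) of its diagonal blocks, we immediately obtain the required identifications of underlying vector spaces at each vertex $y \in Q_0$.

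What remains is to check that these identifications are compatible with the $A$-module structure. For every arrow $\zeta \in Q_1$, the block matrix $\phi_\zeta$ has each non-zero block entry given by some $\varphi_{\zeta'}$ with $\pi_1(\zeta')=\zeta$, and zero blocks otherwise (this is encoded in the construction of $G$ via $\pi$). Restricting $\phi_\zeta$ to $\bigoplus_{\pi_0(i)=y}\Ker f'_i$ preserves the block structure, since the inclusions $\Ker f'_i \hookrightarrow M'_i$ intertwine $\varphi_{\zeta'}$ with its restriction on the kernel. Consequently the $A$-module structure on $\Ker(Gf')$ agrees arrow-by-arrow with that of $G(\Ker f')$, and the analogous argument handles images via the canonical surjections onto $\im f'_i$.

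I do not expect a substantial obstacle here; the functor $G$ was constructed so that, at each vertex of $Q$, the underlying vector space is a direct sum over the fibre of $\pi_0$ and morphisms are block diagonal along that decomposition. The routine part is simply verifying that kernels/images pass through this direct sum decomposition, which is a purely linear-algebraic observation together with a commutativity check for each arrow of $Q$. Combining the exactness of $G$ with the exactness of $F$ from Lemma~\ref{FunctorFStrict} then yields the exactness of $H=GF$.
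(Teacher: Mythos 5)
Your proof is correct, and it takes a mildly different but essentially equivalent route from the paper. The paper argues directly about the composite $H$: it observes that, just as for $F$, every vertex of the $K$-representation $HM$ is a direct sum of copies of $M_0$ and every linear map $(Hf)_y$ is block diagonal with diagonal entries $f_0$, and then declares the exactness argument for $F$ to apply verbatim. You instead isolate and prove the stronger statement that $G$ is exact as a functor on all of $\Mod* KQ'$ (not just on the image of $F$), and then conclude by composing exact functors. The substantive content --- that $Gf'$ is block diagonal over the fibres of $\pi_0$, that kernels and images of a block-diagonal map decompose blockwise, and that the arrow maps $\phi_\zeta$ (whose non-zero blocks are individual $\varphi_{\zeta'}$) restrict compatibly to each summand because $f'$ intertwines the $\varphi_{\zeta'}$ --- is identical in both proofs. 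Your factorization is cleaner and buys a general fact about $G$ that the paper never states explicitly, at the minor cost of an argument that is formally independent of the particular shape of the representations $FM$.
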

	\begin{proof}
		Let $M=(M_0, \lambda, \mu)$ and $N=(N_0, \lambda', \mu')$ be $K$-representations of $K \langle a_1, a_2 \rangle$-modules $M$ and $N$. Note that each vertex in the $K$-representation associated to $HM$ (resp. $HN$) is a direct sum of copies of $M_0$ (resp. $N_0$), and for any morphism $f=(f_0) \in \Hom_{K \langle a_1, a_2 \rangle} (M, N)$, the linear map $(Hf)_y$ is a block diagonal matrix with diagonal entries $f_0$ for all $y \in Q_0$. Thus, the proof of the exactness of $H$ is identical to the proof of the exactness of the functor $F$ in Lemma~\ref{FunctorFStrict}.
	\end{proof}
	
	\begin{lem} \label{IndecEmbed}
		The functor $H=GF:\fin K \langle a_1, a_2 \rangle \rightarrow \Mod* A$ maps indecomposable $K \langle a_1, a_2 \rangle$-modules to indecomposable $A$-modules.
	\end{lem}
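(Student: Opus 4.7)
The strategy is to show that any idempotent $e \in \End_A(HM)$ is trivial when $M$ is an indecomposable $K\langle a_1,a_2\rangle$-module, by reducing $e$ to the image under $G$ of an endomorphism of $FM$ and then invoking the indecomposability of $FM$ (which holds since $F$ is a strict representation embedding by Lemma~\ref{FunctorFStrict}).

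First I would set up the block structure. For each $y \in Q_0 \cap \im \pi_0$, the vector space $(HM)_y = \bigoplus_{i \in \pi_0^{-1}(y)} (FM)_i$ so any $e_y$ decomposes as a block matrix $(e_y^{k,i})_{k,i \in \pi_0^{-1}(y)}$, where the diagonal blocks $e_y^{i,i}$ act on $(FM)_i$ and the off-diagonal blocks $e_y^{k,i}$ with $k\neq i$ are linear maps $(FM)_i \to (FM)_k$. Using Remark~\ref{phiMaps}, for each arrow $\zeta \in Q_1$ the block matrix $\phi_\zeta$ has at most one non-zero block entry $\varphi_{\zeta'}$ per row and per column, located precisely at the positions $(k,i)$ for which an arrow $\zeta' \colon i \to k$ in $Q'$ satisfies $\pi_1(\zeta') = \zeta$. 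The commutativity relation $e_{e(\zeta)} \phi_\zeta = \phi_\zeta e_{s(\zeta)}$, read off blockwise, yields three kinds of equations for each arrow $\zeta' \colon i \to k$ with $\pi_1(\zeta')=\zeta$:
\begin{enumerate}[label=(\alph*)]
    \item the diagonal commutativity $e_{e(\zeta)}^{k,k}\varphi_{\zeta'} = \varphi_{\zeta'} e_{s(\zeta)}^{i,i}$;
    \item the source-annihilation $\varphi_{\zeta'} e_{s(\zeta)}^{i,i'} = 0$ for every $i' \neq i$ with $\pi_0(i') = s(\zeta)$;
    \item the target-annihilation $e_{e(\zeta)}^{k',k}\varphi_{\zeta'} = 0$ for every $k' \neq k$ with $\pi_0(k') = e(\zeta)$.
\end{enumerate}

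Next I would eliminate the off-diagonal blocks, which is where the main work lies. Only the vertices $y \in Q_0$ whose preimage under $\pi_0$ has size greater than one support potential off-diagonal blocks, and by Remark~\ref{PiPreImages}(b) these are exactly $y=x$, plus possibly $\pi_0(-3)$ (when $\val(v_2)=2$) and $\pi_0(n+1)$ (when $\val(v_3)=2$). At $y=x$ the equivalence class is $\{0,n\}$, and the six arrows of $Q'$ with endpoint in $\{0,n\}$, namely $\alpha'_1, \alpha'_n, \beta'_1, \beta'_2, \gamma'_1, \gamma'_2$, give rise via (b) and (c) to six annihilation conditions on the two off-diagonal blocks $e_x^{n,0}$ and $e_x^{0,n}$. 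Using the explicit matrix forms of $\varphi_{\beta'_1}, \varphi_{\beta'_2}, \varphi_{\gamma'_1}, \varphi_{\gamma'_2}$ (which involve $\lambda$ and $\mu$) together with the more rigid $\varphi_{\alpha'_1}$ and $\varphi_{\alpha'_n}$, I expect linear algebra to collapse the system to $e_x^{n,0} = e_x^{0,n} = 0$. A parallel but simpler argument handles the other vertices with a two-element preimage; here, only a single pair of arrows (the image of $\{\delta'_1,\delta'_2\}$ or analogous) contributes, but the structure of the matrices is simple enough that the annihilation is immediate.

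Once all off-diagonal blocks vanish, the family $(e_{\pi_0(i)}^{i,i})_{i \in Q'_0}$ is a well-defined endomorphism of the $KQ'$-module $FM$: the diagonal commutativity (a) above gives precisely the required commutativity relations for each arrow $\zeta' \in Q'_1$. Since this endomorphism is idempotent and $FM$ is indecomposable (as $F$ is fully faithful and $M$ is indecomposable), it equals $0$ or $\id_{FM}$. By full faithfulness of $F$, the corresponding endomorphism of $M$ is $0$ or $\id_M$, and since the block matrix entries of $e_y$ encode exactly this endomorphism repeated along the diagonal (because of the block diagonal structure of $(Hf)_y$ for $f \in \End(M)$), we conclude $e=0$ or $e=\id_{HM}$, hence $HM$ is indecomposable.

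The main obstacle is Step 2: verifying systematically that the source- and target-annihilation conditions (b), (c) at $y=x$ force the off-diagonal blocks of $e_x$ to vanish. The matrices $\varphi_{\beta'_j}$ and $\varphi_{\gamma'_j}$ involve $\lambda$ and $\mu$, which interact non-trivially with the block structure; the bookkeeping branches on whether $\val(v_2) = 2$ or $> 2$ and $\val(v_3) = 2$ or $> 2$, and on the direction ($Q_1$ or $Q_1^{-1}$) of $\alpha_1$ and $\alpha_n$ in the string $w_\chi$. Each of these cases is a finite linear-algebraic verification, but covering them uniformly is the delicate part.
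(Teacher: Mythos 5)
Your proposal takes a genuinely different route from the paper's. The paper never tries to show that the off-diagonal blocks of the idempotent $\Phi$ vanish. Instead it observes that the diagonal blocks $((\Phi_{\pi_0(i)})_{i,i})_{i\in Q'_0}$ assemble into an idempotent element of $\End_{KQ'}(M')$; since $M'$ is indecomposable, this idempotent is trivial, so the diagonal blocks are either all identity matrices or all zero. The paper then closes with the observation that the trace of an idempotent matrix equals its rank: if the diagonal blocks are all identities, $\Phi_y$ has full trace, hence full rank, hence is non-singular, hence is the identity; if they are all zero, $\Phi_y$ has zero trace, hence zero rank, hence is zero. This bypasses any off-diagonal analysis entirely, and in fact proves the stronger statement that $G$ itself (not merely $H=GF$) preserves indecomposability.

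Your Step 2 also contains two inaccuracies. First, the vertices $y\in Q_0$ with $|\pi_0^{-1}(y)|>1$ are not confined to $\{x,\pi_0(-3),\pi_0(n+1)\}$: Remark~\ref{PiPreImages}(b) only gives \emph{examples} of equivalence classes, and the string $w_\chi$ may visit the same polygon of $\chi$ more than once (as it does in the paper's worked example, where $\pi_0(2)=\pi_0(3)$), so $\pi_0$ need not be injective on $\{1,\dots,n-1\}$ either. Second, your annihilation claims (b) and (c) only hold when the relevant column (resp.\ row) of $\phi_\zeta$ is entirely zero; whenever another arrow $\zeta''\colon i'\to k''$ in $Q'$ has $\pi_1(\zeta'')=\zeta$ --- which happens for the images of the $\alpha'_j$ when $w_\chi$ revisits an arrow of $Q$, and for $\delta'_1,\delta'_2$ when $\val(v_3)=3$ by Remark~\ref{phiMaps}(c) --- the $(k,i')$ block of the commutativity relation reads $\varphi_{\zeta'}\,e_{s(\zeta)}^{i,i'} = e_{e(\zeta)}^{k,k''}\,\varphi_{\zeta''}$, which relates two off-diagonal blocks rather than annihilating one. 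Even after these corrections, it is not evident that the off-diagonal blocks of an arbitrary idempotent in $\End_A(HM)$ must all vanish; you rightly flag this as the delicate part, but it is not a detail to be deferred, and the paper's trace argument makes the whole question irrelevant.
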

	\begin{proof}
		We will prove that $G$ maps indecomposables to indecomposables, since then the same is true for $H$. Let $M'=(M'_i, \varphi_{\zeta'})_{i \in Q'_0, \zeta' \in Q'_1}$ be a $K$-representation of $Q$ such that $M'$ is indecomposable. Then there exist no non-trivial idempotents in $\End_{KQ'}(M')$. Suppose there exists an idempotent map $\Phi=(\Phi_y)_{y \in Q_0} \in \End_{A}(GM')$. Then for each arrow $\zeta:y \rightarrow z$ in $Q$, there exists a commutative square
		\begin{equation*}
			\xymatrix{
				\bigoplus_{\pi_0(i)=y}M'_i
					\ar[r]^{\phi_\zeta}	\ar[d]^{\Phi_y}
				&	\bigoplus_{\pi_0(k)=z}M'_k
					\ar[d]^{\Phi_z} \\
				\bigoplus_{\pi_0(i)=y}M'_i
					\ar[r]^{\phi_\zeta}
				&	\bigoplus_{\pi_0(k)=z}M'_k.
			}
		\end{equation*}
		View $\Phi_y$ and $\Phi_z$ as block matrices $((\Phi_y)_{i,j})_{\pi_0(i)=\pi_0(j)=y}$ and $((\Phi_z)_{k,l})_{\pi_0(k)=\pi_0(l)=z}$. By Remark~\ref{phiMaps}(a), (b) and (c), the map $\phi_\zeta$ contains at most one non-zero entry in each row and column when viewed as a block matrix $((\phi_\zeta)_{i,k})_{\pi_0(i)=y,\pi_0(k)=z}$. This implies the existence of commutativity relations
		\begin{equation*}
			(\phi_\zeta\Phi_y)_{k,i} = \varphi_{\zeta'}(\Phi_y)_{i,i} = (\Phi_z)_{k,k}\varphi_{\zeta'} = (\Phi_z \phi_\zeta)_{k,i}.
		\end{equation*}
		for each arrow $\zeta':i \rightarrow k$ in $Q'$ such that $\pi_1(\zeta')=\zeta$. Ranging over all $\zeta \in Q_1$, we obtain precisely the commutativity relations that determine a morphism in $\End_{KQ'}(M')$.
		
		Since $\Phi_y$ is idempotent for all $y \in Q_0$, each block diagonal entry $(\Phi_y)_{i,i}$ is idempotent (for each $y$ and each $i$). But since any idempotent in $\End_{KQ'}(M')$ is trivial, it follows that either every block entry $(\Phi_y)_{i,i}$ is an identity matrix or every block entry $(\Phi_y)_{i,i}$ is a zero matrix.
		
		Recall that an idempotent matrix is non-singular if and only if it is an identity matrix. Also recall that the trace of an idempotent matrix is equal to its rank. If every block entry $(\Phi_y)_{i,i}$ is an identity, then $\Phi_y$ is full rank for all $y \in Q_0$ (such that $(GM')_y \neq 0$). But then every $\Phi_y$ is non-singular and thus must each be an identity matrix. On the other hand, if every block entry $(\Phi_y)_{i,i}$ is a zero matrix, then the rank of each $\Phi_y$ is zero, and thus must be a zero matrix. Hence, if $\Phi \in \End_{A}(GM')$ is idempotent, then $\Phi$ must be a trivial idempotent. So $GM'$ must be indecomposable.
	\end{proof}
	
	\begin{lem} \label{IsoEmbed}
		The functor $H=GF:\fin K \langle a_1, a_2 \rangle \rightarrow \Mod* A$ respects isomorphism classes. That is, if $HM \cong HN$ then $M \cong N$ for all $M,N \in \fin K \langle a_1, a_2 \rangle$.
	\end{lem}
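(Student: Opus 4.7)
The plan is to extract from any isomorphism $\Phi = (\Phi_y)_{y \in Q_0}\colon HM \to HN$ a morphism $f' \in \Hom_{KQ'}(FM, FN)$, show that $f'$ is an isomorphism, and then invoke the fact that $F$ is a strict representation embedding (Lemma \ref{FunctorFStrict}) to descend to an isomorphism $f\colon M \to N$. For each $y \in Q_0$, write $\Phi_y$ as a block matrix $((\Phi_y)_{i,j})_{i,j \in \pi_0^{-1}(y)}$ with blocks $(\Phi_y)_{i,j}\colon (FM)_j \to (FN)_i$, and define $f'_i := (\Phi_{\pi_0(i)})_{i,i}$ for each $i \in Q'_0$.

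First I would verify that $f' = (f'_i)$ defines a morphism of $KQ'$-modules by checking, for each arrow $\zeta'\colon i \to k$ of $Q'$, that $f'_k \varphi_{\zeta'} = \varphi'_{\zeta'} f'_i$. This follows by reading off the $(k,i)$-block of the commutativity equation $\Phi_{\pi_0(k)} \phi_{\pi_1(\zeta')} = \phi'_{\pi_1(\zeta')} \Phi_{\pi_0(i)}$ and invoking Remark \ref{phiMaps}, which ensures that the block matrix for each $\phi_\zeta$ has at most one non-zero entry in each row and column, so no cross terms appear on either side.

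The heart of the proof is a rigidity calculation at the vertex $x$, which has two preimages $\pi_0^{-1}(x) = \{0, n\}$. By considering the $(0,n)$-block of the commutativity of $\Phi$ with $\phi_{\pi_1(\beta'_2)}$ and $\phi_{\pi_1(\gamma'_2)}$, one obtains $(\Phi_x)_{0,n} \varphi_{\beta'_2} = 0$ and $(\Phi_x)_{0,n} \varphi_{\gamma'_2} = 0$. The explicit matrix forms from the construction of $F$, by a direct calculation analogous to the fullness proof in Lemma \ref{FunctorFStrict}, then force $(\Phi_x)_{0,n} = 0$; a symmetric argument using $\phi_{\pi_1(\beta'_1)}$ and $\phi_{\pi_1(\gamma'_1)}$ yields $(\Phi_x)_{n,0} = 0$. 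Thus $\Phi_x$ is block-diagonal, and since it is invertible, both $f'_0$ and $f'_n$ are isomorphisms; in particular, $\dim M_0 = \dim N_0$.

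To conclude, I would propagate the isomorphism property of $f'_0$ and $f'_n$ to every other $f'_i$. Each matrix $\varphi_{\zeta'}$ in the construction of $F$ is either injective or surjective, so the morphism equation $f'_k \varphi_{\zeta'} = \varphi'_{\zeta'} f'_i$, combined with $\dim M_0 = \dim N_0$, allows one to argue inductively along the arrows of $Q'$ that each remaining $f'_i$ is an isomorphism. Hence $f'$ is an isomorphism in $\Hom_{KQ'}(FM, FN)$. By Lemma \ref{FunctorFStrict}, $F$ is fully faithful, so $f' = Ff$ for a unique $f \in \Hom_{K\langle a_1, a_2\rangle}(M, N)$, and $F$ reflects isomorphisms, yielding $M \cong N$. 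The main obstacle will be the rigidity step at $x$: it requires the same kind of careful matrix bookkeeping used in the fullness proof of Lemma \ref{FunctorFStrict}, but now applied to the block-matrix entries of an arbitrary $A$-module isomorphism rather than to a $KQ'$-morphism.
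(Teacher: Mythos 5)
Your approach is essentially the same in spirit as the paper's: extract the diagonal blocks $f'_i = (\Phi_{\pi_0(i)})_{i,i}$, use Remark~\ref{phiMaps} to see these assemble into a $KQ'$-morphism, then force a rigidity at a vertex of $Q$ with a two-element fiber. The genuine difference is the choice of that vertex: you work at $x=\pi_0(0)=\pi_0(n)$ (fiber always $\{0,n\}$), whereas the paper works at $\pi_0(n+3)$ and splits into cases -- if $\val(v_2)>2$ the fiber is a singleton and no matrix argument is needed at all; if $\val(v_2)=2$ the fiber is $\{-3,n+3\}$ and injectivity of $\varphi'_{\beta'_2}$ alone kills the off-diagonal block. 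Your choice avoids the case split but requires combining two maps ($\varphi_{\beta'_2}$ and $\varphi_{\gamma'_2}$) to cover $(FM)_n$. Both routes work, and your observation that block triangularity plus invertibility of $\Phi_x$ automatically yields square, invertible diagonal blocks (and hence $\dim M_0=\dim N_0$) is a slightly cleaner way to obtain the dimension equality than the paper's separate counting.

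There is, however, one incorrect claim: the assertion that ``a symmetric argument using $\phi_{\pi_1(\beta'_1)}$ and $\phi_{\pi_1(\gamma'_1)}$ yields $(\Phi_x)_{n,0}=0$'' is false. The vertex $0$ of $Q'$ has no incoming arrows and the vertex $n$ has no outgoing arrows, so in every commutativity square involving $\Phi_x$ the block $(\Phi_x)_{n,0}$ is multiplied on either side only by zero entries of the block matrices $\phi_\zeta$ and $\phi'_\zeta$; no equation constrains it, and in general it need not vanish. The squares coming from $\beta'_1,\gamma'_1$ produce relations of the form $\varphi'_{\beta'_1}(\Phi_x)_{0,n}=0$, etc. -- i.e.\ further (redundant) information about the \emph{same} off-diagonal block $(\Phi_x)_{0,n}$, not about $(\Phi_x)_{n,0}$. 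This is harmless to the proof: block \emph{triangularity} of $\Phi_x$ already forces both $(\Phi_x)_{0,0}$ and $(\Phi_x)_{n,n}$ to be bijective, which is all you use. You should simply delete the $(\Phi_x)_{n,0}=0$ step and state $\Phi_x$ is block triangular rather than block diagonal. Incidentally, the inductive propagation along arrows in your final paragraph is valid but redundant once you invoke the fullness computation of Lemma~\ref{FunctorFStrict}: that computation already shows every $f'_i$ is a block diagonal matrix with each block equal to one and the same map $f_0$, so bijectivity of $f'_0$ alone forces bijectivity of $f_0$ and hence of every $f'_i$.
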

	\begin{proof}
		Let $M$ and $N$ be finite-dimesnional $K \langle a_1, a_2 \rangle$-modules such that $GFM \cong GFN$. Let
		\begin{align*}
			FM&=((FM)_i, \varphi_{\zeta'})_{i \in Q'_0, \zeta' \in Q'_1},	&
			FN&=((FN)_i, \varphi'_{\zeta'})_{i \in Q'_0, \zeta' \in Q'_1}	\\
			GFM&=((GFM)_y, \phi_\zeta)_{y \in Q_0, \zeta \in Q_1},		&
			GFN&=((GFN)_y, \phi'_\zeta)_{y \in Q_0, \zeta \in Q_1}.
		\end{align*}
		Then there exists a bijective $K$-linear map $(\Phi_y)_{y \in Q_0}$ such that the squares 
		\begin{equation*}
			\xymatrix{
				(GFM)_y \ar[d]_{\Phi_y} \ar[r]^{\phi_{\zeta}}		&	(GFM)_z \ar[d]^{\Phi_z} \\
				(GFN)_y \ar[r]^{\phi'_{\zeta}}						&	(GFN)_z
			}
		\end{equation*}
		commute. Write
		\begin{align*}
			(GFM)_y&=\bigoplus_{\pi_0(i)=y} (FM)_i,	&	(GFM)_z&=\bigoplus_{\pi_0(k)=z} (FM)_k, \\
			(GFN)_y&=\bigoplus_{\pi_0(i)=y} (FN)_i,	&	(GFN)_z&=\bigoplus_{\pi_0(k)=z} (FN)_k.
		\end{align*}
		Then each $\Phi_y$ can be viewed as a block matrix, where each entry $(\Phi_y)_{j,i}:(FM)_i \rightarrow (FN)_j$ is a $K$-linear map such that $\pi_0(i)=\pi_0(j)=y$. Recall further that for any arrow $\zeta: y \rightarrow z$ in $Q$, the maps $\phi_\zeta$ and $\phi'_\zeta$ can be viewed as a block matrices such that the entries $(\phi_\zeta)_{k,i}:(FM)_i \rightarrow (FM)_k$ and $(\phi'_\zeta)_{k,i}:(FN)_i \rightarrow (FN)_k$ are $K$-linear maps such that $\pi_0(i)=y$ and $\pi_0(k)=z$. If $(\phi_\zeta)_{k,i}$ (resp. $(\phi'_\zeta)_{k,i}$) is a non-zero entry of $\phi_\zeta$ (resp. $\phi'_\zeta$), then $(\phi_\zeta)_{k,i}=\varphi_{\zeta'}$ and $(\phi'_\zeta)_{k,i}=\varphi'_{\zeta'}$ for some arrow $\zeta':i \rightarrow k$ in $Q'$ such that $\pi_1(\zeta')=\zeta$. By Remark~\ref{phiMaps}(a), (b) and (c), each row and column of $\phi_\zeta$ contains at most one non-zero entry. Thus for each arrow $\zeta':i \rightarrow k$ in $Q'$, we have commutativity relations of the form
		\begin{equation} \label{CommRelationFM} \tag{$\ast$}
			(\phi'_{\pi_1(\zeta')}\Phi_{\pi_0(i)})_{k,i} = \varphi'_{\zeta'}(\Phi_{\pi_0(i)})_{i,i} = (\Phi_{\pi_0(k)})_{k,k}\varphi_{\zeta'} = (\Phi_{\pi_0(k)} \phi_{\pi_1(\zeta')})_{k,i}.
		\end{equation}
		
		Note that the relations (\ref{CommRelationFM}) are precisely the commutativity relations that determine the space $\Hom_{KQ'}(FM,FN)$. So $\Phi'=(\Phi'_i)_{i \in Q'_0} \in \Hom_{KQ'}(FM, FN)$, where $\Phi'_i = (\Phi_{\pi_0(i)})_{i,i}$. Recall that the space $\Hom_{KQ'}(FM, FN)$ was calculated in the proof of Lemma~\ref{FunctorFStrict}. Specifically, each $K$-linear map $\Phi'_i:(FM)_i \rightarrow (FN)_i$ is a block diagonal matrix with diagonal entries $f_0$, where $(f_0)=f\in \Hom_{K\langle a_1, a_2 \rangle}(M,N)$.
		
		Recall that $Q'$ is constructed using a string. It follows from Remark~\ref{PiPreImages}(a) that
		\begin{align*}
			(GFM)_{\pi_0(n+3)} &=
			\begin{cases}
				(FM)_{n+3}						&	\text{if } \val(v_2)>2 \\
				(FM)_{-3} \oplus (FM)_{n+3}	&	\text{if } \val(v_2)=2.
			\end{cases} \\
			(GFN)_{\pi_0(n+3)} &=
			\begin{cases}
				(FN)_{n+3}						&	\text{if } \val(v_2)>2 \\
				(FN)_{-3} \oplus (FN)_{n+3}	&	\text{if } \val(v_2)=2.
			\end{cases}
		\end{align*}
		In the case where $\val(v_2)>2$, we have $\Phi_{\pi_0(n+3)}=\Phi'_{n+3}$, which by assumption is bijective. But since $\Phi'_{n+3}$ is a block diagonal matrix with diagonal entries $f_0=f \in \Hom_{K\langle a_1, a_2 \rangle}(M,N)$ it follows that $\Phi'_{n+3}$ is bijective only if $f_0$ is bijective. Thus in this case, there exists an isomorphism $f:M \rightarrow N$.
		
		In the case where $\val(v_2)=2$, we have
		\begin{equation*}
			\Phi_{\pi_0(n+3)}=
			\begin{pmatrix}
				(\Phi_{\pi_0(n+3)})_{-3,-3}	&	(\Phi_{\pi_0(n+3)})_{-3,n+3}		\\
				(\Phi_{\pi_0(n+3)})_{n+3,-3}	&	(\Phi_{\pi_0(n+3)})_{n+3,n+3}
			\end{pmatrix}.
		\end{equation*}
		Firstly, we will make the observation that $\dim_K M = \dim_K N$, since $GFM \cong GFN$, which implies
		\begin{equation*}
			\dim_K GFM = (3n+15) \dim_K M_0 = (3n+15) \dim_K N_0 = \dim_K GFN,
		\end{equation*}
		where $n$ is the length of the string $w_\chi$ used in the construction of $Q'$. Secondly, note from the definition of $F$ and the above observation that
		\begin{equation*}
			\dim_K (FM)_{-3} = \dim_K (FM)_{n+3}=\dim_K (FN)_{-3} = \dim_K (FN)_{n+3},
		\end{equation*}
		so each block entry of $\Phi_{\pi_0(n+3)}$ is a square matrix. Thirdly, note from the definition of $F$ that the map
		\begin{equation*}
			\varphi_{\beta'_2}=\varphi'_{\beta'_2}=
			\begin{pmatrix}
				1	&	0	\\
				1	&	0	\\
				0	&	1	\\
				0	&	1
			\end{pmatrix}
		\end{equation*}
		is injective (where $\beta'_2: n+3 \rightarrow n \in Q'_1$, as defined earlier in this section). Fourthly, by Remark~\ref{phiMaps}(a), the map $\phi_{\pi_1(\beta'_2)}$ (resp. $\phi'_{\pi_1(\beta'_2)}$) has at most one non-zero entry, which is $(\phi_{\pi_1(\beta'_2)})_{n,n+3}=\varphi_{\beta'_2}$ (resp. $(\phi'_{\pi_1(\beta'_2)})_{n+3,n}=\varphi'_{\beta'_2}$). Thus, we have a commutative square of the form
		\begin{equation*}
			\xymatrix{
				(FM)_{-3} \oplus (FM)_{n+3}
					\ar[d]_{\Phi_{\pi_0(n+3)}}
					\ar[r]^-{\left(\begin{smallmatrix} 0 & \varphi_{\beta'_2} \\ 0 & 0 \end{smallmatrix} \right)}
				&	(FM)_n \oplus X \ar[d]^{\Phi_{\pi_0(n)}} \\
				(FN)_{-3} \oplus (FN)_{n+3}
					\ar[r]^-{\left(\begin{smallmatrix} 0 & \varphi'_{\beta'_2} \\ 0 & 0 \end{smallmatrix} \right)}
				&	(FN)_n \oplus X'
			}
		\end{equation*}
		where $X = (GFM)_{\pi_0(n)} / (FM)_n$ and $X' = (GFN)_{\pi_0(n)} / (FN)_n$. From this, we obtain the relation $\varphi'_{\beta'_2}(\Phi_{\pi_0(n+3)})_{n+3,-3} = 0$. Since $\varphi'_{\beta'_2}$ is injective, it has a left inverse. This implies that $(\Phi_{\pi_0(n+3)})_{n+3,-3} = 0$, so $\Phi_{\pi_0(n+3)}$ is a block triangular matrix. Thus,
		\begin{equation*}
			\det \Phi_{\pi_0(n+3)} = \det (\Phi_{\pi_0(n+3)})_{-3,-3} \det (\Phi_{\pi_0(n+3)})_{n+3,n+3}.
		\end{equation*}
		But $\Phi_{\pi_0(n+3)}$ is bijective by assumption and hence has non-zero determinant. Thus, both $(\Phi_{\pi_0(n+3)})_{-3,-3}$ and $(\Phi_{\pi_0(n+3)})_{n+3,n+3}$ have non-zero determinant, and hence both are bijections. But both $(\Phi_{\pi_0(n+3)})_{-3,-3}$ and $(\Phi_{\pi_0(n+3)})_{n+3,n+3}$ are diagonal matrices with diagonal entries $f_0$, where $(f_0)=f \in \Hom_{K\langle a_1, a_2 \rangle}(M,N)$. Thus, there must exist an isomorphism $f \in \Hom_{K\langle a_1, a_2 \rangle}(M,N)$, as required.
	\end{proof}
	
	We can now prove Proposition~\ref{WildCycleMult}.
	\begin{proof}
		Suppose $\chi$ is not of the form given in Proposition~\ref{WildCycleMult}. If $x$ is self-folded then $A$ is wild by Proposition~\ref{CrossBand}, so assume that this is not the case. Suppose then that $\chi$ is locally of the form
		\begin{center}
			\begin{tikzpicture}
				\draw[pattern = north west lines] (0.3,-0.6) -- (1.3,-0.6) -- (0.8,0.2) -- (0.3,-0.6);
				\draw[dashed] (1.7,-0.6) -- (1.3,-0.6);
				\draw[dashed] (2.3,-0.6) -- (2.6,-0.6);
				\draw (2,-0.6) node{$\chi'$};
				\draw[dashed] (0.8,0.2) -- (0.8,0.5);
				\draw[dashed] (0.8,1.1) -- (0.8,1.5);
				\draw (0.8,0.8) node{$\chi''$};
				\draw[dashed] (-1,-0.6) -- (-0.7,-0.6);
				\draw[dashed] (-0.1,-0.6) -- (0.3,-0.6);
				\draw (-0.4,-0.6) node{$\chi'''$};
				\draw [fill=black] (1.3,-0.6) ellipse (0.05 and 0.05);
				\draw [fill=black] (0.8,0.2) ellipse (0.05 and 0.05);
				\draw [fill=black] (0.3,-0.6) ellipse (0.05 and 0.05);
				\draw (1.4,-0.35) node{$v_1$};
				\draw (0.45,0.2) node{$v_2$};
				\draw (0.15,-0.35) node{$v_3$};
			\end{tikzpicture}
		\end{center}
		where $\chi'$, $\chi''$ and $\chi'''$ are subconfigurations of $\chi$. If any two of the subconfigurations $\chi'$, $\chi''$ and $\chi'''$ contain a polygons belonging to a cycle or vertices of multiplicity strictly greater than one, then $A$ is also wild by Proposition~\ref{CrossBand}, so assume that this is not the case either. Thus, $\chi'$, $\chi''$ and $\chi'''$ are pairwaise disjoint and all cycles and vertices of $\chi$ of higher multiplicity must belong to precisely one of $\chi'$, $\chi''$ and $\chi'''$. Let $\chi'$ be this subconfiguration, which we may assume without loss of generality. Then $\chi''$ and $\chi'''$ are multiplicity-free trees. Since $\chi$ is not of the form given in the Proposition statement, there necessarily exists at least two polygons in $\chi''$ or $\chi'''$. Suppose (without loss of generality) that $\chi'''$ contains at least two polygons. Then $\chi$ satisfies Assumption~\ref{WildAssum}. Thus, there exists a $K$-linear functor $H:\fin K \langle a_1, a_2 \rangle \rightarrow \Mod* A$ (defined above) that is exact (Lemma~\ref{ExactEmbed}), maps indecomposable $K \langle a_1, a_2 \rangle$-modules to indecomposable $A$-modules (Lemma~\ref{IndecEmbed}), and respects isomorphism classes (Lemma~\ref{IsoEmbed}), and hence, is a representation embedding. Thus, $A$ is wild.
	\end{proof}
	
	%  -----------------------------------------------------------------------
	% BRAUER CONFIGURATIONS WITH AT LEAST TWO 3-GONS
	%  -----------------------------------------------------------------------
	\section{Brauer Configurations with at Least Two 3-gons} \label{Multiple3GonSubsection}
	We will address the final case of wild symmetric special triserial algebras, which is where the Brauer configuration contains multiple 3-gons. We begin with the following lemma.
	
	\begin{lem} \label{WildHeredSubQuiver}
		Let $A=KQ/I$ be a Brauer configuration algebra. Suppose there exists a connected acyclic subquiver $Q' \subset Q$ such that $KQ'$ is a wild hereditary algebra and every (directed) path $\alpha_1\ldots\alpha_n$ in $Q'$ is not in $I$. Then $A$ is wild.
	\end{lem}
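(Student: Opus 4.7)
The plan is to construct a strict representation embedding $F\colon \Mod* KQ' \rightarrow \Mod* A$ via ``extension by zero'' along the subquiver inclusion $Q' \hookrightarrow Q$, and then precompose with a known representation embedding from $\fin K\langle a_1,a_2\rangle$ into $\Mod* KQ'$ (which exists because $KQ'$ is wild). Concretely, for a $KQ'$-representation $M=(M_x,\varphi_\alpha)_{x\in Q'_0,\alpha\in Q'_1}$, I would define $FM=((FM)_y,\phi_\beta)_{y\in Q_0,\beta\in Q_1}$ by $(FM)_y=M_y$ if $y\in Q'_0$ and $(FM)_y=0$ otherwise, and $\phi_\beta=\varphi_\beta$ if $\beta\in Q'_1$ and $\phi_\beta=0$ otherwise. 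On morphisms, $Ff$ agrees with $f$ on $Q'_0$ and is zero elsewhere.

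The first step is to verify that $FM$ actually defines an $A$-module, that is, it respects all relations in $I$. By the description of Brauer configuration algebras, the generators of $I$ fall into three families: (a) zero relations $\alpha\beta$ where $\alpha\beta$ is not a subpath of any cycle $\mathfrak{C}_v$; (b) commutativity relations $(\mathfrak{C}_{u,\gamma})^{\mathfrak{e}_u}-(\mathfrak{C}_{v,\delta})^{\mathfrak{e}_v}$ at polygons; and (c) the ``overflow'' relations $(\mathfrak{C}_{v,\gamma_1})^{\mathfrak{e}_v}\gamma_1$ at truncated edges. For (a), if both $\alpha,\beta\in Q'_1$ then $\alpha\beta$ is a directed path in $Q'$, which by hypothesis avoids $I$, a contradiction; hence at least one of $\phi_\alpha,\phi_\beta$ is zero. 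For (b) and (c), the key observation is that any cycle $\mathfrak{C}_v$ of $Q$ contains at least one arrow outside $Q'$, since $Q'$ is acyclic; therefore $\phi_{(\mathfrak{C}_{u,\gamma})^{\mathfrak{e}_u}}=0=\phi_{(\mathfrak{C}_{v,\delta})^{\mathfrak{e}_v}}$ on $FM$, and the corresponding relations are satisfied trivially.

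Next I would show that $F$ is $K$-linear, exact, and fully faithful, from which it follows that $F$ is a strict representation embedding in the sense of the preliminaries. $K$-linearity is immediate from the definition. Exactness follows because kernels, images, and cokernels in $\Mod* A$ and $\Mod* KQ'$ are both computed pointwise at each vertex, and the vanishing components of $FM$ are preserved by any subquotient. For fullness and faithfulness, any $A$-module morphism $g\colon FM\to FN$ must have $g_y=0$ for $y\notin Q'_0$ (since the relevant vector spaces are zero), and the remaining components $(g_y)_{y\in Q'_0}$ satisfy exactly the commutativity squares needed to form a morphism $M\to N$ in $\Mod* KQ'$; conversely every such morphism lifts uniquely to $FM\to FN$ by extending by zero. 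Hence $F$ induces a bijection $\Hom_{KQ'}(M,N)\cong\Hom_A(FM,FN)$.

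Finally, since $KQ'$ is wild hereditary, there is a representation embedding $G\colon\fin K\langle a_1,a_2\rangle\rightarrow\Mod* KQ'$, and the composition $F\circ G$ is then a representation embedding into $\Mod* A$ by the fact (cited in the preliminaries, \cite[XIX, Lemma 1.2]{WildBook}) that strict representation embeddings are representation embeddings and that the composition of representation embeddings is a representation embedding. This proves that $A$ is of wild representation type. The main conceptual point, and the only step with genuine content, is the verification that commutativity relations are respected; this is where the acyclicity of $Q'$ is essential and is where the hypothesis that $Q'$ is both acyclic and contains no path lying in $I$ comes together.
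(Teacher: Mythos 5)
Your proposal is correct and follows essentially the same route as the paper: both define the extension-by-zero functor $F\colon\Mod* KQ'\to\Mod* A$, observe it is a strict representation embedding, and precompose with an embedding from $\fin K\langle a_1,a_2\rangle$. The paper states the verification that $FM$ respects the relations in $I$ as evident, whereas you spell out the case analysis (zero relations, commutativity relations, overflow relations) and correctly identify that acyclicity of $Q'$ forces every cycle $\mathfrak{C}_v$ to leave $Q'$, so both sides of a commutativity relation act by zero; this is a useful elaboration but not a different argument.
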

	\begin{proof}
		Define a functor $F:\Mod* KQ' \rightarrow \Mod* A$ in the following way. For any $KQ'$-module $M$ defined by a quiver representation $(M_x, \varphi_\alpha)_{x \in Q'_0, \alpha \in Q'_1}$, we define $FM$ to be the $A$-module given by the quiver representation $((FM)_x, \phi_\alpha)_{x \in Q_0, \alpha \in Q_1}$ such that
		\begin{equation*}
			(FM)_x =
			\begin{cases}
				M_x,	&	\text{if } x \in Q'_0 \\
				0,		&	\text{otherwise}
			\end{cases}
			\qquad \text{and} \qquad
			\phi_\alpha =
			\begin{cases}
				\varphi_\alpha,	&	\text{if } \alpha \in Q'_1 \\
				0,					&	\text{otherwise.}
			\end{cases}
		\end{equation*}
		For any $KQ'$-modules $M$ and $N$ and any morphism $\Phi = (\Phi_x)_{x \in Q'_0} :M \rightarrow N$, we define a morphism $F\Phi = ((F\Phi)_x)_{x \in Q_0}:FM \rightarrow FN$ by $(F\Phi)_x=\Phi_x$ if $x \in Q'_0$ and $(F\Phi)_x = 0$ otherwise. It is easy to see that this functor is exact and fully faithful, and hence, is a (strict) representation embedding. Since, $KQ'$ is a wild algebra, this implies $A$ is also a wild algebra.
	\end{proof}
	
	\begin{prop} \label{Multiple3GonCase}
		Let $A=KQ/I$ be a Brauer configuration algebra associated to a Brauer configuration $\chi$. Suppose $\chi$ contains at least two 3-gons. Suppose further that $\chi$ is not of the form
		\begin{center}
			\begin{tikzpicture}[scale=1.5, ]%baseline=(o.base)
			\draw [dashed] (0,0) ellipse (0.5 and 0.5);
			\draw (0,0) node {$G$};
			\draw [fill=black] (0,0.5) ellipse (0.03 and 0.03);
			\draw (-0.25,0.6) node {$u_1$};
			\draw [fill=black] (-0.4,-0.3) ellipse (0.03 and 0.03);
			\draw (-0.667,-0.1665) node {$u_2$};
			\draw [fill=black] (0.4,-0.3) ellipse (0.03 and 0.03);
			\draw (0.6665,-0.1665) node {$u_r$};
			
			\draw [fill=black] (0.4,1.1) ellipse (0.03 and 0.03);
			\draw (0.2665,1.25) node {$v_1$};
			\draw [fill=black] (0.8,1.4) ellipse (0.03 and 0.03);
			\draw (0.65,1.55) node {$v'_1$};
			\draw [fill=black] (-0.4,1.1) ellipse (0.03 and 0.03);
			\draw (-0.3165,1.2634) node {$v''_1$};
			\draw [fill=black] (-0.8,1.4) ellipse (0.03 and 0.03);
			\draw (-0.6299,1.55) node (o) {$v'''_1$};
			\draw [fill=black] (-1.2,-0.3) ellipse (0.03 and 0.03);
			\draw (-1.25,-0.45) node {$v_2$};
			\draw [fill=black] (-1.6,-0.1) ellipse (0.03 and 0.03);
			\draw (-1.7,-0.3) node {$v'_2$};
			\draw [fill=black] (-0.8,-0.9) ellipse (0.03 and 0.03);
			\draw (-1,-0.95) node {$v''_2$};
			\draw [fill=black] (-0.7,-1.4) ellipse (0.03 and 0.03);
			\draw (-0.9335,-1.45) node {$v'''_2$};
			\draw [fill=black] (0.8,-0.9) ellipse (0.03 and 0.03);
			\draw (1,-1) node {$v_r$};
			\draw [fill=black] (0.7,-1.4) ellipse (0.03 and 0.03);
			\draw (0.9,-1.4) node {$v'_r$};
			\draw [fill=black] (1.2,-0.3) ellipse (0.03 and 0.03);
			\draw (1.3,-0.5) node {$v''_r$};
			\draw [fill=black] (1.6,-0.1) ellipse (0.03 and 0.03);
			\draw (1.8,-0.3) node {$v'''_r$};
			
			\draw [pattern=north west lines](0,0.5) -- (-0.4,1.1) -- (0.4,1.1) -- (0,0.5);
			\draw (-0.8,1.4) -- (-0.4,1.1);
			\draw (0.8,1.4) -- (0.4,1.1);
			\draw [pattern=north west lines](-1.2,-0.3) -- (-0.4,-0.3) -- (-0.8,-0.9) -- (-1.2,-0.3);
			\draw (-1.6,-0.1) -- (-1.2,-0.3);
			\draw (-0.7,-1.4) -- (-0.8,-0.9);
			\draw [pattern=north west lines](0.4,-0.3) -- (1.2,-0.3) -- (0.8,-0.9) -- (0.4,-0.3);
			\draw (1.6,-0.1) -- (1.2,-0.3);
			\draw (0.7,-1.4) -- (0.8,-0.9);
			\draw (0,-0.8) node {$\cdots$};
			\end{tikzpicture}
		\end{center}
		where $G$ is a Brauer graph connecting the (not necessarily distinct) vertices $u_1,\ldots,u_r$ and $\mathfrak{e}_{v_i}=\mathfrak{e}_{v'_i}=\mathfrak{e}_{v''_i}=\mathfrak{e}_{v'''_i}=1$ for all $i$. Then $A$ is wild.
	\end{prop}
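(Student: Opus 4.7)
The plan is to restrict the structure of $\chi$ using the preceding wildness results and then invoke Lemma~\ref{WildHeredSubQuiver} by exhibiting a wild hereditary subquiver of $Q$.

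First I would apply Proposition~\ref{CrossBand} to each 3-gon of $\chi$, allowing me to assume that no 3-gon is self-folded and that no 3-gon has two distinct sides leading to subconfigurations which contain a simple cycle or a vertex of multiplicity strictly greater than one. I would then apply Proposition~\ref{WildCycleMult} to each 3-gon. Either $A$ is wild, or $\chi$ has (for some 3-gon $x$) the specific shape given in that proposition: the 3-gon $x$ is attached at one vertex to a subconfiguration $\chi'$ containing all cycles and higher multiplicities of $\chi$, while the other two vertices of $x$ each carry a single truncated 2-gon. Since $\chi$ contains at least two 3-gons, $\chi'$ must itself contain at least one more 3-gon in this subcase. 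I would treat this alongside the principal case in which the reductions above leave $\chi$ a multiplicity-free tree.

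In the reduced situation, $\chi$ has at least two 3-gons and is not of the tame form described in Theorem~\ref{TameQC}. That form requires every 3-gon to be locally of the shape: one vertex attached to a Brauer graph $G$, and each of the other two vertices carrying a single truncated 2-gon. I would enumerate the possible deviations — for example, a vertex of some 3-gon having valency strictly greater than two, a 2-gon attached to a vertex of a 3-gon being itself non-truncated, a 3-gon lying inside the ``central'' subconfiguration attached to another 3-gon, or two 3-gons being connected by a more elaborate path than a single vertex of $G$. In each case I would build a connected acyclic subquiver $Q' \subseteq Q$ using the two 3-gons $x, y$ of $\chi$ as branching \emph{sources}: at each of $x$ and $y$, include all three outgoing arrows in $Q$, and extend each arm linearly along the cycle $\mathfrak{C}_v$ at the corresponding vertex $v$ of $\chi$. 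Consecutive arrows of a single cycle $\mathfrak{C}_v$ never compose to a relation of $I$. The arms may terminate at sinks where multiple arms meet and only incoming arrows are included. By suitable choice, the resulting underlying graph is a tree with two degree-three vertices (at $x$ and $y$) joined by a path, carrying leaves or arms long enough to escape the Dynkin and Euclidean classifications — in particular, differing from $\widetilde{D}_n$ and $\widetilde{E}_n$ — so that $KQ'$ is a wild hereditary algebra and Lemma~\ref{WildHeredSubQuiver} delivers the wildness of $A$.

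The main obstacle is the interaction between branching and the relations in $I$. A length-two composition $\alpha\beta$ in $Q'$ lies in $I$ unless $\alpha$ and $\beta$ are consecutive arrows of a single cycle $\mathfrak{C}_v$; consequently, branching in $Q'$ can only occur at polygons of $\chi$ for which the subquiver includes either only outgoing or only incoming arrows across the multiple non-truncated vertices. In our triserial setting this forces the degree-three vertices of the underlying graph of $Q'$ to correspond to 3-gons of $\chi$, used as sources or sinks. The subcase in which $\chi$ has the shape from Proposition~\ref{WildCycleMult} requires the most care: the two truncated 2-gons attached to $x$ already fix two of the three arms at $x$ to length one, so the third arm must be extended carefully along the cycle at the ``central'' vertex into $\chi'$ until it reaches the second 3-gon $y$, whose own three outgoing arrows then provide the additional branching needed to pass outside the Euclidean class $\widetilde{D}_n$. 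The combinatorial heart of the proof is a careful case analysis verifying that every deviation from the tame form of Theorem~\ref{TameQC} leaves enough freedom to assemble such a subquiver without any pair of consecutively chosen arrows crossing between distinct cycles of $\chi$.
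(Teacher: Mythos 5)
The overall strategy here is the same as the paper's: reduce to a clean situation using earlier wildness results and then apply Lemma~\ref{WildHeredSubQuiver} with a wild hereditary subquiver built from branching at two 3-gons. However, there is a genuine gap in the reduction step.

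\textbf{Missing case: $n$-gons with $n>3$.} The proposition does not assume that $\chi$ is triserial; it only requires at least two $3$-gons. Your reduction uses Proposition~\ref{CrossBand} and Proposition~\ref{WildCycleMult}, neither of which rules out the presence of a $4$-gon (or larger) when there are no cycles or multiplicities around it. The paper disposes of this case immediately by invoking Theorem~\ref{TameQuad}: any Brauer configuration with both an $n$-gon ($n>3$) and a $3$-gon is wild. You omit this and cannot close the argument without it.

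\textbf{The residual ``WildCycleMult form'' subcase does not yield to the subquiver construction.} You observe correctly that after applying Proposition~\ref{WildCycleMult} you may be left with $\chi$ of the form in that statement, with a 3-gon $x$ carrying two truncated 2-gons and the second 3-gon $y$ sitting inside the central $\chi'$. You then propose to ``treat this alongside the principal case'' by running the subquiver construction through $x$ and $y$. But if $y$ is also an antenna --- i.e., $y$ itself has two truncated 2-gons --- then both branching vertices of the subquiver carry two leaves each, and the underlying graph is exactly $\widetilde{\mathbb{D}}_m$, which is Euclidean, so $KQ'$ is tame, not wild, and Lemma~\ref{WildHeredSubQuiver} buys you nothing. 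You assert without justification that ``[$y$]'s own three outgoing arrows \ldots\ provide the additional branching needed to pass outside the Euclidean class $\widetilde{\mathbb{D}}_n$,'' but this is false when the two non-string arms at $y$ both have length one.

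The correct resolution, used by the paper, is to note that when $\chi$ has a cycle or high multiplicity and is \emph{not} of the allowed form, a closer look at \emph{all} $3$-gons shows either that some $3$-gon is not an antenna (whereupon Proposition~\ref{WildCycleMult} applied to \emph{that} $3$-gon gives wildness directly), or that all $3$-gons are antennas but the central part is not a Brauer graph, which forces an $n$-gon with $n>3$ and hence wildness by Theorem~\ref{TameQuad}. This completely eliminates the residual subcase, and only the multiplicity-free tree case survives to the subquiver construction. Your proposal instead carries an unresolved subcase into the subquiver step, where the construction can fail.
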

	\begin{proof}
		If $\chi$ contains a 3-gon and an $n$-gon with $n>3$, then $A$ is wild by Theorem~\ref{TameQuad}. If $\chi$ contains a cycle or a vertex with multiplicity strictly greater than one, then the result follows from Proposition~\ref{WildCycleMult}. So assume that $\chi$ contains no $n$-gon with $n>3$, no cycles and no vertices with multiplicity strictly greater than one. (Note that this implies that no 3-gon of $\chi$ is self-folded.) Under this assumption, if $\chi$ is not of the form given in the proposition statement, then $\chi$ contains a 3-gon $x$ that is locally of the form
		\begin{center}
			\begin{tikzpicture}
				\draw[pattern = north west lines] (0.3,-1) -- (1.3,-1) -- (0.8,-0.2) -- (0.3,-1);
				\draw[dashed] (1.7,-1) -- (1.3,-1);
				\draw[dashed] (2.3,-1) -- (2.6,-1);
				\draw (2,-1) node{$\chi'$};
				\draw[dashed] (0.8,-0.2) -- (0.8,0.1);
				\draw[dashed] (0.8,0.7) -- (0.8,1);
				\draw (0.8,0.4) node{$\chi''$};
				\draw[dashed] (-1,-1) -- (-0.7,-1);
				\draw[dashed] (-0.1,-1) -- (0.3,-1);
				\draw (-0.4,-1) node{$\chi'''$};
				\draw [fill=black] (1.3,-1) ellipse (0.05 and 0.05);
				\draw [fill=black] (0.8,-0.2) ellipse (0.05 and 0.05);
				\draw [fill=black] (0.3,-1) ellipse (0.05 and 0.05);
				\draw (1.4,-0.75) node{$u_0$};
				\draw (0.45,-0.2) node{$u_1$};
				\draw (0.15,-0.75) node{$u_2$};
			\end{tikzpicture}
		\end{center}
		where $\chi'$, $\chi''$ and $\chi'''$ are pairwise disjoint subconfigurations of $\chi$ such that at least two of $\chi'$, $\chi''$ and $\chi'''$ contain more than one polygon distinct from $x$.		
		
		Let $y$ be some 3-gon in $\chi$ distinct from $x$. Since $Q$ is connected, there exists a string $w=\beta_1\ldots\beta_n$ such that $s(w)=x$ and $e(w)=y$. Recall that since $x$ is a 3-gon in $\chi$, $x$ is the source of 3 distinct arrows and the target of 3 distinct arrows in $Q$. Since at least two of $\chi'$, $\chi''$ and $\chi'''$ contain more than one polygon distinct from $x$, there exist pairwise distinct symbols $\alpha_1,\alpha_2,\alpha_3 \in Q_1 \cup Q^{-1}_1$ such that $\alpha_1$, $\alpha_2$ and $\alpha_3$ are not symbols of $w$ and $\alpha_1w$ and $\alpha_2\alpha_3 w$ are strings. Since $y$ is the source of 3 distinct arrows and the target of 3 distinct arrows in $Q$, there exist pairwise distinct symbols $\gamma_1,\gamma_2, \in Q_1 \cup Q^{-1}_1$ such that $\gamma_1$ and $\gamma_2$ are not symbols of $w$ and $w\gamma_1$ and $w\gamma_2$ are strings. Thus, there exists a wild subquiver $Q'$ of $Q$ with underlying graph
		\begin{center}
			\begin{tikzpicture}
				\node[anchor=east] (1) at (0,0) {$X:$};
				\node[anchor=west] (2) at (0.4,0) {$\xymatrix{	& & \bullet \ar@{-}[d]^{\alpha_1} & & & & \bullet \ar@{-}[d]^{\gamma_1} \\
				\bullet \ar@{-}[r]^{\alpha_2} & \bullet \ar@{-}[r]^{\alpha_3} & x \ar@{-}[r]^{\beta_1} & \bullet \ar@{-}[r] & \cdots \ar@{-}[r] & \bullet \ar@{-}[r]^{\beta_n} & y \ar@{-}[r]^{\gamma_2} & \bullet}$};
			\end{tikzpicture}
		\end{center}
		The orientation of $Q'$ is determined by the string $w$ and the symbols $\alpha_1$, $\alpha_2$, $\alpha_3$, $\gamma_1$ and $\gamma_2$. Since   $Q'$ is constructed from strings, every directed path of $Q'$ avoids the relations in $I$. Thus $KQ'$ is a wild hereditary algebra and by Lemma~\ref{WildHeredSubQuiver}, $A$ is wild.
	\end{proof}	
	
	\section{The Proof of the Main Theorem} \label{MainProof}
	We can now bring together all of the results from previous sections to prove the main theorem (Theorem~\ref{Result1}). Throughout, let $A$ be a Brauer configuration algebra associated to a Brauer configuration $\chi$.
	
	($\Rightarrow$:) We prove the contrapositive. Namely, that if $\chi$ is none of the Brauer configurations presented in the list of Theorem~\ref{Result1}, then $A$ is wild. Since $\chi$ is not a Brauer graph, $A$ is $n$-serial with $n>2$. If $A$ is $n$-serial with $n>4$, then $A$ is wild by Theorem~\ref{TameQuad}. But if $n=4$ and not of the form given in Theorem~\ref{Result1}(iv), then $A$ is wild again by Theorem~\ref{TameQuad}.
	
	So suppose $A$ is triserial. This automatically implies that $\chi$ is not of the form given in Theorem~\ref{Result1}(i) or (iv). If $\chi$ contains at least two 3-gons, then $\chi$ cannot be of the form given in Theorem~\ref{Result1}(iii). By Proposition~\ref{Multiple3GonCase}, $A$ is wild if it is also not of the form given in Theorem~\ref{Result1}(ii).
	
	Now suppose $\chi$ contains precisely one 3-gon. If $\chi$ contains a cycle or a vertex of multiplicity strictly greater than one, then $\chi$ is necessarily not of the form in Theorem~\ref{Result1}(iii). By Proposition~\ref{WildCycleMult}, if $\chi$ is also not of the form in Theorem~\ref{Result1}(ii) (with $r=1$), then $A$ is wild.
	
	The only remaining case is where $\chi$ contains precisely one 3-gon and $\chi$ is a multiplicity-free tree. If $\chi$ is not of the form given in Theorem~\ref{Result1}(ii), then at least two of the disjoint subtrees connected to the unique 3-gon in $\chi$ contain at least two polygons each. By Theorem~\ref{ExceptionalAlgebras}, if $\chi$ is not of the form in Theorem~\ref{Result1}(iii), then $A$ is wild. This completes the proof.
	
	($\Leftarrow$:) If $\chi$ is of the form given in (i), then $A$ is biserial and hence tame. Cases (ii), (iii) and (iv) of Theorem~\ref{Result1} follow from Theorems~\ref{TameQC}, \ref{ExceptionalAlgebras} and \ref{TameQuad} respectively.
	%================================================================
	% REFERENCES
	%================================================================
	
\end{document}